\newcounter{svcnt}
\newtheorem{thm}{Theorem}[section]
\newtheorem{prp}[thm]{Proposition}
\newtheorem{lmm}[thm]{Lemma}
\newtheorem{crl}[thm]{Corollary}
\theoremstyle{definition}
\newtheorem{dfn}[thm]{Definition}
\newtheorem{eg}[thm]{Example}
\theoremstyle{remark}
\newtheorem{rmk}[thm]{Remark}
\numberwithin{equation}{section}
\def\lra{\longrightarrow}
\def\BE#1{\begin{equation}\label{#1}}
\def\EE{\end{equation}}
\def\lr#1{\langle#1\rangle}
\def\flr#1{\left\lfloor{#1}\right\rfloor}
\def\blr#1{\big\langle#1\big\rangle}
\def\wt#1{\widetilde{#1}}
\def\ov#1{\overline{#1}}
\def\eref#1{(\ref{#1})}
\def\tn#1{\textnormal{#1}}
\def\sf#1{\textsf{#1}}
\def\ch#1{\check{#1}}
\def\smsize#1{\begin{small}#1\end{small}}
\def\De{\Delta}
\def\Ga{\Gamma}
\def\La{\Lambda}
\def\Si{\Sigma}
\def\al{\alpha}
\def\be{\beta}
\def\eps{\epsilon}
\def\ga{\gamma}
\def\io{\iota}
\def\la{\lambda}
\def\na{\nabla}
\def\om{\omega}
\def\si{\sigma}
\def\ups{\upsilon}
\def\vph{\varphi}
\def\vt{\vartheta}
\def\cA{\mathcal A}
\def\C{\mathbb C}
\def\cC{\mathcal C}  
\def\cD{\mathcal{D}}
\def\bE{\mathbb E}
\def\E{\tn{E}}
\def\cH{\mathcal H}
\def\fI{\mathfrak i}
\def\cJ{\mathcal J}
\def\fJ{\mathfrak j}
\def\cK{\mathcal K}
\def\bL{\mathbb L}
\def\cL{\mathcal L}
\def\cM{\mathcal M}
\def\fM{\mathfrak M}
\def\cN{\mathcal N}
\def\cO{\mathcal O}
\def\fO{\mathfrak O}
\def\P{\mathbb P}
\def\Q{\mathbb Q}
\def\R{\mathbb{R}}
\def\bT{\mathbb{T}}
\def\cT{\mathcal T}
\def\cU{\mathcal U}
\def\cV{\mathcal V}
\def\nV{\tn{V}}
\def\fX{\mathfrak X}
\def\Z{\mathbb{Z}}
\def\cZ{\mathcal Z}
\def\a{\mathbf a}
\def\b{\mathbf b}
\def\c{\mathbf c}
\def\fc{\mathfrak c}
\def\fd{\mathfrak d}
\def\e{\mathbf e}
\def\h{\mathbf h}
\def\ff{\mathfrak f}
\def\g{\mathfrak g}
\def\fm{\mathfrak m}
\def\p{\mathbf p}
\def\fs{\mathfrak s}
\def\u{\mathbf u}
\def\x{\mathbf x}
\def\y{\mathbf y}
\def\Y{\mathbf Y}
\def\z{\mathbf z}
\def\Aut{\tn{Aut}}
\def\Cntr{\tn{Cntr}}
\def\tnd{\textnormal{d}}
\def\Edg{\tn{Edg}}
\def\ev{\tn{ev}}
\def\F{\tn{F}}
\def\Fix{\tn{Fix}}
\def\GW{\tn{GW}}
\def\Im{\tn{Im}}
\def\id{\textnormal{id}}
\def\Id{\tn{Id}}
\def\nd{\tn{nd}}
\def\pt{\tn{pt}}
\def\PD{\tn{PD}}
\def\rk{\tn{rk}\,}
\def\Sym{\tn{Sym}}
\def\top{\textnormal{top}}
\def\Ver{\tn{Ver}}
\def\vrt{\tn{vrt}}
\def\val{\tn{val}}
\def\0{\mathbf 0}
\def\1{\mathbf 1}
\def\dbar{\bar\partial}
\def\prt{\partial}
\def\eset{\emptyset}
\def\i{\infty}
\def\bu{\bullet}
\begin{document}

\title{Real Gromov-Witten Theory in All Genera and\\ 
Real Enumerative Geometry: Computation}
\author{Penka Georgieva\thanks{Partially supported by ERC grant STEIN-259118} $~$and 
Aleksey Zinger\thanks{Partially supported by NSF grants DMS 1500875 and MPIM}}
\date{\today}
\maketitle

\begin{abstract}
\noindent
The first part of this work constructs real positive-genus   Gromov-Witten invariants
of real-orientable symplectic manifolds of odd ``complex" dimensions;
the second part studies the orientations on the moduli spaces of real maps
used in constructing these invariants.
The present paper applies the results of the latter to 
obtain quantitative and qualitative conclusions about the invariants defined in the former.
After describing large collections of real-orientable symplectic manifolds,  
we show that the  real genus~1  Gromov-Witten invariants
of sufficiently positive almost Kahler threefolds are signed counts of real genus~1 curves only
and thus provide direct lower bounds for the counts of these curves in such targets.
We specify real orientations on the real-orientable complete intersections in projective spaces;
the real Gromov-Witten invariants they determine 
are in a sense canonically determined by the complete intersection itself, 
(at least) in most cases.
We also obtain equivariant localization data that computes the real invariants 
of projective spaces and 
determines the contributions from many torus fixed loci for other complete intersections.
Our results confirm Walcher's predictions for the vanishing of these invariants in certain cases
and for the localization data in other cases.
\end{abstract}

\tableofcontents

\section{Introduction}
\label{intro_sec}

\noindent
The theory of $J$-holomorphic maps plays prominent roles in symplectic topology,
algebraic geometry, and string theory.
The foundational work of~\cite{Gr,McSa94,RT,LT,FO} has 
established the theory of (closed) Gromov-Witten invariants,
i.e.~counts of $J$-holomorphic maps from closed Riemann surfaces to symplectic manifolds.
In~\cite{RealGWsI}, we introduce the notion of \sf{real orientation} on 
a real symplectic manifold $(X,\om,\phi)$. 
A real orientation on a real symplectic $2n$-manifold $(X,\om,\phi)$ with $n\!\not\in\!2\Z$
induces orientations on the moduli spaces of real $J$-holomorphic maps from arbitrary genus~$g$
symmetric surfaces to~$(X,\phi)$ commuting 
with the involutions on the domain and the target
and thus gives rise
to arbitrary-genus real Gromov-Witten invariants,  
i.e.~counts of such maps, 
for  \sf{real-orientable} symplectic manifolds of odd ``complex" dimension~$n$.
These orientations are studied in detail in~\cite{RealGWsII}.
The present paper applies the results of~\cite{RealGWsII} to 
the computation of these invariants.\\

\noindent
Propositions~\ref{CYorient_prp}  and~\ref{CIorient_prp} 
provide large collections of real-orientable symplectic manifolds;
they include the odd-dimensional projective spaces and the quintic threefold,
which plays a special role in the interactions of Gromov-Witten theory with string theory.
We also show that the  genus~1  real Gromov-Witten invariants
of sufficiently positive almost Kahler threefolds are signed counts of real genus~1 curves only;
see Theorem~\ref{g1EG_thm}.
The ``classical" Gromov-Witten invariants in contrast include genus~0 contributions 
and generally are not integer; see~\eref{GWvsEnum_e}.
Thus, the  genus~1  real Gromov-Witten invariants provide direct lower bounds for the counts of
real genus~1  curves in many almost Kahler threefolds.\\

\noindent
An explicit real orientation on each complete intersection~$X_{n;\a}$ 
of Proposition~\ref{CIorient_prp} is specified in Section~\ref{RealOrientCI_subs}.
This real orientation may depend on 
the parametrization of the ambient projective space with its involution,
on the ordering of the line bundles corresponding to~$X_{n;\a}$,
and on the section~$s_{n;\a}$ cutting out~$X_{n;\a}$
(i.e.~negating some components of~$s_{n;\a}$ may change the induced real orientation).
However, we show that the real Gromov-Witten invariants of
a fixed complete intersection~$X_{n;\a}$ determined by this real orientation 
are independent of all these choices in most cases and 
vanish in the subset of these cases predicted in~\cite{Wal}; see Theorem~\ref{GWsCI_thm}.
In the cases not covered by the independence statement of this theorem,
the real Gromov-Witten invariants are expected to vanish as well.
These invariants are also preserved by linear inclusions of the ambient projective space
into larger projective spaces; see Proposition~\ref{CROprop_prp}.
Furthermore, the signed count of real lines through a pair of conjugate 
points in  $\P^{2m-1}$ with the standard conjugation is $+1$ with respect to these orientations:
\BE{d1tau_e0} \blr{H^{2m-1}}_{0,1}^{\P^{2m-1},\tau_{2m}}=+1.\EE
Section~\ref{LocData_subs} describes the equivariant localization data that computes 
the contributions to these invariants from many torus fixed loci. 
If $X_{n;\a}\!=\!\P^{2m-1}$ (but $g$ is arbitrary) or $g\!=\!0$ (but $X_{n;\a}$ is arbitrary),
this description covers all fixed loci and thus completely determines 
the real Gromov-Witten invariants in these cases.
Theorem~\ref{EquivLocal_thm} is used in~\cite{NZapp} to compute the real genus~$g$ degree~$d$
Gromov-Witten invariants of~$\P^3$ with $d$~conjugate pairs of point constraints
for $g\!\le\!5$ and $d\!\le\!8$.
The equivariant localization data of Section~\ref{LocData_subs} agrees with \cite[(3.22)]{Wal}.
This  implies that the spin structure on the real locus 
of the quintic threefold $X_{5;(5)}$ used, but not explicitly specified, in~\cite{PSW} 
is the spin structure associated with our real orientation 
on $(X_{5;(5)},\tau_{5;(5)})$.

\subsection{Real-orientable symplectic manifolds}
\label{RealGWth_subs}

\noindent
An \textsf{involution} on a topological space~$X$ is a homeomorphism
$\phi\!:X\!\lra\!X$ such that $\phi\!\circ\!\phi\!=\!\id_X$.
By an \textsf{involution on a manifold}, we will mean a smooth involution. 
Let
$$X^\phi=\big\{x\!\in\!X\!:~\phi(x)\!=\!x\big\}$$
denote the fixed locus.
An \sf{anti-symplectic involution~$\phi$} on a symplectic manifold $(X,\om)$
is an involution $\phi\!:X\!\lra\!X$ such that $\phi^*\om\!=\!-\om$.
For example, the~maps
\BE{tauetadfn_e}
\begin{aligned}
\tau_n\!:\P^{n-1}&\lra\P^{n-1}, &\quad [Z_1,\ldots,Z_n]&\lra[\ov{Z}_1,\ldots,\ov{Z}_n],\\
\eta_{2m}\!: \P^{2m-1}& \lra\P^{2m-1},&\quad
[Z_1,\ldots,Z_{2m}]&\lra 
\big[\ov{Z}_2,-\ov{Z}_1,\ldots,\ov{Z}_{2m},-\ov{Z}_{2m-1}\big],
\end{aligned}\EE
are anti-symplectic involutions with respect to the standard Fubini-Study symplectic
forms~$\om_n$ on~$\P^{n-1}$ and~$\om_{2m}$ on $\P^{2m-1}$, respectively.
If 
$$k\!\ge\!0, \qquad \a\equiv(a_1,\ldots,a_k)\in(\Z^+)^k\,,$$
and $X_{n;\a}\!\subset\!\P^{n-1}$ is a complete intersection of multi-degree~$\a$
preserved by~$\tau_n$,  then $\tau_{n;\a}\!\equiv\!\tau_n|_{X_{n;\a}}$
is an anti-symplectic involution on $X_{n;\a}$ with respect to the symplectic form
$\om_{n;\a}\!=\!\om_n|_{X_{n;\a}}$. 
Similarly, if $X_{2m;\a}\!\subset\!\P^{2m-1}$ is preserved by~$\eta_{2m}$, then
$\eta_{2m;\a}\!\equiv\!\eta_{2m}|_{X_{2m;\a}}$
is an anti-symplectic involution on $X_{2m;\a}$ with respect to the symplectic form
$\om_{2m;\a}\!=\!\om_{2m}|_{X_{2m;\a}}$.
A \sf{real symplectic manifold} is a triple $(X,\om,\phi)$ consisting 
of a symplectic manifold~$(X,\om)$ and an anti-symplectic involution~$\phi$.\\

\noindent
Let $(X,\phi)$ be a topological space with an involution.
A \sf{conjugation} on a complex vector bundle $V\!\lra\!X$ 
\sf{lifting} an involution~$\phi$ is a vector bundle homomorphism 
$\wt\phi\!:V\!\lra\!V$ covering~$\phi$ (or equivalently 
a vector bundle homomorphism  $\wt\phi\!:V\!\lra\!\phi^*V$ covering~$\id_X$)
such that the restriction of~$\wt\phi$ to each fiber is anti-complex linear
and $\wt\phi\!\circ\!\wt\phi\!=\!\id_V$.
A \sf{real bundle pair} $(V,\wt\phi)\!\lra\!(X,\phi)$   
consists of a complex vector bundle $V\!\lra\!X$ and 
a conjugation~$\wt\phi$ on $V$ lifting~$\phi$.
For example, 
$$(X\!\times\!\C^n,\phi\!\times\!\fc)\lra(X,\phi),$$
where $\fc\!:\C^n\!\lra\!\C^n$ is the standard conjugation on~$\C^n$,
is a real bundle pair.
If $X$ is a smooth manifold, then $(TX,\tnd\phi)$ is also a real bundle pair over~$(X,\phi)$.
For any real bundle pair $(V,\wt\phi)$ over~$(X,\phi)$, 
we denote~by
$$\La_{\C}^{\top}(V,\wt\phi)=(\La_{\C}^{\top}V,\La_{\C}^{\top}\wt\phi)$$
the top exterior power of $V$ over $\C$ with the induced conjugation.
Direct sums, duals, and tensor products over~$\C$ of real bundle pairs over~$(X,\phi)$
are again real bundle pairs over~$(X,\phi)$.

\begin{dfn}[{\cite[Definition~5.1]{RealGWsI}}]\label{realorient_dfn4}
Let $(X,\phi)$ be a topological space with an involution and 
$(V,\vph)$ be a real bundle pair over~$(X,\phi)$.
A \sf{real orientation} on~$(V,\vph)$ consists~of 
\begin{enumerate}[label=(RO\arabic*),leftmargin=*]

\item\label{LBP_it2} a rank~1 real bundle pair $(L,\wt\phi)$ over $(X,\phi)$ such that 
\BE{realorient_e4}
w_2(V^{\vph})=w_1(L^{\wt\phi})^2 \qquad\hbox{and}\qquad
\La_{\C}^{\top}(V,\vph)\approx(L,\wt\phi)^{\otimes 2},\EE

\item\label{isom_it2} a homotopy class of isomorphisms of real bundle pairs in~\eref{realorient_e4}, and

\item\label{spin_it2} a spin structure~on the real vector bundle
$V^{\vph}\!\oplus\!2(L^*)^{\wt\phi^*}$ over~$X^{\phi}$
compatible with the orientation induced by~\ref{isom_it2}.\\ 

\end{enumerate}
\end{dfn}

\noindent
An isomorphism in~\eref{realorient_e4} restricts to an isomorphism 
\BE{realorient2_e3}\La_{\R}^{\top}V^{\vph}\approx (L^{\wt\phi})^{\otimes2}\EE
of real line bundles over~$X^{\phi}$.
Since the vector bundles $(L^{\wt\phi})^{\otimes2}$ and $2(L^*)^{\wt\phi^*}$ are canonically oriented, 
\ref{isom_it2} determines orientations on $V^{\vph}$ and $V^{\vph}\!\oplus\! 2(L^*)^{\wt\phi^*}$.
By the first assumption in~\eref{realorient_e4}, the real vector bundle
$V^{\vph}\!\oplus\!2(L^*)^{\wt\phi^*}$ over~$X^{\phi}$ admits a spin structure.\\

\noindent
Let $(X,\om,\phi)$ be a real symplectic manifold.
A \sf{real orientation on~$(X,\om,\phi)$} is a real orientation on the real bundle pair $(TX,\tnd\phi)$.
We call $(X,\om,\phi)$ \sf{real-orientable} if it admits a real orientation.
The next three statements, which are established in Section~\ref{RealOrientMfld_subs},
 describe large collections of real-orientable symplectic manifolds.

\begin{prp}\label{CYorient_prp}
Let $(X,\om,\phi)$ be a real symplectic manifold with $w_2(X^{\phi})\!=\!0$.
If 
\begin{enumerate}[label=(\arabic*),leftmargin=*]

\item
$H_1(X;\Q)\!=\!0$ and $c_1(X)\!=\!2(\mu\!-\!\phi^*\mu)$ for some $\mu\!\in\!H^2(X;\Z)$ or

\item\label{KahlerOrient_it} $X$ is  compact Kahler, $\phi$ is anti-holomorphic, and 
$\cK_X\!=\!2([D]\!+\![\ov{\phi_*D}])$ for some divisor $D$ on~$X$,

\end{enumerate}
then $(X,\om,\phi)$ is a real-orientable symplectic manifold.
\end{prp}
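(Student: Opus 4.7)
The plan is to exhibit, in each of the two cases, the data of Definition~\ref{realorient_dfn4} for $(V,\vph)\!=\!(TX,\tnd\phi)$. The core construction is a rank-$1$ real bundle pair $(L,\wt\phi)$ with $(L,\wt\phi)^{\otimes2}\!\approx\!\La_\C^\top(TX,\tnd\phi)$ and $c_1(L)$ of the form $\al\!-\!\phi^*\al$ for some $\al\!\in\!H^2(X;\Z)$; verifying~\ref{spin_it2} will then reduce to a direct Stiefel-Whitney computation using the assumption $w_2(X^\phi)\!=\!0$.

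In case~(1), fix $\mu\!\in\!H^2(X;\Z)$ with $c_1(X)\!=\!2(\mu\!-\!\phi^*\mu)$ and a complex line bundle $L_0\!\lra\!X$ with $c_1(L_0)\!=\!\mu$, and set $L\!=\!L_0\!\otimes\!\ov{\phi^*L_0}$. A canonical identification $\phi^*\ov{L}\!\cong\!L$ (using $\phi^2\!=\!\id$ and the commutation of $\phi^*$ with $\ov{(\cdot)}$) endows $L$ with a tautological conjugation~$\wt\phi$ lifting~$\phi$, and one computes $c_1(L)\!=\!\mu\!-\!\phi^*\mu$, whence $c_1(L^{\otimes2})\!=\!c_1(\La_\C^\top TX)$. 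Thus $L^{\otimes2}$ and $\La_\C^\top TX$ agree as complex line bundles. To upgrade this to an isomorphism of real bundle pairs as in~\ref{isom_it2}, compare the two conjugations on a common underlying complex line bundle: the discrepancy is classified by a $\Z/2$-equivariant cohomology class of $(X,\phi)$ controlled by $H^1(X;\Z)$, and the hypothesis $H_1(X;\Q)\!=\!0$ forces this group to be torsion, allowing the obstruction to be absorbed by twisting $L_0$ by a suitable torsion line bundle.

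In case~(2), take $(L,\wt\phi)\!=\!(\cO_X(-E),\wt\phi)$ with $E\!=\!D\!+\!\ov{\phi_*D}$; the $\phi$-invariance of~$E$ (which holds because $\phi$ is anti-holomorphic) provides a natural conjugation~$\wt\phi$, and the hypothesis $\cK_X\!=\!2[E]$ directly furnishes an isomorphism of real holomorphic bundle pairs $(L,\wt\phi)^{\otimes2}\!\approx\!\La_\C^\top(TX,\tnd\phi)$, handling \ref{LBP_it2}--\ref{isom_it2}. Moreover $c_1(L)\!=\!-[D]\!+\!\phi^*[D]$ (since $\phi$ anti-holomorphic gives $c_1(\cO(\ov{\phi_*D}))\!=\!-\phi^*[D]$) is again of the form $\al\!-\!\phi^*\al$. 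To verify~\ref{spin_it2} in either case, note that $c_1(L)|_{X^\phi}\!=\!0$ because $\phi|_{X^\phi}\!=\!\id$; combined with the splitting $L|_{X^\phi}\!=\!L^{\wt\phi}\!\oplus\!L^{\wt\phi}$ of underlying real vector bundles and the identity $w_2(W)\!\equiv\!c_1(W)\bmod 2$ for a complex line bundle~$W$, this yields $w_1(L^{\wt\phi})^2\!=\!0\!=\!w_2(X^\phi)$, so the first equation of~\eref{realorient_e4} holds. The real vector bundle $TX^\phi\!\oplus\!2(L^*)^{\wt\phi^*}$ then has $w_2\!=\!w_2(X^\phi)\!+\!w_1(L^{\wt\phi})^2\!=\!0$ (using $w_2(2M)\!=\!w_1(M)^2$) and inherits an orientation from~\ref{isom_it2}, so it admits a spin structure; any such completes the real orientation.

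The main obstacle is the upgrade step in case~(1), where the $c_1$-level equality must be promoted to an isomorphism of \emph{real} bundle pairs rather than merely complex ones; the rational-cohomology hypothesis on $H_1(X)$ is used precisely to kill the equivariant torsion obstruction separating the two conjugations. Case~(2) circumvents this hurdle automatically via the rigidity of the divisorial construction.
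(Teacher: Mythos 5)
Your overall strategy coincides with the paper's: in both cases you build the same rank-$1$ real bundle pair $(L,\wt\phi)$ with $L\!=\!L_0\!\otimes_\C\!\ov{\phi^*L_0}$ (which for case~(2) is $\cO_X(-D-\ov{\phi_*D})$), compute $c_1(L)\!=\!\mu\!-\!\phi^*\mu$, and then verify the three conditions of Definition~\ref{realorient_dfn4}. Your Stiefel--Whitney verification of the first equation of~\eref{realorient_e4} (via $c_1(L)|_{X^\phi}\!=\!0$, hence $w_1(L^{\wt\phi})^2\!=\!0\!=\!w_2(X^\phi)$) is a valid variant: the paper instead exhibits a concrete orientation on $L^{\wt\phi}$ by $v\!\otimes\!v$, giving the slightly stronger conclusion $w_1(L^{\wt\phi})\!=\!0$, but either route is fine. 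Your final step (existence of a spin structure on $TX^\phi\!\oplus\!2(L^*)^{\wt\phi^*}$ from orientability plus $w_2\!=\!0$) is also correct.

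The genuine gap is in the upgrade step, and you are right that this is the crux. In case~(1) the claim that ``$H_1(X;\Q)\!=\!0$ forces $H^1(X;\Z)$ to be torsion'' is incorrect: by universal coefficients $H^1(X;\Z)\!\cong\!\Hom(H_1(X;\Z),\Z)$ is always torsion-free, and $H_1(X;\Q)\!=\!0$ forces it to vanish entirely. Accordingly, ``twisting $L_0$ by a torsion line bundle'' neither addresses the actual obstruction nor is needed for the part that $H^1(X;\Z)$ does control. The correct analysis (and the content of \cite[Lemma~2.7]{Teh}, which the paper cites here) is this: given two conjugations $\chi_1,\chi_2$ on the same complex line bundle, their discrepancy is a function $f\!:X\!\to\!\C^*$ with $f\,\ov{f\!\circ\!\phi}\!=\!1$; since $H^1(X;\Z)\!=\!0$, $f$ has a global logarithm, and the residual obstruction is a locally constant $\Z/2$-valued invariant that vanishes on every component of~$X$ meeting $X^\phi$ (and is irrelevant when $X^\phi\!=\!\eset$). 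Your case~(2) similarly overstates things: the isomorphism of real bundle pairs is \emph{not} ``directly furnished'' by $\cK_X\!=\!2[E]$ in Pic; rather, compactness of the Kähler manifold rigidifies the holomorphic automorphisms (to constants) and kills the analogous obstruction --- this is what \cite[Lemma~2.6]{Teh} supplies and why the compactness hypothesis appears. So the proof as written has a real gap in its central step; it is not merely a stylistic difference from the paper.
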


\begin{crl}\label{CIorient_crl}
Let $n\!\in\!\Z^+$ and $\a\!\equiv\!(a_1,\ldots,a_{n-4})\!\in\!(\Z^+)^{n-4}$ be such~that 
$$a_1\!+\!\ldots\!+\!a_{n-4}\equiv n \mod4\,.$$
If $X_{n;\a}\!\subset\!\P^{n-1}$ is a complete intersection of multi-degree~$\a$
preserved by~$\tau_n$, then  
$(X_{n;\a},\om_{n;\a},\tau_{n;\a})$ is a real-orientable symplectic manifold.
\end{crl}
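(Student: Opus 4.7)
The plan is to apply Proposition~\ref{CYorient_prp}(2) to the real symplectic manifold $(X_{n;\a},\om_{n;\a},\tau_{n;\a})$. Since $X_{n;\a}$ is a smooth projective variety, it is compact K\"ahler, and $\tau_{n;\a}$ is anti-holomorphic as the restriction of~$\tau_n$. Hence the K\"ahler and anti-holomorphic hypotheses of Proposition~\ref{CYorient_prp}(2) hold automatically, and the task reduces to verifying (i) the existence of a divisor $D$ on $X_{n;\a}$ with $\cK_{X_{n;\a}}=2([D]+[\ov{\phi_*D}])$ and (ii) the vanishing $w_2(X_{n;\a}^{\tau_{n;\a}})=0$.

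For (i), adjunction gives
\[
\cK_{X_{n;\a}}=(a_1+\cdots+a_{n-4}-n)\,H\big|_{X_{n;\a}},
\]
where $H$ is the hyperplane class on~$\P^{n-1}$. The hypothesis $a_1+\cdots+a_{n-4}\equiv n\pmod4$ gives $a_1+\cdots+a_{n-4}-n=4k$ for some $k\in\Z$. I would take $D=kD_0|_{X_{n;\a}}$, where $D_0\subset\P^{n-1}$ is a hyperplane cut out by a real linear form (so $\tau_n(D_0)=D_0$). Since $D_0$ is $\tau_n$-invariant, $[\ov{\phi_*D}]=[D]$, and hence $[D]+[\ov{\phi_*D}]=2kH|_{X_{n;\a}}$; doubling recovers $\cK_{X_{n;\a}}$.

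For (ii), I would compute $w(TX_{n;\a}^{\tau_{n;\a}})$ in $H^*(X_{n;\a}^{\tau_{n;\a}};\Z/2)$ via the short exact sequence of real vector bundles
\[
0\lra TX_{n;\a}^{\tau_{n;\a}}\lra T\R\P^{n-1}\big|_{X_{n;\a}^{\tau_{n;\a}}}\lra\bigoplus_{i=1}^{n-4}\cO(a_i)^\tau\big|_{X_{n;\a}^{\tau_{n;\a}}}\lra 0
\]
obtained by taking $\tau$-fixed parts of the holomorphic normal bundle sequence over~$X_{n;\a}^{\tau_{n;\a}}$. Using $w(T\R\P^{n-1})=(1+x)^n$ and $w(\cO(a_i)^\tau)=1+a_ix$ (mod~$2$), the Whitney product formula yields $w(TX_{n;\a}^{\tau_{n;\a}})=(1+x)^n\prod_i(1+a_ix)^{-1}$. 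Its degree-$2$ part is to be shown zero using the congruence hypothesis together with the structure of the restriction map from~$H^*(\R\P^{n-1};\Z/2)$.

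Step~(ii) is the main obstacle. A na\"ive parity calculation reduces $w_2$ to a binomial-coefficient expression in $x$ whose vanishing in $H^2(X_{n;\a}^{\tau_{n;\a}};\Z/2)$ is not forced by $a_1+\cdots+a_{n-4}\equiv n\pmod4$ alone; one must also exploit cohomological vanishing statements in $H^*(X_{n;\a}^{\tau_{n;\a}};\Z/2)$ coming from the complete-intersection geometry (for instance, relations in the image of $H^*(\R\P^{n-1};\Z/2)$ that kill the problematic $x^2$ term when $n$ and the~$a_i$ fall in the awkward parity range). Once both (i) and (ii) are established, Proposition~\ref{CYorient_prp}(2) concludes the proof.
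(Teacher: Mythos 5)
Your plan correctly identifies Proposition~\ref{CYorient_prp}(2) as the right tool, and your treatment of the canonical-divisor condition~(i) via adjunction is the same as the paper's (which takes $D=\cO_{\P^{n-1}}((n-|\a|)/4)|_{X_{n;\a}}$). You have also correctly diagnosed that in~(ii) the Whitney-product computation does not by itself force $w_2=0$ under the congruence $|\a|\equiv n\bmod 4$ alone --- e.g.\ for $X_{6;(2,4)}\subset\P^5$ your expression gives the coefficient $\binom{6}{2}+6\cdot 6+(4+16)+8\equiv 1\bmod 2$, so the naive formula returns $w_2=x^2$, not zero. But your speculation about relations in the image of $H^*(\R\P^{n-1};\Z/2)$ does not locate the actual mechanism, and the proof proposal therefore has a genuine gap.

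What you are missing is built into the requirement that $\a$ have length exactly $n-4$: this makes $X_{n;\a}$ a complex \emph{threefold}, so $X_{n;\a}^{\tau_{n;\a}}$ is a closed real $3$-manifold (or empty). Your own degree-$1$ Whitney computation already gives $w_1(X_{n;\a}^{\tau_{n;\a}})=(n-|\a|)x=0$ since $n\equiv|\a|\bmod 2$. On a closed $3$-manifold the Wu classes $v_i$ vanish for $i\ge 2$, and Wu's relations~\cite[Theorem~11.14]{MiSt} then force $w_2=\mathrm{Sq}^1 v_1+v_2=v_1^2=w_1^2=0$. That is precisely how the paper's proof closes step~(ii) before invoking Proposition~\ref{CYorient_prp}(2); as a byproduct it also shows that $x^2=0$ in $H^2(X_{n;\a}^{\tau_{n;\a}};\Z/2)$ in the cases where the direct Whitney computation seemed to return $w_2=x^2$.
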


\begin{prp}\label{CIorient_prp}
Let $m,n\!\in\!\Z^+$, $k\!\in\!\Z^{\ge0}$, and $\a\!\equiv\!(a_1,\ldots,a_k)\!\in\!(\Z^+)^k$.
\begin{enumerate}[label=(\arabic*),leftmargin=*]
\item\label{CItau_it} 
If $X_{n;\a}\!\subset\!\P^{n-1}$ is a complete intersection of multi-degree~$\a$
preserved by~$\tau_n$,
\BE{CIorient_e1}
a_1\!+\!\ldots\!+\!a_k\equiv n \mod2, \quad\hbox{and}\quad
a_1^2\!+\!\ldots\!+\!a_k^2
\equiv a_1\!+\!\ldots\!+\!a_k \mod4,\EE
then $(X_{n;\a},\om_{n;\a},\tau_{n;\a})$ is a real-orientable symplectic manifold.

\item\label{CIeta_it}  If $X_{2m;\a}\!\subset\!\P^{2m-1}$ is a complete intersection of 
multi-degree~$\a$ preserved by~$\eta_{2m}$ and
$$a_1\!+\!\ldots\!+\!a_k\equiv 2m \mod4,$$
then $(X_{2m;\a},\om_{2m;\a},\eta_{2m;\a})$ is a real-orientable symplectic manifold.\\
\end{enumerate}
\end{prp}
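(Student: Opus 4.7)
The plan is to exhibit an explicit real orientation on $(TX_{n;\a},\tnd\tau_{n;\a})$ in part~\ref{CItau_it} and on $(TX_{2m;\a},\tnd\eta_{2m;\a})$ in part~\ref{CIeta_it} by taking the rank~$1$ real bundle pair $(L,\wt\phi)$ of~\ref{LBP_it2} to be the restriction of an appropriate power of~$\cO(1)$ from the ambient projective space. Writing $|\a|\!=\!a_1\!+\!\ldots\!+\!a_k$, the adjunction formula yields
\BE{CIplan_adj_e}
\La^{\top}_\C TX_{n;\a}\cong \cO_{\P^{n-1}}(n\!-\!|\a|)\big|_{X_{n;\a}}
\EE
naturally as real bundle pairs, so the natural candidate for~$L$ is $\cO(s)|_{X_{n;\a}}$ with $s\!=\!(n\!-\!|\a|)/2$ in~\ref{CItau_it} and $s\!=\!(2m\!-\!|\a|)/2$ in~\ref{CIeta_it}. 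In part~\ref{CItau_it}, $s\!\in\!\Z$ by the first condition in~\eref{CIorient_e1}, and the $\tau_n$-invariance of the tautological sub-bundle $\cO(-1)\!\subset\!\P^{n-1}\!\times\!\C^n$ produces an involutive conjugation on $\cO(1)$ lifting~$\tau_n$; its $s$-th tensor power defines $\wt\tau$ on~$L$. In part~\ref{CIeta_it}, the corresponding lift of~$\eta_{2m}$ to $\cO(1)$ is only quaternionic (it squares to $-\id$ since $\eta_{2m}^{\,2}\!=\!-\id$ on~$\C^{2m}$), so its $s$-th tensor power is involutive precisely when $s$ is even, which is what the hypothesis $|\a|\!\equiv\!2m\mod 4$ guarantees. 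In either case, $\cO(2s)\!\cong\!\cO(s)^{\otimes 2}$ upgrades via~\eref{CIplan_adj_e} to an isomorphism $(L,\wt\phi)^{\otimes 2}\!\cong\!\La^{\top}_\C(TX,\tnd\phi)$ of real bundle pairs, delivering~\ref{LBP_it2} and~\ref{isom_it2}.

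\noindent
It then remains to verify the Stiefel--Whitney identity $w_2(V^\vph)\!=\!w_1(L^{\wt\phi})^2$ in~\eref{realorient_e4}. In part~\ref{CIeta_it} this is vacuous, since $\eta_{2m}$ has no fixed points on~$\P^{2m-1}$ (any fixed line would contradict $\eta_{2m}^{\,2}\!=\!-\id$) and hence $X_{2m;\a}^{\eta_{2m;\a}}\!=\!\eset$. In part~\ref{CItau_it}, set $Y\!=\!X_{n;\a}^{\tau_{n;\a}}\!\subset\!\R P^{n-1}$ and let $y\!\in\!H^1(Y;\Z_2)$ be the pullback of the standard generator. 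The normal bundle sequence $0\!\to\!TX_{n;\a}\!\to\!T\P^{n-1}|_X\!\to\!\bigoplus_i\cO(a_i)|_X\!\to\!0$ is compatible with the involutions; passing to $\tau$-fixed parts over~$Y$ and reducing mod~$2$ gives $w(TY)\prod_i(1\!+\!a_iy)\!=\!(1\!+\!y)^n$. If $k_1$ denotes the number of odd~$a_i$, the even-degree factors on the left contribute $1$ and the odd ones contribute $(1\!+\!y)^{k_1}$, so $w(TY)\!=\!(1\!+\!y)^{n-k_1}$ and, since $n\!-\!k_1$ is even,
\BE{CIplan_w2_e}
w_2(TY)=\tbinom{n-k_1}{2}y^2\equiv\tfrac{n-k_1}{2}\,y^2\mod 2.
\EE
As $w_1(L^{\wt\tau})^2\!\equiv\!sy^2\mod 2$, the identity reduces to $|\a|\!\equiv\!k_1\mod 4$; and since $a_i^2\!\equiv\!1\mod 4$ for odd~$a_i$ and $a_i^2\!\equiv\!0\mod 4$ for even~$a_i$, one has $\sum a_i^2\!\equiv\!k_1\mod 4$, which is exactly the second condition in~\eref{CIorient_e1}.

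\noindent
With \ref{LBP_it2} and \ref{isom_it2} in hand, $V^\vph\!\oplus\!2(L^*)^{\wt\phi^*}$ inherits an orientation and its $w_2\!=\!w_2(V^\vph)\!+\!w_1(L^{\wt\phi})^2$ vanishes by the above, so a spin structure as in~\ref{spin_it2} exists and any choice completes the real orientation. The main obstacle is the mod-$4$ bookkeeping in the previous paragraph; the parity distinction between real and quaternionic lifts of~$\cO(1)$ under~$\eta_{2m}$ is the secondary subtlety forcing the congruence in part~\ref{CIeta_it}.
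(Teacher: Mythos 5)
Your proof is correct and follows essentially the same route as the paper's: the same choice of rank-one real bundle pair $(L,\wt\phi)=(\cO_{\P^{n-1}}((n-|\a|)/2),\cdot)|_{X}$ via adjunction, the same top-exterior-power isomorphism, and the same $w_2$ check (vacuous in the $\eta$ case because the real locus is empty). The only cosmetic difference is in the Stiefel--Whitney bookkeeping: the paper expands the total class directly and establishes the numerical congruence~\eref{CIorient_e3}, whereas you first cancel the even-degree factors mod~$2$ to get $w(TY)=(1+y)^{n-k_1}$ and then reduce the $w_2$ identity to $|\a|\equiv k_1\bmod 4$ (which is Lemma~\ref{CIorient_lmm}); your observation that the $\eta_{2m}$-lift to $\cO(1)$ is quaternionic, so that $\cO(s)$ carries a conjugation precisely when $s$ is even, is likewise implicit in the paper's construction of $\wt\eta_{2m;1}^{(2a)}$ only for even twists, and you have made it the explicit reason for the mod-$4$ hypothesis in part~(2).
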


\noindent
As indicated by the proof of Corollary~\ref{CIorient_crl} in Section~\ref{RealOrientMfld_subs}, 
the condition $w_2(X^{\phi})\!=\!0$ in Proposition~\ref{CYorient_prp} is redundant if $X$ is a threefold.
In particular, every real compact Kahler Calabi-Yau threefold is real-orientable.\\

\noindent
By Proposition~\ref{CIorient_prp}, $(\P^{2m-1},\tau_{2m})$ and $(\P^{4m-1},\eta_{4m})$
are  real-orientable symplectic manifolds.
So are the   complete intersection Calabi-Yau threefolds
$$X_{5;(5)}\subset\P^4, \qquad  X_{6;(3,3)}\subset\P^5,
\quad\hbox{and}\quad X_{8;(2,2,2,2)}\subset\P^7$$
preserved by the standard conjugation on the ambient space.
Proposition~\ref{CIorient_prp}\ref{CItau_it} does not apply to 
the two remaining   projective  complete intersection Calabi-Yau threefolds,
$$X_{6;(2,4)}\subset\P^5 \qquad\hbox{and}\qquad X_{7;(2,2,3)}\subset\P^6,$$
as they do not satisfy the second conditions in~\eref{CIorient_e1}.
These two threefolds are instead real-orientable by Corollary~\ref{CIorient_crl},
whenever they are preserved by the standard conjugation on the ambient space.
In contrast to Proposition~\ref{CIorient_prp}\ref{CItau_it}, 
Corollary~\ref{CIorient_crl} does not endow the symplectic manifolds 
to which it applies with a natural real orientation.

\subsection{Gromov-Witten and~enumerative invariants}
\label{RealGWs_subs}

\noindent
A \sf{symmetric surface} $(\Si,\si)$ is a closed  oriented (possibly nodal) surface~$\Si$  
with an orientation-reversing involution~$\si$.
If $\Si$ is smooth, the fixed locus~$\Si^{\si}$ of~$\si$ is a disjoint union of circles.
If in addition $(X,\phi)$ is a manifold with an involution, 
a \sf{real map} 
$$u\!:(\Si,\si)\lra(X,\phi)$$ 
is a smooth map $u\!:\Si\!\lra\!X$ such that $u\!\circ\!\si=\phi\!\circ\!u$.\\

\noindent
For a symplectic manifold $(X,\om)$, we denote~by $\cJ_{\om}$
the space of $\om$-compatible almost complex structures on~$X$.
If $\phi$ is  an anti-symplectic involution on~$(X,\om)$, let 
$$\cJ_{\om}^{\phi}=\big\{J\!\in\!\cJ_{\om}\!:\,\phi^*J\!=\!-J\big\}.$$
For a genus~$g$ symmetric surface~$(\Si,\si)$, possibly nodal and disconnected,
we similarly denote by $\cJ_{\Si}^{\si}$
the space of complex structures~$\fJ$ on~$\Si$ compatible with the orientation such that 
$\si^*\fJ\!=\!-\fJ$.
For $J\!\in\!\cJ_{\om}^{\phi}$, a \sf{real $J$-holomorphic} map consists
of a symmetric surface~$(\Si,\si)$, $\fJ\!\in\!\cJ_{\Si}^{\si}$, 
and a real $(J,\fJ)$-holomorphic map $u\!:\Si\!\lra\!X$.\\

\noindent 
Let $(X,\om,\phi)$ be a real-orientable symplectic $2n$-manifold with $n\!\not\in\!2\Z$,
$g,l\!\in\!\Z^{\ge0}$, $B\!\in\!H_2(X;\Z)$, and $J\!\in\!\cJ_{\om}^{\phi}$.
We denote by $\ov\fM_{g,l}(X,B;J)^{\phi}$ 
the  moduli space of equivalence classes of stable real degree~$B$  $J$-holomorphic maps
from genus~$g$ symmetric (possibly nodal) surfaces with $l$ conjugate pairs of  marked points. 
By \cite[Theorem~1.4]{RealGWsI}, a real orientation on~$(X,\om,\phi)$ 
determines an orientation on this compact space, 
endows it with a virtual fundamental class, and  
thus gives rise to real genus~$g$ GW-invariants of $(X,\om,\phi)$
that are independent of the choice of~$J\!\in\!\cJ_{\om}^{\phi}$.
If $n\!=\!3$ and $c_1(X)$ is divisible by~4, a real orientation on~$(X,\om,\phi)$ also determines 
a count of real genus~1 $J$-holomorphic curves with conjugate and real point insertions;
see \cite[Theorem~1.5]{RealGWsI}.\\

\noindent
The real genus~$g$  GW-invariants of \cite[Theorem~1.4]{RealGWsI}  are 
in general combinations of counts of real curves of genus~$g$ and 
counts of real curves of lower genera, just as happens in the ``classical" complex setting.
In light of \cite[Theorems~1A/B]{g1diff} and \cite[Theorem~1.5]{FanoGV}, 
it seems plausible that integral counts of genus~$g$ real curves alone can  
be extracted from these GW-invariants to directly provide lower bounds for
enumerative counts of real curves in good situations.
This would typically involve delicate obstruction analysis.
However, the situation is fairly simple if $g\!=\!1$ and $n\!=\!3$.
The statement below
applies whenever it makes sense in the context of \cite[Theorems~1.4,1.5]{RealGWsI}; 
see the beginning of Section~\ref{g1EG_sec} and~\eref{GWvsEnumR_e}.

\begin{thm}\label{g1EG_thm}
Let $(X,\om,\phi)$ be a compact real-orientable 6-manifold 
and $J\!\in\!\cJ_{\om}^{\phi}$ be an almost  complex structure which is
genus~1 regular in the sense of \cite[Definition~1.4]{g1comp}.
The genus~1 real GW-invariants of $(X,\om,\phi)$ are then equal to 
the corresponding signed counts of real $J$-holomorphic curves
and thus provide lower bounds for the number of real genus~1 irreducible curves in~$(X,J,\phi)$. 
\end{thm}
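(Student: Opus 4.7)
\noindent
The plan is to adapt the strategy of \cite[Theorem~1A]{g1diff} from the complex genus~1 setting
to the real setting, using the genus~1 regularity of~$J$ from \cite[Definition~1.4]{g1comp} and
the real orientation on $(X,\om,\phi)$ furnished by the real-orientability hypothesis.
Concretely, I would stratify the compact moduli space $\ov\fM_{1,l}(X,B;J)^\phi$ by the topological
type of the domain. Its main stratum consists of equivalence classes of simple real
$(J,\fJ)$-holomorphic maps from smooth genus~1 symmetric surfaces.
By the genus~1 regularity hypothesis on~$J$, the real Cauchy-Riemann operator cuts out this main
stratum transversely as a smooth manifold of the expected real dimension, and the real orientation
on $(X,\om,\phi)$ orients it via \cite[Theorem~1.4]{RealGWsI}.
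After imposing the conjugate pair point constraints (and any real point constraints) defining the
invariant, the main stratum then contributes the signed count of simple real genus~1 $J$-holomorphic
curves satisfying those constraints.

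\noindent
The crux of the argument, and its main obstacle, is to show that the remaining strata contribute
zero to the virtual count. The problematic strata are those in which a subcurve of arithmetic
genus~1 is contracted to one or more points of a real genus~0 map, since in the complex analog
these strata produce the genus~0 correction term in \cite{g1diff}. I expect the analogous
correction to vanish in the real setting because the obstruction bundle over each ghost stratum
inherits a compatible real structure from the contracted genus~1 factor, and the orientation data
from~\cite{RealGWsII} forces the corresponding equivariant Euler class to pair trivially with the
fundamental class of the moduli of pointed real genus~1 symmetric surfaces. Multi-node and
multi-bubble configurations should acquire strictly positive codimension after imposing the
constraints, by a dimension count using genus~1 regularity together with the hypothesis
$n\!=\!3$, so they do not contribute either. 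Making this vanishing rigorous is the principal
technical difficulty, and will require a boundary obstruction analysis parallel to but more
delicate than that of~\cite{g1diff}.

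\noindent
Combining the transversality of the main stratum with the vanishing of the ghost contributions
yields the asserted identity between the genus~1 real GW-invariant and the algebraic $\pm1$-count
of real genus~1 irreducible $J$-holomorphic curves satisfying the constraints. Since each such
curve contributes $\pm1$, the absolute value of the invariant is at most the unsigned count of
real genus~1 irreducible $J$-holomorphic curves in $(X,J,\phi)$ satisfying the constraints,
establishing the claimed lower bound.
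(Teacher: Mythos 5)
Your high-level plan is in the right spirit, and you correctly identify the crux -- showing that the ghost/boundary strata contribute zero -- but you explicitly leave that step unproved, and your guess at the mechanism (``the orientation data from \cite{RealGWsII} forces the corresponding equivariant Euler class to pair trivially'') is not what actually makes it work. There is no equivariant structure in play here, and the vanishing is not a consequence of orientation-reversing symmetries of the obstruction bundle. Moreover, the paper does not directly compare a stratum-by-stratum count on a single moduli space: following \cite[Theorem~1.1]{g1comp2}, it compares the $(J,\nu)$-perturbed count (which defines the GW-invariant) with the unperturbed $J$-count (the enumerative number), and localizes the difference near the boundary strata of $\ov\fM_{1,\h;\p}(B)$ via a real analogue of \cite[Proposition~3.1]{g1comp2}. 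This is a deformation/gluing argument, with a deformation--obstruction complex $\cD_{\cT}\colon\pi_1^*L_1^{\R}\!\otimes_\R\!\pi_2^*L_0^{\R}\to\pi_1^*(\bE^\R)^*\!\otimes_\R\!\pi_2^*\ev_0^*TX^\phi$ over each main boundary stratum; the correction is the signed number of zeros of a generic affine perturbation of~$\cD_\cT$.

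The actual vanishing then happens for two concrete, pedestrian reasons, neither of which your proposal captures. For the strata $\cM_{1,0;1}\times\fM_{0,\h;\p}'(B)$ (ghost torus attached at a point of the real circle of a rigid embedded genus-0 curve $\cC_u$), each closed component of the stratum is $S^1\!\times\!S^1$, and the quotient bundle $\ev_0^*TX^\phi/\Im\cD_0$ restricts on the second factor to the normal bundle of $\cC_u^\phi$ in the 3-manifold~$X^\phi$, which is orientable; the relevant rank-2 bundle over $S^1\!\times\!S^1$ is a tensor product of pullbacks from the two circle factors and therefore has zero Euler number. For the strata $\cM_{1,0;2}\times\fM_{0,\h;\p}(B)$ (ghost torus attached at a real marked point), the quotient bundle is $L_1^{\R}\oplus L_1^{\R}$ pulled back from $\ov\cM_{1,0;1}$, and one can choose the perturbation so that the resulting section is nowhere zero. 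Your dimension-count remark about multi-node configurations is correct in spirit, but it also needs the precise statement of the real analogue of \cite[Proposition~3.1]{g1comp2} to be turned into an actual proof that those strata contribute nothing. So the gap you flag is genuine, and filling it requires the explicit obstruction-bundle identifications above rather than a symmetry argument.
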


\noindent
In contrast, the complex genus~1 degree~$d$ enumerative and GW-invariants of~$\P^3$
are related by the formula
\BE{GWvsEnum_e}  \E_{1,d}=\GW_{1,d}+\frac{2d\!-\!1}{12}\GW_{0,d}\,.\EE
This formula, originally announced as Theorem~A in~\cite{Getzler},
is established as a special case of \cite[Theorem~1.1]{g1comp2},
comparing standard and ``reduced'' GW-invariants 
(the latter do not ``see" the genus~0 curves in sufficiently positive cases).\\

\noindent
The real genus~$g$ GW-invariants of $(X,\om,\phi)$ with $g\!\ge\!2$, 
 include counts of lower genus curves with rational coefficients,
even if the real dimension of~$X$ is~6 and $X$ is very positive.
For sufficiently positive almost Kahler real-orientable manifolds $(X,\om,\phi)$
of real dimension~6, these contributions are determined in~\cite{NZ}.


\subsection{Real invariants of complete intersections}
\label{VanCompPrp_subs}

\noindent
Explicit real orientations on the real-orientable complete intersections 
of Proposition~\ref{CIorient_prp} are described in Section~\ref{RealOrientCI_subs}.
They in general depend on some auxiliary choices made. 
By studying the effect of these choices on the resulting orientations of
the moduli spaces of real maps,
we establish the following statement in Section~\ref{CROprop_subs}.

\begin{thm}\label{GWsCI_thm}
Let $(X,\phi)$ denote an odd-dimensional 
complete intersection $(X_{n;\a},\om_{n;\a},\tau_{n;\a})$ or $(X_{2m;\a},\om_{2m;\a},\eta_{2m;\a})$
satisfying the assumptions of Proposition~\ref{CIorient_prp}
and $g,d\!\in\!\Z^{\ge0}$.
\begin{enumerate}[label=(\arabic*),leftmargin=*]

\item\label{CIindep_it}   If either $\phi\!=\!\eta_{n;\a}$, or $kg\!=\!0$, or
$g\!\not\in\!2\Z$ and $d\!\in\!2\Z$, or 
$a_i\!\not\in\!2\Z$ for all~$i$ and $d\!-\!g\!\not\in\!2\Z$, then
the genus~$g$ degree~$d$ real GW-invariants of $(X,\phi)$
determined by the real orientation of Section~\ref{RealOrientCI_subs} 
with insertions from~$\P^{n-1}$
are independent of the real parametrization of the ambient projective space, 
the ordering of the line bundles associated with the complete intersection,
and the section~$s_{n;\a}$ cutting out the complete intersection~$(X,\phi)$.

\item\label{CIvan_it} 
Suppose the genus~$g$ degree~$d$ real GW-invariants of $(X,\phi)$ with insertions from~$\P^{n-1}$
are independent of the real parametrization of the ambient projective space, 
the ordering of the line bundles associated with the complete intersection,
and the section~$s_{n;\a}$.
If either  $d\!-\!g\!\in\!2\Z$ or $a_i\!\in\!2\Z$ for some~$i$ and $g\!\in\!2\Z$, then these
invariants vanish.\\
\end{enumerate} 
\end{thm}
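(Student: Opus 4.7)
The plan is to track, for each of the three auxiliary choices in the construction of Section~\ref{RealOrientCI_subs}, how it modifies the real orientation on $(X,\phi)$ and then how this modification propagates to the orientation on the moduli space $\ov\fM_{g,l}(X,B;J)^\phi$ via the results of~\cite{RealGWsII}.  Since the underlying moduli space and all integrands pulled back from~$\P^{n-1}$ are unchanged by any of these choices, the only possible effect on a given invariant is an overall sign.  Part~\ref{CIindep_it} then reduces to verifying that this sign is~$+1$ in each listed case, while part~\ref{CIvan_it} reduces to exhibiting, under the independence hypothesis, a single choice whose sign is~$-1$: this forces the invariant to equal its own negative and hence to vanish.

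The three sources of choice to analyze are: (a)~the real linear reparametrization of the ambient $(\P^{n-1},\phi)$; (b)~the ordering of the rank-one summands of $\bigoplus_i\cO_{\P^{n-1}}(a_i)$; and (c)~the section $s_{n;\a}\!=\!(s_1,\ldots,s_k)$ up to componentwise sign flips $s_i\!\mapsto\!-s_i$.  For~(a), the real automorphism group of $(\P^{2m-1},\eta_{2m})$ is connected while that of $(\P^{n-1},\tau_n)$ has two components, so the cobordism invariance of the real GW-invariants reduces the parametrization question to evaluating the effect of a single generator in the $\tau_n$ case and to nothing in the $\eta_{2m}$ case.  For~(b), transposing two summands swaps them inside the real bundle pair used to relate $TX$ to $T\P^{n-1}|_X$; the induced sign on the moduli orientation can be read off from the braiding properties of determinant lines of real Cauchy--Riemann operators established in~\cite{RealGWsII}, and yields a parity expression in the $a_i$ and~$d$.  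For~(c), replacing $s_i$ by $-s_i$ alters the isomorphism~\ref{isom_it2} in~\eref{realorient_e4} by an explicit sign, which transfers via~\cite{RealGWsII} to a power of~$(-1)$ on the moduli orientation whose exponent is a linear combination of $a_i$, $d$, and~$g$.

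Assembling these formulas, the total sign is controlled by the parities appearing in the statement.  When $\phi\!=\!\eta_{n;\a}$ the parametrization term disappears and the congruence in Proposition~\ref{CIorient_prp}\ref{CIeta_it} cancels the remaining contributions; when $kg\!=\!0$ there are no contributions from~(b) or~(c); and the two remaining parity cases in~\ref{CIindep_it} are precisely those in which the combined exponent from~(b) and~(c) is even, using the second congruence in~\eref{CIorient_e1}.  For part~\ref{CIvan_it}, when $d\!-\!g\!\in\!2\Z$ the flip $s_1\!\mapsto\!-s_1$ yields sign $-1$, and when some $a_i\!\in\!2\Z$ with $g\!\in\!2\Z$ a transposition moving this summand past another one yields~$-1$; under the standing independence hypothesis either choice forces the invariant to vanish.

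The main obstacle will be the sign bookkeeping for choice~(c), since negating a component of~$s_{n;\a}$ alters both the homotopy class of isomorphism in~\ref{isom_it2} and, through~\eref{realorient2_e3}, the induced orientation and spin structure on $V^\vph\!\oplus\!2(L^*)^{\wt\phi^*}$.  Expressing this combined change as a single explicit sign on the determinant line bundle of the linearized real $\bp$-operator, in a manner consistent with the orientation conventions of~\cite{RealGWsII}, is the delicate step; once it is settled, the contributions from~(a) and~(b) follow more formally from the exchange and deformation properties of determinant lines already catalogued in~\cite{RealGWsII}.
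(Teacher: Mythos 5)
Your high-level strategy — isolating the three sources of auxiliary choice, showing that each can only affect an overall sign on the moduli orientation, and then either verifying the sign is $+1$ (part~1) or exhibiting a sign $-1$ to force vanishing (part~2) — is indeed the skeleton of the paper's argument for the $\tau_{n;\a}$ case with $g\ge1$. The paper makes this precise by doubling an oriented symmetric half-surface, classifying the induced changes as (C0)--(C3) at the level of the real orientation and (E1)--(E3) at the level of boundary trivializations, and invoking \cite[Propositions~4.1,4.2]{BHH} to relate the number of (E3)-type boundary components to the parity of the degree $d$.  That is where the $(-1)^{(g+1)+d}$ and $(-1)^{g+1}$ exponents you anticipate actually come from, and your plan to derive them from braiding properties of determinant lines of real Cauchy--Riemann operators from \cite{RealGWsII} is consistent with that.

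There is, however, a genuine gap in your treatment of the case $kg=0$.  You assert that ``when $kg\!=\!0$ there are no contributions from~(b) or~(c),'' but this is false when $g\!=\!0$ and $k\!>\!0$: the line bundles and the section are present, and the very sign computation you outline gives $(-1)^{d-g-1}\!=\!(-1)^{d-1}$ for an odd flip and $(-1)^{g+1}\!=\!-1$ for an even flip, so the orientation \emph{is} reversed whenever $d$ is even or some $a_i$ is even.  The paper closes this hole by a separate argument: for $g\!=\!0$, the real GW-invariants of $X_{n;\a}$ are reproduced by cupping with the Euler class of $\cV_{n;\a}^{\wt\phi_{n;\a}}$ over $\ov\fM_{0,l}(\P^{n-1},d)^{\tau_n}$ (as in \cite{PSW}), and the equivariant localization analysis of Section~\ref{LocData_subs} shows that \emph{all} torus fixed loci contribute zero precisely when $d\!\in\!2\Z$ or some $a_i\!\in\!2\Z$; independence then holds trivially because the invariants are~$0$.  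Without this (or an equivalent) argument, your case analysis does not cover $g\!=\!0$, $k\!>\!0$.

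A second, smaller discrepancy: for $\phi\!=\!\eta_{n;\a}$, the paper does not carry out a parity bookkeeping at all.  Since $X_{2m;\a}^{\eta_{2m;\a}}\!=\!\eset$, there is no spin datum to track, and instead the paper shows the space of transverse real sections of $\cL_{n;\a}$ is \emph{path-connected} (the singular locus has complex codimension $\ge1$ because the fixed locus is empty), so the construction lifts over a path between any two sections; combined with connectedness of $\Aut(\P^{2m-1},\eta_{2m})$ this gives independence directly.  Your suggestion that the congruence in Proposition~\ref{CIorient_prp}\ref{CIeta_it} ``cancels the remaining contributions'' presupposes a sign computation that isn't needed (and isn't what's done).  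Similarly, the $d\!-\!g\!\in\!2\Z$ vanishing in the $\eta$ case is not deduced from a sign flip but from Lemma~\ref{etamaps_lmm}, which shows $\ov\fM_{g,l}(\P^{2m-1},d)^{\eta_{2m}}\!=\!\eset$ outright.  If you want to follow your uniform sign-tracking route for $\eta$, you would still need to justify why the section choice has no effect without the path-connectedness argument, since the parity bookkeeping through (E1)--(E3) relies on data over $X^{\phi}$, which is empty here.
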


\noindent
This theorem refers to the real GW-invariants of \cite[Theorems~1.4,1.5]{RealGWsI}.
The proof we give in Section~\ref{CROprop_subs} addresses the case of 
conjugate pairs of primary insertions
(i.e.~pullbacks of cohomology classes on~$\P^{n-1}$), but it is easily adaptable to
the descendant insertions (i.e.~$\psi$-classes).
It also applies to the real genus~1 GW-invariants with real point insertions
under the conditions of \cite[Theorem~1.5]{RealGWsI},
because the number of real point insertions is then even and 
the appropriate analogue of the middle vertical arrow in~\eref{CROdiag_e}
is thus still orientation-preserving.
Theorems~\ref{g1EG_thm} and~\ref{GWsCI_thm}\ref{CIvan_it} 
imply that the genus~1 odd-degree complex enumerative 
invariants of~$\P^3$ with pairs of the same insertions are even,
as claimed in \cite[Corollary~2.6]{RealGWsI}.\\

\noindent
The last assumption in Theorem~\ref{GWsCI_thm}\ref{CIvan_it} is that the real invariants
of the complete intersections cut out by two different transverse real sections~$s_{n;\a}$
and~$s_{n;\a}'$ are the same (even if the complete intersections are~not).
The last conclusion of Theorem~\ref{GWsCI_thm}\ref{CIindep_it} is weaker,
as the complete intersection~$(X,\phi)$ is fixed.
By the next paragraph, the stronger conclusion holds if $\phi\!=\!\eta_{n;\a}$ or $kg\!=\!0$.
We expect that it holds in all cases; see the next paragraph and Remark~\ref{Cobordism_rmk}.
The GW-invariants of~$(X,\phi)$ are predicted in~\cite{Wal}  to vanish
in all cases not covered by the assumptions of Theorem~\ref{GWsCI_thm}\ref{CIindep_it}.
Conversely, the independence of the real GW-invariants of~$(X,\phi)$
of all choices and in all cases would confirm the vanishing predictions of~\cite{Wal}
in all cases.\\

\noindent
In the case of the complete intersections $(X_{2m;\a},\eta_{2m;\a})$,
the space of transverse sections is path-connected;
see the proof of Theorem~\ref{GWsCI_thm}.
The construction of real orientations of Section~\ref{RealOrientCI_subs} 
lifts over such paths.
Thus, the real orientations on these complete intersections 
and the resulting real GW-invariants 
are independent 
of the choice of the section~$s_{2m;\a}$ cutting out~$X_{2m;\a}$.
On the other hand, 
the topology of the real locus of a (smooth) complete intersection $(X_{n;\a},\tau_{n;\a})$
may depend on the choice of the section~$s_{n;\a}$ cutting out~$X_{n;\a}$.
Thus, the real orientations of Section~\ref{RealOrientCI_subs} 
for different complete intersections of multidegree~$\a$ in $(\P^{n-1},\tau_n)$ 
are not comparable in general.
However, the resulting real GW-invariants can still be compared.
In the case of the primary insertions arising from~$\P^{n-1}$,
these invariants are expected to be related to the sheaf $\cV_{n;\a}^{\wt\phi_{n;\a}}$ in~\eref{cVdfn_e}
in a manner independent of the section~$s_{n;\a}$.
This is indeed the case for the $g\!=\!0$ invariants, as shown in~\cite{PSW} 
for~$(X_{5;(5)},\tau_{5;(5)})$. 
Whenever the real GW-invariants of~$X_{n;\a}$ are related to~$\cV_{n;\a}^{\wt\phi_{n;\a}}$ as expected,
they are also independent of the choice of the section~$s_{n;\a}$.\\

\noindent
The projective spaces $\P^{n-1}$ and~$\P^{2m-1}$ admit natural, ``half"-dimensional torus actions
that are compatible with the involutions~$\tau_n$ and $\eta_{2m}$, respectively;
these are the torus actions appearing in \cite[Section~3]{Wal}.
We determine the torus fixed loci of the induced actions on the moduli spaces of real
maps and their normal bundles in Sections~\ref{FixedLoci_subs} and~\ref{FLCsign_subs}, respectively.
The resulting equivariant localization data, which is described in Section~\ref{LocData_subs},
determines the real genus~$g$  GW-invariants of $(\P^{2m-1},\tau_{2m})$ 
and $(\P^{4m-1},\tau_{4m})$ 
 with conjugate pairs of constraints 
via the virtual equivariant localization theorem of~\cite{GP}.
Since the moduli spaces of real maps into these targets are smooth in the genus~0 case
and all torus fixed loci are contained in the smooth locus in the genus~1 case,
the classical equivariant localization theorem of~\cite{AB} suffices in these cases.
We also determine the equivariant contributions to the real GW-invariants of 
the real-orientable complete intersections~$X_{n;\a}$ that arise from
the torus fixed loci containing no maps with contracted components of positive genus;
see Theorem~\ref{EquivLocal_thm}.
In contrast to the usual approach in real GW-theory of choosing a half-graph,
our computational algorithm allows to pick  the non-fixed edges and vertices
at random (one from each conjugate pair); see Remark~\ref{EquivLocal_rmk}.
Our equivariant localization data is consistent with \cite[(3.22)]{Wal}.
We also obtain the two types of cancellations of contributions 
from some fixed loci predicted in the Calabi-Yau cases in \cite[Sections~3.2,3.3]{Wal};
see Corollary~\ref{EquivLocal_crl} and the second case of Lemma~\ref{VanV_lmm2}.\\

\noindent
By Theorem~\ref{g1EG_thm}, the genus~1 real GW-invariants of $(\P^3,\om_4,\tau_4)$ 
and $(\P^3,\om_4,\eta_4)$ are lower bounds for the enumerative counts of such curves in 
$(\P^3,J_0,\tau_4)$ and $(\P^3,J_0,\eta_4)$, respectively.
The lower bound for the number of real genus~1 degree~$d$ curves passing through  $d$~pairs of 
conjugate point insertions obtained from the equivariant localization computation
of Section~\ref{EquivLoc_sec} is~0 for $d\!=\!2$, 1 for $d\!=\!4$, and~4 for $d\!=\!6$;
see Examples~\ref{d2_eg} and~\ref{g1d4_eg} and~\cite{RealGWsApp}.
The $d\!=\!2$ number is as expected, since there are no connected degree~2 curves 
of any kind passing through two generic pairs of conjugate points in~$\P^3$.
The $d\!=\!4$ number is also not surprising, since there is only one genus~1 degree~4
curve passing through 8~generic points in~$\P^3$; see the first three paragraphs
of \cite[Section~1]{Kollar}.
By~\eref{GWvsEnum_e} and \cite{growi},  the number  of genus~1 degree~6 curves
passing through 12~generic points in~$\P^3$ is~2860.
Our signed count of~$-4$ for the real genus~1 degree~6 curves
through 6~pairs of conjugate points in~$\P^3$ is thus consistent with the complex count
and provides a non-trivial lower bound for the number of real genus~1 degree~6 curves
with 6~pairs of conjugate point insertions.\\

\noindent
In Section~\ref{EquivLocApp_subs}, 
we use the equivariant localization data of Section~\ref{LocData_subs}
to give an alternative proof of Theorem~\ref{GWsCI_thm}\ref{CIvan_it}
in the case of projective spaces
and to compare the real GW-invariants for the two involution types;
see Proposition~\ref{GWsPn_prp} below.
The genus~0 and~1 cases of both of these statements appear in
\cite[Theorem~1.9]{Teh} and \cite[Theorem~7.2]{Teh2}, respectively
(the latter presumes that genus~1 real GW-invariants of~$\P^3$ can be defined).

\begin{prp}\label{GWsPn_prp}
The genus~$g$ real GW-invariants of $(\P^{4m-1},\om_{4m},\tau_{4m})$ and 
$(\P^{4m-1},\om_{4m},\eta_{4m})$
with only conjugate pairs of insertions differ by the factor of~$(-1)^{g-1}$.
\end{prp}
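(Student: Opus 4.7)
The plan is to apply the equivariant localization data of Section~\ref{LocData_subs} to both $(\P^{4m-1},\om_{4m},\tau_{4m})$ and $(\P^{4m-1},\om_{4m},\eta_{4m})$; as noted in the discussion following Theorem~\ref{EquivLocal_thm}, this data completely determines the real genus~$g$ GW-invariants of a projective space. It therefore suffices to match the two localization sums term-by-term and exhibit a global sign of $(-1)^{g-1}$.

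First, following \cite[Section~3]{Wal} and Section~\ref{FixedLoci_subs}, I would fix the half-dimensional torus actions on $\P^{4m-1}$ compatible with $\tau_{4m}$ and $\eta_{4m}$, respectively. The $4m$ torus fixed points of $\P^{4m-1}$ split in both cases into $2m$ conjugate pairs with identical combinatorial structure, so the decorated half-graphs indexing the torus fixed loci in $\ov\fM_{g,l}(\P^{4m-1},d;J)^{\tau_{4m}}$ and $\ov\fM_{g,l}(\P^{4m-1},d;J)^{\eta_{4m}}$ are in canonical bijection. Under this bijection, the complex geometry of corresponding fixed loci agrees, and so do the equivariant Euler classes of the virtual normal bundles and the contributions of the conjugate-pair insertions, since these are determined by complex data alone. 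The only possible source of discrepancy between the two localization sums is thus the orientation sign attached to each fixed locus by the respective real orientation on $(\P^{4m-1},\tau_{4m})$ and $(\P^{4m-1},\eta_{4m})$ from Section~\ref{RealOrientCI_subs}.

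The main obstacle is then to compute the ratio of these two orientation signs and to show it is the constant $(-1)^{g-1}$, independent of the half-graph. Using the orientation comparison results of \cite{RealGWsII} which underpin Theorem~\ref{EquivLocal_thm}, this ratio at a given fixed locus should reduce to the difference of the orientations induced on the determinant of the Cauchy-Riemann operator on a symmetric genus~$g$ surface coupled to the two distinct real structures on the hyperplane bundle. Since all complex contributions to these determinants agree, only a topological factor depending solely on~$g$ survives; a direct index-parity computation on a symmetric genus~$g$ surface should yield the universal factor $(-1)^{g-1}$. Pulling this common factor out of the localization sum then proves the proposition, with the $g\!=\!0$ and $g\!=\!1$ cases of \cite[Theorem~1.9]{Teh} and \cite[Theorem~7.2]{Teh2} providing consistency checks.
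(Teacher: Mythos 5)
Your general plan---compute both invariants via equivariant localization, observe that the admissible decorated graphs with involution indexing the fixed loci are the same in the two cases, and then track how the per-graph contribution changes---is exactly the paper's approach; the first two paragraphs of your proposal are sound. The gap is in your final step: you say the orientation-sign ratio ``should reduce to the difference of the orientations induced on the determinant of the Cauchy-Riemann operator on a symmetric genus~$g$ surface'' and that ``a direct index-parity computation'' should produce $(-1)^{g-1}$, but you neither carry that computation out nor, more importantly, identify the actual mechanism, which is not a genus-$g$ index computation.

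Here is what is actually happening. The contribution of an admissible pair $(\Ga,\si)$ in~\eref{EquivLocal_e} factors over vertices and edges. The vertex contributions~\eref{Contr3V_e},~\eref{Contr2V_e} and the Klein-edge contributions~\eref{ContrECb_e},~\eref{ContrEC_e} are identical for $\phi=\tau_{4m}$ and $\phi=\eta_{4m}$. The only $\phi$-dependence is the factor $(-1)^{|\phi|}$ in the $\si$-fixed (real) edge contribution~\eref{ContrER_e}, where $|\phi|=0$ for $\tau$ and $|\phi|=1$ for $\eta$. So the ratio of the per-graph contributions is $(-1)^{|\E_{\R}^{\si}(\Ga)|}$, which a priori depends on the graph. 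The decisive step you are missing is the combinatorial identity~\eref{PnEGthm_e1}: since $\nV_{\R}^{\si}(\Ga)=\eset$ and the $\si$-free edges and vertices come in pairs,
$$g = 1 + \big|\E_{\R}^{\si}(\Ga)\big| + 2\big|\E_+^{\si}(\Ga)\big| + 2\!\!\!\sum_{v\in\nV_+^{\si}(\Ga)}\!\!\!\big(\g(v)\!-\!1\big) \equiv 1 + \big|\E_{\R}^{\si}(\Ga)\big| \mod 2,$$
so $|\E_{\R}^{\si}(\Ga)|\equiv g-1\pmod 2$ uniformly over all admissible graphs. This turns the graph-dependent sign into the universal constant $(-1)^{g-1}$ and completes the proof. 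Without this parity identity, the argument does not close, and framing it as an index computation on the genus-$g$ domain obscures the fact that the sign lives on the individual real edges of the localization graph rather than on the global surface.
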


\noindent 
The proof of Proposition~\ref{GWsPn_prp} extends directly 
to the real GW-invariants of the complete intersections~$(X,\phi)$ 
of Theorem~\ref{GWsCI_thm} whenever they are related 
to the sheaf $\cV_{n;\a}^{\wt\phi_{n;\a}}$ in~\eref{cVdfn_e} as expected
(in particular for $g\!=\!0$).

\subsection{Outline of the paper}
\label{introend_subs}

\noindent
Propositions~\ref{CYorient_prp} and~\ref{CIorient_prp}  are straightforward to prove;
this is done in Section~\ref{RealOrientMfld_subs}.
Explicit real orientations on the real-orientable complete intersections 
of Proposition~\ref{CIorient_prp} are described in Section~\ref{RealOrientCI_subs}.
We establish Theorem~\ref{GWsCI_thm} and note additional properties of
the real GW-invariants determined by these real orientations 
in Section~\ref{CROprop_subs}.
Theorem~\ref{g1EG_thm} is proved in Section~\ref{g1EG_sec}.
Section~\ref{EquivSetup_subs} introduces the equivariant setting relevant to the present situation.
The resulting equivariant localization data is described in Section~\ref{LocData_subs}; 
see Theorem~\ref{EquivLocal_thm}.
Its use is illustrated in Section~\ref{EquivLocApp_subs}.
Theorem~\ref{EquivLocal_thm}  is proved in Section~\ref{EquivLocalPf_sec}.

\section{Some examples}
\label{RealOrientEg_sec}

\noindent
This section establishes Propositions~\ref{CYorient_prp} and~\ref{CIorient_prp}
and thus
provides large collections of real-orientable symplectic manifolds.
We also describe specific real orientations on the   real-orientable symplectic manifolds
of Propositions~\ref{CIorient_prp}.

\subsection{Real orientable symplectic manifolds}
\label{RealOrientMfld_subs}

\noindent
We begin by deducing  Proposition~\ref{CYorient_prp} from
topological observations made in~\cite{Teh}.

\begin{proof}[{\bf\emph{Proof of Proposition~\ref{CYorient_prp}}}]
Let $(X,\om,\phi)$ be a real symplectic manifold such that $w_2(X^{\phi})\!=\!0$.
We follow the proof of \cite[Proposition~1.5]{Teh}.\\

\noindent 
Suppose $c_1(X)\!=\!2(\mu\!-\!\phi^*\mu)$ for some $\mu\!\in\!H^2(X;\Z)$.
Let $L'\!\lra\!X$ be a complex line bundle such that $c_1(L')\!=\!\mu$
and $(L,\wt\phi)$ be the real bundle pair over~$(X,\phi)$ given~by
\BE{CYorient_e3} L=L'\!\otimes_{\C}\!\ov{\phi^*L'}, \qquad
\wt\phi(v\!\otimes\!w)=w\!\otimes\!v.\EE
A continuous orientation on the fibers of $L^{\wt\phi}\!\lra\!X^{\phi}$ is 
given by the elements $v\!\otimes\!v$ with $v\!\in\!L'|_{X^{\phi}}$ nonzero.
Thus, the first requirement in~\eref{realorient_e4} is satisfied.
Since $c_1(X)\!=\!2(\mu\!-\!\phi^*\mu)$, the complex line bundles $\La_{\C}^{\top}TX$
and $L^{\otimes2}$ are isomorphic.
If in addition $H_1(X;\Q)\!=\!0$, \cite[Lemma~2.7]{Teh} then implies that 
the second requirement in~\eref{realorient_e4} is also satisfied.\\

\noindent
Suppose $X$ is Kahler, $\phi$ is anti-holomorphic, and 
$\cK_X\!=\!2([D]\!+\![\ov{\phi_*D}])$ for some divisor $D$ on~$X$.
Let $L'\!=\![-D]$ be the dual of the holomorphic line bundle corresponding to the divisor~$D$
and $(L,\wt\phi)$ be the real bundle pair over~$(X,\phi)$ defined as in~\eref{CYorient_e3}.
The first requirement in~\eref{realorient_e4} is again satisfied.
In this case, $(L,\wt\phi)$  is a holomorphic line bundle with an anti-holomorphic conjugation.
Since $\cK_X\!=\!2([D]\!+\![\ov{\phi_*D}])$, the holomorphic line bundles $\La_{\C}^{\top}TX$
and $L^{\otimes2}$ are isomorphic.
If in addition $X$ is compact, \cite[Lemma~2.6]{Teh} then implies that 
the second requirement in~\eref{realorient_e4} is also satisfied.
\end{proof}

\begin{proof}[{\bf\emph{Proof of Corollary~\ref{CIorient_crl}}}]
The assumptions imply that $X_{n;\a}\!\subset\!\P^{n-1}$ is a threefold such that  
$$w_1\big(X_{\tau_{n;\a}}^{\tau_{n;\a}}\big)=0,  \quad
\cK_{X_{n;\a}}=2\big([D]\!+\![\ov{\{\tau_{n;\a}\}_*D}]\big)
~~\hbox{with}~~ D=\cO_{\P^{n-1}}\big((n\!-\!a_1\!-\!\dots\!-\!a_{n-4})/4\big).$$
By Wu's relations \cite[Theorem~11.14]{MiSt} and the first statement above,
$w_2(X_{\tau_{n;\a}}^{\tau_{n;\a}})\!=\!0$.
The claim now follows from Proposition~\ref{CYorient_prp}\ref{KahlerOrient_it}.
\end{proof}

\noindent
We now turn to the setting of Proposition~\ref{CIorient_prp}.
We will denote by~$\fc$ the standard conjugation on~$\C^n$.
Define another conjugation  on~$\C^n$ by 
$$\fc_{\tau}'\big(v_1,\ldots,v_n\big)= 
\begin{cases}
(\bar{v}_2,\bar{v}_1,\ldots,\bar{v}_n,\bar{v}_{n-1})
,&\hbox{if}~n\!\in\!2\Z;\\
(\bar{v}_2,\bar{v}_1,\ldots,\bar{v}_{n-1},\bar{v}_{n-2},\bar{v}_n)
,&\hbox{if}~n\!\not\in\!2\Z.
\end{cases}$$
We also define a $\C$-antilinear automorphism of~$\C^{2m}$ by 
$$\fc_{\eta}\big(v_1,\ldots,v_{2m}\big)=
\big(\bar{v}_2,-\bar{v}_1,\ldots,\bar{v}_{2m},-\bar{v}_{2m-1}\big).$$ 
This automorphism has order~4.\\

\noindent
For the purposes of equivariant localization computations, 
it is convenient to consider the involution
\BE{tauprdfn_e}
\tau_n'\!: \P^n\lra\P^n, \qquad 
\tau_n'\big([Z_1,\ldots,Z_n]\big)=\big[\fc_{\tau}'(Z_1,\ldots,Z_n)\big].\EE
It is  equivalent to the involution~$\tau_n$ defined in Section~\ref{RealGWth_subs}
under the biholomorphic automorphism of~$\P^n$ given~by
\BE{PnAutdn_e}
\big[Z_1,\ldots,Z_{2m}\big]\lra
\begin{cases}
[Z_1\!+\!\fI Z_2,Z_1\!-\!\fI Z_2,\ldots,Z_{n-1}\!+\!\fI Z_n,Z_{n-1}\!-\!\fI Z_n],
&\hbox{if}~n\!\in\!2\Z;\\
\big[Z_1\!+\!\fI Z_2,Z_1\!-\!\fI Z_2,\ldots,Z_{n-2}\!+\!\fI Z_{n-1},Z_{n-2}\!-\!\fI Z_{n-1},Z_n\big],
&\hbox{if}~n\!\not\in\!2\Z.
\end{cases}\EE\\

\noindent
The involutions~$\tau_n$ and $\tau_n'$ lift to conjugations on the tautological line bundle
\BE{gandfn_e}\ga_{n-1}=\cO_{\P^{n-1}}(-1)\equiv
\big\{(\ell,v)\!\in\!\P^{n-1}\!\times\!\C^n\!:\,v\!\in\!\ell\!\subset\!\C^n\big\}\EE
as
$$\wt\tau_n(\ell,v)= \big(\tau_n(\ell),\fc(v)\big),\qquad
\wt\tau_n'\big(\ell,v\big)=\big(\tau_n'(\ell),\fc_{\tau}'(v)\big).$$
For $a\!\in\!\Z$,
we denote the induced conjugations on $\cO_{\P^{n-1}}(a)\!\equiv\!(\ga_{n-1}^*)^{\otimes a}$
by $\wt\tau_{n;1}^{(a)}$ and $\wt\tau_{n;1}'^{(a)}$, respectively,
omitting~$(a)$ for $a\!=\!1$.
The composition of $2\wt\tau_{n;1}'^{(a)}$ with the involution
$$2\cO_{\P^{n-1}}(a)\lra 2\cO_{\P^{n-1}}(a), \qquad (x,y)\lra(y,x),$$
is again an involution on $2\cO_{\P^{n-1}}(a)$; we denote it by $\wt\tau_{n;1,1}'^{(a)}$.\\

\noindent
For $a\!\in\!\Z^+$, the involution $\phi\!=\!\eta_{2m}$ lifts to 
a conjugation on $2\ga_{2m-1}^{\otimes a}$ as 
$$\wt\eta_{2m;1,1}^{(-a)}\big(\ell,v^{\otimes a},w^{\otimes a}\big)
=\big(\eta_{2m}(\ell),(\fc_{\eta}(w))^{\otimes a},
(-\fc_{\eta}(v))^{\otimes a}\big).$$
We denote the induced conjugations on 
$$2\cO_{\P^{2m-1}}(a)=\big(2\ga_{2m-1}^{\otimes a}\big)^*
\qquad\hbox{and}\qquad
\cO_{\P^{2m-1}}(2a) \equiv \La_{\C}^2\big(2\cO_{\P^{2m-1}}(a)\big)$$
by $\wt\eta_{2m;1,1}^{(a)}$ and $\wt\eta_{2m;1}^{(2a)}$, respectively.
We note that $\wt\eta_{2m;1,1}^{(2a)}\!=\!2\wt\eta_{2m;1}^{(2a)}$.\\

\noindent
For $\phi\!=\!\tau_n',\eta_{2m}$, we define
\BE{baridfn_e}
\phi:\{1,\ldots,n\}\lra\{1,\ldots,n\}
\qquad\hbox{by}\quad
\phi(i)=\begin{cases}
n,&\hbox{if}~i\!=\!n\!\not\in\!2\Z;\\
3i\!-\!4\flr{\frac{i}2}\!-\!1,&\hbox{otherwise};
\end{cases}\EE
the second case interchanges each odd integer $2k\!-\!1$ with its successor~$2k$.
For $\phi\!=\!\tau_n$, we take the bijection in~\eref{baridfn_e} to be the identity.
Let
\BE{absphi_e} |\phi|=\begin{cases} 0,&\hbox{if}~\phi\!=\!\tau_n,\tau_n';\\
1,&\hbox{if}~\phi\!=\!\eta_{2m}.\end{cases}\EE

\begin{lmm}\label{Pnses_lmm}
Let $\phi$ denote either the involution $\tau_n$ or~$\tau_n'$ on~$\P^{n-1}$
or the involution~$\eta_{2m}$ on~$\P^{2m-1}$.
Euler's exact sequence of holomorphic vector bundles 
\BE{Pnses_e}  0\lra \P^{n-1}\!\times\!\C \stackrel{f}{\lra} 
n\cO_{\P^{n-1}}(1) \stackrel{h}{\lra} T\P^{n-1}\lra 0\EE
over $\P^{n-1}$ commutes with the conjugation $\phi\!\times\!\fc$ on the first term,
the conjugation $\tnd\phi$ on the last term, and 
the conjugation~$\wt\phi_n^*$ on the middle term given~by
\begin{enumerate}[label=$\bu$,leftmargin=*]

\item $n\wt\tau_{n;1}$ if $\phi\!=\!\tau_n$,
\item $m\wt\tau_{n;1,1}'$ if $\phi\!=\!\tau_n'$ with $n\!=\!2m$,
\item $m\wt\tau_{n;1,1}'\!\oplus\!\wt\tau_{n;1}'$ if $\phi\!=\!\tau_n'$ with $n\!=\!2m\!+\!1$,
\item $m\wt\eta_{n;1,1}$ if $\phi\!=\!\eta_n$ with $n\!=\!2m$.

\end{enumerate}
\end{lmm}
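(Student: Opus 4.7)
The plan is to rewrite Euler's sequence~\eref{Pnses_e} in the intrinsic form
$$0\lra \ga_{n-1}^*\!\otimes_\C\!\ga_{n-1}\stackrel{f}{\lra} \ga_{n-1}^*\!\otimes_\C\!\C^n \stackrel{h}{\lra} \ga_{n-1}^*\!\otimes_\C\!\big(\C^n/\ga_{n-1}\big)\lra 0,$$
obtained by tensoring the tautological inclusion $\ga_{n-1}\!\subset\!\P^{n-1}\!\times\!\C^n$ and its quotient with $\ga_{n-1}^*$. Under the canonical trivialization $\ga_{n-1}^*\!\otimes_\C\!\ga_{n-1}\!=\!\P^{n-1}\!\times\!\C$ and the standard identification $T\P^{n-1}\!=\!\ga_{n-1}^*\!\otimes_\C\!(\C^n/\ga_{n-1})$, this recovers~\eref{Pnses_e}, with the standard basis $e_1,\ldots,e_n$ of $\C^n$ identifying the middle term with $n\cO_{\P^{n-1}}(1)$.

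Each of the three involutions $\phi$ on $\P^{n-1}$ is induced from a $\C$-antilinear automorphism $\fc_\phi$ of~$\C^n$, namely $\fc$ for~$\tau_n$, $\fc_\tau'$ for~$\tau_n'$, and $\fc_\eta$ for~$\eta_{2m}$. The product conjugation $\phi\!\times\!\fc_\phi$ on the trivial bundle $\P^{n-1}\!\times\!\C^n$ preserves $\ga_{n-1}$ by the very definition of~$\phi$, and hence induces conjugations on $\ga_{n-1}^*$, on $\C^n/\ga_{n-1}$, and via tensor product on every term of the rewritten sequence. By naturality, the induced conjugations on the outer terms are precisely $\phi\!\times\!\fc$ on the first term and $\tnd\phi$ on the last. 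Commutativity of Euler's sequence with the conjugations is therefore automatic, and the lemma reduces to identifying the induced conjugation on $\ga_{n-1}^*\!\otimes_\C\!\C^n$ with the listed $\wt\phi_n^*$.

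For this last step I would split $\C^n$ into $\fc_\phi$-invariant blocks and inspect the corresponding splitting of $\ga_{n-1}^*\!\otimes_\C\!\C^n\!=\!\bigoplus_i\ga_{n-1}^*$. For~$\fc$, each $\C e_i$ is invariant and $\fc$ acts by standard conjugation, so the induced action on $\ga_{n-1}^*$ is $\wt\tau_{n;1}$ on each summand, producing $n\wt\tau_{n;1}$. For~$\fc_\tau'$, the pairs $\C e_{2k-1}\!\oplus\!\C e_{2k}$ are invariant (with a residual $\C e_n$ when $n$ is odd), and $\fc_\tau'$ acts as the coordinate swap composed with~$\wt\tau_{n;1}'$ on each pair, which is exactly the definition of~$\wt\tau_{n;1,1}'$; one gets $m\wt\tau_{n;1,1}'$ (plus $\wt\tau_{n;1}'$ on the residual summand in the odd case). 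For~$\fc_\eta$, the same pairs are invariant, but the swap carries a sign: $e_{2k-1}\!\mapsto\!e_{2k}$ and $e_{2k}\!\mapsto\!-e_{2k-1}$. This sign matches the one appearing in the defining formula for $\wt\eta_{2m;1,1}^{(-1)}$ on $2\ga_{2m-1}$, and dualizing yields $\wt\eta_{2m;1,1}^{(1)}$ on $2\cO_{\P^{2m-1}}(1)$, giving $m\wt\eta_{n;1,1}$ as claimed.

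The main obstacle is the $\eta$-case: although $\fc_\eta$ has order four on $\C^{2m}$, its tensor product with~$\wt\eta_{2m}$ on~$\ga_{2m-1}^*$ must descend to an honest order-two conjugation on each block, and the sign convention must precisely reproduce the one built into $\wt\eta_{2m;1,1}^{(\pm 1)}$ after dualizing the tautological inclusion. This is a careful but mechanical unwinding of the definitions; once it is checked on a single pair $\C e_{2k-1}\!\oplus\!\C e_{2k}$, summing over $k$ completes the proof.
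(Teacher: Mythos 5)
Your argument is correct in structure and takes a genuinely different route from the paper. The paper works in explicit affine coordinates: it introduces the sections $\ch{Z}_i$, the coordinate sections $\z_i$, the transition formulas~\eref{PnCoord_e}, writes $f$ and $h$ explicitly, and then verifies the commutation relations~\eref{phiPn_e}--\eref{dphiPn_e} by direct computation. You instead observe that the tautological inclusion $\ga_{n-1}\!\subset\!\P^{n-1}\!\times\!\C^n$ is preserved by $\phi\!\times\!\fc_\phi$, so the entire exact sequence inherits compatible ($\C$-antilinear) bundle automorphisms by naturality after tensoring with $\ga_{n-1}^*$, and the only content of the lemma is the identification of the middle conjugation with the listed $\wt\phi_n^*$. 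This is conceptually cleaner and, in particular, makes the commutativity assertion trivial; the paper's approach instead front-loads explicit coordinate formulas that are then re-used in later equivariant-localization computations, which is presumably why the authors chose it. Your key structural observation that in the $\eta$-case the tensor product of two order-four $\C$-antilinear maps on $\ga_{2m-1}^*$ and on $\C^{2m}$ is an honest order-two conjugation on $\ga_{2m-1}^*\!\otimes_\C\!\C^{2m}$ is correct and is exactly what makes the intrinsic picture work uniformly across the three cases.

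One small pitfall to watch in the final unwinding: you wrote $\fc_\eta(e_{2k-1})\!=\!e_{2k}$ and $\fc_\eta(e_{2k})\!=\!-e_{2k-1}$, but from the definition $\fc_\eta(v_1,\ldots,v_{2m})\!=\!(\bar v_2,-\bar v_1,\ldots)$ one gets $\fc_\eta(e_{2k-1})\!=\!-e_{2k}$ and $\fc_\eta(e_{2k})\!=\!e_{2k-1}$; what you wrote is $\fc_\eta^{-1}\!=\!-\fc_\eta$. Whether this matters depends on which of $\wt\phi$ or $\wt\phi^{-1}$ enters the dual conjugation convention, so it is exactly the kind of sign that must be nailed down when you match the block conjugation against the paper's $\wt\eta_{2m;1,1}^{(-1)}$ on $2\ga_{2m-1}$ and dualize to $\wt\eta_{2m;1,1}^{(1)}$ on $2\cO_{\P^{2m-1}}(1)$. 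With that fixed, your proof goes through.
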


\begin{proof}
For $i\!=\!1,\ldots,n$, define
\begin{gather*}
\ch{Z}_i\in H^0\big(\P^{n-1};\cO_{\P^{n-1}}(1)\big) \qquad\hbox{by}\quad
\big\{\ch{Z}_i(\ell)\big\}\big(\ell,(v_1,\ldots,v_n)\big)=v_i
~~~\forall\,(v_1,\ldots,v_n)\!\in\!\ell, \\
\z_i\!=\!\!\big(\id,(z_{i1},\ldots,z_{in})\big)\!\!:
\cU_i\!\equiv\!\{[Z_1,\ldots,Z_n]\!\in\!\P^{n-1}\!:\,Z_i\!\neq\!0\}\lra \ga_{n-1}|_{\cU_i}\,,
\quad z_{ij}=\frac{Z_j}{Z_i}=\ch{Z}_j(\z_i).
\end{gather*}
Thus, $\z_i$ is a section of~$\ga_{n-1}$ over~$\cU_i$,  $(z_{ij})_{j\neq i}$
is a  holomorphic chart on~$\cU_i$, and  
\BE{PnCoord_e}\frac{\prt}{\prt z_{ij}}
=\sum_{j'\neq i'}\frac{\prt z_{i'j'}}{\prt z_{ij}}\frac{\prt}{\prt z_{i'j'}}
=\begin{cases}
z_{ii'}^{-1}\frac{\prt}{\prt z_{i'j}}, &\hbox{if~}j\!\neq\!i';\\
-z_{ii'}^{-2}\Big(
\frac{\prt}{\prt z_{i'i}}+\sum\limits_{j'\neq i,i'}z_{ij'}
\frac{\prt}{\prt z_{i'j'}}\Big), &\hbox{if~}j\!=\!i';
\end{cases}\EE
on $\cU_i\!\cap\!\cU_{i'}$ with $i\!\neq\!i'$.
The homomorphisms~$f$ and~$h$ in~\eref{Pnses_e} are defined~by
\begin{gather*}
f\big(\ell,\la)=\big(\la\ch{Z}_1\big|_{\ell},\ldots,\la\ch{Z}_n\big|_{\ell}\big) 
\quad\forall~(\ell,\la)\!\in\!\P^{n-1}\!\times\!\C,\\
h(p_1,\ldots,p_n)=\sum_{j\neq i}
\big(p_j(\z_i(\ell))-z_{ij}(\ell)p_i(\z_i(\ell))\big) \frac\prt{\prt z_{ij}}\bigg|_{\ell}
\quad\forall\,p_1,\ldots,p_n\!\in\!\cO_{\P^{n-1}}(1)|_{\ell}\,,\ell\!\in\!\cU_i\,.
\end{gather*}
It is straightforward to check that the last homomorphism is independent
of the choice of~$i$ and that the sequence~\eref{Pnses_e} is indeed exact.\\

\noindent
Denote by $\wt\phi$ the conjugation $\wt\tau_n$ if $\phi\!=\!\tau_n$, 
the conjugation $\wt\tau_n'$ if $\phi\!=\!\tau_n'$,
and the real bundle automorphism
\BE{wtetadfn_e}\wt\phi\!:\ga_{2m-1}\lra \ga_{2m-1}, \qquad
\wt\phi(\ell,v\big)=
\big(\eta_{2m}(\ell),\fc_{\eta}(v)\big),\EE
if $\phi\!=\!\eta_{2m}$; the square of~\eref{wtetadfn_e} is~$-\Id$ on each fiber.
The effect of the involutions $\phi$ on the coordinate charts is described~by 
the relations 
\BE{phiPn_e}
\ch{Z}_{\phi(i)}\!\circ\!\wt\phi=(-1)^{|\phi|i}\fc\!\circ\!\ch{Z}_i, \qquad
\z_{\phi(i)}\!\circ\!\phi=(-1)^{|\phi|i}\wt\phi\!\circ\!\z_i.\EE
Thus, 
\BE{dphiPn_e}
z_{\phi(i)\phi(j)}\!\circ\!\phi=(-1)^{|\phi|(i+j)}\fc\!\circ\!z_{ij}, \quad
\tnd_{\ell}\phi\bigg(\frac{\prt}{\prt z_{ij}}\bigg|_{\ell}\bigg) 
=(-1)^{|\phi|(i+j)}\frac{\prt}{\prt z_{\phi(i)\phi(j)}}\bigg|_{\phi(\ell)}. \EE\\

\noindent
Denote by $\wt\phi_n$ the conjugation on $\ga_{n-1}$ dual to 
the conjugation~$\wt\phi_n^*$ in the statement of the lemma.
Thus,
\begin{equation*}
\big(\big\{(p_1,\ldots,p_n)\big\}
\big(\wt\phi_n(v_1,\ldots,v_n)\big)\big)_{\phi(i)}
=(-1)^{|\phi|i}p_{\phi(i)}\big(\wt\phi(v_i)\big)
\quad\begin{aligned}
\forall~&p_1,\ldots,p_n\!\in\!\cO_{\P^{n-1}}(1)\big|_{\ell}\,,\\
~&v_1,\ldots,v_n\!\in\!\ell.
\end{aligned}\end{equation*}
This identity is equivalent~to
$$\big((p_1,\ldots,p_n)\!\circ\!\wt\phi_n\big)_i
=(-1)^{|\phi|i}p_{\phi(i)}\!\circ\!\wt\phi
\quad\forall~p_1,\ldots,p_n\!\in\!\cO_{\P^{n-1}}(1)\big|_{\ell}.$$
The first equation in~\eref{phiPn_e} is equivalent~to
$$f\big(\phi(\ell),\bar\la\big)=
\fc\!\circ\!f(\ell,\la)\!\circ\!\wt\phi_n\!:\,n\phi(\ell)\lra\C^n
\qquad\forall~(\ell,\la)\!\in\!\P^{n-1}\!\times\!\C\,.$$
Since $\wt\phi^2\!=\!(-1)^{|\phi|}\Id$,
the remaining  equation in~\eref{phiPn_e} and~\eref{dphiPn_e} are equivalent~to
$$h\big(\fc\!\circ\!(p_1,\ldots,p_n)\!\circ\!\wt\phi_n\big)
=\tnd_{\ell}\phi\big(h(p_1,\ldots,p_n)\big)
\qquad\forall~p_1,\ldots,p_n\!\in\!\cO_{\P^{n-1}}(1)|_{\ell}\,,\ell\!\in\!\P^n.$$
The last two identities imply that $f$ and $h$ commute with the specified conjugations.
\end{proof}

\noindent
Let $\phi$ and~$\wt\phi_n^*$  be as in Lemma~\ref{Pnses_lmm} and define
$$\wt\phi_{\cK}=\begin{cases} \wt\tau_{n;1}^{(n)}, &\hbox{if}~\phi\!=\!\tau_n;\\
 \wt\tau_{n;1}'^{(n)}, &\hbox{if}~\phi\!=\!\tau_n';\\
 \wt\eta_{2m;1}^{(2m)}, &\hbox{if}~\phi\!=\!\eta_{2m}.\\
\end{cases}$$
The short exact sequence~\eref{Pnses_e} induces an isomorphism
\BE{Pnses_e2}\begin{split}
\La_{\C}^{\top}\big(T\P^{n-1},\tnd\phi\big)
&=\La_{\C}^{\top}\big(\P^{n-1}\!\times\!\C,\phi\!\times\!\fc\big)
\otimes\La_{\C}^{\top}\big(T\P^{n-1},\tnd\phi\big)\\
&\approx \La_{\C}^{\top}\big(n\cO_{\P^{n-1}}(1),\wt\phi_n^*\big)
=\big(\cO_{\P^{n-1}}(n),\wt\phi_{\cK}\big)
\end{split}\EE
of real bundle pairs over $(\P^{n-1},\phi)$.

\begin{proof}[{\bf\emph{Proof of Proposition~\ref{CIorient_prp}}}]
Under the numerical assumptions in Proposition~\ref{CIorient_prp}(1),
\BE{CIorient_e3}\begin{split}
&\binom{n}{2}-n\sum_{i=1}^ka_i+\sum_{i=1}^ka_i^2+\sum_{i<j}a_ia_j
\equiv \binom{n}{2}+0+\frac12\big(|\a|^2-|\a|\big)\\
&\qquad\qquad\qquad\equiv\frac14\Big(
\big(n\!-\!|\a|\big)^2+\big(n\!+\!|\a|\big)\big(n\!+\!|\a|\!-\!2\big)\Big)
\equiv\bigg(\frac{n\!-\!|\a|}{2}\bigg)^2 \mod2,
\end{split}\EE
where $|\a|\!=\!a_1\!+\!\ldots\!+\!a_k$.\\

\noindent
If $X_{n;\a}\!\subset\!\P^{n-1}$ is a complete intersection preserved by~$\tau_n$, 
the sequence
\BE{CIprp_e4}0\lra (TX_{n;\a},\tnd\tau_{n;\a})\lra  
(T\P^{n-1},\tnd\tau_{n})\big|_{X_{n;\a}}\lra  
\bigoplus_{i=1}^k\big(\cO_{\P^{n-1}}(a_i),\wt\tau_{n;1}^{(a_i)}\big)\big|_{X_{n;\a}}
\lra 0\EE
is a short exact sequence of real bundle pairs over $(X_{n;\a},\tau_{n;\a})$.
If $X_{2m;\a}\!\subset\!\P^{2m-1}$ is a complete intersection preserved by~$\eta_{2m}$,
the odd degrees~$a_i$ come in pairs.
There is thus a short exact sequence 
\BE{CIprp_e4b}\begin{split}
0&\lra \big(TX_{2m;\a},\tnd\eta_{2m;\a}\big) 
\lra\big(T\P^{2m-1},\tnd\eta_{2m}\big)\big|_{X_{2m;\a}}\\
&\lra  \bigoplus_{a_i\in2\Z}\!\!\!
\big(\cO_{\P^{2m-1}}(a_i),\wt\eta_{2m;1}^{(a_i)}\big)\big|_{X_{2m;\a}}
\oplus \bigoplus_{a_i'\not\in2\Z}\!\!\!
\big(\cO_{\P^{2m-1}}(a_i'),\wt\eta_{2m;1,1}^{(a_i')}\big)\big|_{X_{2m;\a}} \lra 0
\end{split}\EE
of real bundle pairs over $(X_{2m;\a},\eta_{2m;\a})$,
where the second sum is taken over one $a_i'\!=\!a_i$ for each odd-degree pair.
The two exact sequences above determine isomorphisms
\BE{CIprp_e5}\begin{split}
\La_{\C}^{\top}\big(TX_{n;\a},\tnd\tau_{n;\a}\big) \otimes
\big(\cO_{\P^{n-1}}(|\a|),\wt\tau_{n;1}^{(|\a|)}\big)
 &\approx \La_{\C}^{\top}\big(T\P^{n-1},\tnd\tau_n\big)\big|_{X_{n;\a}}\,,\\
\La_{\C}^{\top}\big(TX_{2m;\a},\tnd\eta_{2m;\a}\big) \otimes
\big(\cO_{\P^{2m-1}}(|\a|),\wt\eta_{2m;1}^{(|\a|)}\big)
 &\approx \La_{\C}^{\top}\big(T\P^{2m-1},\tnd\eta_{2m}\big)\big|_{X_{2m;\a}}
\end{split}\EE
of real bundle pairs over  $(X_{n;\a},\tau_{n;\a})$ and $(X_{2m;\a},\eta_{2m;\a})$,
respectively.\\

\noindent
Let $\nu_n(\a)\!=\!n\!-\!|\a|$.
By the assumptions of Proposition~\ref{CIorient_prp}(1), $\nu_n(\a)\!\in\!2\Z$.
By the assumptions of Proposition~\ref{CIorient_prp}(2), $\nu_{2m}(\a)\!\in\!4\Z$.
Thus, the real bundle pairs
\BE{CIprp_e6}\begin{split}
\big(L_{\tau;n;\a},\wt\phi_{\tau;n;\a}\big) 
\equiv \big(\cO_{\P^{n-1}}(\nu_n(\a)/2),\wt\tau_{n;1}^{(\nu_n(\a)/2)}\big)
&\lra \big(\P^{n-1},\tau_n\big),\\
\big(L_{\eta;2m;\a},\wt\phi_{\eta;2m;\a}\big) 
\equiv \big(\cO_{\P^{2m-1}}(\nu_{2m}(\a)/2),\wt\eta_{2m;1}^{(\nu_{2m}(\a)/2)}\big)
&\lra \big(\P^{2m-1},\eta_{2m}\big)
\end{split}\EE
are well-defined.
By~\eref{Pnses_e2} and~\eref{CIprp_e5}, 
\BE{CIprp_e7}\begin{split}
\La_{\C}^{\top}\big(TX_{n;\a},\tnd\tau_{n;\a}\big) &\approx 
\Big(\!\big(L_{\tau;n;\a},\wt\phi_{\tau;n;\a}\big) \big|_{X_{n;\a}}
\Big)^{\otimes2},\\
\La_{\C}^{\top}\big(TX_{2m;\a},\tnd\eta_{2m;\a}\big) &\approx 
\Big(\!\big(L_{\eta;2m;\a},\wt\phi_{\eta;2m;\a}\big)
\big|_{X_{2m;\a}} \Big)^{\otimes2}\,.
\end{split}\EE
By~\eref{CIorient_e3},
\BE{CIprp_e9a}\begin{split}
w_2\big(X_{n;\a}^{\tau_{n;\a}}\big)
&=\bigg(\binom{n}{2}-n\sum_{i=1}^ka_i+\sum_{i=1}^ka_i^2+\sum_{i<j}a_ia_j\bigg)x^2\\
&=\bigg(\frac{n\!-\!|\a|}{2}\bigg)^2x^2
=w_1\big(L_{\tau;n;\a}^{\wt\phi_{\tau;n;\a}}\big)^2,
\end{split}\EE
where $x$ is the restriction of the generator of $H^1(\R\P^{n-1};\Z_2)$ to 
$X_{n;\a}^{\tau_{n;\a}}$.
Since $X_{2m;\a}^{\eta_{2m;\a}}\!=\!\eset$,
\BE{CIprp_e9b}w_2\big(X_{2m;\a}^{\eta_{2m;\a}}\big)=0=
w_1\big(L_{\eta;2m;\a}^{\wt\phi_{\eta;2m;\a}}\big)^2.\EE
Therefore,  $(X_{n;\a},\om_{n;\a},\tau_{n;\a})$ and $(X_{2m;\a},\om_{2m;\a},\eta_{2m;\a})$
are real-orientable symplectic manifolds
under the assumptions in~(1) and~(2), respectively,
of Proposition~\ref{CIorient_prp}.
\end{proof}

\subsection{Real orientations on complete intersections}
\label{RealOrientCI_subs}

\noindent
We next describe a real orientation on each complete intersection $(X_{n;\a},\tau_{n;\a})$
and $(X_{2m;\a},\eta_{2m;\a})$ of  Proposition~\ref{CIorient_prp} with
$$(L,\wt\phi)=\big(L_{\tau;n;\a},\wt\phi_{\tau;n;\a}\big) 
\quad\hbox{and}\quad
(L,\wt\phi)=\big(L_{\eta;2m;\a},\wt\phi_{\eta;2m;\a}\big),$$
respectively, as in~\eref{CIprp_e6}.
These real orientations are used to orient the normal bundles to torus fixed loci
in Section~\ref{EquivLoc_sec}.\\

\noindent
A homotopy class of isomorphisms as in~\eref{realorient_e4}  is determined by 
the corresponding isomorphism~\eref{CIprp_e7}.
By~\eref{CIprp_e9a} and~\eref{CIprp_e9b}, $(L,\wt\phi)$ satisfies the first condition 
in~\eref{realorient_e4}.
Since $X_{2m;\a}^{\eta_{2m;\a}}\!=\!\eset$, this determines a real orientation on
the complete intersections  $(X_{2m;\a},\eta_{2m;\a})$ of  Proposition~\ref{CIorient_prp}(2).\\

\noindent
In the case of the complete intersections $(X_{n;\a},\tau_{n;\a})$ of 
Proposition~\ref{CIorient_prp}(1), it remains to specify a spin structure 
as in~\ref{spin_it2}.

\begin{lmm}\label{CIorient_lmm}
Let $k\!\in\!\Z^{\ge0}$ and $\a\!\equiv\!(a_1,\ldots,a_k)\!\in\!(\Z^+)^k$.
If $\a$ satisfies the second condition in~\eref{CIorient_e1}, then
\BE{CIorientLmm_e}\big|\big\{i\!=\!1,\ldots,k\!:\,a_i\!\not\in\!2\Z\big\}\big|
\equiv \sum_{i=1}^ka_i\qquad\mod4.\EE
\end{lmm}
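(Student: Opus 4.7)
The plan is to observe that the conclusion follows from a direct modulo 4 computation, once one separates the sum $\sum a_i^2$ according to the parity of the $a_i$.

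First I would split the index set $\{1,\ldots,k\}$ into $I_{\tn{ev}}\!=\!\{i:a_i\!\in\!2\Z\}$ and $I_{\tn{od}}\!=\!\{i:a_i\!\not\in\!2\Z\}$, and write $N\!=\!|I_{\tn{od}}|$, which is the quantity on the left-hand side of~\eref{CIorientLmm_e}. For $i\!\in\!I_{\tn{ev}}$, we have $a_i^2\!\equiv\!0\pmod 4$. For $i\!\in\!I_{\tn{od}}$, writing $a_i\!=\!2b_i\!+\!1$ gives $a_i^2\!=\!4b_i(b_i\!+\!1)\!+\!1\!\equiv\!1\pmod 4$. Summing these contributions yields
\begin{equation*}
\sum_{i=1}^k a_i^2\;\equiv\;N \pmod 4.
\end{equation*}

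Combining this with the hypothesis that $\a$ satisfies the second congruence in~\eref{CIorient_e1}, namely $\sum_i a_i^2\!\equiv\!\sum_i a_i\pmod 4$, gives $N\!\equiv\!\sum_i a_i\pmod 4$, which is exactly~\eref{CIorientLmm_e}.

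There is really no main obstacle here; the lemma is a purely arithmetic reformulation of the congruence hypothesis using the elementary fact that odd squares are $\equiv\!1\pmod 4$. The only thing to be careful about is the degenerate case $k\!=\!0$, where both sides of~\eref{CIorientLmm_e} are $0$ and the statement is trivially true.
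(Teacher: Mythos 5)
Your proof is correct and is essentially identical to the paper's: both reduce to the observation that $a_i^2 \equiv 0$ or $1 \pmod 4$ according to the parity of $a_i$, so that $\sum_i a_i^2$ is congruent mod~$4$ to the number of odd $a_i$, and then invoke the second congruence in~\eref{CIorient_e1}.
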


\begin{proof}
Since 
$$a_i^2\equiv \begin{cases}
0,&\hbox{if}~a_i\!\in\!2\Z;\\
1,&\hbox{if}~a_i\!\not\in\!2\Z; 
\end{cases} \mod4,$$
the left-hand side of the second equation in~\eref{CIorient_e1} equals
to the left-hand side of~\eref{CIorientLmm_e} modulo~4.
This establishes the claim.
\end{proof}

\noindent
For each $a\!\in\!\Z$, let
$$\cO_{\R\P^{n-1}}(a)=\cO_{\P^{n-1}}(a)^{\wt\tau_{n;1}^{(a)}}\lra \R\P^{n-1}\,.$$
The canonical orientation on
$$ \cO_{\R\P^{n-1}}(2)=\cO_{\R\P^{n-1}}(1)^{\otimes2}\lra \R\P^{n-1}$$
determines a homotopy class of isomorphisms
\BE{RealOrientCI_e3} \cO_{\R\P^{n-1}}(2a)\approx \R\P^1\!\times\!\R \qquad\hbox{and}\qquad
\cO_{\R\P^{n-1}}(2a\!+\!1)\approx\cO_{\R\P^{n-1}}(1)\EE
of real line bundles over $\R\P^{n-1}$;
the second isomorphism treats the last factor of $\cO_{\R\P^{n-1}}(1)$ as the remainder.\\

\noindent
Furthermore, the real vector bundle 
$$4\cO_{\R\P^{n-1}}(1)\lra \R\P^{n-1}$$
has a canonical orientation and spin structure. 
They are obtained by taking any trivialization of the first $2\cO_{\R\P^{n-1}}(1)$ 
over a loop and taking the same trivialization over the last $2\cO_{\R\P^{n-1}}(1)$.
These orientation and spin structure are invariant under the interchange of the first pair of 
the line bundles $\cO_{\R\P^{n-1}}(1)$ with the last.
The interchange of the components $\cO_{\R\P^{n-1}}(1)$ within 
the same pair reverses the canonical orientation.
By \cite[Lemma~3.11]{RealGWsII}, it also flips  the spin structure
(i.e.~the homotopy classes of the canonical trivializations and their compositions with
this interchange on the left do not differ 
by the compositions with an orientation-reversing diffeomorphism
of~$\R^n$ on the right).\\

\noindent
We now return to the setting of Proposition~\ref{CIorient_prp}(1). 
Let 
$$\ell_0(\a)=\big|\big\{i\!=\!1,\ldots,k\!:\,a_i\!\in\!2\Z\big\}\big| 
\qquad\hbox{and}\qquad
\ell_1(\a)=\big|\big\{i\!=\!1,\ldots,k\!:\,a_i\!\not\in\!2\Z\big\}\big|$$
be the number of even entries in~$\a$ and the number of odd entries, respectively.
The short exact sequences~\eref{Pnses_e} and~\eref{CIprp_e4} of real bundle pairs 
determine a homotopy class of isomorphisms
\BE{RealOrientCI_e5}\begin{split}
& \big(X_{n;\a}^{\tau_{n;\a}}\!\times\!\R\big)\oplus
\big(TX_{n;\a}^{\tau_{n;\a}}\!\oplus\!2(L^*)^{\wt\phi^*}\big|_{X_{n;\a}^{\tau_{n;\a}}}\big)
\oplus 
\bigoplus_{i=1}^k\cO_{\R\P^{n-1}}(a_i)\big|_{X_{n;\a}^{\tau_{n;\a}}}\\
&\hspace{2.5in}
\approx n\cO_{\R\P^{n-1}}(1)\big|_{X_{n;\a}^{\tau_{n;\a}}}
\oplus 2(L^*)^{\wt\phi^*}\big|_{X_{n;\a}^{\tau_{n;\a}}}
\end{split}\EE
of real vector bundles over $X_{n;\a}^{\tau_{n;\a}}$.
If $n\!-\!|\a|\!\in\!4\Z$, \eref{RealOrientCI_e3} and~\eref{RealOrientCI_e5} 
determine a homotopy class of isomorphisms
\BE{RealOrientCI_e9a}\begin{split}
& \big(X_{n;\a}^{\tau_{n;\a}}\!\times\!\R\big)\oplus
\big(TX_{n;\a}^{\tau_{n;\a}}\!\oplus\!2(L^*)^{\wt\phi^*}\big|_{X_{n;\a}^{\tau_{n;\a}}}\big)
\oplus \big(X_{n;\a}^{\tau_{n;\a}}\!\times\!\R^{\ell_0(\a)}\big)
\oplus \ell_1(\a)\cO_{\R\P^{n-1}}(1)\big|_{X_{n;\a}^{\tau_{n;\a}}}\\
&\hspace{1in}
\approx \big(n\!-\!\ell_1(\a)\big)\cO_{\R\P^{n-1}}(1)\big|_{X_{n;\a}^{\tau_{n;\a}}}
\oplus \ell_1(\a)\cO_{\R\P^{n-1}}(1)\big|_{X_{n;\a}^{\tau_{n;\a}}}
\oplus \big(X_{n;\a}^{\tau_{n;\a}}\!\times\!\R^2\big).
\end{split}\EE
If $n\!-\!|\a|\!\not\in\!4\Z$, \eref{RealOrientCI_e3} and~\eref{RealOrientCI_e5} 
determine a homotopy class of isomorphisms
\BE{RealOrientCI_e9b}\begin{split}
& \big(X_{n;\a}^{\tau_{n;\a}}\!\times\!\R\big)\oplus
\big(TX_{n;\a}^{\tau_{n;\a}}\!\oplus\!2(L^*)^{\wt\phi^*}\big|_{X_{n;\a}^{\tau_{n;\a}}}\big)
\oplus \big(X_{n;\a}^{\tau_{n;\a}}\!\times\!\R^{\ell_0(\a)}\big)
\oplus \ell_1(\a)\cO_{\R\P^{n-1}}(1)\big|_{X_{n;\a}^{\tau_{n;\a}}}\\
&\hspace{1.8in}
\approx \big(n\!+\!2\!-\!\ell_1(\a)\big)\cO_{\R\P^{n-1}}(1)\big|_{X_{n;\a}^{\tau_{n;\a}}}
\oplus \ell_1(\a)\cO_{\R\P^{n-1}}(1)\big|_{X_{n;\a}^{\tau_{n;\a}}}.
\end{split}\EE
The first terms on the right-hand sides of~\eref{RealOrientCI_e9a} and~\eref{RealOrientCI_e9b}
correspond to the first $n\!-\!\ell_1(\a)$ and $n\!+\!2\!-\!\ell_1(\a)$ summands
of the first term on the right-hand sides of~\eref{RealOrientCI_e5}.\\

\noindent
By Lemma~\ref{CIorient_lmm}, the ranks of the first summands
on the right-hand sides of~\eref{RealOrientCI_e9a} and~\eref{RealOrientCI_e9b}
are divisible by~4.
Thus, \eref{RealOrientCI_e9a} and~\eref{RealOrientCI_e9b} determine 
a homotopy class of isomorphisms
\BE{RealOrientCI_e11}\begin{split}
& \big(X_{n;\a}^{\tau_{n;\a}}\!\times\!\R\big)\oplus
\big(TX_{n;\a}^{\tau_{n;\a}}\!\oplus\!2(L^*)^{\wt\phi^*}\big|_{X_{n;\a}^{\tau_{n;\a}}}\big)
\oplus \big(X_{n;\a}^{\tau_{n;\a}}\!\times\!\R^{\ell_0(\a)}\big)
\oplus \ell_1(\a)\cO_{\R\P^{n-1}}(1)\big|_{X_{n;\a}^{\tau_{n;\a}}}\\
&\hspace{2.5in}
\approx \big(X_{n;\a}^{\tau_{n;\a}}\!\times\!\R^{n+2-\ell_1(\a)}\big)
\oplus \ell_1(\a)\cO_{\R\P^{n-1}}(1)\big|_{X_{n;\a}^{\tau_{n;\a}}}
\end{split}\EE
over every loop in $X_{n;\a}^{\tau_{n;\a}}$.
Since the  real bundle pair $(L,\wt\phi)$ satisfies~\eref{realorient_e4},
the real orientable vector bundle \hbox{$TX_{n;\a}^{\tau_{n;\a}}\!\oplus\!2(L^*)^{\wt\phi^*}$} 
admits a spin structure.
By the first two statements of \cite[Lemma~3.11]{RealGWsII},
it is determined by~\eref{RealOrientCI_e11} and the orientation on~$TX^{\phi}$
specified by the first isomorphism in~\eref{CIprp_e7}.

\subsection{The canonical orientations of the moduli spaces}
\label{CROprop_subs}

\noindent
By \cite[Theorem~1.3]{RealGWsI}, the real orientations on $(X_{n;\a},\tau_{n;\a})$
and $(X_{2m;\a},\eta_{2m;\a})$ constructed in Section~\ref{RealOrientCI_subs}
induce orientations on the moduli spaces of real maps
into these real symplectic manifolds if $n\!-\!k\!\in\!2\Z$ (so that the complex dimensions 
of these Kahler manifolds are odd).
The construction of these real orientations involves some implicit choices
(which are listed explicitly in the statement of Theorem~\ref{GWsCI_thm}). 
Below we describe the effect of these choices on the real orientations,
the induced orientations on the moduli spaces of real maps,
and the resulting GW-invariants of $(X_{n;\a},\tau_{n;\a})$ and $(X_{2m;\a},\eta_{2m;\a})$. 
We use this description to establish Theorem~\ref{GWsCI_thm}.
Proposition~\ref{CROprop_prp} notes additional properties of the  real orientations
of Section~\ref{RealOrientCI_subs}.\\

\noindent
Throughout this section, we denote by $(X,\phi)$ either 
a fixed complete intersection $(X_{n;\a},\tau_{n;\a})$ in $(\P^{n-1},\tau_n)$
cut out by a real holomorphic bundle section~$s_{n;\a}$  
or a fixed complete intersection $(X_{2m;\a},\eta_{2m;\a})$ in $(\P^{2m-1},\eta_{2m})$
cut out by a real holomorphic bundle section~$s_{2m;\a}$.
Two such sections cut out the same complete intersections if and only if they differ by a
real holomorphic automorphism of the holomorphic vector bundle
\BE{cLdfn_e}\cL_{n;\a}\equiv\bigoplus_{i=1}^k\cO_{\P^{n-1}}(a_i)\lra\P^{n-1}\,.\EE
A holomorphic endomorphism~$\Phi$ of~\eref{cLdfn_e} corresponds to $k^2$~elements
\BE{vphij_e}\vph_{ij}\in H^0\big(\P^{n-1};\cO_{\P^{n-1}}(a_i\!-\!a_j)\big), \qquad
i,j=1,\ldots,k.\EE
In particular, $\vph_{ij}\!=\!0$ if $a_i\!<\!a_j$ and $\vph_{ij}\!\in\!\C$ if $a_i\!=\!a_j$.
This endomorphism  is thus invertible if and only if
the (constant) matrix $(\vph_{ij}')_{ij}$ given~by
$$\vph_{ij}'=\begin{cases}\vph_{ij},&\hbox{if}~a_i\!=\!a_j;\\
0,&\hbox{if}~a_i\!\neq\!a_j;
\end{cases}$$
is invertible.
If $\phi\!=\!\tau_{n;\a}$, $\Phi$ is real if each $\vph_{ij}$ in~\eref{vphij_e} is real.
The homotopy classes of real automorphisms of~\eref{cLdfn_e} in this case are thus generated
by negating the individual components of~\eref{cLdfn_e}.
If $\phi\!=\!\eta_{2m;\a}$, the odd-degree line bundles in~\eref{vphij_e} are paired~up.
Every real holomorphic automorphism of the associated rank~2 real bundle pair is homotopic to
the identity through  real holomorphic automorphisms.
The homotopy classes of real automorphisms of~\eref{cLdfn_e} in this case are thus generated~by
negating the individual even-degree components of~\eref{cLdfn_e}.

\begin{rmk}\label{etaAuts_rmk}
The real bundle pairs $(2\cO_{\P^{2m-1}}(a),\wt\eta_{2m;1,1}^{(a)})$ with $a\!\not\in\!2\Z$
admit continuous real automorphisms not homotopic to the identity.
Their homotopy classes are characterized by their restrictions to an equator 
$S^1\!\subset\!\C^*$ inside of 
a real linearly embedded $\P^1\!\subset\!\P^{n-1}$ being homotopic to 
the automorphism
$$(\al,\be)\lra \big(z^{-1}\be,z\al\big).$$
This homomorphism extends continuously, but not holomorphically, over~$\P^{n-1}$.\\
\end{rmk}

\noindent
Whenever the choices implicitly made in Section~\ref{RealOrientCI_subs} affect
the resulting real orientation on $(X_{n;\a},\phi)$
constructed in Section~\ref{RealOrientCI_subs},
their effects on the induced orientations of the  moduli spaces 
of real maps are straightforward to determine 
using  oriented symmetric half-surfaces (or \sf{sh-surfaces})
as in \cite{XCapsSetup} and~\cite[Section~3.2]{RealGWsII}.
An sh-surface~$(\Si^b,c)$ contains ordinary boundary components and \sf{crosscaps}
(boundary components with an antipodal involution) and doubles to a symmetric surface~$(\Si,\si)$.
The parity of the total number of the boundary components of~$\Si^b$
is the parity of $g\!+\!1$, where $g$ is the genus of~$\Si$.\\

\noindent
The first choice made in Section~\ref{RealOrientCI_subs} is
a real holomorphic parametrization of $(\P^{n-1},\tau_n)$ and $(\P^{2m-1},\eta_{2m})$;
this in particular includes the ordering of the homogeneous components for the purposes
of Lemma~\ref{Pnses_lmm}.
Since the group $\Aut(\P^1,\eta)$ is connected,  
the group $\Aut(\P^{2m-1},\eta_{2m})$ is also connected.
A path in $\Aut(\P^{2m-1},\eta_{2m})$ lifts to 
a path of holomorphic automorphisms of $m(2\cO_{\P^{2m-1}}(1),\wt\eta_{2m;1,1}^{(1)})$.
Since the group of holomorphic automorphisms of this real bundle pair over the identity is connected,
it follows that the real orientation of Section~\ref{RealOrientCI_subs}
is independent of the choice of 
real holomorphic parametrization of~$(\P^{2m-1},\eta_{2m})$.\\

\noindent
The group $\Aut(\P^1,\tau)$ has two connected components;
the negation of a homogenous coordinate is not homotopic to the identity.
Since such an automorphism reverses the orientation of $\R\P^{n-1}$ whenever $n\!\in\!2\Z^+$,
this implies that  $\Aut(\P^{n-1},\tau_n)$ has precisely two connected components for any 
$n\!\ge\!2$.
Negating a homogenous coordinate of~$\P^n$ changes 
\begin{enumerate}[label=(C\arabic*),leftmargin=*]

\setcounter{enumi}{-1}

\item\label{Ch_it} the homotopy class of the surjection~$h$ in~\eref{Pnses_e},

\item\label{Ceta_it}  the homotopy class of the isomorphism in~\eref{realorient_e4} induced by
the first isomorphism in~\eref{CIprp_e7} over every real loop in~$X_{n;\a}$,
and 

\item\label{Corient_it} the orientation on $TX_{n;\a}^{\tau_{n;\a}}\!\oplus\!2(L^*)^{\wt\phi^*}$
induced by the isomorphism~\eref{RealOrientCI_e11}. 

\setcounter{svcnt}{\value{enumi}}
\end{enumerate}
By the third statement of \cite[Lemma~3.11]{RealGWsII}, such an interchange also changes
\begin{enumerate}[label=(C\arabic*),leftmargin=*]

\setcounter{enumi}{\value{svcnt}}

\item\label{Cspin_it} the resulting spin of $TX_{n;\a}^{\tau_{n;\a}}\!\oplus\!2(L^*)^{\wt\phi^*}$
over every loop in $X_{n;\a}^{\tau_{n;\a}}$ not contractible in~$\R\P^{n-1}$.

\end{enumerate}
For a real map~$u$ from an sh-surface~$\Si^b$, the effects of these changes on the trivializations
over the boundary components of~$\Si^b$ are to flip 
\begin{enumerate}[label=(E\arabic*),leftmargin=*]

\item\label{Eeta_it} the homotopy type of trivializations of $u^*(TX_{n;\a},\tnd\tau_{n;\a})$
over each crosscap,

\item\label{Eorient_it} the orientation of trivializations of $u^*TX_{n;\a}^{\tau_{n;\a}}$
over each ordinary boundary component,

\item\label{Espin_it} the spin of trivializations of $u^*TX_{n;\a}^{\tau_{n;\a}}$
over each ordinary boundary component to  which the restriction of $u$ is homotopically
non-trivial as a map to~$\R\P^{n-1}$.

\end{enumerate}
By \cite[Propositions~4.1,4.2]{BHH},
the parity of the number of the components in~\ref{Espin_it}
is the parity of the degree~$d$ of~$u$.

\begin{proof}[{\bf\emph{Proof of Theorem~\ref{GWsCI_thm}}}]
Let $\wt\eta_{2m}$ be an anti-holomorphic conjugation on
the vector bundle~$\cL_{n;\a}$ in~\eref{cLdfn_e} lifting 
the involution $\eta_{2m}$ on~$\P^{2m-1}$ and
$$H^0\big(\P^{2m-1};\cL_{n;\a}\big)^{\wt\eta_{2m}}\subset
H^0\big(\P^{2m-1};\cL_{n;\a}\big)$$
be the subspace of real holomorphic sections.
Since the fixed locus of the involution~$\eta_{2m}$ on~$\P^{2m-1}$ is empty,
the subspace 
$$\big\{(s,p)\!\in\!H^0\big(\P^{2m-1};\cL_{n;\a}\big)^{\wt\eta_{2m}}\!\times\!\P^{n-1}\!:\,
s(p)\!=\!0,\,\rk_{\C}\na s|_p\!<\!k\big\}$$
has complex codimension $n$ in $H^0(\P^{2m-1};\cL_{n;\a})^{\wt\eta_{2m}}\!\times\!\P^{n-1}$.
Thus, its projection to the first component has complex codimension~one
and the space of regular real sections of~\eref{cLdfn_e} is path-connected.
Along with the conclusions above Remark~\ref{etaAuts_rmk} and 
in the paragraph concerning $\Aut(\P^{2m-1},\eta_{2m})$, 
this implies that the real orientation on $(X_{2m;\a},\eta_{2m;\a})$
constructed in Section~\ref{RealOrientCI_subs}
and the resulting real GW-invariants are independent 
of the real parametrization of $(\P^{2m-1},\eta_{2m})$, 
the ordering of the line bundles associated with the complete intersection,
and the section~$s_{2m;\a}$.\\

\noindent
The first vanishing claim of Theorem~\ref{GWsCI_thm}\ref{CIvan_it} in this case follows
from Lemma~\ref{etamaps_lmm} below
(its $g\!=\!0$ case is contained in \cite[Lemma~1.9]{Teh}).
If $a_i\!\in\!2\Z$ for some~$i$, replacing the component~$s_i$ of  $s_{n;\a}$ by~$-s_i$
results in the changes in~\ref{Ceta_it} and thus in~\ref{Eeta_it}.
If $g\!\in\!2\Z$, this replacement thus changes 
the orientation of the moduli space of real maps to $(X_{2m;\a},\eta_{2m;\a})$ and 
the sign of the corresponding real GW-invariants.
However, by the previous paragraph, these invariants are independent of the choice of
the section~$s_{n;\a}$.
This establishes the second vanishing claim of Theorem~\ref{GWsCI_thm}\ref{CIvan_it}
for $\phi\!=\!\eta_{2m;\a}$.\\

\noindent
We next consider the case $\phi\!=\!\tau_{n;\a}$.
A real holomorphic reparametrization  of $(\P^{n-1},\tau_n)$ induced 
by a linear automorphism~$\vph$ of~$\C^n$ determines 
a commutative diagram of the~form 
\BE{CROdiag_e}\begin{split}
\xymatrix{\ov\fM_{g,l}(X,d)^{\phi}  \ar[d]\ar[rr]^>>>>>>>>>>>{\ev}&&
X^l  \ar[d]\ar@{^{(}->}[rr]&& \big(\P^{n-1}\big)^l \ar[d]\\
\ov\fM_{g,l}(X',d)^{\phi'}\ar[rr]^>>>>>>>>>>>{\ev}&& 
X'^l  \ar@{^{(}->}[rr]&&  \big(\P^{n-1}\big)^l}
\end{split}\EE
with $(X',\phi')$ denoting the complete intersection cut out by the section $s_{n;\a}'$
obtained from~$s_{n;\a}$ by a suitable transform.
The middle and right vertical arrows are orientation-preserving 
with respect to the canonical complex orientations on their domains and targets.
The left vertical arrow is orientation-preserving with respect
to the orientations of Section~\ref{RealOrientCI_subs} 
differing by the reparametrization of the middle term in~\eref{Pnses_e}
by~$\vph$.
The GW-invariants of Theorem~\ref{GWsCI_thm} are the intersection numbers of
the first horizontal arrows with the cycles represented by the constraints.\\

\noindent
By the commutativity of the first square in~\eref{CROdiag_e},
the GW-invariants of Theorem~\ref{GWsCI_thm}  are thus invariant 
under real holomorphic reparametrizations of $(\P^{n-1},\tau_n)$.
This in particular establishes Theorem~\ref{GWsCI_thm}\ref{CIindep_it} for $k\!=\!0$.
Negating one of the $\cO_{\P^{n-1}}(1)$-factors  in~\eref{Pnses_e} or 
one of the odd-degree components~$s_i$ of~$s_{n;\a}$ affects \ref{Ceta_it}-\ref{Cspin_it}
and the orientation of $\ov\fM_{g,l}(X,d)^{\phi}$ through~\ref{Eeta_it}-\ref{Espin_it}.
Each of the last three changes by itself would reverse the orientation of $\ov\fM_{g,l}(X,d)^{\phi}$.
Since the parity of the number of changes~\ref{Eeta_it} and~\ref{Eorient_it} is that of $g\!+\!1$
and of~\ref{Espin_it} is that of~$d$,
negating one of the $\cO_{\P^{n-1}}(1)$-factors in~\eref{Pnses_e} or 
one of the odd-degree components~$s_i$ of~$s_{n;\a}$
preserves the orientation of $\ov\fM_{g,l}(X,d)^{\phi}$ 
if $d\!-\!g\!\not\in\!2\Z$ and reverses it otherwise.
This establishes Theorem~\ref{GWsCI_thm}\ref{CIindep_it} under the assumptions that
$a_i\!\not\in\!2\Z$ for all~$i$ and $d\!-\!g\!\not\in\!2\Z$.
Since
negating an even-degree component~$s_i$ of~$s_{n;\a}$ affects \ref{Ceta_it} and~\ref{Corient_it} only
and the orientation of $\ov\fM_{g,l}(X,d)^{\phi}$ through~\ref{Eeta_it} and~\ref{Eorient_it},
this operation
preserves the orientation of $\ov\fM_{g,l}(X,d)^{\phi}$ 
if $g\!\not\in\!2\Z$ and reverses it otherwise.
Combined with the previous observation, 
this establishes Theorem~\ref{GWsCI_thm}\ref{CIindep_it} under the assumption
$g\!\not\in\!2\Z$ for all~$i$ and $d\!\in\!2\Z$.\\

\noindent
We now turn to Theorem~\ref{GWsCI_thm}\ref{CIvan_it} with $\phi\!=\!\tau_{n;\a}$.
If $d\!-\!g\!\in\!2\Z$, negating one of the $\cO_{\P^{n-1}}(1)$-factors  in~\eref{Pnses_e} 
changes the orientation of $\ov\fM_{g,l}(X,d)^{\phi}$ and thus the sign of
the real GW-invariants of~$(X,\phi)$.
If $a_i\!\in\!2\Z$ for some~$i$ and $g\!\in\!2\Z$, replacing $s_i$ by $-s_i$
changes the sign of the real GW-invariants of~$(X,\phi)$.
If the real genus~$g$ degree~$d$ GW-invariants of~$(X,\phi)$ are invariant under these changes,
then they must vanish.\\
 
\noindent
It remains to consider the $g\!=\!0$ case of  Theorem~\ref{GWsCI_thm}\ref{CIindep_it}.
The real GW-invariants of $(X,\phi)$ are then given by cupping the constraints 
with the Euler class of the bundle~\eref{cVdfn_e} to $\ov\fM_{g,l}(\P^{n-1},d)^{\tau_n}$.
The proof of \cite[Theorem~3]{PSW} establishes this statement for $(n,\a)\!=\!(5,(5))$ and $d\!\not\in\!2\Z$,
but its principles apply in general (as long as $g\!=\!0$).
The real GW-invariants of $(X,\phi)$ can then be computed using the equivariant localization
theorem of~\cite{AB} as in Section~\ref{LocData_subs}.
If $d\!\in\!2\Z$ (i.e.~$d\!-\!g\!\in\!2\Z$) or $a_i\!\in\!2\Z$ for some~$i$,
then all torus fixed loci contribute zero to these invariants; see Lemma~\ref{VanV_lmm2}.
Therefore, the $g\!=\!0$ real GW-invariants vanish in these cases and are
in particular independent of all choices implicitly made in Section~\ref{RealOrientCI_subs}.
The same reasoning applies whenever the real genus~$g$ degree~$d$ GW-invariants
can be similarly related to $\ov\fM_{g,l}(\P^{n-1},d)^{\tau_n}$ and 
either $d\!-\!g\!\in\!2\Z$ or $a_i\!\in\!2\Z$ for some~$i$ and $g\!\in\!2\Z$. 
\end{proof}

\begin{rmk}\label{Cobordism_rmk}
The independence of the real GW-invariants of $(X,\phi)$ in the $\phi\!=\!\eta_{n;\a}$ case
is established by taking the homotopy between two moduli spaces of real maps 
induced by a generic path between two regular real sections of~\eref{cLdfn_e};
it consists of regular sections in this case.
Such a path need not exist in the $\phi\!=\!\tau_{n;\a}$ case, but 
a cobordism between the moduli space would satisfy.
It would pass through spaces of real maps into hypersurfaces with isolated
real nodal points.
Unfortunately, we do not see at this point a notion of  a moduli space which
would be suitable for constructing the desired cobordism.
\end{rmk}

\begin{lmm}\label{etamaps_lmm}
Suppose $m,d\!\in\!\Z^+$ and $g,l\!\in\!\Z^{\ge0}$.
If $d\!-\!g\!\in\!2\Z$, then
$$\ov\fM_{g,l}\big(\P^{2m-1},d\big)^{\eta_{2m}}=\eset\,.$$
\end{lmm}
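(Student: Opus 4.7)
The plan is to derive a topological parity constraint from any real map into $(\P^{2m-1},\eta_{2m})$.

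First, $\eta_{2m}$ is fixed-point-free on $\P^{2m-1}$: if $\eta_{2m}(\ell)=\ell$ with $0\neq v\in\ell$ then $\fc_\eta(v)=\la v$ for some $\la\in\C^*$, which gives $-v=\fc_\eta^{\,2}(v)=|\la|^2 v$, a contradiction. Hence for any $u\colon(\Si,\si)\to(\P^{2m-1},\eta_{2m})$ representing a class in $\ov\fM_{g,l}(\P^{2m-1},d)^{\eta_{2m}}$, the involution $\si$ must be fixed-point-free on $\Si$, since any $x\in\Si^\si$ would satisfy $\eta_{2m}(u(x))=u(\si(x))=u(x)$. Pulling back then produces a complex line bundle $L:=u^*\cO_{\P^{2m-1}}(1)$ of degree $d$ on $\Si$ with an anti-$\C$-linear lift $\wt\phi:=u^*\wt\eta_{2m;1}$ of $\si$ satisfying $\wt\phi^{\,2}=-\id_L$, inherited from $\wt\eta_{2m;1}^{\,2}=-\id$.

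The core parity claim is that on a smooth genus-$g$ symmetric surface $(\Si,\si)$ with fixed-point-free $\si$, the existence of a degree-$d$ complex line bundle $L$ with such a $\wt\phi$ forces $d-g$ to be odd. The plan to prove it is to pass to $N:=\Si/\si$, a closed non-orientable surface with $\chi(N)=1-g$, for which the Wu formula gives $w_2(N)=w_1(N)^2\equiv g+1\pmod 2$. The data $(L,\wt\phi)$ with $\wt\phi^{\,2}=-\id$ presents the unit-circle bundle $S(L)\to\Si$ as pulled back from a principal $\tn{Pin}^-(2)$-bundle on $N$ (the subgroup of $U(1)\cdot\wt\phi\cdot U(1)$ in which $\wt\phi$ anti-commutes with fiber rotations and squares to $-1\in U(1)$). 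The mod-$2$ obstruction for a degree-$d$ $U(1)$-bundle on the oriented double cover to come from such a $\tn{Pin}^-(2)$-bundle on $N$ is $d+w_2(N)\in H^2(N;\Z_2)$, so existence forces $d\equiv w_2(N)\equiv g+1\pmod 2$, contradicting $d-g\in 2\Z$.

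The main obstacle will be a clean identification of the obstruction class with $d+w_2(N)\bmod 2$. A self-contained alternative passes through the standard rank-$2$ real bundle pair $(L\oplus L,\wt\phi')$ with $\wt\phi'(v,w)=(-\wt\phi w,\wt\phi v)$ and $(\wt\phi')^{\,2}=\id$: the $m$ pairs $(u^*Z_{2k-1},u^*Z_{2k})$ are $\si$-equivariant (via the identities following from \eref{phiPn_e}) with no common zero, hence give $m$ nowhere-vanishing sections of the descended real rank-$4$ bundle on $N$, and a Stiefel--Whitney computation combined with Wu's formula yields the parity. As a sanity check, for $(\Si,\si)=(\P^1,\eta)$ the natural conjugation $\wt\eta^{\otimes d}$ on $\cO(d)$ has square $(-\id)^d$, and no modification $f\wt\eta^{\otimes d}$ can flip this sign because simple-connectivity of $\P^1$ makes $f\bar{\si^*\!f}$ homotopic to the positive function $|f|^2$; thus $\wt\phi^{\,2}=-\id$ requires $d-g=d$ odd.
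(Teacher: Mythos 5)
Your strategy is genuinely different from the paper's. The paper projects to a generic real line in $\P^1$, invokes Riemann--Hurwitz, degenerates the branched cover by colliding all branch points to one conjugate pair, and then reads off the parity of $d-g$ from the combinatorics of the degenerate limit. You instead extract a quaternionic structure $\wt\phi$ with $\wt\phi^2=-\id$ on $L=u^*\cO(1)$ from the identity $\wt\eta_{2m}^{\,2}=-\id$, and try to turn $(L,\wt\phi)$ into a characteristic-class constraint on $N=\Si/\si$ via $\tn{Pin}^-(2)$-bundles. The opening observations are correct ($\eta_{2m}$ is fixed-point-free, hence so is $\si$; the pullback conjugation has square $-\id$), the $\P^1$ sanity check is correct, and the target parity claim ($d\equiv g+1\bmod 2$ for smooth $(\Si,\si)$ with $\Si^\si=\eset$ and quaternionic $L$ of degree $d$) is in fact true.

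However there are two genuine gaps. First, the identification of the relevant obstruction with $d+w_2(N)\bmod 2$ is exactly the heart of the matter and is not established; as you note yourself, this is ``the main obstacle.'' Some care is needed here because $\pi^*\colon H^2(N;\Z_2)\to H^2(\Si;\Z_2)$ vanishes for the orientation double cover (in the $\Z_2$-Gysin sequence $\cup\,w_1(N)\colon H^1\to H^2$ is already onto), so the parity of $d$ cannot be read off from a pulled-back $w_2$; you must instead work with twisted integral cohomology $H^2(N;\Z_{w_1})\cong\Z$ or an equivalent device, and the alternative sketch through $(L\oplus L,\wt\phi')$ and the $m$ Euler-sequence sections is likewise not carried out. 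Second, and more structurally, $\ov\fM_{g,l}(\P^{2m-1},d)^{\eta_{2m}}$ contains stable maps with nodal domains, and the quotient $N=\Si/\si$ is not a closed surface when $\Si$ is nodal, so the Pin argument as stated only rules out the smooth-domain locus $\fM_{g,l}$; you need either a separate combinatorial reduction (nodes and swapped components come in $\si$-conjugate pairs, so their contributions to $d$ and $g$ are even, reducing to $\si$-invariant smooth components) or an argument that works directly for nodal $\Si$.

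A clean way to salvage the whole idea, including the nodal case, is to upgrade from a topological to a holomorphic statement: since all maps here are $J_0$-holomorphic, $(L,\wt\phi)$ is a holomorphic line bundle on the (possibly nodal) domain $\Si$ with an anti-holomorphic lift of $\si$ satisfying $\wt\phi^2=-\id$, so $\wt\phi$ induces quaternionic structures on $H^0(\Si;L)$ and $H^1(\Si;L)$ (either directly on Dolbeault/\v{C}ech cochains or via Serre duality). Quaternionic vector spaces have even complex dimension, hence $\chi(L)=d-g+1$ is even and $d-g$ is odd, for nodal as well as smooth $\Si$. This realizes your parity claim in one stroke and removes both gaps; it is still a different argument from the paper's Riemann--Hurwitz degeneration.
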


\begin{proof}
It is sufficient to establish this statement for $l\!=\!0$ and $m\!=\!1$
(after composition with a projection to a generic real line).
By the Riemann-Hurwitz formula \cite[p219]{GH},
a genus~$g$ degree~$d$ cover of~$\P^1$ has $2(d\!+\!g\!-\!1)$ branched points in~$\P^1$,
counted with multiplicity.
Any such cover is determined by the branched points and some combinatorial data.
Suppose some combinatorial data is compatible with the involution~$\eta_2$ on~$\P^1$ 
and some (necessarily fixed-point-free) involution on the domain.
We take the limit of such real covers by bringing all of the branched points 
to a pair of conjugate points.
The restriction of the limiting map to each non-contracted component of the domain
then has no branched points;
any such component is thus a $\P^1$ and the degree of the limiting map is~1.
Denote by $g_0$ the sum of the geometric genera of the contracted components
and by $N$ the number of nodes of the domain of the limiting map.
Thus,
\BE{etamaps_e3} g=d-N+1+g_0.\EE
Since the fixed locus of~$(\P^1,\eta)$ is empty, $g_0,N\!\in\!2\Z$.
The claim thus follows from~\eref{etamaps_e3}.
\end{proof}

\noindent
Since the involutions $\tau_n$ and~$\tau_n'$ on~$\P^{n-1}$ are related 
by the automorphism~\eref{PnAutdn_e}, 
the construction of real orientations in Section~\ref{RealOrientCI_subs}
applies with only minor changes to complete intersections invariant under 
the involutions~$\tau_n'$.
For such complete intersections, some of the odd-degree sections~$s_i$ may be paired up
(the sections corresponding to the real bundle pairs $(2\cO_{\P^{n-1}}(a),\wt\tau_{n;1,1}'^{(a)})$).
However, the ordering of the sections within each such pair is not fixed
(because the automorphism~$\wt\phi$ in the proof of Lemma~\ref{Pnses_lmm}
is of order~2 in this case).
The same invariance considerations as in the $\tau_n$ case apply
with the involution~$\tau_n'$.\\ 

\noindent
We next note some properties of the real orientations constructed in Section~\ref{RealOrientCI_subs}.

\begin{prp}\label{CROprop_prp}
\begin{enumerate}[label=(\arabic*),leftmargin=*]

\item 
Let $m,n\!\in\!\Z^+$, $k\!\in\!\Z^{\ge0}$, and $\a\!\equiv\!(a_1,\ldots,a_k)\!\in\!(\Z^+)^k$.
If the real orientations on $(X_{n;\a},\tau_{n;\a})$ and $(X_{2m;\a},\eta_{2m;\a})$
constructed in Section~\ref{RealOrientCI_subs} are independent of the choices made,
then they are also invariant under the inclusions
\BE{CROprop_e2}\big(\P^{n-1},\tau_n\big) \lra \big(\P^n,\tau_{n+1}\big)
\quad\hbox{and}\quad
\big(\P^{2m-1},\eta_{2m}\big)\lra \big(\P^{2m+1},\eta_{2m+2}\big)\EE
as coordinate subspaces.
If the induced orientation on the moduli space of genus~$g$ degree~$d$ real maps
is independent of these choices,
then it is also invariant under the inclusions~\eref{CROprop_e2}.

\item Let $m\!\in\!\Z^+$. The signed count of real lines through a pair of conjugate 
points in  $(\P^{2m-1},\tau_{2m})$ with respect to the real orientation of 
Section~\ref{RealOrientCI_subs} is given by~\eref{d1tau_e0}.

\end{enumerate}
\end{prp}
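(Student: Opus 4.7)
For part~(1), I would verify that each of the three ingredients in Definition~\ref{realorient_dfn4}, as constructed in Section~\ref{RealOrientCI_subs}, is compatible with the coordinate inclusions in~\eref{CROprop_e2}. For part~(2), I would identify the moduli space as a degree $\pm 1$ cover of $\P^{2m-1}$ via the evaluation map and then use the equivariant localization data of Section~\ref{LocData_subs} to pin down the sign.

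For part~(1), fix the inclusion $(\P^{n-1},\tau_n)\!\hookrightarrow\!(\P^n,\tau_{n+1})$ as the vanishing locus of the last coordinate; the case $(\P^{2m-1},\eta_{2m})\!\hookrightarrow\!(\P^{2m+1},\eta_{2m+2})$ is analogous with a conjugate pair of added coordinates. A complete intersection $X_{n;\a}\!\subset\!\P^{n-1}$ of multi-degree~$\a$ is the complete intersection $X_{n+1;\a'}\!\subset\!\P^n$ of multi-degree $\a'\!\equiv\!(\a,1)$, and $\nu_{n+1}(\a')\!=\!\nu_n(\a)$. Thus the real line bundle pairs of~\eref{CIprp_e6} produced by the two constructions agree under restriction, so the ingredient~\ref{LBP_it2} of Definition~\ref{realorient_dfn4} matches. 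For~\ref{isom_it2}, Euler's sequence~\eref{Pnses_e} on~$\P^n$ restricted to~$\P^{n-1}$ decomposes as the Whitney sum of Euler's sequence on~$\P^{n-1}$ with the identification $\cO_{\P^{n-1}}(1)\!=\!N_{\P^{n-1}/\P^n}$; the extra $\cO_{\P^{n-1}}(1)$ in the middle of the restricted Euler sequence cancels against the extra $\cO_{\P^{n-1}}(1)$ in $\cL_{n+1;\a'}$ coming from the added linear defining equation, so the isomorphism~\eref{CIprp_e7} induced from the two exact sequences agrees. For~\ref{spin_it2}, the spin structure from~\eref{RealOrientCI_e11} on each side differs only by the extra pair of $\cO_{\R\P^{n-1}}(1)$ summands (one from the enlarged Euler sequence, one from the added defining equation), which pair up and carry a canonical orientation and spin structure via $\cO_{\R\P^{n-1}}(2)\!=\!\cO_{\R\P^{n-1}}(1)^{\otimes 2}$. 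The compatibility of the induced orientations on the moduli spaces of real maps then follows from \cite[Theorem~1.3]{RealGWsI}.

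For part~(2), the moduli space $\ov\fM_{0,1}(\P^{2m-1},1)^{\tau_{2m}}$ is smooth of real dimension $4m\!-\!2$. For a generic point $p\!\in\!\P^{2m-1}$, there is a unique real line $\ell_p$ through $p$ and $\tau_{2m}(p)$, and the free action of $\Aut(\P^1,\si)$ on pairs (parametrization of~$\ell_p$, marked point mapping to~$p$) makes $\ev_1^{-1}(p)$ a single point; hence the invariant is $\deg(\ev_1)\!=\!\pm 1$. To pin down the sign, I would apply the equivariant localization of Section~\ref{LocData_subs} (the smoothness of the moduli space permits the classical localization of~\cite{AB}): the torus fixed loci consist of parametrized identity maps onto real coordinate lines through pairs of conjugate torus fixed points, and the contribution from a single such fixed line, obtained by unraveling~\eref{RealOrientCI_e11} at that line and invoking \cite[Lemma~3.11]{RealGWsII} to convert the trivialization data into a sign, evaluates to~$+1$. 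The main obstacle is the precise bookkeeping of the spin structure contribution at the fixed locus; I would handle it by making the real orientation on $(\P^1,\tau_2)$ fully explicit and then transferring the resulting sign along the equivariant localization formula.
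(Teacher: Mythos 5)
Your plan for part~(1) tracks the paper's argument: you correctly identify $X_{n;\a}\!\subset\!\P^{n-1}$ with $X_{n+1;\a'}\!\subset\!\P^n$ for $\a'\!=\!(\a,1)$, note that $\nu_{n+1}(\a')\!=\!\nu_n(\a)$ so the line bundle pair $(L,\wt\phi)$ of~\eref{CIprp_e6} is unchanged, and observe that the extra $\cO_{\P^{n-1}}(1)$ in the middle of Euler's sequence cancels the extra linear factor of $\cL_{n+1;\a'}$. The one imprecision is the claim that the two extra summands ``pair up and carry a canonical orientation and spin structure via $\cO_{\R\P^{n-1}}(2)\!=\!\cO_{\R\P^{n-1}}(1)^{\otimes2}$''; in fact they match across the two sides of the isomorphisms~\eref{RealOrientCI_e9a}/\eref{RealOrientCI_e9b} by the identity, which is what leaves the homotopy class of the trivialization, and hence the spin structure by the first two parts of~\cite[Lemma~3.11]{RealGWsII}, unchanged. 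The effect of the enlargement is simply that the coefficient $\ell_1(\a)$ increases by one on both sides, while $n\!-\!\ell_1(\a)$ and $n\!-\!|\a|$ are unchanged.

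For part~(2), your preliminary reduction to showing the invariant is $\pm 1$ (via $\ev_1$ being a degree-$\pm 1$ map between smooth compact oriented manifolds of the same real dimension $4m\!-\!2$) is a nice elementary observation that the paper does not make explicitly. However, the sign is the entire content of the assertion, and your plan to determine it via equivariant localization is a genuine gap as written. The paper determines the sign by comparing the orientation of Section~\ref{RealOrientCI_subs} to the spin/relative-spin orientations of~\cite{Teh}, and this comparison is qualitatively different according to the parity of~$m$: when $m$ is even the real part of the bundle in~\eref{CIprp_e6} is orientable and~\cite[Theorem~1.5]{RealGWsII} identifies the real-orientation with the spin orientation so that~\cite[(1.21)]{Teh} applies directly, whereas when $m$ is odd one must pass through relative spin structures and match them (via~\cite[Theorem~8.1.1]{FOOO}, \cite[Remark~6.5]{Teh}, \cite[Remark~1.11]{Teh}), with a sign coming from the comparison of stabilizing bundles $\cO_{\P^{2m-1}}(-m)$ and $\cO_{\P^{2m-1}}(1)$ that cancels precisely because $m\!+\!1\!\in\!4\Z$ fails or holds on both sides simultaneously. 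Your localization route would not avoid this: the degree-$1$ edge contribution in the localization formula (Proposition~\ref{REdgeCntr_prp}) is itself established by precisely this kind of spin-structure bookkeeping, so the ``main obstacle'' you defer is the actual content of the proof, not an incidental cleanup. In short, part~(2) as proposed is a plausible outline but does not yet contain the argument that pins down the sign, and it omits the even/odd dichotomy in~$m$ that governs which comparison result applies.
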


\begin{proof} (1) The first inclusion in~\eref{CROprop_e2} replaces 
$X_{n;\a}$ with $X_{n+1;\a'}$, where $\a'$ is tuple obtained from~$\a$
by adding~1 as the last component.
The only effect of this change on \eref{RealOrientCI_e9a}-\eref{RealOrientCI_e11} 
is to increase the coefficients~$\ell_1(\a)$ in front of $\cO_{\R\P^{n-1}}(1)\big|_{X_{n;\a}^{\tau_{n;\a}}}$
by~$1$.
There is no effect on the homotopy class of the isomorphism in~\eref{realorient_e4} induced by
the first isomorphism in~\eref{CIprp_e7}
or on the spin structure on $TX_{n;\a}^{\tau_{n;\a}}\!\oplus\!2(L^*)^{\wt\phi^*}$.\\

\noindent
The second inclusion in~\eref{CROprop_e2} replaces 
$X_{2m;\a}$ with $X_{2m+2;\a'}$, where $\a'$ is tuple obtained from~$\a$
by adding~1 as the last two components.
There is no effect on the homotopy class of the isomorphism in~\eref{realorient_e4} induced by
the second isomorphism in~\eref{CIprp_e7}.\\

\noindent
The second inclusion in~\eref{CROprop_e2} replaces 
$X_{2m;\a}$ with $X_{2m+2;\a'}$, where $\a'$ is tuple obtained from~$\a$
by adding~1 as the last two components.
There is no effect on the homotopy class of the isomorphism in~\eref{realorient_e4} induced by
the second isomorphism in~\eref{CIprp_e7}.\\ 

\noindent
(2) Suppose first $m\!\in\!2\Z$. 
The real part of the first line bundle in~\eref{CIprp_e6} is then orientable.
By \cite[Theorem~1.5]{RealGWsII},  the orientation on 
\BE{d1tau_e2}\ov\fM_{0,1}(\P^{2m-1},1)^{\tau_{2m}}=\fM_{0,1}(\P^{2m-1},1)^{\tau_{2m}}\EE
induced by the real orientation of Section~\ref{RealOrientCI_subs} is 
the orientation induced by the associated spin structure on $T\R\P^{2m-1}$.
The latter is induced by the canonical spin structure on 
$2m\cO_{\R\P^{2m-1}}(1)$ via Euler's sequence as in \cite[Section~5.5]{Teh}.
The claim now follows from \cite[(1.21)]{Teh}.\\

\noindent
Suppose now $m\!\not\in\!2\Z$. 
The real part of the first line bundle in~\eref{CIprp_e6} is then non-orientable.
By \cite[Theorem~1.5]{RealGWsII},  the orientation on~\eref{d1tau_e2} 
induced by the real orientation of Section~\ref{RealOrientCI_subs}
agrees with the orientation induced by the associated relative spin structure 
if and only if $m\!+\!1\!\in\!4\Z$.
This is the relative spin structure in the sense of \cite[Theorem~8.1.1]{FOOO}
associated with the oriented rank~2 real vector bundle
$$\cO_{\P^{2m-1}}(-m)\lra\P^{2m-1}$$
and the canonical spin structure on
\begin{equation*}\begin{split}
2m\cO_{\R\P^{2m-1}}(1)\oplus \cO_{\P^{2m-1}}(-m)\big|_{\R\P^{2m-1}}
&\approx 2m\cO_{\R\P^{2m-1}}(1)\oplus 2\cO_{\R\P^{2m-1}}(1)\\
&\approx 2(m\!+\!1)\cO_{\R\P^{2m-1}}(1).
\end{split}\end{equation*}
The orientation on~\eref{d1tau_e2} induced by this relative spin structure agrees
with the orientation induced by the relative spin structure 
 associated with the oriented rank~2 real vector bundle~$\cO_{\P^{2m-1}}(1)$
and the canonical spin structure on
$$2m\cO_{\R\P^{2m-1}}(1)\oplus \cO_{\P^{2m-1}}(1)\big|_{\R\P^{2m-1}}
\approx 2(m\!+\!1)\cO_{\R\P^{2m-1}}(1)$$
if and only if $m\!+\!1\!\in\!4\Z$.
Thus, the first and the third orientations on~\eref{d1tau_e2} are the same.
The second relative spin structure is the relative spin structure of \cite[Remark~6.5]{Teh}.
The claim now follows from \cite[Remark~1.11]{Teh}.\\

\noindent
The equality of the first and third orientations on~\eref{d1tau_e2} above
can be seen in another way as well.
The former is the orientation obtained as in~\cite{Ge2} 
by adding $2\cO_{\R\P^{2m-1}}(-m)$  to $T\P^{2m-1}$
and using the canonical spin structure on
$$2m\cO_{\R\P^{2m-1}}(1)\oplus 2\cO_{\R\P^{2m-1}}(-m)
\approx 2(m\!+\!1)\cO_{\R\P^{2m-1}}(1).$$
By \cite[Remark~3.10]{RealGWsII}, the resulting orientation on~\eref{d1tau_e2}
depends only on the latter (in contrast to the relative spin orientations).
Thus, we can replace $\cO_{\R\P^{2m-1}}(-m)$
by~$\cO_{\R\P^{2m-1}}(1)$.
By \cite[Corollary~3.8(1)]{RealGWsII}, the resulting orientation on~\eref{d1tau_e2}
agrees with  the third orientation on~\eref{d1tau_e2}.
\end{proof}

\begin{rmk}\label{d1num_rmk}
The computation of the number~\eref{d1tau_e0} in~\cite{Teh} in both cases is confirmed
through a second argument; see Corollary~5.4 and the paragraph above Remark~6.9 in~\cite{Teh}.
It is also indirectly confirmed by the two proofs of \cite[Proposition~3.5]{RealGWsII}.
\end{rmk}

\section{Proof of Theorem~\ref{g1EG_thm}}
\label{g1EG_sec}

\noindent
We assume that $(X,\om,\phi)$ is a compact real-orientable 6-manifold,
$$B \in H_2(X;\Z)\!-\!\{0\},\qquad l,k\in\Z^{\ge0}, \quad\hbox{and}\quad
 \mu_1,\ldots,\mu_l\!\in\!H^*(X;\Q), $$
are such~that 
$$\sum_{i=1}^{l}(\deg\mu_i\!-\!2)+2k=\blr{c_1(X),B}$$
and either $k\!=\!0$ or 
$$\lr{c_1(X),B'}\in4\Z ~{}~\forall\,B'\!\in\!H_2(X;\Z)~\hbox{with}~\phi_*B'\!=\!-B'
\quad\hbox{and}\quad \mu_1,\ldots,\mu_l=\PD_X(\pt).$$
We fix $J\!\in\!\cJ_{\om}^{\phi}$ as in the statement of Theorem~\ref{g1EG_thm}.

\subsection{Real GW- and~enumerative invariants}
\label{RealGWs_subs2}

\noindent
For $g\!\in\!\Z^{\ge0}$, denote by $\ov\fM_{g,l;k}(B)$ the moduli space of 
real genus~$g$ degree~$B$ $X$-valued $J$-holomorphic maps with
 $l$~conjugate pairs of marked points and $k$ real points and~by
$$\fM_{g,l;k}(B)\subset\ov\fM_{g,l;k}(B)$$
the subspace of maps from smooth domains.
The construction of Prym structures of~\cite{Loo} over~$\R$ provides 
a real analogue of the perturbations~$\nu$ of~\cite{RT2}
over the Deligne-Mumford moduli $\ov\cM_{g,l;k}$  of real curves.
For a $\phi$-invariant perturbation~$\nu$, denote by 
 $\ov\fM_{g,l;k}(B;\nu)$ the moduli space of 
real genus~$g$ degree~$B$ $X$-valued $(J,\nu)$-holomorphic 
maps with $l$~conjugate pairs of marked points
and $k$ real points and~by
$$\fM_{g,l;k}(B;\nu)\subset\ov\fM_{g,l;k}(B;\nu)$$
the subspace of maps from smooth domains.
Let
\begin{gather*}
\ev\!:\fM_{g,l;k}(B;\nu)\lra X^l\!\times\!(X^{\phi})^k, \\
\big[u,(z_1^+,z_1^-),\ldots,(z_l^+,z_l^-),x_1,\ldots,x_k,\fJ\big]\lra
\big(u(z_1^+),\ldots,u(z_l^+),u(x_1),\ldots,u(x_k)\big),
\end{gather*}
be the total evaluation map.\\

\noindent
Choose a tuple  $\p\!\equiv\!(p_1,\ldots,p_k)\!\in\!(X^{\phi})^k$ of real points
and pseudocycle representatives
$$h_1\!:Y_1\lra X, \quad\ldots, \quad h_l\!:Y_l\lra X$$
for the Poincare duals of  $\mu_1,\ldots,\mu_l$;
see the paragraph above \cite[Theorem~1.1]{PseudoCycles}.
Define
\begin{gather*}
\h\!=\!(h_1,\ldots,h_l)\!:\,\Y\!\equiv\!\prod_{i=1}^lY_i\lra X^l,\\
\De_{\p}^l=
\big\{(q_1,\ldots,q_l,p_1,\ldots,p_k,q_1,\ldots,q_l)\!:\,q_1,\ldots,q_l\!\in\!X\big\}
\subset X^l\!\times\!(X^{\phi})^k\!\times\!X^l\,.
\end{gather*}
With~$\nu$ as above, let
\begin{gather*}
\ov\fM_{g,\h;\p}(B;\nu)=
\big\{\big([\u],\y\big)\!\in\!\ov\fM_{g,l;k}(B;\nu)\!\times\!\Y\!:\,
\big(\ev(\u),\h(\y)\big)\!\in\!\De_{\p}^l\big\},\\
\fM_{g,\h;\p}(B;\nu)=\ov\fM_{g,\h;\p}(B;\nu)\cap
\big(\fM_{g,l;k}(B;\nu)\!\times\!\Y\big),\\
\ov\fM_{g,\h;\p}(B)=\ov\fM_{g,\h;\p}(B;0),\quad
\fM_{g,\h;\p}(B)=\fM_{g,\h;\p}(B;0).\\
\end{gather*}

\noindent
For generic choices of the tuple~$\p$, the pseudocycle representatives~$\h$, 
and a perturbation~$\nu$, the~map 
$$\ev\!\times\!\h\!: \ov\fM_{g,l;k}(B;\nu)\!\times\!\Y\lra X^l\!\times\!(X^{\phi})^k\!\times\!X^l $$
is transverse to~$\De_{\p}^l$ on every stratum of the domain.
In such a case, 
$$\fM_{g,\h;\p}(B;\nu)=\ov\fM_{g,\h;\p}(B;\nu)$$
is a zero-dimensional orbifold oriented by a real orientation on $(X,\om,\phi)$.
For $g\!=\!1$, the corresponding real GW-invariant is the weighted cardinality 
of this orbifold, i.e.
\BE{Jnucount_e}\GW_{1,B}^{\phi}\big(\mu_1,\ldots,\mu_l;\pt^k\big)
\equiv\, ^{\pm}\!\big|\fM_{1,\h;\p}(B;\nu)\big|\,;\EE
see \cite[Theorem~1.4]{RealGWsI} and the proof of \cite[Theorem~1.5]{RealGWsI}.
This number is independent of the generic choices of $\nu$, $\p$, and $\h$,
as well as of~$J$.\\

\noindent
By the  genus~1 regularity assumption on~$J$  and the proofs of \cite[Propositions~1.7,1.8]{g1comp},
the~map 
$$\ev\!\times\!\h\!: \ov\fM_{g,l;k}(B)\!\times\!\Y\lra X^l\!\times\!(X^{\phi})^k\!\times\!X^l $$
is also transverse to~$\De_{\p}^l$ on every stratum of the domain for $g\!=\!0,1$ and
generic choices of the tuple~$\p$ and the pseudocycle representatives~$\h$ and 
\BE{JfMspaces_e}\fM_{g,\h;\p}(B)\subset\ov\fM_{g,\h;\p}(B)\EE
is a finite zero-dimensional manifold consisting of non-intersecting embeddings.
A real orientation on $(X,\om,\phi)$ and the proofs of \cite[Theorems~1.4,1.5]{RealGWsI}
endow this manifold with an orientation. 
For $g\!=\!1$, the corresponding real enumerative invariant is the signed cardinality 
of this manifold, i.e.
\BE{Jcount_e}\E_{1,B}^{\phi}\big(\mu_1,\ldots,\mu_l;\pt^k\big)
\equiv\, ^{\pm}\!\big|\fM_{1,\h;\p}(B)\big|\,.\EE
We will show~that 
\BE{GWvsEnumR_e} \GW_{1,B}^{\phi}\big(\mu_1,\ldots,\mu_l;\pt^k\big)
=\E_{1,B}^{\phi}\big(\mu_1,\ldots,\mu_l;\pt^k\big);\EE
the two sets in~\eref{JfMspaces_e} may still be different.
By~\eref{GWvsEnumR_e}, the number in~\eref{Jcount_e} 
is independent of the generic choices of~$\p$ and~$\h$.

\subsection{Proof of \eref{GWvsEnumR_e}}
\label{GWvsEnumPf_subs}

\noindent
The proof of \eref{GWvsEnumR_e} is similar to the proof of \cite[Theorem~1.1]{g1comp2}.
We denote by $\fX_{1,l;k}(B)$ the space of all stable real genus~1 degree~$d$ maps 
in the $L^p_1$-topology of \cite[Section~3]{LT} and by
$$\fX_{1,l;k}^{\{0\}}(B)\subset\fX_{1,l;k}(B)$$
the subspace of tuples~$[\u]$ so that the degree of the restriction of the associated map~$u$
to the principal component of the domain is nonzero.
Let
\begin{gather*}
\fX_{1,\h;\p}(B)=
\big\{\big([\u],\y\big)\!\in\!\fX_{1,l;k}(B)\!\times\!\Y\!:\,
\big(\ev(\u),\h(\y)\big)\!\in\!\De_{\p}^l\big\},\\
\fX_{1,\h;\p}^{\{0\}}(B)=
\fX_{1,\h;\p}(B)\cap\big(\fX_{1,l;k}^{\{0\}}(B)\!\times\!\Y\big).
\end{gather*}
By the  genus~1 regularity assumption on~$J$,
\BE{JfMspaces_e2} 
\ov\fM_{1,\h;\p}(B)\cap \fX_{1,\h;\p}^{\{0\}}(B)=\fM_{1,\h;\p}(B)\,.\EE\\

\noindent
For $\nu$ sufficiently small, $\ov\fM_{1,\h;\p}(B;\nu)$ is contained
in an arbitrarily small neighborhood~of 
$$\ov\fM_{1,\h;\p}(B)\subset \fX_{1,l;k}(B)\!\times\!\Y\,.$$
By the regularity of the subspace~\eref{JfMspaces_e}, 
there is a unique element of $\fM_{1,\h;\p}(B;\nu)$ near each element of~$\fM_{1,\h;\p}(B)$.
Thus, the difference between the numbers in~\eref{GWvsEnumR_e} is the number of elements 
of $\fM_{1,\h;\p}(B;\nu)$ that lie close~to
\BE{JfMspaces_e3} \ov\fM_{1,\h;\p}(B)-\fM_{1,\h;\p}(B)\subset 
 \fX_{1,\h;\p}(B)- \fX_{1,\h;\p}^{\{0\}}(B);\EE
the inclusion above holds by~\eref{JfMspaces_e2}.\\

\noindent
By the regularity assumption on~$J$, every map~$u$ in the subset in~\eref{JfMspaces_e3}
 is not constant on a single bubble component $\Si_u^*\!\approx\!\P^1$
of its domain~$\Si_u$.
The image of~$u$ is an embedded rational curve intersecting the pseudocycles $h_1,\ldots,h_l$
in distinct non-real points.
Since $u$ is a real map and $\Si_u^*$ shares only one node~$P_u$ with the remainder of~$\Si_u$,
$u|_{\Si_u^*}$ is a real map and $P_u$ is a non-isolated real node.
The complement of~$\Si_u^*$ in~$\Si_u$ contains at most one real marked point
(in addition to the node shared with~$\Si_u^*$) and 
no conjugate pairs of marked points.  
If $\Si_u\!-\!\Si_u^*$ contains one marked point, then
the restriction of~$u$ to $\Si_u^*$ determines an element of 
$$\fM_{0,\h;\p}(B)=\ov\fM_{0,\h;\p}(B)$$
with one of the real marked points distinguished
and the remaining components of~$\Si_u$ determine an element of~$\ov\cM_{1,0;2}$.
If $\Si_u\!-\!\Si_u^*$ contains no marked point, then
the restriction of~$u$ to $\Si_u^*$ determines an element of 
the preimage 
$$\fM_{0,\h;\p}'(B)\subset \fM_{0,l;k+1}(B)\!\times\!\Y$$
of $\fM_{0,\h;\p}(B)$ under the forgetful morphism
$$\ff\!\times\!\id_{\Y}\!: \fM_{0,l;k+1}(B)\!\times\!\Y \lra \fM_{0,l;k}(B)\!\times\!\Y$$
dropping the additional marked point.
The remaining components of~$\Si_u$ determine an element of~$\ov\cM_{1,0;1}$ in this case.\\

\noindent
Since the images~$\cC_u$ of the elements of the $g\!=\!0$ spaces in~\eref{JfMspaces_e}
are disjoint real embedded  curves in~$(X,\phi)$, 
the topological components of $\ov\fM_{0,\h;\p}'(B)$ correspond 
to the elements~$[\u]$ of the $g\!=\!0$ spaces in~\eref{JfMspaces_e} .
The evaluation
$$\ev_0\!: \ov\fM_{0,\h;\p}'(B)\lra X^{\phi}$$
at the distinguished real marked point associated with the node restricts 
to a diffeomorphism from each topological component of the domain 
to the real locus of the curve~$\cC_u$ corresponding to the given component.\\

\noindent
The left-hand side of~\eref{JfMspaces_e3} is stratified by the subspaces 
$\cU_{\cT;\h;\p}(B)$ of maps of a fixed combinatorial type~$\cT$.
By the above, every non-empty stratum $\cU_{\cT;\h;\p}(B)$ is of
the~form
\BE{cUcTdecomp_e1}\cU_{\cT;\h;\p}(B) \approx \cU_{\cT}\!\times\!\fM_{0,\h;\p}'(B),\EE
where $\cU_{\cT}$ is a stratum of $\ov\cM_{1,0;1}$, or 
of the~form
\BE{cUcTdecomp_e2}\cU_{\cT;\h;\p}(B) \approx \cU_{\cT}\!\times\!\fM_{0,\h;\p}(B),\EE
where $\cU_{\cT}$ is a stratum of $\ov\cM_{1,0;2}$.
In particular, the \sf{main} boundary strata are of the form
\BE{mncMcT_e}\cM_{1,0;1}\!\times\!\fM_{0,\h;\p}'(B) \qquad\hbox{and}\qquad
\cM_{1,0;2}\!\times\!\fM_{0,\h;\p}(B).\EE
Proposition~\ref{bdcontr_prp} below describes the signed number of elements 
of  $\fM_{1,\h;\p}(B;\nu)$ near each stratum~$\cU_{\cT;\h;\p}(B)$.\\

\noindent 
We denote by 
$$\bE^{\R},L_1^{\R}\lra \ov\cM_{1,0;1} \qquad\hbox{and}\qquad
\bE^{\R},L_1^{\R}\lra\ov\cM_{1,0;2}$$
the real parts of the Hodge line bundle of holomorphic differentials and
of the universal tangent line bundle at the marked point 
and their pullbacks by the forgetful morphism.
The real part of the tautological line bundle over~$\ov\cM_{1,0;2}$ for the first marked point
and~$L_1^{\R}$ are canonically identified over~$\cM_{1,0;2}$.
Let 
$$L_0^{\R}\lra\ov\fM_{0,0;\{\0\}}(B)
\qquad\hbox{and}\qquad
L_0^{\R}\lra \fM_{0,\h;\p}(B),\ov\fM_{0,\h;\p}'(B)$$
denote the real part of the universal tangent line bundle at the marked point
and its pullbacks by the projection map to the moduli space component
and the forgetful morphisms
(keeping only the marked point associated with the node).\\

\noindent
The homomorphism
$$s_1\!:L_1^{\R}\lra \big(\bE^{\R}\big)^*, \qquad
\big\{s_1(v)\big\}(\psi)=\psi(v),$$
of real line bundles over $\ov\cM_{1,0;1}$ and $\ov\cM_{1,0;2}$ is an isomorphism.
We define a bundle homomorphism over $\ov\fM_{0,0;\{\0\}}(B)$, 
$\fM_{0,\h;\p}(B)$, and $\ov\fM_{0,\h;\p}'(B)$~by
\BE{cD0Rdfn_e}\cD_0^{\R}\!: L_0^{\R} \lra \ev_0^*TX^{\phi}\,,
\qquad \cD_0^{\R}\big([\u,v]\big)=\tnd_{x_0(\u)}u(v),\EE
where $x_0(\u)$ is the distinguished marked point and $u$ is the map component of the tuple~$\u$.
The restriction of~\eref{cD0Rdfn_e} over the component of $\ov\fM_{0,\h;\p}'(B)$ 
corresponding to an embedded real curve~$\cC$ in~$(X,\phi)$ identifies~$L_0^{\R}$
with~$\ev_0^*T\cC^{\phi}$.
For each combinatorial type~$\cT$ as above, let $\cD_{\cT}$ be the bundle homomorphism
over~$\cU_{\cT;\h;\p}(B)$ given~by 
$$\cD_{\cT}\!=\!\pi_1^*s_1\!\otimes_{\R}\!\pi_2^*\cD_0^{\R}\!:\,
\pi_1^*L_1^{\R}\!\otimes_{\R}\!\pi_2^*L_0^{\R} \lra 
\pi_1^*(\bE^{\R})^*\!\otimes_{\R}\!\pi_2^*\ev_0^*TX^{\phi}$$
with respect to the decomposition~\eref{cUcTdecomp_e1} or~\eref{cUcTdecomp_e2}.\\

\noindent
For a generic choice of a bundle section~$\ov\nu_{\cT}$ of 
\BE{ObscTdfn_e}\pi_1^*(\bE^{\R})^*\!\otimes_{\R}\!\pi_2^*\ev_0^*TX^{\phi}\lra  
\ov\cU_{\cT;\h;\p}(B),\EE
the affine bundle map
\begin{gather*}
\al_{\cT,\ov\nu_{\cT}}\!: \pi_1^*L_1^{\R}\!\otimes_{\R}\!\pi_2^*L_0^{\R} 
\lra \pi_1^*(\bE^{\R})^*\!\otimes_{\R}\!\pi_2^*\ev_0^*TX^{\phi}, \\
\al_{\cT,\ov\nu_{\cT}}(\ups)
=\cD_{\cT}(\ups)+\ov\nu_{\cT}(\u)
\quad\forall~\ups\!\in\!\pi_1^*L_1^{\R}\!\otimes_{\R}\!\pi_2^*L_0^{\R}\big|_{\u},
~\u\!\in\!\cU_{\cT;\h;\p}(B),
\end{gather*}
over $\ov\cU_{\cT;\h;\p}(B)$ is transverse to the zero set.
This section thus has no zeros unless $\cT$ corresponds to one
of the main strata~\eref{mncMcT_e}.
Since $\cD_{\cT}$ is injective, $\al_{\cT,\ov\nu_{\cT}}$ has a finite number of transverse 
zeros over each main stratum.\\

\noindent
The affine bundle map $\al_{\cT,\ov\nu_{\cT}}$ can be viewed 
as a section $\al_{\cT,\ov\nu_{\cT}}'$ of the bundle
$$\pi_{\cT}^*\big(  \pi_1^*(\bE^{\R})^*\!\otimes_{\R}\!\pi_2^*\ev_0^*TX^{\phi} \big)
\lra \pi_1^*L_1^{\R}\!\otimes_{\R}\!\pi_2^*L_0^{\R}, $$
where
$$\pi_{\cT}\!: \pi_1^*L_1^{\R}\!\otimes_{\R}\!\pi_2^*L_0^{\R} \lra \ov\cU_{\cT;\h;\p}(B)$$
is the bundle projection map. 
By our assumptions, 
the choice of a real orientation on $(X,\om,\phi)$ used to define the number~\eref{Jnucount_e}
orients the uncompactified moduli space $\fM_{1,\h;\p}(B)$. 
Since the domain and target of~$\al_{\cT,\ov\nu_{\cT}}$ form a deformation-obstruction complex 
for the oriented moduli space $\ov\fM_{1,\h;\p}(B)$ over the main strata $\cU_{\cT;\h;\p}(B)$
and all zeros of $\al_{\cT,\ov\nu_{\cT}}'$ are contained in
$$\pi_{\cT}^*\big(  \pi_1^*(\bE^{\R})^*\!\otimes_{\R}\!\pi_2^*\ev_0^*TX^{\phi} \big)
\big|_{\pi_1^*L_1^{\R}\otimes_{\R}\pi_2^*L_0^{\R}|_{\cU_{\cT;\h;\p}(B)}}
-\pi_1^*L_1^{\R}\!\otimes_{\R}\!\pi_2^*L_0^{\R}\,,$$
the orientation on $\fM_{1,\h;\p}(B)$ determines a sign for each zero of~$\al_{\cT,\ov\nu_{\cT}}'$
and $\al_{\cT,\ov\nu_{\cT}}$
(these are the same).
We denote the resulting weighted cardinality of $\al_{\cT,\ov\nu_{\cT}}^{-1}(0)$
by~$N(\cD_{\cT},\ov\nu_{\cT})$.\\

\noindent
As described above \cite[(3.14)]{g1comp2}, a perturbation~$\nu$ as above determines
a section~$\ov\nu_{\cT}$ of the complex analogue of the bundle~\eref{ObscTdfn_e}.
If $\nu$ is $\phi$-invariant, then  $\ov\nu_{\cT}$ is a section of~\eref{ObscTdfn_e}.
If $\nu$ is generic, then  $\al_{\cT,\ov\nu_{\cT}}$ is transverse to
the zero set for every~$\cT$.

\begin{prp}\label{bdcontr_prp}
There exists an non-empty subspace of $\phi$-invariant perturbations~$\nu$
satisfying the following property.
For each boundary stratum $\cU_{\cT;\h;\p}(B)$ of $\ov\fM_{1,\h;\p}(B)$, there exist 
$\cC_{\cT}(\dbar_J)\!\in\!\Q$ and a compact subset $K_{\nu}$ of $\cU_{\cT;\h;\p}(B)$ with the following property.
For every compact subset $K$ of $\cU_{\cT;\h;\p}(B)$ and 
open subset~$U$ of $\fX_{1,\h;\p}(B)$,
there exist an open neighborhood $U_{\nu}(K)$ of $K$ in $\fX_{1,\h;\p}(B)$
and $\eps_{\nu}(U)\!\in\!\R^+$, respectively, such that
$$^{\pm}\big|\fM_{1,\h;\p}(B;t\nu)\!\cap\!U\big|
=\cC_{\cT}(\dbar_J)
\quad\hbox{if}~~
t\!\in\!(0,\eps_{\nu}(U)),~K_{\nu}\!\subset\! K\!\subset\! U\!\subset\! U_{\nu}(K).$$
Furthermore, $\cC_{\cT}(\dbar_J)\!=\!N(\cD_{\cT},\ov\nu_{\cT})$
if $\cU_{\cT;\h;\p}(B)$ is a main stratum and $\cC_{\cT}(\dbar_J)\!=\!0$ otherwise.
\end{prp}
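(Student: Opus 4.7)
The plan is to adapt the gluing and obstruction analysis of \cite[Sections~3-4]{g1comp2} from the complex to the real $\phi$-equivariant setting. For $[\u]\in\cU_{\cT;\h;\p}(B)$, the domain decomposes as $\Si_u=\Si_{\tn{gh}}\cup\Si_u^*$, where $\Si_{\tn{gh}}$ is the genus~1 ghost tree meeting the degree-carrying bubble $\Si_u^*$ at a real node~$P_u$. I would parameterize an $L^p_1$-neighborhood of~$[\u]$ in $\fX_{1,l;k}(B)$ by smoothing parameters in $\pi_1^*L_1\otimes\pi_2^*L_0$ together with deformations of~$\u$, restrict to the $\phi$-invariant locus so that the smoothing parameter takes values in $\pi_1^*L_1^{\R}\otimes\pi_2^*L_0^{\R}$, and perform a $\phi$-equivariant pregluing producing approximate $(J,t\nu)$-holomorphic maps for small $t>0$. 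This produces the open neighborhood $U_\nu(K)$ required by the statement, with $K_\nu$ arising as the image of a sufficiently large compact set in the parameter space.

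The next step is to identify the leading-order obstruction. The ``kernel'' directions on $\Si_{\tn{gh}}$ are controlled by holomorphic differentials (hence by~$\bE^{\R}$), while the small-parameter Newton iteration picks up the first nontrivial term from the derivative $\cD_0^{\R}$ at the node~$P_u$. Taking $\phi$-fixed parts identifies the obstruction to extending the pregluing to a genuine $(J,t\nu)$-holomorphic map with $\al_{\cT,\ov\nu_\cT}$; the $\phi$-invariance of~$\nu$ is exactly what makes~$\ov\nu_\cT$ a section of the real bundle~\eref{ObscTdfn_e}. For generic~$\nu$, standard Newton iteration estimates give a bijection, up to terms of order $o(t)$, between $\fM_{1,\h;\p}(B;t\nu)\cap U$ near $K$ and the transverse zeros of $\al_{\cT,\ov\nu_\cT}$ in the preimage of~$U$. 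On a non-main stratum, the injectivity of~$\cD_\cT$ combined with the strictly smaller dimension of $\cU_{\cT;\h;\p}(B)$ forces $\al_{\cT,\ov\nu_\cT}$ to be nowhere zero for generic choice of~$\ov\nu_\cT$, whence $\cC_\cT(\dbar_J)=0$. On a main stratum of the form~\eref{mncMcT_e}, the count is by construction $N(\cD_\cT,\ov\nu_\cT)$.

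The orientation on $\fM_{1,\h;\p}(B;\nu)$ induced by the real orientation on $(X,\om,\phi)$ through \cite[Theorem~1.4]{RealGWsI} is carried by the gluing identification of the previous step to the orientation on $\al_{\cT,\ov\nu_\cT}^{-1}(0)$ used to define $N(\cD_\cT,\ov\nu_\cT)$, which pins down the sign comparison. The main obstacle is executing the refined genus~1 gluing theorem $\phi$-equivariantly with enough precision to isolate $\al_{\cT,\ov\nu_\cT}$ as the leading obstruction rather than as one among many terms of comparable size. In the complex setting this is the content of \cite[Sections~3-4]{g1comp2}; here the additional work lies in verifying that pregluing, the parameterization of the approximate kernel, the Newton iteration, and the bounds on higher-order terms all commute with the involutions on the domain and target. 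Since $\nu$ is $\phi$-invariant and $J\in\cJ_\om^\phi$, these compatibilities reduce to taking $\phi$-fixed subspaces of the complex constructions, and should follow once the complex analysis is assembled with sufficient care to survive passage to real subbundles.
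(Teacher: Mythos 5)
Your proposal matches the paper's approach: the paper's entire proof is the two-sentence observation that this is the real case of \cite[Proposition~3.1]{g1comp2}, obtained by restricting that proof to the space of real (i.e.~$\phi$-invariant) parameters. Your outline of the $\phi$-equivariant pregluing, identification of $\al_{\cT,\ov\nu_\cT}$ as the leading obstruction, and the dimension argument for non-main strata is exactly what that restriction entails, so the proposal is correct and on the same route.
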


\noindent
This is the real case of \cite[Proposition~3.1]{g1comp2}.
It is obtained by restricting the proof in~\cite{g1comp2} to the space of real parameters.\\

\noindent
It remains to compute the number $N(\cD_{\cT},\ov\nu_{\cT})$ for each of
the main strata~\eref{mncMcT_e}.
Since the section~$\cD_{\cT}$ is injective, the zeros of $\al_{\cT,\ov\nu_{\cT}}$ 
correspond to the zeros of the transverse bundle section~$\ov\nu_{\cT}'$~of
\BE{Qbndl_e}
\pi_1^*(\bE^{\R})^*\!\otimes_{\R}\!\pi_2^*\ev_0^*TX^{\phi} \big/\Im\cD_{\cT}
\approx \pi_1^*L_1^{\R}\!\otimes_{\R}\!\pi_2^*
\big(\ev_0^*TX^{\phi}/\Im\cD_0\big)\EE
obtained by composting~$\ov\nu_{\cT}'$ with the projection~map;
see \cite[Section~3.3]{g2n2and3}.\\

\noindent
Each topological component of the closure $\ov\cU_{\cT;\h;\p}(B)$  of the first space 
in~\eref{mncMcT_e} is  $S^1\!\times\!S^1$,
with the circles coming from each factor.
By the proof of \cite[Theorem~1.5]{RealGWsI}, the orientation on $\fM_{1,\h;\p}(B)$
extends to an orientation over $\ov\cU_{\cT;\h;\p}(B)$.   
This orientation and an orientation on $\ov\cU_{\cT;\h;\p}(B)$ induce an orientation
on the target vector bundle in~\eref{Qbndl_e}.
The number $N(\cD_{\cT},\ov\nu_{\cT})$ is the signed number of zeros of 
the section~$\ov\nu_{\cT}'$ with respect to the chosen orientation on  $\ov\cU_{\cT;\h;\p}(B)$
and the induced orientation on this vector bundle, i.e.
\BE{Qbndl_e1}N(\cD_{\cT},\ov\nu_{\cT})=
\blr{e\big(\pi_1^*L_1^{\R}\!\otimes_{\R}\!\pi_2^*(\ev_0^*TX^{\phi}/\Im\cD_0)\big),
\big[\ov\cU_{\cT;\h;\p}(B)\big]}.\EE
Since the restriction of $\ev_0^*TX^{\phi}/\Im\cD_0$ to the component of
 $\ov\fM_{0,\h;\p}'(B)$ corresponding to a curve $\cC_u\!\subset\!X$
is the normal bundle of $\cC_u^{\phi}$ in~$X^{\phi}$ and is thus orientable,
the number~\eref{Qbndl_e1}  is zero in this case.\\

\noindent
Each topological component of the closure of the second space in~\eref{mncMcT_e} is~$\ov\cM_{1,0;2}$
and
\BE{Qbndl_e2}\pi_1^*L_1^{\R}\!\otimes_{\R}\!\pi_2^*
\big(\ev_0^*TX^{\phi}/\Im\cD_0\big)\approx 
L_1^{\R}\!\oplus\!L_1^{\R}\lra \ov\cM_{1,0;2}\EE
in this case.
Since $L_1^{\R}$ is the pullback of a bundle over $\ov\cM_{1,0;1}$ in our setup, 
there is an open subset of sections~$\ov\nu_{\cT}'$ of~\eref{Qbndl_e2}
that have no zeros.
Thus, $\nu$ can be chosen so that the section $\al_{\cT,\ov\nu_{\cT}}$ has no 
zeros and so $N(\cD_{\cT},\ov\nu_{\cT})\!=\!0$  in this case as~well.\\

\noindent
In summary, the difference between the two numbers in~\eref{GWvsEnumR_e} 
is the sum of the numbers $N(\cD_{\cT},\ov\nu_{\cT})$ corresponding
to the main boundary strata of $\ov\fM_{1,\h;\p}(B)$.
By the last two paragraphs, each of these numbers   $N(\cD_{\cT},\ov\nu_{\cT})$ is zero.
This establishes~\eref{GWvsEnumR_e} and Theorem~\ref{g1EG_thm}.

\section{Equivariant localization}
\label{EquivLoc_sec}

\noindent
Throughout this section, $n\!\in\!\Z^+$, $m$ is the integer part of~$n/2$, and
$$[n]=\{1,\ldots,n\}.$$
We denote by~$\phi$ either the involution~$\tau_n'$  on~$\P^{n-1}$
given by~\eref{tauprdfn_e} or the involution~$\eta_n$  on~$\P^{n-1}$ with $n\!=\!2m$
given by~\eref{tauetadfn_e}.
The restriction of the standard $\bT^n$-action on~$\P^{n-1}$
to a subtorus $\bT^m\!\subset\!\bT^n$ commutes with the involution~$\phi$;
see Section~\ref{EquivSetup_subs}.
This restriction thus induces a $\bT^m$-action on the moduli space $\ov\fM_{g,l}(\P^{n-1},d)^{\phi}$ 
of genus~$g$ degree~$d$ real $J_0$-holomorphic maps into~$(\P^{n-1},\phi)$ with 
$l$~pairs of conjugate marked points.
We describe the fixed loci of this action and their normal bundles, 
as  real vector bundles, in Sections~\ref{FixedLoci_subs} and~\ref{FLCsign_subs}.
By Proposition~\ref{CIorient_prp}, $(\P^{n-1},\phi)$ admits a real orientation 
if $(n,\phi)\!=\!(2m,\tau_n')$ or $(n,\phi)\!=\!(4m,\eta_n)$.
By \cite[Theorem~1.3]{RealGWsI}, $\ov\fM_{g,l}(\P^{n-1},d)^{\phi}$  is orientable in these cases
and is oriented by a real orientation on~$(\P^{n-1},\phi)$.
We determine  the orientations on the normal bundles to the fixed loci with respect to 
the real orientations of Section~\ref{RealOrientCI_subs} in Section~\ref{NBsign_subs}.
The equivariant localization data provided by Theorem~\ref{EquivLocal_thm}
determines the real GW-invariants 
of $(\P^{2m-1},\om_{2n},\tau_{2m})$ and $(\P^{4m-1},\om_{4m},\eta_{4m})$ and exhibits 
the two types of vanishing phenomena described in \cite[Sections~3.2,3.3]{Wal}.
We illustrate their use in Section~\ref{EquivLocApp_subs}.\\

\noindent
We also describe equivariant localization data related to the real GW-invariants
of the complete intersections $X_{n;\a}\!\subset\!\P^{n-1}$ preserved by~$\phi$.
Such a complete intersection is the zero set of a transverse holomorphic section~$s_{n;\a}$ 
of the vector bundle~\eref{cLdfn_e}
satisfying $s_{n;\a}\!\circ\!\phi\!=\!\wt\phi_{n;\a}\!\circ\!s_{n;\a}$
for a conjugation~$\wt\phi_{n;\a}$ lifting~$\phi$.
The latter is necessarily a direct sum of the conjugations~$\wt\phi_{n;1}^{(a)}$ 
and~$\wt\phi_{n;1,1}^{(a)}$ described in Section~\ref{RealOrientMfld_subs}. 
Let $\phi_{n;\a}\!=\!\phi|_{X_{n;\a}}$.
If $(n,\a)$ and~$\phi$ satisfy the assumptions of Proposition~\ref{CIorient_prp}
(with $\tau_n$ replaced by~$\tau_n'$ in the first case), 
then $(X_{n;\a},\phi_{n;\a})$ admits a real orientation.
By \cite[Theorem~1.3]{RealGWsI}, $\ov\fM_{g,l}(X_{n;\a},d)^{\phi_{n;\a}}$  is then orientable
if $n\!-\!k\!\in\!2\Z$ (so that the complex dimension of~$X_{n;\a}$ is odd)
and is oriented by a real orientation on~$(X_{n;\a},\phi_{n;\a})$.\\

\noindent
Whenever defined, the real  GW-invariants of $(X_{n;\a},\om_{2n;\a},\phi_{n;\a})$ 
are expected to be related
to the ``fibration''
\BE{cVdfn_e}
\pi_{n;\a}^{\wt\phi_{n;\a}}\!:
\cV_{n;\a}^{\wt\phi_{n;\a}}\equiv
\ov\fM_{g,l}\big(\cL_{n;\a},d\big)^{\wt\phi_{n;\a}}\lra  \ov\fM_{g,l}(\P^{n-1},d)^{\phi}\,.\EE
The $\bT^m$-action on the base of this ``fibration'' lifts canonically to the total space.
The restriction of~\eref{cVdfn_e} to a certain open subspace
\BE{fM0dfn_e}\fM_{g,l}^{\star}(\P^{n-1},d)^{\phi} \subset \ov\fM_{g,l}(\P^{n-1},d)^{\phi}\EE
is a vector orbi-bundle of the expected rank, but the dimensions of the fibers of~\eref{cVdfn_e}
are higher over the complement of~\eref{fM0dfn_e}.
Under the assumptions of Proposition~\ref{CIorient_prp}, 
the orientation systems of the restrictions of~$\cV_{n;\a}^{\wt\phi_{n;\a}}$ and
$T\ov\fM_{g,l}(\P^{n-1},d)^{\phi}$ to $\fM_{g,l}^{\star}(\P^{n-1},d)^{\phi}$ are the same.
The section~$s_{n;\a}$ and the real orientation on~$(X_{n;\a},\phi_{n;\a})$
described in Section~\ref{RealOrientCI_subs} determine a homotopy class of isomorphisms between
these two systems and a~class
\BE{PDeVdfn_e}
\e\big(\cV_{n;\a}^{\wt\phi_{n;\a}}\big)\in 
 H_{\bT^m}^*\big(\fM_{g,l}^{\star}(\P^{n-1},d)^{\phi};\fO\big)\EE
in the $\bT^m$-equivariant cohomology with coefficients in the orientation system 
of $\ov\fM_{g,l}(\P^{n-1},d)^{\phi}$.\\

\noindent
For each $i\!=\!1,\ldots,l$, let 
$$\psi_i\in H^2\big(\ov\fM_{g,l}(\P^{n-1},d)^{\phi};\Q\big)$$
denote the first Chern class of the universal cotangent line bundle associated with
the first marked point in the $i$-th conjugate pair.
In Section~\ref{LocData_subs}, we describe the equivariant localization contributions~to
\BE{PDeVdfn_e2}
\int_{[\ov\fM_{g,l}(\P^{n-1},d)^{\phi}]^{\vrt}}
\prod_{i=1}^l\!\!\big(\psi_i^{b_i}\ev_i^*\x^{p_i}\big)\, \e\big(\cV_{n;\a}^{\wt\phi_{n;\a}}\big)
\in H_{\bT^m}^*\EE
for any extension of~\eref{PDeVdfn_e} to a class 
$$\e\big(\cV_{n;\a}^{\wt\phi_{n;\a}}\big)\in 
 H_{\bT^m}^*\big(\ov\fM_{g,l}(\P^{n-1},d)^{\phi};\fO\big)$$
from the $\bT^m$-fixed loci contained in $\fM_{g,l}^{\star}(\P^{n-1},d)^{\phi}$;
all such loci consist of real maps with no contracted positive-genus components
(unless $k\!=\!0$, i.e.~$X_{n;\a}\!=\!\P^{n-1}$).
For $(n,\a)\!=\!(5,(5))$ and $\phi\!=\!\tau_n'$, our conclusion specializes to~\cite[(3.22)]{Wal}.
The equivariant contributions to~\eref{PDeVdfn_e2} described by Theorem~\ref{EquivLocal_thm} vanish
whenever the orientation on $\ov\fM_{g,l}(X_{n;\a},d)^{\phi_{n;\a}}$ 
induced by the canonical real orientation  depends 
on the implicit choices made in Section~\ref{RealOrientCI_subs};
see Section~\ref{CROprop_subs}.\\

\noindent
If $k\!=\!0$ or $g\!=\!0$, the subspace in~\eref{fM0dfn_e} is the entire moduli space. 
In the first case,  $\e(\cV_{n;\a})\!=\!1$.
In the second case, the class~\eref{PDeVdfn_e} relates
the $g\!=\!0$ real GW-invariants of $(X_{n;\a},\om_{n;\a},\phi_{n;\a})$ to $(\P^{n-1},\om_n,\phi)$,
as shown in the proof of \cite[Theorem~3]{PSW} for $(n,\a)\!=\!(5,(5))$ and $\phi\!=\!\tau_n'$.
If $g\!\ge\!1$ and $k\!\ge\!1$, the subspace in~\eref{fM0dfn_e} is not even dense in the entire moduli space.
By \cite[Theorem~1.1]{g1cone} and \cite[Theorem~1.1]{LZ},
the natural extension of the complex analogue of the $g\!=\!1$ case of~\eref{PDeVdfn_e} is 
a class on the closure 
$$ \ov\fM_{1,l}^0(\P^{n-1},d) \subset \ov\fM_{1,l}(\P^{n-1},d)$$
of the subspace corresponding to~\eref{fM0dfn_e} and relates
the reduced genus~1 (complex) GW-invariants of~$X_{n;\a}$ defined in~\cite{g1comp2} to $(\P^{n-1},\om_n)$.
By \cite[Theorem~1A]{g1diff}, the difference between the standard and reduced
GW-invariants of~$X_{n;\a}$ is a combination of the genus~0 GW-invariants.
The same should be the case in the real setting and in higher genera.
The proof of \cite[Theorem~1.1]{g1comp2} in fact suggests that the real analogues of 
the reduced genus~1 GW-invariants are equal to the genus~1 real GW-invariants of $(X,\om,\phi)$
defined in~\cite{RealGWsI} if $\dim_{\R}X\!=\!6$.
This is consistent with the prediction of \cite[Section~3.4]{Wal} that 
the genus~1 real GW-invariants of $(X_{n;\a},\om_{n;\a},\tau_{n;\a}')$ are obtained
from the equivariant contributions of the $\bT^m$-fixed loci in $\fM_{g,l}^{\star}(\P^{n-1},d)^{\phi}$ 
described in Section~\ref{LocData_subs}.

\subsection{Equivariant setting}
\label{EquivSetup_subs}

\noindent
Let $\bT^n$ denote the $n$-torus $(S^1)^n$ or its complex analogue~$(\C^*)^n$.
The $\bT^n$-quotient of its classifying space~$E\bT^n$ is $B\bT^n\!=\!(\P^{\i})^n$.
Thus, \sf{the group cohomology of~$\bT^n$} is
$$H_{\bT^n}^*\equiv H^*(B\bT^n;\Q)=\Q\big[\al_1,\ldots,\al_n\big],$$
where $\al_i\!\equiv\!\pi_i^*c_1(\ga_{\i}^*)$,
$\ga_{\i}\!\lra\!\P^{\i}$ is the tautological line bundle, and 
$$\pi_i\!: (\P^{\i})^n \lra \P^{\i}$$ 
is the projection to the $i$-th component.
We will call $\al_1,\ldots,\al_n$
\sf{the weights of the standard representation of~$\bT^n$ on~$\C^n$}. 
Let
$$\cH^*_{\bT^n}\approx \Q\big(\al_1,\ldots,\al_n\big)$$
denote the field of fractions of $H^*_{\bT^n}$.\\

\noindent
We denote the \sf{$\bT^n$-equivariant $\Q$-cohomology} of a topological space~$M$
with a $\bT^n$-action, i.e.~the cohomology~of
$$B_{\bT^n}M\equiv E\bT^n\!\times_{\bT^n}\!M\,,$$
by~$H_{\bT^n}^*(M)$.
The projection $B_{\bT^n}M\!\lra\!B\bT^n$ induces an action of~$H_{\bT^n}^*$
on $H_{\bT^n}^*(M)$.
Define
$$\cH^*_{\bT^n}(M)=\cH_{\bT^n}^*\!\otimes_{H_{\bT^n}^*}\!H_{\bT^n}^*(M)\,.$$
If $\bT^n$ acts trivially on~$M$, then 
$$B_{\bT^n}M= B\bT^n\!\times\!M, \qquad 
\cH^*_{\bT^n}(M)=\cH^*_{\bT^n}\!\otimes\!H^*(M;\Q);$$
the last identification is on the level of $\cH_{\bT^n}^*$-algebras.
For an oriented vector bundle $V\!\lra\!M$ with a $\bT^n$-action lifting
the action on~$M$, let
$$ \e(V)\equiv e\big(B_{\bT^n}V\big)\in H_{\bT^n}^*(M)$$ 
denote the \sf{equivariant Euler class of}~$V$.
For a complex bundle $V\!\lra\!M$ with a $\bT^n$-action lifting
the action on~$M$, let
$$ \c(V)\equiv c\big(B_{\bT^n}V\big)\in H_{\bT^n}^*(M)$$ 
denote the \sf{equivariant Chern class of}~$V$.
A continuous $\bT^n$-equivariant map $f\!:M'\!\lra\!M$ between two topological spaces
induces a homomorphism
$$f^*\!: H_{\bT}^*(M) \lra H_{\bT}^*(M').$$
The equivariant Euler and Chern classes are natural with respect to such maps.\\

\noindent
The standard action of $\bT^n$ on $\C^n$,
$$\big(u_1,\ldots,u_n\big)\cdot \big(z_1,\ldots,z_n\big)
= \big(u_1z_1,\ldots,u_nz_n\big)$$
descends to a $\bT^n$-action on $\P^{n-1}$.
The latter has $n$ fixed points,
\BE{fixedpt_e} 
P_1\equiv[1,0,\ldots,0], \qquad\ldots,\qquad P_n\equiv[0,\ldots,0,1].\EE
The curves preserved by this action are the lines through the fixed points,
\BE{P1ijdfn_e}\P_{ij}^1\equiv
\big\{[Z_1,\ldots,Z_n]\!\in\!\P^{n-1}\!:\,
Z_k\!=\!0~~\forall\,k\!\neq\!i,j\big\}, \qquad i,j\!\in\![n],~i\!\neq\!j.\EE
The product of the standard $\bT^n$-actions on~$\P^{n-1}$ and~$\C^n$ restricts 
to a $\bT^n$-action on the tautological line bundle
$$\ga_{n-1}\subset \P^{n-1}\!\times\!\C^n$$
as in~\eref{gandfn_e} and induces $\bT^n$-actions on the holomorphic line bundles
$$\cO_{\P^{n-1}}(a)\equiv \big(\ga_{n-1}^*\big)^{\otimes a}, \qquad \forall~a\!\in\!\Z.$$
Let
$$\x\equiv\e(\ga_{n-1}^*)\in H_{\bT^n}^*(\P^{n-1})$$
denote the equivariant hyperplane class.
The equivariant restrictions of $\x$ and $\c(T\P^{n-1})$ to 
the fixed points~\eref{fixedpt_e} are described~by
\BE{EquivRestr_e} \x|_{P_i}=\al_i\in H_{\bT^n}^*=H_{\bT^n}^*(P_i), \quad
\c(T\P^{n-1})\big|_{P_i}=\prod_{k\neq i}\!\big(1+\al_i\!-\!\al_k\big) 
\qquad\forall~i\!\in\![n].\EE
The first identity above follows from the definition of~$\al_i$.
The second identity follows from the homomorphisms~$f$ and~$g$ in 
the short exact sequence~\eref{Pnses_e} being $\bT^n$-equivariant
with respect to the $\bT^n$-action on the middle term obtained by tensoring the standard action
on~$\ga_{n-1}^*$ with the standard action on~$\C^n$.\\

\noindent
With notation as in~\eref{baridfn_e},
\BE{phiPact_e}  \phi(P_i)=P_{\phi(i)}\,.\EE
Denote by $\la_1,\ldots,\la_m$ the weights of the standard representation
of~$\bT^m$ on~$\C^m$.
The embedding 
\BE{TmTn_e}\io\!:\bT^m\lra\bT^n, \qquad 
\io\big(u_1,u_2,\ldots,u_m\big)= 
\begin{cases} (u_1,\bar{u}_1,\ldots,u_m,\bar{u}_m),&\hbox{if}~n\!=\!2m;\\
(u_1,\bar{u}_1,\ldots,u_m,\bar{u}_m,1),&\hbox{if}~n\!=\!2m\!+\!1;
\end{cases}\EE
induces a $\bT^m$-action on $\P^{n-1}$ that commutes with the involution~$\phi$. 
Under this embedding,
\BE{sp_weights_e}
(\al_1,\ldots,\al_n)\big|_{\bT^m}=
\begin{cases}
(\la_1,-\la_1,\ldots,\la_m,-\la_m), &\hbox{if}~n\!=\!2m;\\
(\la_1,-\la_1,\ldots,\la_m,-\la_m,0),&\hbox{if}~n\!=\!2m\!+\!1.
\end{cases}\EE\\

\noindent
A subspace $Y\!\subset\!\P^{n-1}$ is called \sf{$\phi$-invariant}
(\sf{$\bT^m$-invariant}) if $\phi(Y)\!=\!Y$ ($u(Y)\!=\!Y$ for all $u\!\in\!\bT^m$);
$Y$~is called \sf{$\phi$-fixed} (\sf{$\bT^m$-fixed}) 
if $\phi(y)\!=\!y$ for all $y\!\in\!Y$ ($u(y)\!=\!y$ for all $y\!\in\!Y$ and $u\!\in\!\bT^m$).
The next lemma describes the $\bT^m$- and $\phi$-fixed and invariant
zero- and one-dimensional subspaces of~$\P^{n-1}$. 
If $n\!=\!2m\!+\!1$, $1\!\le\!i\!\le\!m$, and $a,b\!\in\!\C^*$, then
\BE{cCidfn_e}\cC_i(a,b)\equiv\big\{[z_1,\ldots,z_n]\in\P^n\!:~
z_k\!=\!0~~\forall\,k\!\neq\!2i\!-\!1,2i,n,~
az_{2i-1}z_{2i}\!-\!bz_n^2\!=\!0\big\}\EE
is a smooth conic contained in the plane $\P^2_{2i-1,2i,n}$ spanned by $P_{2i-1},P_{2i},P_n$
and passing through~$P_{2i-1}$ and~$P_{2i}$.

\begin{lmm}[{\cite[Lemma~3.1]{bcov0_ci}}]\label{FixedCurves_lmm}
Suppose $n\!\in\!\Z^+$ and $m\!=\!\flr{n/2}$.
If $n\!=\!2m$, let $\phi$ be either the involution~$\tau_n'$ or~$\eta_{2m}$ on~$\P^{n-1}$;
if $n\!=\!2m\!+\!1$, let $\phi\!=\!\tau_n'$.
\begin{enumerate}[label=(\arabic*),leftmargin=*]

\item The $\bT^m$-fixed points in $\P^{n-1}$ are the  points $P_i$ in~\eref{fixedpt_e}
with $i\!\in\![n]$.

\item\label{Tcurves_it} If $n\!=\!2m$, the $\bT^m$-invariant irreducible curves in $\P^{n-1}$ 
are the lines $\P_{ij}^1$ as in~\eref{P1ijdfn_e}.
If $n\!=\!2m\!+\!1$, the $\bT^m$-invariant irreducible curves in $\P^{n-1}$ 
are the lines $\P_{ij}^1$ with $1\!\le\!i\!\neq\!j\!\le\!n$
and the conics $\cC_i(a,b)$ as in~\eref{cCidfn_e} with $a,b\!\in\!\C^*$.

\item\label{TRcurves_it} 
If $n\!=\!2m$, the $\phi$-invariant $\bT^m$-invariant irreducible curves in $\P^{n-1}$
are the lines $\P_{2i-1,2i}^1$ with $1\!\le\!i\!\le\!m$.
If $n\!=\!2m\!+\!1$, the $\phi$-invariant $\bT^m$-invariant irreducible curves 
in $\P^{n-1}$ are the lines $\P_{2i-1,2i}^1$ with $1\!\le\!i\!\le\!m$
and the conics $\cC_i(a,b)$ with $1\!\le\!i\!\le\!m$ and $a,b\!\in\!\C^*$ such that 
$a\bar{b}\!\in\!\R$.

\end{enumerate}
\end{lmm}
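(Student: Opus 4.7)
The plan is to extract everything from the action of $\bT^m$ on the homogeneous coordinates, whose weights are listed in~\eref{sp_weights_e}. Writing these weights as $w_1,\ldots,w_n\in H^2(B\bT^m;\Z)$, I observe that they are pairwise distinct: $\pm\la_1,\ldots,\pm\la_m$ are distinct nonzero elements of the lattice $\Z\la_1\!\oplus\!\cdots\!\oplus\!\Z\la_m$, and the additional weight $0$ in the case $n\!=\!2m\!+\!1$ is also distinct. For part~(1), a point $[Z_1,\ldots,Z_n]$ is $\bT^m$-fixed iff $\bT^m$ acts on it through a single character, which, given the pairwise distinctness of the $w_i$, forces at most one $Z_i$ to be nonzero; hence the fixed points are precisely $P_1,\ldots,P_n$.

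For part~(2), I would classify $\bT^m$-invariant irreducible curves by looking at orbit closures of non-fixed points. If $I\!\subset\![n]$ is the set of indices on which $[Z_1,\ldots,Z_n]$ has nonzero entries, the $\bT^m$-orbit lies in $\P^{|I|-1}$ parametrized by $I$, and its dimension equals the rank of the lattice generated by the differences $\{w_i\!-\!w_{i_0}\!:\,i\!\in\!I\}$ (for a choice of base $i_0\!\in\!I$). An irreducible $\bT^m$-invariant curve arises when this rank equals~$1$. For $|I|\!=\!2$ this is automatic (the differences are nonzero), giving the lines $\P^1_{ij}$ of~\eref{P1ijdfn_e}. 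For $|I|\!\ge\!3$ I need all pairwise differences of the weights to lie in a rank-$1$ sublattice of $\Z\la_1\!\oplus\!\cdots\!\oplus\!\Z\la_m$; a short case-check shows the only triples of weights with this property are $\{\la_i,-\la_i,0\}$, which occur only in the $n\!=\!2m\!+\!1$ case and give orbit closures in the plane $\P^2_{2i-1,2i,n}$. Parametrizing such an orbit as $[uz_{2i-1},u^{-1}z_{2i},z_n]$ and eliminating $u$ yields the defining equation $aZ_{2i-1}Z_{2i}\!=\!bZ_n^2$, exhibiting the orbit closure as the conic $\cC_i(a,b)$ of~\eref{cCidfn_e} with $(a,b)\!=\!(z_n^2,z_{2i-1}z_{2i})$.

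For part~(3), I would intersect the list from~(2) with the condition of $\phi$-invariance, using~\eref{phiPact_e} and the explicit formula for $\phi$ in~\eref{baridfn_e}. The permutation $\phi$ interchanges $(2k\!-\!1)\!\leftrightarrow\!(2k)$ for each $k\!\le\!m$ and fixes $n$ when $n$ is odd. A line $\P^1_{ij}$ is thus $\phi$-invariant iff $\{\phi(i),\phi(j)\}\!=\!\{i,j\}$; inspecting cases shows this forces $\{i,j\}\!=\!\{2k\!-\!1,2k\}$ for some $k\!\le\!m$ (the potential case $\{i,n\}$ for $n$ odd fails since $\phi(i)\!\neq\!i$ for $i\!<\!n$). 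For the conics $\cC_i(a,b)$, $\phi\!=\!\tau_n'$ preserves the ambient plane $\P^2_{2i-1,2i,n}$ and acts there by $[Z_{2i-1},Z_{2i},Z_n]\!\mapsto\![\bar Z_{2i},\bar Z_{2i-1},\bar Z_n]$; substituting into $aZ_{2i-1}Z_{2i}\!-\!bZ_n^2\!=\!0$ yields $\bar aZ_{2i-1}Z_{2i}\!-\!\bar bZ_n^2$, so $\phi(\cC_i(a,b))\!=\!\cC_i(\bar a,\bar b)$, and the equality $\cC_i(a,b)\!=\!\cC_i(\bar a,\bar b)$ holds exactly when $(\bar a,\bar b)\!\in\!\C^*\!\cdot\!(a,b)$, i.e.\ when $a\bar b\!\in\!\R$.

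The only nontrivial step is the lattice/rank analysis in part~(2) ruling out larger invariant orbit closures for $n\!=\!2m\!+\!1$; I expect this to reduce to checking a few small combinations of signs among $\la_1,\ldots,\la_m$, and the remaining steps are then straightforward case analysis from the formulas~\eref{baridfn_e}, \eref{tauprdfn_e}, and~\eref{tauetadfn_e}.
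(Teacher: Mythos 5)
The paper does not give its own proof of this lemma; it is imported as \cite[Lemma~3.1]{bcov0_ci}, so there is no internal argument to compare against. Your proof is correct and is the natural one. The pairwise distinctness of the restricted weights in \eref{sp_weights_e} gives part~(1) immediately. For part~(2), reducing to the closure of a one-dimensional $\bT^m$-orbit and computing the orbit dimension as the rank of the difference lattice of the weights over the support $I$ is exactly the right move; your analysis correctly isolates $\{\la_i,-\la_i,0\}$ as the only admissible triple (available only for $n=2m+1$), rules out $|I|\ge 4$ (any four of the weights contain a bad triple, and the rank over a subset bounds the rank over the whole support from below), and the parametrization $[uz_{2i-1},u^{-1}z_{2i},z_n]$ with elimination of $u$ recovers $\cC_i(z_n^2,z_{2i-1}z_{2i})$ consistently with~\eref{cCidfn_e}. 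For part~(3), the line analysis via \eref{baridfn_e} and~\eref{phiPact_e} (which covers $\tau_n'$ and $\eta_{2m}$ simultaneously, since the underlying index permutation is the same), the identity $\phi(\cC_i(a,b))=\cC_i(\bar a,\bar b)$, and the equivalence of $(\bar a,\bar b)\in\C^*\!\cdot(a,b)$ with $a\bar b\in\R$ are all correct. In short, the proposal is a valid and complete proof.
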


\begin{rmk}\label{conics_rmk}
If $n\!=\!2m\!+\!1$ and $a,b\!\in\!\C^*$ are such that $a\bar{b}\!\in\!\R$, then
$$\cC_i(a,b)^{\phi}=\begin{cases}
\{[z_1,\ldots,z_n]\!\in\!\P^2_{2i-1,2i,n}\!:\,|z_{2i-1}|\!=\!|z_{2i}|\big\},&
\hbox{if}~a\bar{b}\!\in\!\R^+;\\
\eset,&\hbox{if}~a\bar{b}\!\in\!\R^-.
\end{cases}$$
\end{rmk}

\subsection{Equivariant localization data}
\label{LocData_subs}

\noindent
Since the $\bT^m$-action on~$\P^{n-1}$ commutes with the involution~$\phi$,
it induces an action on $\ov\fM_{g,l}(\P^{n-1},d)^{\phi}$.
This action lifts to an action on the total space of the ``fibration"~\eref{cVdfn_e}.
Theorem~\ref{EquivLocal_thm} describes the class~\eref{PDeVdfn_e} 
as a sum of contributions from
the fixed loci of the $\bT^m$-action on  $\ov\fM_{g,l}(\P^{n-1},d)^{\phi}$.
Each such contribution is a rational fraction in the weights $\al_1,\ldots,\al_n$
of the standard representation of~$\bT^n$ on~$\C^n$ restricted to 
the subtorus $\bT^m\!\subset\!\bT^n$ defined by the embedding~\eref{TmTn_e};
thus, it is a rational fraction in the weights $\la_1,\ldots,\la_m$
of the standard representation of~$\bT^m$ on~$\C^m$.
As in the complex case described in detail in \cite[Chapter~27]{MirSym}, 
all such contributions (that do not cancel with other contributions) come from
fixed loci corresponding to decorated graphs.
In contrast to the complex case, there are also more complicated fixed loci involving
covers of the conics~\eref{cCidfn_e} if $n\!=\!2m\!+\!1$.
These are dealt with in Section~\ref{EquivLocalPf_sec}, 
where Theorem~\ref{EquivLocal_thm} is justified.\\

\noindent
A \sf{graph} $(\Ver,\Edg)$ is a pair consisting of a finite set~$\Ver$ of \sf{vertices}
and an element 
$$\Edg\in \Sym^k\big(\{\Ver'\!\subset\!\Ver\!:\,|\Ver'|\!=\!2\}\big)$$ 
for some $k\!\in\!\Z^{\ge0}$.
We will view~$\Edg$ as a collection of two-element subsets of~$\Ver$, called \sf{edges},
but it may contain several copies of the same two-element subset.
However, we do not allow an edge from a vertex back to itself.
For $e\!\in\!\Edg$ and $v\!\in\!e$, let $e/v\!\in\!\Ver$ denote the vertex in~$e$
other than~$v$.
A graph $(\Ver,\Edg)$ is \sf{connected} if for all $v,v'\!\in\!\Ver$ with $v\!\neq\!v'$
there~exist
$$k\!\in\!\Z^+, \quad v_1,\ldots,v_{k-1}\in\Ver,\quad 
e_1,\ldots,e_k\in\Edg
\qquad\hbox{s.t.}\quad v_{i-1},v_i\!\in\!e_i~~\forall~i\!=\!1,\ldots,k,$$ 
with $v_0\!\equiv\!v$ and $v_k\!\equiv\!v'$.
For any graph $(\Ver,\Edg)$, let
$$g\big(\Ver,\Edg\big)\equiv |\Edg|-|\Ver|+1$$
be its \sf{genus}.
An \sf{automorphism} of a graph $(\Ver,\Edg)$ is a bijection
$$h\!:\Ver\sqcup\Edg\lra \Ver\sqcup\Edg$$
such that $h(\Ver)\!=\!\Ver$ and $h(v)\!\in\!h(e)$ whenever 
$v\!\in\!\Ver$, $e\!\in\!\Edg$, and $v\!\in\!e$.
A \sf{subgraph} of $(\Ver,\Edg)$ is a graph $(\Ver',\Edg')$ such~that $\Ver'\!\subset\!\Ver$
and $\Edg'\!\subset\!\Edg$.\\

\noindent
For a finite set~$S$, an \sf{$S$-marked $[n]$-labeled decorated graph} 
(or simply \sf{decorated graph}) is a tuple
\BE{Dual_e} \Ga\equiv \big(\Ver,\Edg,\g,\vt,\fd,\fm\big) \EE
consisting of a graph $(\Ver,\Edg)$ and~maps
$$\g\!:\,\Ver\lra\Z^{\ge 0}, \quad \vt\!:\,\Ver\lra[n],\quad
\fd\!:\Edg\lra\Z^+, \quad \fm\!:\,S\lra\Ver,$$
such that $\vt(v)\!\neq\!\vt(e/v)$ for every $v\!\in\!e$ and $e\!\in\!\Edg$.
We define \sf{the genus}~$g(\Ga)$ and \sf{the degree}~$d(\Ga)$  of such a graph~by
$$g(\Ga)= g\big(\Ver,\Edg\big)+\sum_{v\in\Ver}\!\!\g(v) \qquad\hbox{and}\qquad
d(\Ga)=\sum_{e\in\Edg}\!\!\fd(e)\,,$$
respectively.
For each $v\!\in\!\Ver$, let
$$\E_v(\Ga)=\big\{e\!\in\!\Edg\!:\,v\!\in\!e\big\} \quad\hbox{and}\quad
\val_v(\Ga)=2\g(v)+\big|\E_v(\Ga)\big|+\big|\fm^{-1}(v)\big|$$
denote the set of edges leaving the vertex~$v$ and its \sf{valence}.\\

\noindent
An \sf{automorphism} of a decorated graph~$\Ga$ as in~\eref{Dual_e} 
is an automorphism~$h$ of the graph $(\Ver,\Edg)$ such~that 
$$\g=\g\circ h|_{\Ver}, \quad \vt=\vt\circ h|_{\Ver}, \quad 
\fd=\fd\circ h|_{\Edg}, \quad \fm=h\circ\fm.$$
A \sf{decorated subgraph} of a decorated graph~$\Ga$ as in~\eref{Dual_e} is a tuple
\BE{subDual_e}\Ga'\equiv \big(\Ver',\Edg',\g',\vt',\fd',\fm'\big)\EE
such that $(\Ver',\Edg')$ is a subgraph of $(\Ver,\Edg)$,
$$(\g',\vt')=(\g,\vt)\big|_{\Ver'}, \quad \fd'=\fd|_{\Edg'}, \quad 
\fm'=\fm|_{\fm^{-1}(\Ver')} \,.$$\\

\noindent
An \sf{involution} $\si$ on a decorated graph~$\Ga$ as in~\eref{Dual_e} is an automorphism of 
the graph~$(\Ver,\Edg)$ and the set~$S$ such~that
\BE{Invol_e}
\si\!\circ\!\si=\id,\quad \g=\g\!\circ\!\si|_{\Ver},\quad 
\phi\!\circ\!\vt=\vt\!\circ\!\si|_{\Ver},
\quad \fd=\fd\!\circ\!\si|_{\Edg},
\quad \si\!\circ\!\fm=\fm\!\circ\!\si|_S\,,\EE
with $\phi$ as in~\eref{baridfn_e}.
In such a case, let 
$$\nV_{\R}^{\si}(\Ga)\subset \Ver  \qquad\hbox{and}\qquad 
\E_\R^{\si}(\Ga)\subset\Edg$$ 
be  the subsets consisting of the fixed points of~$\si$ and define
$$\nV_\C^{\si}(\Ga)\equiv\Ver\!-\!\nV_\R^{\si}(\Ga),\qquad  
\E_\C^{\si}(\Ga)\equiv\Edg\!-\!\E_{\R}^{\si}(\Ga).$$
If $\nV_{\R}^{\si}(\Ga)\!\neq\!\eset$, then $n\!=\!2m\!+\!1$  and 
$\vt(v)\!=\!n$ for all $v\!\in\!\nV_{\R}^{\si}(\Ga)$.
If $e\!\in\!\E_{\R}^{\si}(\Ga)$, $v_1,v_2\!\in\!e$, and $v_1\!\neq\!v_2$,
then $\vt(v_1)\!=\!\phi(\vt(v_2))$.
If $n\!=\!2m\!+\!1$, then $\vt(v)\!\neq\!n$ for all $v\!\in\!e$ with $e\!\in\!\E_{\R}^{\si}(\Ga)$.
An \sf{automorphism} of a pair $(\Ga,\si)$ consisting of a decorated graph with an involution 
is an automorphism~$h$ of~$\Ga$ such that $h\!\circ\!\si\!=\!\si\!\circ\!h$.
We denote the group of all automorphisms of~$(\Ga,\si)$ by~$\Aut(\Ga,\si)$.\\

\noindent
The elements  of the set $\E_{\C}^{\si}(\Ga)$ above are called 
\sf{Klein edges} in \cite[Section~3]{Wal};
the elements of $\E_{\R}^{\si}(\Ga)$ are doubled \sf{half-edges} or \sf{disk edges}
in the terminology of~\cite{Wal}.
A graph with an involution can be depicted as in Figure~\ref{discord_fig}.
The label next to each edge~$e$ indicates the value of~$\fd$ on~$e$.
Each vertex~$v$ should similarly be labeled by the pair $(\g(v),\vt(v))$;
we drop the first label if it is~zero.
The involution~$\si$ is indicated by the two-sided arrows in Figure~\ref{discord_fig}.
For example, it exchanges the two vertices in the first two diagrams.
It flips each of the edges back to itself in the first diagram, but exchanges two of
them in the second.
The first diagram contains no Klein edges, while 
the second contains a pair of such edges.
In the first two diagrams, $S\!=\!\eset$.
In the last two diagrams, the disks indicate any possibility for 
the graph~$\Ga_+$.\\

\begin{figure}
\begin{pspicture}(-1,-2.1)(10,1.8)
\psset{unit=.3cm}
\pscircle*(5,3){.2}\pscircle*(5,-3){.2}
\pscircle[linewidth=.07](5,0){3}\psline[linewidth=.07](5,3)(5,-3)
\rput(5,4){$i$}\rput(5,-4){$\phi(i)$}
\rput(2.6,0){$1$}\rput(7.5,0){$1$}\rput(5.6,0){$1$}
\psline[linewidth=.05]{<->}(1.5,2)(1.5,-2)\psline[linewidth=.05]{<->}(4.5,2)(4.5,-2)
\psline[linewidth=.05]{<->}(8.5,2)(8.5,-2)
\pscircle*(13,3){.2}\pscircle*(13,-3){.2}
\pscircle[linewidth=.07](13,0){3}\psline[linewidth=.07](13,3)(13,-3)
\rput(13,4){$i$}\rput(13,-4){$\phi(i)$}
\rput(10.6,0){$1$}\rput(15.5,0){$1$}\rput(13.6,0){$1$}
\psline[linewidth=.05]{<->}(12.5,1.5)(11.5,-1.5)\psline[linewidth=.05]{<->}(12.5,-1.5)(11.5,1.5)
\psline[linewidth=.05]{<->}(16.5,2)(16.5,-2)
\pscircle*(22,2.5){.2}\pscircle*(27,2.5){.2}\pscircle*(32,2.5){.2}
\pscircle*(22,-2.5){.2}\pscircle*(27,-2.5){.2}\pscircle*(32,-2.5){.2}
\psline[linewidth=.07](22,2.5)(32,2.5)\psline[linewidth=.07](22,-2.5)(32,-2.5)
\psline[linewidth=.07](22,2.5)(22,-2.5)\psline[linewidth=.07](32,2.5)(32,-2.5)
\psline[linewidth=.05]{<->}(21.5,2)(21.5,-2)\psline[linewidth=.05]{<->}(32.5,2)(32.5,-2)
\pscircle[linewidth=.2,linestyle=dotted](27,4.6){2}
\pscircle[linewidth=.2,linestyle=dotted](27,-4.6){2}
\rput(21.5,3){\smsize{$1$}}\rput(32.5,3){\smsize{$1$}}
\rput(21.5,-3){\smsize{$2$}}\rput(32.5,-3){\smsize{$2$}}
\rput(27,1.6){\smsize{$3$}}\rput(27,-1.6){\smsize{$4$}}
\rput(27,4.5){$\Ga_+$}\rput(27,-4.5){\smsize{$\si(\Ga_+)$}}
\pscircle*(36,2.5){.2}\pscircle*(41,2.5){.2}\pscircle*(46,2.5){.2}
\pscircle*(36,-2.5){.2}\pscircle*(41,-2.5){.2}\pscircle*(46,-2.5){.2}
\psline[linewidth=.07](36,2.5)(46,2.5)\psline[linewidth=.07](36,-2.5)(46,-2.5)
\psline[linewidth=.07](36,2.5)(36,-2.5)\psline[linewidth=.07](46,2.5)(46,-2.5)
\psline[linewidth=.05]{<->}(38,1.5)(44,-1.5)\psline[linewidth=.05]{<->}(38,-1.5)(44,1.5)
\pscircle[linewidth=.2,linestyle=dotted](41,4.6){2}
\pscircle[linewidth=.2,linestyle=dotted](41,-4.6){2}
\rput(35.5,3){\smsize{$1$}}\rput(46.5,3){\smsize{$1$}}
\rput(35.5,-3){\smsize{$2$}}\rput(46.5,-3){\smsize{$2$}}
\rput(41,1.6){\smsize{$3$}}\rput(41,-1.6){\smsize{$4$}}
\rput(41,4.5){$\Ga_+$}\rput(41,-4.5){\smsize{$\si(\Ga_+)$}}
\end{pspicture}
\caption{Examples of decorated graphs with involutions.}
\label{discord_fig}
\end{figure}
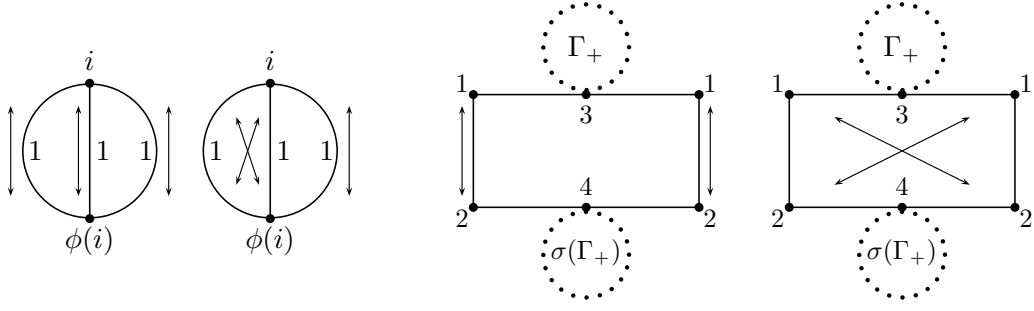

\noindent
We will call a pair $(\Ga,\si)$ consisting of a connected decorated graph with an involution 
\sf{admissible} if $\Ga$ is a connected graph, 
\BE{AdmissPairCond_e}
\fd(e)\not\in2\Z ~{}~\forall\,v\!\in\!\E_{\R}^{\si}(\Ga) 
\qquad\hbox{and}\qquad
\vt(v)\neq 2m\!+\!1 ~{}~\forall\, v\!\in\!\Edg.\EE
The second condition in~\eref{AdmissPairCond_e}, which is relevant only if $n\!\not\in\!2\Z$, implies that 
$\nV_{\R}^{\si}(\Ga)\!=\!\eset$.
For $d,g,l\!\in\!\Z^{\ge0}$, let $\cA_{g,l}(n,d)$ denote 
the set of admissible pairs $(\Ga,\si)$ such that $\Ga$ is an $S_l$-marked $[n]$-labeled
decorated graph with $g(\Ga)\!=\!g$, $d(\Ga)\!=\!d$, 
\BE{Sldfn_e}S_l\equiv \{1^+,1^-,\ldots,l^+,l^-\big\}, \qquad 
\si\big(i^{\pm}\big)=i^{\mp}\quad\forall~i\!=\!1,\ldots,l.\EE
We show in Section~\ref{EquivLocalPf_sec} that all equivariant contributions to~\eref{PDeVdfn_e}
arise from the fixed loci corresponding to the elements of $\cA_{g,l}(n,d)$.\\

\noindent
In the remainder of this section, we describe the equivariant contributions arising 
from the elements of~$\cA_{g,l}(n,d)$.
Fix tuples
$$\b\equiv(b_1,\ldots,b_l)\in\big(\Z^{\ge0}\big)^l  \qquad\hbox{and}\qquad 
\p\equiv(p_1,\ldots,p_l)\in\big(\Z^{\ge0}\big)^l.$$
We will describe the equivariant localization contributions to~\eref{PDeVdfn_e2}
under the assumptions that $n\!-\!k\!\in\!2\Z$ and either 
\begin{enumerate}[label=$\bu$,leftmargin=*]

\item $\phi\!=\!\tau_n'$ and $(n,\a)$ satisfies the conditions of Proposition~\ref{CIorient_prp}(1) or

\item  $\phi\!=\!\eta_{2m}$ and $(n,\a)$ satisfies 
the conditions of Proposition~\ref{CIorient_prp}(2) with $n\!=\!2m$.

\end{enumerate}
If $k\!=\!0$, i.e.~$X_{n;\a}\!=\!\P^{n-1}$, 
the equivariant Euler class in~\eref{PDeVdfn_e2} is~1.
If $g\!=\!0$, this class is well-defined over the entire moduli space 
as an element in the cohomology of $\ov\fM_{0,l}(\P^{n-1},d)^{\phi}$
twisted by the orientation system of this moduli space.
The integral in~\eref{PDeVdfn_e2} then computes the genus~0 real GW-invariants
of $(X_{n;\a},\om_{n;\a},\phi_{n;\a})$.
In other cases, this Euler class is well-defined over the subspace~\eref{fM0dfn_e}.
For the admissible pairs $(\Ga,\si)$ with $\g(v)\!=\!0$ for all $v\!\in\!\Ver$,
Theorem~\ref{EquivLocal_thm} below then describes the contribution of
the $\bT^m$-fixed locus in $\ov\fM_{g,l}(\P^{n-1},d)^{\phi}$ corresponding to~$(\Ga,\si)$ 
to the integral in~\eref{PDeVdfn_e2} of any extension of  $\e(\cV_{n;\a}^{\wt\phi_{n;\a}})$
to an equivariant cohomology class over the entire space.
In the case $g\!=\!1$ and $\dim_{\C}\!X_{n;\a}\!=\!3$, it is expected that this class integral is independent
of the extension and computes the  genus~1 real GW-invariants
of $(X_{n;\a},\om_{n;\a},\phi_{n;\a})$.\\

\noindent
For $\a\!\in\!(\Z^+)^k$ as before, let
$$ 
\lr\a=a_1\cdot\ldots\cdot a_k.$$
For $g\!\in\!\Z^{\ge0}$ and a finite set~$S$ with $2g\!+\!|S|\!\ge\!3$, 
denote by $\ov\cM_{g,S}$  the usual Deligne-Mumford moduli space of stable genus~$g$
$S$-marked curves and~by
$$\bE\lra \ov\cM_{g,S}$$
the Hodge vector bundle of holomorphic differentials.
For each $i\!\in\!S$, let
$$\psi_i\in H^2\big(\ov\cM_{g,S};\Q\big)$$
be the first Chern class of the universal cotangent line bundle associated with
the $i$-th marked point.\\

\noindent
Suppose $(\Ga,\si)$ is an element of $\cA_{g,l}(n,d)$ with $\Ga$ as in~\eref{Dual_e}. 
For each $v\!\in\!\Ver$, let
\begin{gather}
\label{Svdfn_e}
S_v=\E_v(\Ga)\sqcup\fm^{-1}(v), \quad
S_v^-=\big\{i\!=\!1,\ldots\!,l\!:\,i^-\!\in\!S_v\big\}, \quad
|\b|_v=\sum_{\begin{subarray}{c}1\le i\le l\\ i^{\pm}\in S_v\end{subarray}}\!\!\!b_i, 
\quad
|\p|_v=\sum_{\begin{subarray}{c}1\le i\le l\\ i^{\pm}\in S_v\end{subarray}}\!\!\!p_i,\\ 
\notag
\fs_v=\g(v)\!-\!1\!+\!|\E_v(\Ga)|+\!\!\sum_{i\in S_v^-}\!\!\big(1\!+\!b_i\!+\!p_i\big),
\quad
\psi_{e;v}=\frac{\al_{\vt(e/v)}\!-\!\al_{\vt(v)}}{\fd(e)}~~\forall\,e\!\in\!\E_v(\Ga).
\end{gather}
If $\val_v(\Ga)\!\ge\!3$, let 
\BE{Contr3V_e}\Cntr_{(\Ga,\si);v}(\b,\p)=
\bigg(\frac{\e(T_{P_{\vt(v)}}\P^{n-1})}{\lr\a\al_{\vt(v)}^k}\bigg)^{\!|\E_v(\Ga)|-1}
\al_{\vt(v)}^{|\p|_v} \!\!\!\!\!\!\!\!\!
\int\limits_{\ov\cM_{\g(v),S_v}}\!\!
\frac{\e(\bE^*\!\otimes\!T_{P_{\vt(v)}}\P^{n-1})}
{\prod\limits_{e\in\E_v(\Ga)}\!\!\!\!\!\!\psi_{e;v}\!\left(\psi_{e;v}\!+\!\psi_e\right)}
\!\!\prod_{\begin{subarray}{c}1\le i\le l\\ i^{\pm}\in S_v\end{subarray}}\!\!\!\psi_i^{b_i}\,.\EE
If $\val_v(\Ga)\!=\!1,2$, let
\BE{Contr2V_e}\begin{split}
\Cntr_{(\Ga,\si);v}(\b,\p)&=-(-1)^{\val_v(\Ga)}\!
\bigg(\frac{\e(T_{P_{\vt(v)}}\P^{n-1})}{\lr\a\al_{\vt(v)}^{k}}\bigg)^{\!|\E_v(\Ga)|-1}
\!\al_{\vt(v)}^{|\p|_v}\\
&\hspace{1in}
\times\bigg(\prod\limits_{e\in\E_v(\Ga)}\!\!\!\!\!\!\psi_{e;v}\bigg)^{-1}
\bigg(\!\sum\limits_{e\in\E_v(\Ga)}\!\!\!\!\!\psi_{e;v}\!
\bigg)^{\!3-\val_v(\Ga)-|\E_v(\Ga)|+|\b|_v}\,.
\end{split}\EE 
In light of~\eref{EquivRestr_e},
the equivariant Euler classes of $T_{P_{\vt(v)}}\P^{n-1}$ and $\bE^*\!\otimes\!T_{P_{\vt(v)}}\P^{n-1}$
are readily expressible in terms of the torus weights $\al_1,\ldots,\al_n$
and the Hodge classes $c_i(\bE)$ on~$\ov\cM_{\g(v),S_v}$.

\begin{rmk}\label{VerCntr_rmk}
Our \sf{vertex contributions}, i.e.~\eref{Contr3V_e} and~\eref{Contr2V_e},  
{\it include} the movements of the nodes associated with the edges $e\!\in\!\E_v(\Ga)$,
in contrast to \cite[(27.8)]{MirSym} and \cite[(3.22)]{Wal}.
The right-hand sides of these equations are the standard 
vertex contributions in the complex setting.
The inclusion of the node movements 
has the effect of dividing the product of all factors on the first two lines 
in \cite[(27.8)]{MirSym} and the last two lines in \cite[(3.22)]{Wal}
associated with~$v$ by the product of $-\psi_{e;v}$ with $e\!\in\!\E_v(\Ga)$.
In the real setting, each vertex contribution comes with the sign $(-1)^{\fs_v}$;
see~\eref{EquivLocal_e}.
The contribution from $S_v^-$ comes from orienting the moduli space by
the positive marked points.
The contributions of $\g(v)\!-\!1$ and $|\E_v(\Ga)|$ appear for more delicate reasons. 
The first arises from the comparison between the orientation on the moduli space~\eref{fMbu_e0}  
of real maps from the nodal doublets~\eref{SymSurfDbl_e}
induced by a real orientation
and the standard complex orientation on the moduli space of maps from one
of the components; see the first equation in~\eref{Csgn_e} and \cite[Theorem~1.4]{RealGWsII}.
The second sign contribution arises because the complex line bundle of smoothings 
of the node associated with each flag~$(e,v)$ should be taken with the anti-complex orientation;
see \eref{cVcZclasses_e3} and \cite[Theorem~1.2]{RealGWsII}.
The product of these extra signs over all relevant vertices
 constitutes the leading sign in \cite[(3.15)]{Wal}.\\
\end{rmk}

\noindent
With $(\Ga,\si)$ as in the previous paragraph, let $e\!\in\!\Edg$
and $v_1,v_2\!\in\!e$ be the two vertices of~$e$.
Suppose first that $e\!\in\!\E_{\C}^{\si}(\Ga)$.
If \BE{ConicsCovCond_e}
 n=2m\!+\!1, \qquad \fd(e)\in2\Z, \quad\hbox{and}\quad \vt(v_1)=\phi\big(\vt(v_2)\big)\,,\EE 
then $k\!\ge\!1$.
In this case, we set $\Cntr_{(\Ga,\si);e}\!=\!0$ if $k\!\ge\!2$ and 
\BE{ContrECb_e}\begin{split} 
&\Cntr_{(\Ga,\si);e}=
\frac{(-1)^{(a_1+1)\fd(e)/2}}{\fd(e)}\,
\frac{a_1((a_1\fd(e)/2)!)^2}{((\fd(e)/2)!)^2(\fd(e)!)^2}
\frac{\left(\frac{2\al_{\vt(v)}}{\fd(e)}\right)^{\!(a_1-3)\fd(e)+2}}
{\prod\limits_{\begin{subarray}{c}1\le j<n\\ j\neq\vt(v_1),\vt(v_2) \end{subarray}}
\!\!\!\!\left(\!\al_j\!\!\! \prod\limits_{r=1}^{\fd(e)/2}\!\!
\left(\!r^2\left(\frac{2\al_{\vt(v)}}{\fd(e)}\right)^{\!2}\!-\!\al_j^2\!\right)\!\!\right)}
\end{split}\EE
if $k\!=\!1$ and $v\!\in\!e$ is either vertex.
If one of the conditions in~\eref{ConicsCovCond_e} is not satisfied, then let
\BE{ContrEC_e}\begin{split}
\Cntr_{(\Ga,\si);e}&=\frac{(-1)^{\fd(e)}}{\fd(e)\,(\fd(e)!)^2}
\frac{\prod\limits_{i=1}^k\prod\limits_{r=0}^{a_i\fd(e)}\!
\frac{(a_i\fd(e)-r)\al_{\vt(v_1)}+r\al_{\vt(v_2)}}{\fd(e)}}
{\left(\frac{\al_{\vt(v_1)}-\al_{\vt(v_2)}}{\fd(e)}\right)^{\!\!2\fd(e)-2} \!\!\!\!\!\!\!\!\!\!\!
\prod\limits_{j\neq\vt(v_1),\vt(v_2)}\prod\limits_{r=0}^{\fd(e)}\!\!
\left(\!\frac{(\fd(e)-r)\al_{\vt(v_1)}+r\al_{\vt(v_2)}}{\fd(e)}\!-\!\al_j\!\right)}\,.
\end{split}\EE
If $e\!\in\!\E_{\R}^{\si}(\Ga)$, then $\fd(e)\!\not\in\!2\Z$ by the first assumption
in~\eref{AdmissPairCond_e}.
If in addition $a_i\!\not\in\!2\Z$ for all $i\!=\!1,\ldots,k$,  let
\BE{ContrER_e}
\Cntr_{(\Ga,\si);e}=
\frac{(-1)^{|\phi|+\frac{\fd(e)-1}{2}}}{\fd(e)}\,
\frac{\prod\limits_{i=1}^k\!\!\left(a_i\fd(e)\right)!!}{2^{\fd(e)-1}\fd(e)!}
\frac{\left(\frac{\al_{\vt(v_1)}}{\fd(e)}\right)^{\!\frac{(|\a|-2)\fd(e)+k}{2}+1}}
{\prod\limits_{j\neq\vt(v_1),\vt(v_2)}
\!\!\!\!\!\!\!\prod\limits_{r=0}^{(\fd(e)-1)/2}\!\!
\left(\frac{(\fd(e)-2r)\al_{\vt(v_1)}}{\fd(e)}\!-\!\al_j\right)}.\EE
If $a_i\!\in\!2\Z$ for some $i\!=\!1,\ldots,k$, set $\Cntr_{(\Ga,\si);e}\!=\!0$.

\begin{rmk}\label{EdgCntr_rmk}
Our \sf{edge contributions}, i.e.~\eref{ContrECb_e}, \eref{ContrEC_e}, and~\eref{ContrER_e},
do {\it not} include the movements of 
the nodes associated with the vertices  $v_1,v_2\!\in\!e$,
in contrast to \cite[(27.8)]{MirSym} and \cite[(3.22)]{Wal}.
They are included into the vertex contributions.
The right-hand side in~\eref{ContrEC_e} is the {\it negative} of the standard 
edge contribution in the complex setting with the automorphism group taken into account.
This has the effect of multiplying the product of all factors on the last line 
in \cite[(27.8)]{MirSym} and the first line in \cite[(3.22)]{Wal}
associated with~$e$ by  $-\psi_{e;v_1}\psi_{e;v_2}/\fd(e)$.
The extra negative sign for each Klein edge arises due to \cite[Theorem~1.4]{RealGWsII};
see Remark~\ref{VerCntr_rmk} and the second equation in~\eref{Csgn_e}. 
The product of these extra signs over all relevant Klein edges
 constitutes the leading sign in \cite[(3.22)]{Wal}.
For $(n,\a)\!=\!(5,(5))$ and $(n,k)\!=\!(2m,0)$
with $m\!\in\!2\Z$, \eref{ContrER_e} becomes the disk factor
on the first line in \cite[(3.22)]{Wal} multiplied by $-\psi_{e;v_1}/\fd(e)$
and \cite[(6.21)]{Teh} multiplied by $-(-1)^{|\phi|}\psi_{e;v_1}/\fd(e)$, respectively.
For $(n,k)\!=\!(2m,0)$ with $m\!\not\in\!2\Z$ and $\phi\!=\!\tau_n'$,
\eref{ContrER_e} becomes
\cite[(6.21)]{Teh} multiplied by 
$$-(-1)^{(\fd(e)-1)/2}\psi_{e;v_1}/\fd(e).$$
Along with the node sign correction of \cite[Remark~6.9]{Teh},
the extra factor of $(-1)^{(\fd(e)-1)/2}$ accounts for the difference 
between our canonical orientation on the moduli space and that 
induced by the relative spin structure of  \cite[Remark~6.5]{Teh};
see \cite[Corollary~3.6]{RealGWsII} and the second case in Section~\ref{EdgeCntrPrp_subs}.
The map automorphisms arising from the edge degrees are absorbed into the automorphism
factor \cite[(3.15)]{Wal};
these automorphisms and 
the factor of $(-1)^{|\phi|}$ arising from the $\eta$ involution on the domain
is taken into account in \cite[(6.7)]{Teh}.
\end{rmk}

\begin{rmk}\label{EdgCntrVan_rmk}
The right-hand side in~\eref{ContrECb_e} is obtained from the right-hand side in~\eref{ContrEC_e}
by first setting $\al_n\!=\!0$ as required by~\eref{sp_weights_e}, 
simplifying the resulting expression, and then setting \hbox{$\al_{\vt(v_1)}\!=\!-\al_{\vt(v_2)}$}.
This corresponds to resolving the $\frac{0}{0}$ ambiguity described below \cite[(3.23)]{Wal} 
``from" the left diagram in \cite[Figure~5]{Wal}.
The restrictions of the denominators in~\eref{ContrEC_e} may vanish because 
some fixed loci of the $\bT^m$-action on $\ov\fM_{g,l}(\P^{n-1},d)^{\phi}$
consist of covers of the  conics~$\cC_i(a,b)$
of Lemma~\ref{FixedCurves_lmm}\ref{TRcurves_it};
see Section~\ref{FixedLoci_subs}.
The equivariant contributions from these fixed loci are computed in Lemma~\ref{AuxAct_lmm}
using an auxiliary $S^1$-action.
The right diagram in \cite[Figure~5]{Wal} involves graphs $\Ga$ with vertices $v\!\in\!\Ver$
mapped by~$\vt$ to~$2m\!+\!1$;
the second condition in~\eref{AdmissPairCond_e} excludes such graphs in our perspective.\\
\end{rmk}

\noindent
Since $\nV_{\R}^{\si}(\Ga)\!=\!\eset$, we can choose a subset 
$\nV_+^{\si}(\Ga)$ of~$\Ver$ such~that 
\BE{nVplus_e}\Ver=\nV_+^{\si}(\Ga) \sqcup \nV_-^{\si}(\Ga)
\qquad\hbox{with}\quad \nV_-^{\si}(\Ga) \equiv \si\big(\nV_+^{\si}(\Ga)\big).\EE
We can also choose a subset $\E_+^{\si}(\Ga)$ of~$\E_{\C}^{\si}(\Ga)$ such~that 
\BE{nEplus_e}\Edg=\E_{\R}^{\si}(\Ga) \sqcup \E_+^{\si}(\Ga) \sqcup \E_-^{\si}(\Ga)
\qquad\hbox{with}\quad \E_-^{\si}(\Ga) \equiv \si\big(\E_+^{\si}(\Ga)\big).\EE

\begin{thm}\label{EquivLocal_thm}
Suppose $(\Ga,\si)\!\in\!\cA_{g,l}(n,d)$ with $\Ga$ as in~\eref{Dual_e}.
If $\g(v)\!=\!0$ for all $v\!\in\!\Ver$ or $k\!=\!0$,
then the contribution of the $\bT^m$-fixed locus in $\ov\fM_{g,l}(\P^{n-1},d)^{\phi}$ 
corresponding to~$(\Ga,\si)$ to~\eref{PDeVdfn_e2} is the restriction~of
\BE{EquivLocal_e}\Cntr_{(\Ga,\si)}(\b,\p)=\frac{1}{|\Aut(\Ga,\si)|}
\prod_{v\in\nV_+^{\si}(\Ga)}\!\!\!\!\!(-1)^{\fs_v}\Cntr_{(\Ga,\si);v}(\b,\p)
\prod_{e\in\E_{\R}^{\si}(\Ga)\sqcup\E_+^{\si}(\Ga)}
\hspace{-.33in}\Cntr_{(\Ga,\si);e}\EE
to the subtorus $\bT^m\!\subset\!\bT^n$ as in~\eref{sp_weights_e}.
The integral in~\eref{PDeVdfn_e2}  is the sum of these contributions
over~$\cA_{g,l}(n,d)$ if $g\!=\!0,1$ or $k\!=\!0$.
\end{thm}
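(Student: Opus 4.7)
\noindent
The plan is to apply the virtual equivariant localization theorem of~\cite{GP}, which reduces to the classical Atiyah-Bott theorem~\cite{AB} on the smooth loci of the moduli space. First I would classify the $\bT^m$-fixed loci in $\ov\fM_{g,l}(\P^{n-1},d)^{\phi}$ via Lemma~\ref{FixedCurves_lmm}: a $\bT^m$-fixed real stable map consists of contracted positive-genus components mapped to the torus-fixed points~$P_i$ and noncontracted rational components given by ramified covers of $\bT^m$-invariant lines~$\P_{ij}^1$ or, when $n=2m+1$, of invariant conics $\cC_i(a,b)$, branched only over the~$P_i$. The involution~$\phi$ acts on the combinatorial data to produce the graph involution~$\si$. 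The line-cover fixed loci are parametrized by $\cA_{g,l}(n,d)$; the two conditions~\eref{AdmissPairCond_e} rule out even-degree $\phi$-equivariant covers of $\P^1_{2i-1,2i}$ (no fixed-point free involution on the domain exists) and vertices mapped to~$P_n$ in the odd case (these belong to conic-cover fixed loci handled separately).

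\noindent
For a given pair $(\Ga,\si)\in\cA_{g,l}(n,d)$, the corresponding fixed locus is an $\Aut(\Ga,\si)$-quotient of a product of Deligne-Mumford moduli spaces $\prod_{v\in\nV_+^{\si}(\Ga)}\ov\cM_{\g(v),S_v}$. The deformation-obstruction complex splits into per-vertex pieces (deformations of the stable curve at~$v$, values of $T\P^{n-1}$ and $\cL_{n;\a}$ at~$P_{\vt(v)}$, and smoothings of the nodes separating vertex from edge components) and per-edge pieces ($H^0$ and $H^1$ of the pullbacks of $T\P^{n-1}$ and $\cL_{n;\a}$ to the rational cover). I would compute each piece by the \v{C}ech template of \cite[Chapter~27]{MirSym} and then halve by the involution: each Klein edge pair contributes its complex factor once (yielding~\eref{ContrEC_e}), while each real edge contributes the Pfaffian of the real deformation complex, giving the disk factor~\eref{ContrER_e} as in \cite[Section~6]{Teh}. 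I would absorb the node-smoothing factors $-\psi_{e;v}$ into the vertex pieces~\eref{Contr3V_e}-\eref{Contr2V_e} rather than into the edges; this bookkeeping choice is motivated by the sign analysis in the next paragraph and explained in Remarks~\ref{VerCntr_rmk} and~\ref{EdgCntr_rmk}.

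\noindent
The most delicate step, and the main obstacle, is the sign analysis. The canonical orientation on the fixed locus induced by the real orientation of Section~\ref{RealOrientCI_subs} must be compared with the complex orientation on $\prod_v\ov\cM_{\g(v),S_v}$ via the sh-surface decomposition of \cite[Section~3]{RealGWsII}. The relevant sign sources are: (a) orienting the nodal doublet moduli space at each~$v$ contributes $(-1)^{\g(v)-1}$ by \cite[Theorem~1.4]{RealGWsII}; (b) orienting the node-smoothing complex line of each flag $(e,v)$ with the \emph{anti}-complex orientation by \cite[Theorem~1.2]{RealGWsII} contributes $(-1)^{|\E_v(\Ga)|}$; (c) the convention of using the $i^+$ marked point of each conjugate pair to orient the vertex factor introduces $(-1)^{1+b_i+p_i}$ for each $i\in S_v^-$ when paired against $\psi_i^{b_i}\ev_i^*\x^{p_i}$; and (d) each Klein edge pair contributes an additional $-1$ from the same source as~(a), built directly into~\eref{ContrEC_e}. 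These collect into $\prod_v(-1)^{\fs_v}$ as in~\eref{EquivLocal_e}; verifying their compatibility with the edge/vertex splitting of the deformation complex requires tracking the sh-surface pieces of the real orientation through localization, and is the most technical portion of the argument.

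\noindent
Finally, the non-graph fixed loci (those involving covers of the conics $\cC_i(a,b)$, present only when $n=2m+1$) produce the $\tfrac{0}{0}$ ambiguity of Remark~\ref{EdgCntrVan_rmk}, since the denominator factor $\al_{\vt(v_1)}+\al_{\vt(v_2)}$ in~\eref{ContrEC_e} vanishes under~\eref{sp_weights_e}. I would introduce an auxiliary $S^1$-action on the conic as in Lemmas~\ref{AuxAct_lmm} and~\ref{VanV_lmm2}, apply classical localization to the combined $\bT^m\!\times\!S^1$-action, and extract the residue as the auxiliary weight tends to zero; this yields~\eref{ContrECb_e} for $k=1$ and zero for $k\ge 2$ (the latter by a rank jump of $H^0(\P^1,f^*\cL_{n;\a})^{\wt\phi}$ under the double cover). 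For the last sentence of the theorem, the hypotheses $k=0$ or $g=0$ ensure that~\eref{PDeVdfn_e} extends canonically to the entire moduli space, so the sum of contributions over $\cA_{g,l}(n,d)$ computes~\eref{PDeVdfn_e2}; in the $g=1$ case one invokes the refined extension over $\ov\fM_{1,l}^0(\P^{n-1},d)$ discussed before Section~\ref{EquivSetup_subs} together with the analysis of \cite[Theorem~1.1]{g1comp2}.
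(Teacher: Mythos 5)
Your proposal follows essentially the same route as the paper's proof in Section~\ref{EquivLocalPf_sec}: you invoke virtual localization, classify fixed loci via Lemma~\ref{FixedCurves_lmm} into graph-indexed strata plus conic covers, justify the admissibility conditions by the vanishing of Lemmas~\ref{VanV_lmm} and~\ref{VanV_lmm2}, split the deformation-obstruction complex into vertex and edge pieces, absorb the node-smoothing factors $-\psi_{e;v}$ into the vertex contributions, carry out the sign comparison via \cite[Theorems~1.2, 1.4]{RealGWsII} yielding $(-1)^{\fs_v}$, obtain the real-edge factor by comparing with Teh's spin/relative spin computations, and resolve the $\tfrac00$ conic ambiguity by an auxiliary $S^1$-action. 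This is the same decomposition and the same chain of lemmas used in Sections~\ref{FixedLoci_subs}--\ref{EdgeCntrPrp_subs}; the only place your sketch is lighter than the paper is the three-case orientation comparison in Proposition~\ref{REdgeCntr_prp} (canonical real orientation vs.\ spin, relative spin, and square-root structures from~\cite{Teh}), which produces the leading sign $(-1)^{|\phi|+(\fd(e)-1)/2}$ in~\eref{ContrER_e}, but you correctly identify the sources.
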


\begin{rmk}\label{EquivLocal_rmk}
As indicated in Section~\ref{NBsign_subs}, the transfer of the movement of the node corresponding
to a flag~$(e,v)$ from the contribution for~$e$ to the contribution for~$v$ 
in effect marks all nodes with $v\!\in\!\nV_+^{\si}(\Ga)$ as positive.
This is used to compare the orientation associated with~$(\Ga,\si)$
to  the orientations associated with its edges and vertices.
This transfer allows us to choose the subsets $\nV_+^{\si}(\Ga)$ and $\E_+^{\si}(\Ga)$
at random, as long as the conditions~\eref{nVplus_e} and~\eref{nEplus_e} are satisfied.
We show directly in Section~\ref{EquivLocApp_subs} that~\eref{EquivLocal_e}
is independent of all choices made, provided 
the conditions of Proposition~\ref{CIorient_prp} and Theorem~\ref{EquivLocal_thm}
are satisfied and $n\!-\!k\!\in\!2\Z$
(otherwise, the moduli space of real maps into $(X_{n;\a},\phi_{n;\a})$
may not be orientable).
\end{rmk}

\subsection{Examples and applications}
\label{EquivLocApp_subs}

\noindent
We now make a number of observations regarding the contributions of Theorem~\ref{EquivLocal_thm},
apply it in some specific cases, 
give an alternative proof of  
Theorem~\ref{GWsCI_thm}\ref{CIvan_it} in the case of projective spaces, 
and establish Proposition~\ref{GWsPn_prp}.\\

\noindent
If the restrictions of Proposition~\ref{CIorient_prp} and Theorem~\ref{EquivLocal_thm}
are satisfied and $n\!-\!k\!\in\!2\Z$,
the contribution~\eref{EquivLocal_e} from an admissible graph $(\Ga,\si)$ 
is independent of all choices made: 
$$v_1\in e \quad\forall~e\!\in\!\Edg, \qquad
\nV_+^{\si}(\Ga)\subset \nV_{\C}^{\si}(\Ga), \quad\hbox{and}\quad
\E_+^{\si}(\Ga)\subset \E_{\C}^{\si}(\Ga).$$
The right-hand sides of~\eref{ContrECb_e} and~\eref{ContrEC_e} are symmetric in~$v_1$ and~$v_2$,
even before restricting to the subtorus $\bT^m\!\subset\!\bT^n$.
Suppose $e\!\in\!\E_{\R}^{\si}(\Ga)$ and $v_1,v_2\!\in\!e$.
Since $\fd(e)\!\not\in\!2\Z$ and $\al_{\vt(v_1)}\!=\!-\al_{\vt(v_2)}$ on~$\bT^m$
in this case,
replacing~$v_1$ by~$v_2$ in~\eref{ContrER_e} changes the restriction of its right-hand side 
to~$\bT^m$ by the factor of~$-1$ to the power~of
$$\frac{(|\a|\!-\!2)\fd(e)\!+\!k}{2}+1+\frac{\fd(e)\!+\!1}{2}(n\!-\!2) \equiv
\frac12\big( (n\!+\!|\a|)\fd(e)+(n\!+\!k)\big) \qquad\mod2.$$
If $a_i\!\not\in\!2\Z$ for all $i\!=\!1,\ldots,k$, then Lemma~\ref{CIorient_lmm} implies that 
\BE{EvenSum_e1}\frac12\big( (n\!+\!|\a|)\fd(e)+(n\!+\!k)\big) \equiv 
\frac12(n\!+\!k)\big(\fd(e)\!+\!1)\big) \qquad\mod2.\EE
If $n\!-\!k\!\in\!2\Z$ and $a_i\!\not\in\!2\Z$ for all $i\!=\!1,\ldots,k$,
interchanging~$v_1$ and~$v_2$ thus has no effect on the restriction of the right-hand side 
of~\eref{ContrER_e} to~$\bT^m$.
If $a_i\!\in\!2\Z$ for some $i\!=\!1,\ldots,k$, then $\Cntr_{(\Ga,\si);e}$ is~0.
In summary, the edge contributions in~\eref{EquivLocal_e} are independent of the ordering of 
the vertices of each edge~$e$ in $\E_{\R}^{\si}(\Ga)\!\sqcup\!\E_+^{\si}(\Ga)$.\\

\noindent
Replacing $e\!\in\!\E_{\C}^{\si}(\Ga)$ by $\si(e)$ in~\eref{ContrEC_e} changes the restriction of 
its right-hand side  to~$\bT^m$ by the factor of~$-1$ to the power~of
$$\sum_{i=1}^k\big(a_i\fd(e)\!+\!1\big)+\big(2\fd(e)\!-\!2\big)+
\big(\fd(e)\!+\!1\big)(n\!-\!2)
\equiv \big(n\!+\!|\a|\big)\fd(e)+\big(n\!+\!k\big) \mod2.$$
If $n\!-\!k\!\in\!2\Z$ and the assumptions of Proposition~\ref{CIorient_prp} are satisfied,
both numbers on the right-hand side above are even.
Thus, replacing an element $e\!\in\!\E_+^{\si}(\Ga)$ with $\si(e)$ has no effect on
the restrictions of~\eref{ContrEC_e} and~\eref{EquivLocal_e} to~$\bT^m$.
By the first sentence in Remark~\ref{EdgCntrVan_rmk}, the same is the case of~\eref{ContrECb_e}. 
In fact, the last conclusion follows from the independence of~\eref{ContrECb_e}
of the choice of~$v\!\in\!e$,
since replacing an element $e\!\in\!\E_+^{\si}(\Ga)$ with $\si(e)$ does not change
the subset $\{\vt(v_1),\vt(v_2)\}$ of~$[n]$ in this~case.\\

\noindent
Replacing $v\!\in\!\Ver$ by $\si(v)$ changes $(-1)^{\fs_v}$ by the factor of~$-1$ to the power~of
\BE{Vch1_e}
\big|S_v^-\big|+\big|S_{\si(v)}^-\big|+
\sum_{\begin{subarray}{c}1\le i\le l\\ i^{\pm}\in S_v\end{subarray}}\!\!\!b_i
+\sum_{\begin{subarray}{c}1\le i\le l\\ i^{\pm}\in S_v\end{subarray}}\!\!\!p_i
=\big|\fm^{-1}(v)\big|\!+\!|\b|_v\!+\!|\p|_v\,.\EE
This replacement changes the restriction of the right-hand side of~\eref{Contr2V_e} 
to~$\bT^m$ by the factor of~$-1$ to the power~of
\BE{Vch22_e}\begin{split}
&(n\!-\!1\!-\!k)\big(|\E_v(\Ga)|\!-\!1\big)+|\b|_v\!+\!|\p|_v+
\big(3\!-\!\val_v(\Ga)\!-\!2|\E_v(\Ga)|\big)\\
& \hspace{1.5in}\equiv \big|\fm^{-1}(v)\big|\!+\!|\b|_v\!+\!|\p|_v
+(n\!-\!k)\big(|\E_v(\Ga)|\!-\!1\big)\mod2.
\end{split}\EE
It changes the restriction of the right-hand side of~\eref{Contr3V_e} 
to~$\bT^m$ by the factor of~$-1$ to the power~of
\BE{Vch2_3}\begin{split}
&(n\!-\!1\!-\!k)\big(|\E_v(\Ga)|\!-\!1\big)+|\b|_v\!+\!|\p|_v+(n\!-\!1)\g(v)-2|\E_v(\Ga)|
-\big(3\g(v)\!-\!3\!+\!|S_v|\big)\\
& \hspace{1.5in}\equiv \big|\fm^{-1}(v)\big|\!+\!|\b|_v\!+\!|\p|_v
+(n\!-\!k)\big(|\E_v(\Ga)|\!-\!1\big)+n\g(v) \mod2.
\end{split}\EE
If $n\!-\!k\!\in\!2\Z$ and the conditions of Theorem~\ref{EquivLocal_thm} are satisfied,
the right-hand sides of~\eref{Vch22_e} and~\eref{Vch2_3} reduce to
the right-hand side of~\eref{Vch1_e}.
Thus, replacing an element $v\!\in\!\nV_+^{\si}(\Ga)$ with $\si(v)$ has no effect on
the restriction of~\eref{EquivLocal_e} to~$\bT^m$.\\

\noindent 
We next observe that the contributions~\eref{EquivLocal_e} for certain admissible pairs 
$(\Ga,\si_1)$ and~$(\Ga,\si_2)$ are opposites of each other if all automorphisms are ignored.
Two examples of such pairs appear in Figure~\ref{discord_fig}.
The cardinalities of $\Aut(\Ga,\si_1)$ and $\Aut(\Ga,\si_2)$
are different  for the first pair (6 and 2, respectively) and
the same for the second pair ($2|\Aut(\Ga_+)|$).

\begin{crl}\label{EquivLocal_crl}
Suppose $n\!\in\!\Z^+$, $g,d,k,l\!\in\!\Z^{\ge0}$ with $n\!-\!k\!\in\!2\Z$ , 
$\a\!\in\!(\Z^+\!-\!2\Z)^k$, and $\phi\!=\!\tau_n'$ or $n\!\in\!2\Z$ and $\phi\!=\!\eta_n$.
If $\phi\!=\!\tau_n'$, assume that $(n,\a)$ satisfies the assumptions
of Proposition~\ref{CIorient_prp}(1);
if $\phi\!=\!\eta_n$, assume that $(m\!\equiv\!n/2,\a)$ satisfies the assumptions
of Proposition~\ref{CIorient_prp}(2).
If $(\Ga,\si_1)$ and~$(\Ga,\si_2)$ are elements of~$\cA_{g,l}(n,d)$ such that 
$k\g(v)\!=\!0$ for all vertices~$v$ of~$\Ga$, 
$$\E_{\R}^{\si_1}(\Ga)\supset \E_{\R}^{\si_2}(\Ga),
\qquad\hbox{and}\qquad 
\big|\E_{\R}^{\si_1}(\Ga)\!-\!\E_{\R}^{\si_2}(\Ga)\big|=2,$$
then 
\BE{EquivLocalCrl}
\fd(e^*)\big|\Aut(\Ga,\si_1)\big|\Cntr_{(\Ga,\si_1)}(\b,\p)
=-\big|\Aut(\Ga,\si_2)\big|\Cntr_{(\Ga,\si_2)}(\b,\p),\EE
where $e^*$ is one of the elements of $\E_{\R}^{\si_1}(\Ga)\!-\!\E_{\R}^{\si_2}(\Ga)$.
\end{crl}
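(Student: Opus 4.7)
The plan is to reduce Corollary~\ref{EquivLocal_crl} to a comparison of edge contributions, exploiting that all vertex data and all other edges are identical on the two sides. Writing $\E_{\R}^{\si_1}(\Ga) \setminus \E_{\R}^{\si_2}(\Ga) = \{e_1, e_2\}$, the fact that $\si_2$ permutes the edge set and fixes $\E_{\R}^{\si_2}(\Ga)$ forces $\si_2(e_1) = e_2$, so $\{e_1,e_2\}$ is a Klein pair for~$\si_2$. Compatibility of $\si_2$ with $\vt$ and $\phi$, together with $e_1,e_2 \in \E_{\R}^{\si_1}(\Ga)$, forces $\fd(e_1) = \fd(e_2) \equiv d$ and the pair of vertex labels on each of $e_1,e_2$ to equal $\{i,\phi(i)\}$ for some common $i \in [n]$. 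Under the admissibility of $(\Ga,\si_1)$ and the hypothesis $\a \in (\Z^+ \setminus 2\Z)^k$, both $d$ and each $a_i$ are odd.

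By Remark~\ref{EquivLocal_rmk} the total contribution \eref{EquivLocal_e} is independent of the choices of $\nV_+^{\si_i}(\Ga)$ and $\E_+^{\si_i}(\Ga)$, so we may pick matching representatives outside $\{v_1,v_1',v_2,v_2'\}$ and $\{e_1,e_2\}$. Since the formulas \eref{Contr3V_e}--\eref{Contr2V_e} for vertex contributions depend only on the underlying graph~$\Ga$ and not on the involution, a direct check shows that the $\nV_+^{\si_i}$-choices can be matched so that all vertex contributions are literally equal. Thus the identity \eref{EquivLocalCrl} reduces to checking
\[
\fd(e_1)\cdot\Cntr_{(\Ga,\si_1);e_1}\cdot\Cntr_{(\Ga,\si_1);e_2}
= -\frac{|\Aut(\Ga,\si_2)|}{|\Aut(\Ga,\si_1)|}\,\Cntr_{(\Ga,\si_2);e_1}
\]
on the subtorus $\bT^m$.

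The main work is this comparison, carried out using \eref{ContrER_e} for the left-hand side and \eref{ContrEC_e} for the right-hand side; the alternative \eref{ContrECb_e} is not triggered since $d$ is odd. On $\bT^m$ one has $\al_{\phi(i)} = -\al_i$ by \eref{sp_weights_e}, so both sides become rational functions in $\al_i$ and the remaining weights $\al_j$ with $j \neq i,\phi(i)$. The factorial factors collapse through $\prod_{r=0}^{d}(d-2r) = (-1)^{(d+1)/2}(d!!)^2$, and the denominator products relate via the factorization $\prod_{r=0}^{d}\bigl((d-2r)\al_i/d - \al_j\bigr) = (-1)^{(d+1)/2}\prod_{r=0}^{(d-1)/2}\bigl((d-2r)^2\al_i^2/d^2 - \al_j^2\bigr)$, both valid for odd~$d$. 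The resulting power of $\al_i$ matches via the parity identity \eref{EvenSum_e1} (where Lemma~\ref{CIorient_lmm} and the hypothesis $n - k \in 2\Z$ enter), and the factor of $d$ in the exponent pulls out cleanly as the $\fd(e_1)$ in the claim.

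Finally, the ratio $|\Aut(\Ga,\si_2)|/|\Aut(\Ga,\si_1)|$ is either $1$ or $2$, governed by whether the transposition $(e_1,e_2)$ (with its induced vertex swap) lies in $\Aut(\Ga)$; the two cases are absorbed, respectively, into the scalar factor from the edge-contribution ratio and into the $\fd(e^*)$ of~\eref{EquivLocalCrl}. The main obstacle is the exact bookkeeping of signs: combining the leading signs $(-1)^{|\phi|+(d-1)/2}$ from two copies of \eref{ContrER_e} with $(-1)^d$ from \eref{ContrEC_e}, the $(-1)^{(d+1)/2}$ factors from the factorial identities, and the $(n-2)$ copies of such signs coming from the product over $j \neq i,\phi(i)$ must conspire---using $a_i$ odd and $n-k \in 2\Z$---to produce the single leading minus sign on the right-hand side of \eref{EquivLocalCrl}.
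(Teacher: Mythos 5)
Your high‑level plan is the same as the paper's: reduce to a comparison of edge contributions by choosing compatible auxiliary data so that the vertex factors and all other edge factors match, then verify the residual edge identity by restricting \eref{ContrER_e} and \eref{ContrEC_e} to $\bT^m$ via $\al_{\phi(i)}=-\al_i$. The factorial and sign bookkeeping you outline for that residual identity is essentially the paper's derivation of~\eref{EquivLocalCrl_e5}. However, there are two genuine gaps.

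First, the reduced identity you write down is wrong: you retain a factor $|\Aut(\Ga,\si_2)|/|\Aut(\Ga,\si_1)|$, but that factor should have cancelled. The corollary multiplies $\Cntr_{(\Ga,\si_i)}$ by $|\Aut(\Ga,\si_i)|$ on each side precisely to remove the $1/|\Aut(\Ga,\si_i)|$ that is already built into~\eref{EquivLocal_e}; after that cancellation the residual identity is simply $\fd(e^*)\,\Cntr_{(\Ga,\si_1);e_1^*}\Cntr_{(\Ga,\si_1);e_2^*}=-\Cntr_{(\Ga,\si_2);e_i^*}$, with no automorphism ratio. Your closing paragraph then tries to ``absorb'' a ratio that should not be there, and moreover the claim that the ratio is ``either $1$ or $2$'' is false: the paper's own example (first two diagrams of Figure~\ref{discord_fig}, a theta graph) has $|\Aut(\Ga,\si_1)|=6$ and $|\Aut(\Ga,\si_2)|=2$.

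Second, you assert without proof that one ``may pick matching representatives'' $\nV_+^{\si_1}(\Ga)=\nV_+^{\si_2}(\Ga)$ and a compatible $\E_+$. This is not automatic: $\si_1$ and $\si_2$ are both fixed‑point‑free on $\Ver$ but partition it into different orbit pairs, so the existence of a common half‑set is a genuine combinatorial claim. The paper isolates this as Lemma~\ref{EquivLocal_lmm} and uses it to produce $\nV_+^{\si_1\si_2}(\Ga)$ and $\E_+^{\si_2}(\Ga)\supset\E_+^{\si_1}(\Ga)$ differing by exactly one of $e_1^*,e_2^*$. Invoking Remark~\ref{EquivLocal_rmk} only tells you the final answer is independent of the choice; it does not by itself supply a simultaneously compatible choice, which is what you need in order to cancel the vertex and remaining edge factors term‑by‑term.
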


\begin{lmm}\label{EquivLocal_lmm}
Let $S$ be a finite set with involutions~$\si_1$ and~$\si_2$ and
$$S_{\R}^{\si_i}\equiv\big\{s\!\in\!S\!:\,\si_i(s)\!=\!s\big\}.$$
If $S_{\R}^{\si_2}\!=\!\eset$, then
there exists a subset $S_+\!\subset\!S$ such that 
\BE{EquivLocalLmm_e}\big|S_+\!\cap\!S_{\R}^{\si_1}\big|=\frac12\big|S_{\R}^{\si_1}\big|, \quad
S=S_{\R}^{\si_1}\sqcup\big(S_+\!-\!S_{\R}^{\si_1}\big)\sqcup \si_1\big(S_+\!-\!S_{\R}^{\si_1}\big) 
=S_+\sqcup\si_2(S_+).\EE
\end{lmm}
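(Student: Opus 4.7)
The plan is to analyze the orbits of the dihedral group $\langle \si_1,\si_2\rangle$ acting on $S$ and construct $S_+$ orbit by orbit. First I would set up the orbit structure. Since $\si_2$ has no fixed points but $\si_1$ may, each $\langle \si_1,\si_2\rangle$-orbit of~$S$ is either a ``cycle'' of even length $2k$ (containing no $\si_1$-fixed points) or an ``interval'' $\{s_0,s_1,\ldots,s_{2k-1}\}$ with $s_i$ obtained from $s_{i-1}$ by alternately applying $\si_2,\si_1,\si_2,\ldots$, starting with~$\si_2$. In the interval case, the endpoints $s_0$ and $s_{2k-1}$ are the only elements of the orbit that lie in $S_{\R}^{\si_1}$, and there are exactly two of them per interval; in particular $|S_{\R}^{\si_1}|$ is even (equal to twice the number of intervals).

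Next I would describe $S_+$ on each orbit. For either an interval or a cycle $\{s_0,\ldots,s_{2k-1}\}$ parametrized as above, define the contribution of this orbit to~$S_+$ to be the subset of even-indexed elements $\{s_0,s_2,\ldots,s_{2k-2}\}$. In the interval case, the $\si_1$-fixed endpoints are $s_0$ and $s_{2k-1}$, exactly one of which (namely $s_0$) is included; in the cycle case there are no $\si_1$-fixed points in the orbit. Summing over all intervals yields the first equation of~\eref{EquivLocalLmm_e}.

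Then I would verify the partition equalities. The $\si_2$-orbits inside a cycle or interval orbit are the pairs $\{s_{2j},s_{2j+1}\}$; each such pair contributes one element to $S_+$ and one to $\si_2(S_+)$, giving the second equality in~\eref{EquivLocalLmm_e}. The $\si_1$-orbits inside $S-S_{\R}^{\si_1}$ within a cycle orbit are the pairs $\{s_{2j+1},s_{2j+2 \bmod 2k}\}$, and within an interval orbit the pairs $\{s_{2j+1},s_{2j+2}\}$ for $0\!\le\!j\!\le\!k\!-\!2$; in each case exactly one element of the pair lies in $S_+-S_{\R}^{\si_1}$ and the other in $\si_1(S_+-S_{\R}^{\si_1})$, which together with $S_{\R}^{\si_1}$ covers the whole orbit and yields the first equality in~\eref{EquivLocalLmm_e}.

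The only subtlety is checking that there is no interval of odd length, since that would force a $\si_2$-fixed point at the far end and contradict the hypothesis $S_{\R}^{\si_2}\!=\!\eset$; this is what guarantees that the even/odd alternation along an interval terminates only at an $\si_1$-fixed endpoint, so that the above assignment is consistent. Everything else is bookkeeping.
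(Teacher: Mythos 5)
Your proof is correct, and it takes a genuinely different, more structural route than the paper's. The paper first replaces $\si_1$ by a fixed-point-free involution $\si_1'$ agreeing with $\si_1$ off $S_\R^{\si_1}$ (using the parity observation that $|S_\R^{\si_1}|$ is even), reduces to the case $S_\R^{\si_1}\!=\!\eset$, and then builds $S_+$ by a greedy algorithm: one maintains a set $S_+'$ with $S_+'\!\cap\!\si_1(S_+')$, $S_+'\!\cap\!\si_2(S_+')$ empty and $\si_2(S_+')$ lying in $S_+'\!\cup\!\si_1(S_+')$ up to at most one discrepancy, and adds one element at a time (either a fresh element or, to resolve the discrepancy, a $\si_1$-partner) until $S\!=\!S_+'\!\sqcup\!\si_1(S_+')$. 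Your approach instead decomposes $S$ into $\langle\si_1,\si_2\rangle$-orbits, classifies each orbit as an alternating cycle of even length or an alternating interval with two $\si_1$-fixed endpoints, and writes down $S_+$ explicitly as the even-indexed elements after parametrizing each orbit so the alternation starts with $\si_2$ and, for an interval, so a $\si_1$-fixed endpoint sits at index~$0$. Both proofs yield the same conclusion; the orbit decomposition makes every count in~\eref{EquivLocalLmm_e} a transparent per-orbit statement (each interval contributes exactly one of its two $\si_1$-fixed endpoints to~$S_+$, and the $\si_1$- and $\si_2$-pairings within each orbit are visibly split across $S_+$ and its complement), at the cost of having to establish the dihedral orbit structure and rule out odd-length intervals. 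The paper's greedy argument avoids that structural analysis but requires checking that both extension steps preserve the invariant and that the process terminates, which is slightly less immediate to verify.
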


\begin{proof}
Since $\si_2$ acts without fixed points, the cardinalities of~$S$ and $S_{\R}^{\si_1}$ are even.
Let $\si_1'$ be any involution on~$S$ such~that 
$$S_{\R}^{\si_1'}=\eset \qquad\hbox{and}\qquad
\si_1'\big|_{S-S_{\R}^{\si_1}}=\si_1\big|_{S-S_{\R}^{\si_1}}.$$
In particular, $\si_1'$ restricts to an involution on $S_{\R}^{\si_1}$ without fixed points.
Therefore, a subset $S_+\!\subset\!S$ that satisfies~\eref{EquivLocalLmm_e} with~$\si_1$
replaced by~$\si_1'$ also satisfies~\eref{EquivLocalLmm_e} itself.
Thus, it is sufficient to establish the claim under the assumption that $S_{\R}^{\si_1}\!=\!\eset$.\\

\noindent
Suppose we have constructed a subset $S_+'\!\subset\!S$ such that 
$$S_+'\!\cap\!\si_1(S_+'),S_+'\!\cap\!\si_2(S_+')=\eset, \qquad
\big|\si_2(S_+')-S_+'\!\cup\!\si_1(S_+')\big)\big|\le 1.$$
If $S\!=\!S_+'\!\cup\!\si_1(S_+')$, then we can take $S_+\!=\!S_+'$.
If 
$$S\supsetneq S_+'\!\cup\!\si_1(S_+') \qquad\hbox{and}\qquad \si_2(S_+')\subset S_+'\!\cup\!\si_1(S_+'),$$
enlarge $S_+'$ by adding any element from the complement of $\!S_+'\!\cup\!\si_1(S_+')$ in~$S$.
If $\si_2(S_+')$ is not contained in $S_+'\!\cup\!\si_1(S_+')$
and $s$ is the unique element in the complement, 
enlarge $S_+'$ by adding~$\si_1(s)$ to it; 
this element is not in $\si_2(S_+')$ by the uniqueness of~$s$.
After repeating this procedure finitely many times, we obtain a subset $S_+\!\subset\!S$
satisfying~\eref{EquivLocalLmm_e}.
\end{proof}

\begin{proof}[{\bf\emph{Proof of Corollary~\ref{EquivLocal_crl}}}]
Let $\Ga$ be as in~\eref{Dual_e} and
$e_1^*$ and $e_2^*$ be the two edges in the complement of
$\E_{\R}^{\si_2}(\Ga)$ in $\E_{\R}^{\si_1}(\Ga)$. 
We denote the vertices of~$e_1^*$ by $v_{11}^*$ and~$v_{12}^*$ and 
the vertices of~$e_2^*$ by $v_{21}^*$ and~$v_{22}^*$.
Since $(\Ga,\si_1)$ is an admissible graph, $\fd(e_1^*),\fd(e_2^*)\!\not\in\!2\Z$.
The involution~$\si_1$ pairs up the decorated one-edge subgraphs of~$\Ga$ determined  
by the edges in~$\E_{\C}^{\si_1}(\Ga)$ and $\si_2$ pairs up 
the decorated one-edge subgraphs determined by the edges~in
\BE{EquivLocalCrl_e2}
\E_{\C}^{\si_2}(\Ga)=\E_{\C}^{\si_1}(\Ga)\sqcup\big\{e_1^*,e_2^*\big\}.\EE
Thus,
\BE{EquivLocalCrl_e3}
\fd(e_1^*)=\fd(e_2^*),\qquad
\big\{\vt(v_{11}^*),\vt(v_{12}^*)\big\}
=\big\{\phi(\vt(v_{21}^*)),\phi(\vt(v_{22}^*))\big\}
=\big\{\vt(v_{21}^*),\vt(v_{22}^*)\big\};\EE
the last equality holds because $v_{21}^*$ and~$v_{22}^*$ are interchanged by~$\si_1$.
By~\eref{ContrER_e}, \eref{ContrEC_e}, \eref{EquivLocalCrl_e3}, and~\eref{EvenSum_e1},
\BE{EquivLocalCrl_e5}
\Cntr_{(\Ga,\si_1);e_1^*}\Cntr_{(\Ga,\si_1);e_2^*}
=-\frac{\Cntr_{(\Ga,\si_2);e_1^*}}{\fd(e_1^*)}
=-\frac{\Cntr_{(\Ga,\si_2);e_2^*}}{\fd(e_2^*)}\EE
when restricted to the subtorus $\bT^m\!\subset\!\bT^n$.\\

\noindent
Since $(\Ga,\si_1)$ and $(\Ga,\si_2)$ are admissible pairs,  $\si_1$ and~$\si_2$
act on~$\Ver$ without fixed points.
By Lemma~\ref{EquivLocal_lmm}, there thus exists a subset $\nV_+^{\si_1\si_2}(\Ga)$ 
of~$\Ver$ that satisfies~\eref{nVplus_e} for $\si\!=\!\si_1$ and $\si\!=\!\si_2$
at the same time.
The involutions $\si_1$ and $\si_2$ restrict to involutions on~\eref{EquivLocalCrl_e2}
such that $\si_2$ acts without fixed points and the fixed points of~$\si_1$ are $\{e_1^*,e_2^*\}$.
By Lemma~\ref{EquivLocal_lmm}, there thus exists a subset $\E_+^{\si_2}(\Ga)$ 
of~$\E_{\C}^{\si_2}(\Ga)$ satisfying~\eref{nEplus_e} for $\si\!=\!\si_2$  
such~that 
$$\E_+^{\si_1}(\Ga)\equiv\E_+^{\si_2}(\Ga)-\big\{e_1^*,e_2^*\big\} $$ 
satisfies~\eref{nEplus_e} for $\si\!=\!\si_1$.
Let $e_i^*$ be the unique element of $\E_+^{\si_2}(\Ga)\!\cap\!\{e_1^*,e_2^*\}$.
The vertex and edge contributions on the right-hand side of~\eref{EquivLocal_e} are then
the same for $\si\!=\!\si_1$ and $\si\!=\!\si_2$, except the contribution of
$e_i^*\!\in\!\E_+^{\si_2}(\Ga)$ is replaced by the product of the contributions
of $e_1^*,e_2^*\!\in\!\E_{\R}^{\si_1}(\Ga)$.
The claim now follows from~\eref{EquivLocalCrl_e5}.
\end{proof}

\begin{proof}[{\bf\emph{Proof of Theorem~\ref{GWsCI_thm}\ref{CIvan_it} for~$k\!=\!0$}}]
Let $(\Ga,\si)$ be an element of $\cA_{g,l}(n,d)$. 
Since $\nV_{\R}^{\si}(\Ga)\!=\!\eset$,
\BE{PnEGthm_e1}
g=g(\Ga)=1+\big|\E_{\R}^{\si}(\Ga)\big|+2\big|\E_+^{\si}(\Ga)\big|
 +2\!\!\!\sum_{v\in\nV_+^{\si}(\Ga)}\!\!\!\!\!\!\big(\g(v)\!-\!1\big)
\equiv 1+\big|\E_{\R}^{\si}(\Ga)\big| \mod2.\EE
Since $\fd(e)\!\not\in\!2\Z$ for all $v\!\in\!\E_{\R}^{\si}(\Ga)$,
\BE{PnEGthm_e3}
d=d(\Ga)=\sum_{e\in\E_{\R}^{\si}(\Ga)}\!\!\!\!\!\fd(e)
+2\!\!\!\!\!\sum_{e\in\E_+^{\si}(\Ga)}\!\!\!\!\!\!\fd(e)
\equiv \big|\E_{\R}^{\si}(\Ga)\big| \mod2.\EE
By~\eref{PnEGthm_e1} and~\eref{PnEGthm_e3}, $\cA_{g,l}(n,d)\!=\!\eset$ if $d\!-\!g\!\in\!2\Z$.
Since all equivariant contributions to the genus~$g$ degree~$d$ real GW-invariants of 
$(\P^{2m-1},\om_{2m},\tau_{2m})$ and 
$(\P^{4m-1},\om_{4m},\eta_{4m})$ with only conjugate pairs of insertions 
arise from the elements of $\cA_{g,l}(n,d)$, with $n\!=\!2m$ and $n\!=\!4m$, respectively,
this establishes the claim.
\end{proof}

\begin{proof}[{\bf\emph{Proof of Proposition~\ref{GWsPn_prp}}}]
By Theorem~\ref{EquivLocal_thm}, the genus~$g$ real GW-invariants of 
$(\P^{4n-1},\om_{4n},\tau_{4n})$ and $(\P^{4n-1},\om_{4n},\eta_{4n})$ 
with only conjugate pairs of insertions are obtained by summing the contributions
from the same set of admissible pairs~$(\Ga,\si)$.
The contributions~\eref{EquivLocal_e} of~$(\Ga,\si)$ to the two invariants
are products of the factors~\eref{Contr3V_e}-\eref{ContrER_e}.
The factors~\eref{ContrER_e} corresponding to the $\si$-fixed edges of~$\Ga$
have opposite signs in the two cases;
all other factors are the same.
By~\eref{PnEGthm_e1}, the parity of~$|\E_{\R}^{\si}(\Ga)|$ is the same as the parity
of $g\!-\!1$.
Thus, the contributions~\eref{EquivLocal_e}  to the two invariants
from every admissible pair~$(\Ga,\si)$ differ by the factor of~$(-1)^{g-1}$;
this establishes the claim.
\end{proof}

\begin{eg}[$d\!=\!2$]\label{d2_eg}
We now apply Theorem~\ref{EquivLocal_thm} to compute the genus~$g$ degree~2
real GW-invariants of~$(\P^3,\tau_4)$ with 2 conjugate pairs of point constraints.
They are given~by
\BE{d2eg_e}\GW_{g,2}^{\P^3,\tau_4}\big(H^3,H^3\big)
=\int_{[\ov\fM_{g,2}(\P^3,2)^{\tau_4}]^{\vrt}}
\ev_1^{\,*}\!\prod_{j\neq 1}\!(\x\!-\!\al_j)
~\ev_2^{\,*}\!\prod_{j\neq 3}\!(\x\!-\!\al_j),\EE
where $H\!\in\!H^2(\P^3;\Q)$ is the usual hyperplane class and 
$\x\!\in\!H_{\bT^4}^2(\P^3;\Q)$ is the equivariant hyperplane class.
If $(\Ga,\si)\!\in\!\cA_{g,2}(4,2)$ and $\Ga$ is as in~\eref{Dual_e}, then
\BE{d2eg_e3}\ev_i^{\,*}\!\!\!\!\!\prod_{j\neq 2i-1}\!\!\!\!(\x\!-\!\al_j)~\big|_{\cZ_{\Ga,\si}}
=\prod_{j\neq2i-1}\!\!\!\!\big(\al_{\vt(\fm(i^+))}\!-\!\al_j\big)
\qquad \forall~i\!=\!1,2,\EE
where $\cZ_{\Ga,\si}$ is the $\bT^2$-fixed locus corresponding to $(\Ga,\si)$;
this restriction is formally encoded into the vertex contribution,
\eref{Contr3V_e} or~\eref{Contr2V_e}, of $v\!=\!\fm(i^+)$.
Thus, the restriction of the integrand in~\eref{d2eg_e} to~$\cZ_{\Ga,\si}$   vanishes unless
$$\vt(\fm(1^+))=1, \quad \vt(\fm(2^+))=3, \quad 
\vt(\fm(1^-))=2, \quad \vt(\fm(2^-))=4.$$
Since there are no degree~2 connected graphs with at least~4 vertices,
the restriction  of the integrand in~\eref{d2eg_e} to all $\bT^2$-fixed loci
vanishes and~so
$$\GW_{g,2}^{\P^3,\tau_4}(H^3,H^3)=0 \qquad\forall~g\!\in\!\Z.$$
\end{eg}

\begin{eg}[$g\!=\!1,d\!=\!4$]\label{g1d4_eg}
We next compute the genus~1 degree~4
real GW-invariant of~$(\P^3,\tau_4)$ with 4 conjugate pairs of point constraints~as
\BE{g1d4eg_e}\GW_{1,4}^{\P^3,\tau_4}\big(H^3,H^3,H^3,H^3\big)=
\int_{[\ov\fM_{1,4}(\P^3,4)^{\tau_4}]^{\vrt}}
\prod_{i=1}^4\!\bigg(\!\ev_i^{\,*}\!\prod_{j\neq i}\!(\x\!-\!\al_j)\!\bigg).\EE
Similarly to Example~\ref{d2_eg}, 
the restriction of the integrand in~\eref{g1d4eg_e} to a fixed locus~$\cZ_{\Ga,\si}$   vanishes unless
$\vt(\fm(i^+))\!=\!i$ for all $i\!\in\![4]$.
There are 11~pairs in $\cA_{1,4}(4,4)$ satisfying this condition:
the first diagram in Figure~\ref{g1d4_fig},  with the four possible ways of labeling its vertices
and  the two possible involutions on the loop, and the three other diagrams.
For each of these diagrams,
$$ \ev_i^{\,*}\!\prod_{j\neq i}\!(\x\!-\!\al_j)~\big|_{\cZ_{\Ga,\si}}
=\prod_{j\neq i}\!\big(\al_{\vt(\fm(i^+))}\!-\!\al_j\big)
=\e\big(T_{P_i}\P^3\big);$$
see~\eref{EquivRestr_e}.
All eight versions of the first diagram in Figure~\ref{g1d4_fig} have the same 
automorphism group, i.e.~$\Z_2$.
Since the degrees of the vertical edges are~1, Corollary~\ref{EquivLocal_crl}
thus implies that these graphs cancel in pairs.
For the remaining three graphs, we can choose the same distinguished subset 
$\nV_+^{\si}(\Ga)$ of~$\Ver$ consisting of the top vertices.
Their contributions in the three cases are given by~\eref{Contr3V_e} with
$\al_{\vt(v)}^{|\p|_v}$ replaced by $\e(T_{P_{\vt(v)}}\P^3)^2$
and $\ov\cM_{\g(v),S_v}\!=\!\ov\cM_{0,4}$:
\begin{alignat*}{3}
&\e\big(T_{P_1}\P^3\big)^3\frac{3\la_1\!-\!\la_2}{8\la_1^3(\la_1\!-\!\la_2)^3}\,,
&\qquad 
&\e\big(T_{P_1}\P^3\big)^3\frac{3\la_1\!+\!\la_2}{8\la_1^3(\la_1\!+\!\la_2)^3}\,,
&\qquad 
\e\big(T_{P_1}\P^3\big)^3\frac{2\la_1}{(\la_1^2\!-\!\la_2^2)^3}\,,\\
&\e\big(T_{P_3}\P^3\big)^3\frac{3\la_2\!-\!\la_1}{8\la_2^3(\la_2\!-\!\la_1)^3}\,,
&\qquad
&-\e\big(T_{P_4}\P^3\big)^3\frac{3\la_2\!+\!\la_1}{8\la_2^3(\la_2\!+\!\la_1)^3}\,,
&\qquad
\e\big(T_{P_3}\P^3\big)^3\frac{2\la_2}{(\la_2^2\!-\!\la_1^2)^3}\,;
\end{alignat*}
see also~\eref{sp_weights_e}.
In the case of the two middle diagrams, the set $\E_{\R}^{\si}(\Ga)$ consists of the two vertical edges.
The product of their contributions, as given by~\eref{ContrER_e}, is $-(\la_1^2\!-\!\la_2^2)^{-2}$
in both cases.
For the set $\E_+^{\si}(\Ga)$, we can choose the top edge in these cases.
Its contributions,  as given by~\eref{ContrEC_e}, are
\BE{g1d4eg_e7}-\frac{1}{4\la_1\la_2(\la_1\!+\!\la_2)^2} \qquad\hbox{and}\qquad
\frac{1}{4\la_1\la_2(\la_1\!-\!\la_2)^2}\,,\EE
respectively.
In the case of the last diagram,  we can take $\E_+^{\si}(\Ga)$ to consist of the top and
left edges; their contributions are given by~\eref{g1d4eg_e7}.
In all three cases, the vertex signs $(-1)^{\fs_v}$ are the same for the two vertices.
Putting the  three contributions together, we obtain
\begin{equation*}\begin{split}
\GW_{1,4}^{\P^3,\tau_4}\big(H^3,H^3,H^3,H^3\big)=
\frac{(\la_1\!+\!\la_2)^2(3\la_1\!-\!\la_2)(3\la_2\!-\!\la_1)}{4\la_1\la_2(\la_1\!-\!\la_2)^2}
&+\frac{(\la_1\!-\!\la_2)^2(3\la_1\!+\!\la_2)(3\la_2\!+\!\la_1)}{4\la_1\la_2(\la_1\!+\!\la_2)^2}\\
&-\frac{16\la_1^2\la_2^2}{(\la_1^2\!-\!\la_2^2)^2}
=-1.
\end{split}\end{equation*}
\end{eg}

\begin{figure}
\begin{pspicture}(-1.5,-1.5)(10,1.8)
\psset{unit=.3cm}
\pscircle*(-1,2.5){.2}\pscircle*(4,2.5){.2}
\pscircle*(-1,-2.5){.2}\pscircle*(4,-2.5){.2}
\psline[linewidth=.07](-1,2.5)(4,2.5)\psline[linewidth=.07](-1,-2.5)(4,-2.5)
\psline[linewidth=.07](-1,2.5)(-1,-2.5)\psarc[linewidth=.07](-3.5,0){3.57}{-45}{45}
\psline[linewidth=.05]{<->}(.2,2)(.8,-2)\psline[linewidth=.05]{<->}(.8,2)(.2,-2)
\psline[linewidth=.05]{<->}(1.3,2)(1.3,-2)
\rput(-1,3.3){\smsize{$1$}}\rput(-1,-3.3){\smsize{$2$}}
\rput(4,3.3){\smsize{$3$}}\rput(4,-3.3){\smsize{$4$}}
\rput(1.5,3.3){\smsize{$1$}}\rput(1.5,-3.3){\smsize{$1$}}
\rput(-1.5,0){\smsize{$1$}}\rput(-.4,0){\smsize{$1$}}
\psline[linewidth=.05](-1,2.5)(-3,2.5)\psline[linewidth=.05](-1,-2.5)(-3,-2.5)
\psline[linewidth=.05](4,2.5)(6,2.5)\psline[linewidth=.05](4,-2.5)(6,-2.5)
\psline[linewidth=.05](-1,2.5)(-3,1)\psline[linewidth=.05](-1,-2.5)(-3,-1)
\psline[linewidth=.05](4,2.5)(6,1)\psline[linewidth=.05](4,-2.5)(6,-1)
\rput(-3,3){\smsize{$1^+$}}\rput(-3,-2.1){\smsize{$1^-$}}
\rput(-3.5,1.3){\smsize{$2^-$}}\rput(-3.5,-.7){\smsize{$2^+$}}
\rput(6.9,3){\smsize{$3^+$}}\rput(6.9,-2.1){\smsize{$3^-$}}
\rput(6.9,1.3){\smsize{$4^-$}}\rput(6.9,-.7){\smsize{$4^+$}}
\pscircle*(13,2.5){.2}\pscircle*(18,2.5){.2}
\pscircle*(13,-2.5){.2}\pscircle*(18,-2.5){.2}
\psline[linewidth=.07](13,2.5)(18,2.5)\psline[linewidth=.07](13,-2.5)(18,-2.5)
\psline[linewidth=.07](13,2.5)(13,-2.5)\psline[linewidth=.07](18,2.5)(18,-2.5)
\psline[linewidth=.05]{<->}(15.5,2)(15.5,-2)
\rput(13,3.3){\smsize{$1$}}\rput(13,-3.3){\smsize{$2$}}
\rput(18,3.3){\smsize{$3$}}\rput(18,-3.3){\smsize{$4$}}
\rput(15.5,3.3){\smsize{$1$}}\rput(15.5,-3.3){\smsize{$1$}}
\rput(12.5,0){\smsize{$1$}}\rput(18.5,0){\smsize{$1$}}
\psline[linewidth=.05](13,2.5)(11,2.5)\psline[linewidth=.05](13,-2.5)(11,-2.5)
\psline[linewidth=.05](18,2.5)(20,2.5)\psline[linewidth=.05](18,-2.5)(20,-2.5)
\psline[linewidth=.05](13,2.5)(11,1)\psline[linewidth=.05](13,-2.5)(11,-1)
\psline[linewidth=.05](18,2.5)(20,1)\psline[linewidth=.05](18,-2.5)(20,-1)
\rput(11,3){\smsize{$1^+$}}\rput(11,-2.1){\smsize{$1^-$}}
\rput(10.5,1.3){\smsize{$2^-$}}\rput(10.5,-.7){\smsize{$2^+$}}
\rput(20.9,3){\smsize{$3^+$}}\rput(20.9,-2.1){\smsize{$3^-$}}
\rput(20.9,1.3){\smsize{$4^-$}}\rput(20.9,-.7){\smsize{$4^+$}}
\pscircle*(27,2.5){.2}\pscircle*(32,2.5){.2}
\pscircle*(27,-2.5){.2}\pscircle*(32,-2.5){.2}
\psline[linewidth=.07](27,2.5)(32,2.5)\psline[linewidth=.07](27,-2.5)(32,-2.5)
\psline[linewidth=.07](27,2.5)(27,-2.5)\psline[linewidth=.07](32,2.5)(32,-2.5)
\psline[linewidth=.05]{<->}(29.5,2)(29.5,-2)
\rput(27,3.3){\smsize{$1$}}\rput(27,-3.3){\smsize{$2$}}
\rput(32,3.3){\smsize{$4$}}\rput(32,-3.3){\smsize{$3$}}
\rput(29.5,3.3){\smsize{$1$}}\rput(29.5,-3.3){\smsize{$1$}}
\rput(26.5,0){\smsize{$1$}}\rput(32.5,0){\smsize{$1$}}
\psline[linewidth=.05](27,2.5)(25,2.5)\psline[linewidth=.05](27,-2.5)(25,-2.5)
\psline[linewidth=.05](32,2.5)(34,2.5)\psline[linewidth=.05](32,-2.5)(34,-2.5)
\psline[linewidth=.05](27,2.5)(25,1)\psline[linewidth=.05](27,-2.5)(25,-1)
\psline[linewidth=.05](32,2.5)(34,1)\psline[linewidth=.05](32,-2.5)(34,-1)
\rput(25,3){\smsize{$1^+$}}\rput(25,-2.1){\smsize{$1^-$}}
\rput(24.5,1.3){\smsize{$2^-$}}\rput(24.5,-.7){\smsize{$2^+$}}
\rput(34.9,3){\smsize{$4^+$}}\rput(34.9,-2.1){\smsize{$4^-$}}
\rput(34.9,1.3){\smsize{$3^-$}}\rput(34.9,-.7){\smsize{$3^+$}}
\pscircle*(41,2.5){.2}\pscircle*(46,2.5){.2}
\pscircle*(41,-2.5){.2}\pscircle*(46,-2.5){.2}
\psline[linewidth=.07](41,2.5)(46,2.5)\psline[linewidth=.07](41,-2.5)(46,-2.5)
\psline[linewidth=.07](41,2.5)(41,-2.5)\psline[linewidth=.07](46,2.5)(46,-2.5)
\psline[linewidth=.05]{<->}(42,1.5)(45,-1.5)\psline[linewidth=.05]{<->}(42,-1.5)(45,1.5)
\rput(41,3.3){\smsize{$1$}}\rput(41,-3.3){\smsize{$4$}}
\rput(46,3.3){\smsize{$3$}}\rput(46,-3.3){\smsize{$2$}}
\rput(43.5,3.3){\smsize{$1$}}\rput(43.5,-3.3){\smsize{$1$}}
\rput(40.5,0){\smsize{$1$}}\rput(46.5,0){\smsize{$1$}}
\psline[linewidth=.05](41,2.5)(39,2.5)\psline[linewidth=.05](41,-2.5)(39,-2.5)
\psline[linewidth=.05](46,2.5)(48,2.5)\psline[linewidth=.05](46,-2.5)(48,-2.5)
\psline[linewidth=.05](41,2.5)(39,1)\psline[linewidth=.05](41,-2.5)(39,-1)
\psline[linewidth=.05](46,2.5)(48,1)\psline[linewidth=.05](46,-2.5)(48,-1)
\rput(39,3){\smsize{$1^+$}}\rput(39,-2.1){\smsize{$3^-$}}
\rput(38.5,1.3){\smsize{$2^-$}}\rput(38.5,-.7){\smsize{$4^+$}}
\rput(48.9,3){\smsize{$3^+$}}\rput(48.9,-2.1){\smsize{$1^-$}}
\rput(48.9,1.3){\smsize{$4^-$}}\rput(48.9,-.7){\smsize{$2^+$}}
\end{pspicture}
\caption{The elements of~$\cA_{1,4}(4,4)$ potentially contributing to~\eref{g1d4eg_e}.}
\label{g1d4_fig}
\end{figure}

\section{Proof of Theorem~\ref{EquivLocal_thm}}
\label{EquivLocalPf_sec}

\noindent
It remains to establish Theorem~\ref{EquivLocal_thm}.
For the remainder of this paper, we assume 
$$n\!\in\!\Z^+, \quad g,d,k,l\!\in\!\Z^{\ge0}, \quad 
n\!-\!k\!\in\!2\Z, \quad \a\!\in\!(\Z^+)^k,$$
and $\phi\!=\!\tau_n'$ or $n\!\in\!2\Z$ and $\phi\!=\!\eta_n$.
We also assume  that 
$(n,\a)$ satisfies the assumptions of Proposition~\ref{CIorient_prp}(1)
if  $\phi\!=\!\tau_n'$
and of  Proposition~\ref{CIorient_prp}(2)  with $(m\!=\!n/2,\a)$ if $\phi\!=\!\eta_n$.
Let $m\!=\!\flr{n/2}$ as before and $S_l$ be as in~\eref{Sldfn_e}.

\subsection{The torus-fixed loci}
\label{FixedLoci_subs}

\noindent
We first identify the topological components of  the fixed locus of 
the $\bT^m$-action on $\ov\fM_{g,l}(\P^{n-1},d)^{\phi}$.\\

\noindent
Similarly to the situation in \cite[Section~27.3]{MirSym}, an element 
\BE{umapdfn_e} [\u]\equiv \big[\Si,(z_1^+,z_1^-),\ldots,(z_l^+,z_l^-),\si,u\big]
\in \ov\fM_{g,l}(\P^{n-1},d)^{\phi}\EE
is fixed by the $\bT^m$-action if and only~if
\begin{enumerate}[label=(F\arabic*),leftmargin=*]

\item\label{TmMap_it} the image of every irreducible component of $\Si$ is either a $\bT^m$-fixed 
point or a $\bT^m$-invariant irreducible curve in~$\P^{n-1}$,

\item the image of every nodal and marked point of~$\Si$ is a $\bT^m$-fixed 
point  in~$\P^{n-1}$,

\item\label{TmBP_it} the image of every branch point of the restriction $u$ to an irreducible component
of~$\Si$ is  a $\bT^m$-fixed  point  in~$\P^{n-1}$.\\

\end{enumerate}

\noindent
The $\bT^m$-fixed points and  $\bT^m$-invariant irreducible curves in~$\P^{n-1}$
are described by Lemma~\ref{FixedCurves_lmm}.
For a $\bT^m$-invariant stable map as in~\eref{umapdfn_e}, 
every non-constant restriction of the map~$u$  to an irreducible component of~$\Si$
is thus a cover of a line $\P^1_{ij}$ with $i\!\neq\!j$ branched only over 
the points~$P_i$ and~$P_j$ or 
of a conic $\cC_i(a,b)$ with $a,b\!\in\!\C^*$ branched only over the points~$P_{2i-1}$ 
and~$P_{2i}$;
the latter is a possibility only if $n\!=\!2m\!+\!1$.
Since $\P^1_{ij}$ and~$\cC_i(a,b)$ are smooth rational curves,
the Riemann-Hurwitz formula \cite[p219]{GH} implies that the domain of 
any irreducible cover of either $\P^1_{ij}$ or~$\cC_i(a,b)$ branched only over two points 
is also a~$\P^1$.\\

\noindent
The combinatorial structure of a $\bT^m$-invariant stable map as in~\eref{umapdfn_e}
can thus be described
by a connected decorated graph~$\Ga$ as in~\eref{Dual_e}.
The irreducible components~$\Si_e$ of~$\Si$ on which the map~$u$ is not constant
are rational and correspond to the edges $e\!\in\!\Edg$.
For $e\!=\!\{v_1,v_2\}$, $u|_{\Si_e}$ is either a degree~$\fd(e)$ cover of the line 
$\P^1_{\vt(v_1),\vt(v_2)}$
or a degree~$\fd(e)/2$ cover of a conic~$\cC_{\flr{(\vt(v_1)+1)/2}}(a,b)$;
the latter is a possibility only if~\eref{ConicsCovCond_e} holds. 
In both cases, the map $u|_{\Si_e}$ is ramified only over $P_{\vt(v_1)}$ and~$P_{\vt(v_2)}$.
We denote the moduli space of all possible $u|_{\Si_e}$ and its closure~by 
\BE{EdgeSpaces_e}\begin{aligned}
&\fM_{\Ga,\si;e}^{\phi;\bT;\circ}\subset \ov\fM_{\Ga,\si;e}^{\phi;\bT;\circ}\subset
\ov\fM_{0,0}\big(\P^{n-1},\fd(e)\big)^{\phi}
&\qquad &\hbox{if}~e\!\in\!\E_{\R}^{\si}(\Ga),\\
&\fM_{\Ga;e}^{\bT;\circ}\subset \ov\fM_{\Ga;e}^{\bT;\circ}\subset
\ov\fM_{0,0}\big(\P^{n-1},\fd(e)\big)
&\qquad &\hbox{if}~e\!\in\!\E_{\C}^{\si}(\Ga).
\end{aligned}\EE
We denote~by 
$$\cN_{\Ga,\si;e}^{\phi;\circ}\lra  \ov\fM_{\Ga,\si;e}^{\phi;\bT;\circ}
\qquad\hbox{and}\qquad
\cN_{\Ga;e}^{\circ}\lra  \ov\fM_{\Ga;e}^{\bT;\circ}$$
the corresponding normal bundles.\\ 

\noindent
The vertices $v\!\in\!\Ver$ with $\val_v(\Ga)\!\ge\!3$ correspond to 
the maximal connected unions~$\Si_v$ of irreducible components of~$\Si$ 
on which~$u$ is constant.
The arithmetic genus of such~$\Si_v$ is~$\g(v)$;
it is sent by~$u$ to~$P_{\vt(v)}$ and carries the marked points $\fm^{-1}(v)\!\subset\!S_l$.
If $v\!\in\!\nV_{\C}^{\si}(\Ga)$, we denote the moduli space of all possible~$\Si_v$ by
\BE{ovfMGavdfn_e}\ov\fM_{\Ga;v}^{\bT}\subset \ov\fM_{\g(v),S_v}\big(\P^{n-1},0\big)\EE
with~$S_v$ as in~\eref{Svdfn_e}; it is isomorphic to $\ov\cM_{\g(v),S_v}$. 
The remaining marked points of~$\Si$ are the branch points of~$u|_{\Si_e}$
corresponding to the vertices $v\!\in\!e$ with $\val_v(\Ga)\!=\!2$ and $|\E_v(\Ga)|\!=\!1$.
The remaining vertices of~$v$ with $\val_v(\Ga)\!=\!2$ correspond to the nodes of~$\Si$
shared by two irreducible components~$\Si_{e_1}$ and~$\Si_{e_2}$ with $e_1,e_2\!\in\!\Edg$.
The involution~$\si$ on~$\Si$ induces an involution~$\si$ on the graph~$\Ga$.
We will call the pair $(\Ga,\si)$ obtained in this way the \sf{combinatorial type of}
the $\bT^m$-fixed stable map~\eref{umapdfn_e}.

\begin{rmk}\label{ConicsSpaces_rmk}
If $e$ does not satisfy~\eref{ConicsCovCond_e}, then the spaces in~\eref{EdgeSpaces_e} 
consist of a single element with the automorphism group~$\Z_{\fd(e)}$. 
Let $i\!=\!\flr{(\vt(v_1)\!+\!1)/2}$.
If $e\!\in\!\E_{\C}^{\si}(\Ga)$ satisfies~\eref{ConicsCovCond_e}, then
$$\ov\fM_{\Ga;e}^{\bT;\circ}\approx \big\{[a,b]\!\in\!\P^1\!\big\}$$
as topological spaces;
see Lemma~\ref{FixedCurves_lmm}\ref{Tcurves_it}
This identification can be chosen so that the image of the map corresponding
to $[a,b]$ is the conic $\cC_i(a,b)$.
The points $[1,0]$ and $[0,1]$ then correspond to the covers of $\P^1_{01}\!\cup\!\P^1_{02}$
and~$\P^1_{12}$, respectively.
The automorphism groups of these points are $(\Z_{\fd(e)/2})^2$
and $\Z_{\fd(e)}$, respectively;
the automorphism groups of the remaining elements are~$\Z_{\fd(e)/2}$.
By Lemma~\ref{FixedCurves_lmm}\ref{TRcurves_it}, the space of
real $\bT^1$-invariant conics (not necessarily smooth) in $(\P^2,\tau_3)$~is
$$\big\{[a,b]\!\in\!\P^1\!:\,a\bar{b}\!\in\!\R\big\}\approx S^1\subset\P^1\,.$$
Suppose $e\!\in\!\E_{\R}^{\si}(\Ga)$ satisfies~\eref{ConicsCovCond_e}.
The  $\bT^1$-invariant degree~$\fd(e)$ cover of~$\P^1_{12}$ is then compatible with both involutions
on the domain;
the automorphism groups of both resulting real  covers are~$\Z_{\fd(e)}$.
The  $\bT^1$-invariant degree $\fd(e)/2$ cover of $\P^1_{01}\!\cup\!\P^1_{02}$ is 
compatible with one involution on the domain;
the automorphism group of the resulting real cover is $\Z_{\fd(e)/2}$.
If $\fd(e)\!\not\in\!4\Z$,  the same is the case for 
the  $\bT^1$-invariant degree $\fd(e)/2$ cover of each of 
the smooth real conics~$\cC_i(a,b)$.
If $\fd(e)\!\in\!4\Z$, the degree $\fd(e)/2$ covers of 
the conics~$\cC_i(a,b)$ with $a\bar{b}\!\in\!\R^-$
are not compatible with any involution
on the domain as these conics have no fixed locus;
see Remark~\ref{conics_rmk} and \cite[Lemma~1.9]{Teh}.
The  $\bT^1$-invariant degree $\fd(e)/2$ cover of each 
conic~$\cC_i(a,b)$ with $a\bar{b}\!\in\!\R^+$
is  compatible with both involutions on the domain;
the automorphism groups of both resulting real  covers are~$\Z_{\fd(e)/2}$.
In both cases, $\ov\fM_{\Ga,\si;e}^{\phi;\bT;\circ}$ can be viewed as the interval $[-1,1]$
with the trivial $\Z_{\fd(e)/2}$-action on the interior points and 
the trivial $\Z_{\fd(e)}$-action on the endpoints;
as an orbifold, it has no boundary.
It can alternatively be viewed as the quotient of $S^1\!\subset\!\C^*$ by 
the $\Z_{\fd(e)}$-action generated by the conjugation.\\
\end{rmk}

\noindent
Let $\Ga$ be an $S_l$-marked $[n]$-decorated connected graph with  an involution~$\si$
such that $g(\Ga)\!=\!g$ and $d(\Ga)\!=\!d$.
We denote~by
$$\cZ_{\Ga,\si}\subset \ov\fM_{g,l}(\P^{n-1},d)^{\phi}$$ 
the subspace consisting of all $\bT^m$-fixed elements of the combinatorial type 
of~$(\Ga,\si)$.
This subspace is closed unless some edge $e\!\in\!\Edg$ satisfies~\eref{ConicsCovCond_e}.
In such a case, the closure~$\ov\cZ_{\Ga,\si}$ of~$\cZ_{\Ga,\si}$ also includes the subspaces
$\cZ_{\Ga',\si'}$ corresponding to the pairs $(\Ga',\si')$ obtained from~$(\Ga,\si)$ by
the ``local replacement" of \cite[Figure~5]{Wal}: 
\begin{enumerate}[label=$\bu$,leftmargin=*]

\item adding a new vertex~$v_e$ to an edge~$e$ satisfying~\eref{ConicsCovCond_e},

\item  extending the functions~$\g$ and~$\vt$ to~$v_e$ by~0 and~$n$, respectively, and 

\item replacing the value of~$\fd$ on~$e$ by the values of~$\fd(e)/2$ on each of the two new edges;

\end{enumerate}
see Figure~\ref{GaDegen_fig}.
If $e\!\in\!\E_{\R}^{\si}(\Ga)$, the involution~$\si'$ is obtained from~$\si$ by sending~$v_e$ 
to itself and interchanging the two new edges.
If $e\!\in\!\E_{\C}^{\si}(\Ga)$, the above breaking procedure should simultaneously be performed
on the edge~$\si(e)$.
The involution~$\si'$ on~$\Ga'$ is then obtained from~$\si$ by interchanging~$v_e$
with~$v_{\si(e)}$ and the two pairs of new edges according to the action on their vertices.
This graph degeneration corresponds to the degeneration of the conics~$\cC_i(a,b)$ 
to the union of the lines~$\P^1_{2i-1,n}$ and~$\P^1_{2i,n}$.
Denote~by
$$\cN_{\Ga,\si}^{\phi}\lra \ov\cZ_{\Ga,\si}$$
the normal bundle of $\ov\cZ_{\Ga,\si}$ in $\ov\fM_{g,l}(\P^{n-1},d)^{\phi}$.\\ 


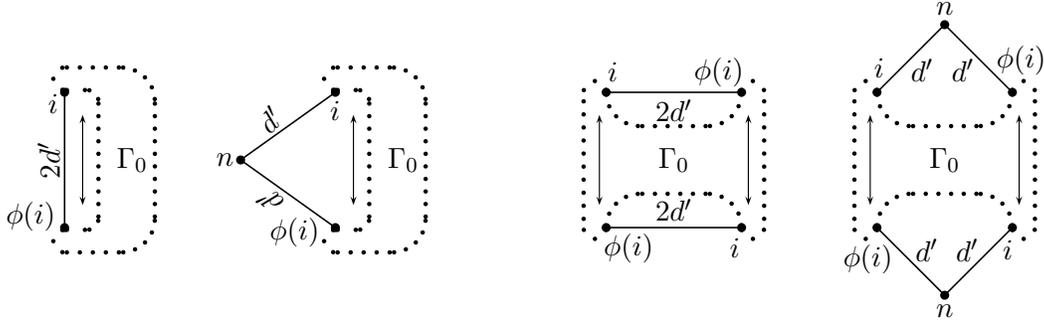
\begin{figure}
\begin{pspicture}(-1.5,-2.5)(10,1.8)
\psset{unit=.3cm}
\pscircle*(2,3){.2}\pscircle*(2,-3){.2}
\psline[linewidth=.07](2,3)(2,-3)
\psline[linewidth=.2,linestyle=dotted](2,3.1)(3,3.1)
\psline[linewidth=.2,linestyle=dotted](2,-3.1)(3,-3.1)
\psline[linewidth=.2,linestyle=dotted](2,4.1)(4.5,4.1)
\psline[linewidth=.2,linestyle=dotted](2,-4.1)(4.5,-4.1)
\psarc[linewidth=.2,linestyle=dotted](4.5,2.6){1.5}{0}{90}
\psarc[linewidth=.2,linestyle=dotted](4.5,-2.6){1.5}{-90}{0}
\psarc[linewidth=.2,linestyle=dotted](2,3.6){.5}{90}{270}
\psarc[linewidth=.2,linestyle=dotted](2,-3.6){.5}{90}{270}
\psarc[linewidth=.2,linestyle=dotted](3,2.6){.5}{0}{90}
\psarc[linewidth=.2,linestyle=dotted](3,-2.6){.5}{-90}{0}
\psline[linewidth=.2,linestyle=dotted](3.5,2.6)(3.5,-2.6)
\psline[linewidth=.2,linestyle=dotted](6,2.6)(6,-2.6)
\rput(1.5,2.5){$i$}\rput(.5,-2.5){$\phi(i)$}
\rput{90}(1.3,0){$2d'$}\rput(5,0){$\Ga_0$}
\psline[linewidth=.05]{<->}(2.8,2)(2.8,-2)
\pscircle*(14,3){.2}\pscircle*(14,-3){.2}\pscircle*(9.8,0){.2}
\psline[linewidth=.07](14,3)(9.8,0)
\psline[linewidth=.07](14,-3)(9.8,0)
\psline[linewidth=.2,linestyle=dotted](14,3.1)(15,3.1)
\psline[linewidth=.2,linestyle=dotted](14,-3.1)(15,-3.1)
\psline[linewidth=.2,linestyle=dotted](14,4.1)(16.5,4.1)
\psline[linewidth=.2,linestyle=dotted](14,-4.1)(16.5,-4.1)
\psarc[linewidth=.2,linestyle=dotted](16.5,2.6){1.5}{0}{90}
\psarc[linewidth=.2,linestyle=dotted](16.5,-2.6){1.5}{-90}{0}
\psarc[linewidth=.2,linestyle=dotted](14,3.6){.5}{90}{270}
\psarc[linewidth=.2,linestyle=dotted](14,-3.6){.5}{90}{270}
\psarc[linewidth=.2,linestyle=dotted](15,2.6){.5}{0}{90}
\psarc[linewidth=.2,linestyle=dotted](15,-2.6){.5}{-90}{0}
\psline[linewidth=.2,linestyle=dotted](15.5,2.6)(15.5,-2.6)
\psline[linewidth=.2,linestyle=dotted](18,2.6)(18,-2.6)
\rput(14,2.2){$i$}\rput(12.2,-3.2){$\phi(i)$}\rput(9.1,0){$n$}
\rput{45}(11.1,1.8){$d'$}\rput{135}(11.1,-1.8){$d'$}
\rput(17,0){$\Ga_0$}
\psline[linewidth=.05]{<->}(14.8,2)(14.8,-2)
\pscircle*(26,3){.2}\pscircle*(32,3){.2}
\pscircle*(26,-3){.2}\pscircle*(32,-3){.2}
\psline[linewidth=.07](26,3)(32,3)
\psline[linewidth=.07](26,-3)(32,-3)
\psarc[linewidth=.2,linestyle=dotted](27.5,3){1.5}{180}{270}
\psarc[linewidth=.2,linestyle=dotted](27.5,-3){1.5}{90}{180}
\psarc[linewidth=.2,linestyle=dotted](30.5,3){1.5}{270}{360}
\psarc[linewidth=.2,linestyle=dotted](30.5,-3){1.5}{0}{90}
\psline[linewidth=.2,linestyle=dotted](27.5,1.5)(30.5,1.5)
\psline[linewidth=.2,linestyle=dotted](27.5,-1.5)(30.5,-1.5)
\psarc[linewidth=.2,linestyle=dotted](25.5,3){.5}{0}{180}
\psarc[linewidth=.2,linestyle=dotted](25.5,-3){.5}{180}{360}
\psarc[linewidth=.2,linestyle=dotted](32.5,3){.5}{0}{180}
\psarc[linewidth=.2,linestyle=dotted](32.5,-3){.5}{180}{360}
\psline[linewidth=.2,linestyle=dotted](25,2.7)(25,-2.7)
\psline[linewidth=.2,linestyle=dotted](33,2.7)(33,-2.7)
\rput(29,0){$\Ga_0$}\rput(29,2.2){$2d'$}\rput(29,-2.2){$2d'$}
\rput(26.3,3.9){$i$}\rput(31,3.9){$\phi(i)$}
\rput(31.7,-3.9){$i$}\rput(27,-3.9){$\phi(i)$}
\psline[linewidth=.05]{<->}(25.7,2)(25.7,-2)
\psline[linewidth=.05]{<->}(32.3,2)(32.3,-2)
\pscircle*(38,3){.2}\pscircle*(44,3){.2}\pscircle*(41,6){.2}
\pscircle*(38,-3){.2}\pscircle*(44,-3){.2}\pscircle*(41,-6){.2}
\psline[linewidth=.07](38,3)(41,6)\psline[linewidth=.07](44,3)(41,6)
\psline[linewidth=.07](38,-3)(41,-6)\psline[linewidth=.07](44,-3)(41,-6)
\psarc[linewidth=.2,linestyle=dotted](39.5,3){1.5}{180}{270}
\psarc[linewidth=.2,linestyle=dotted](39.5,-3){1.5}{90}{180}
\psarc[linewidth=.2,linestyle=dotted](42.5,3){1.5}{270}{360}
\psarc[linewidth=.2,linestyle=dotted](42.5,-3){1.5}{0}{90}
\psline[linewidth=.2,linestyle=dotted](39.5,1.5)(42.5,1.5)
\psline[linewidth=.2,linestyle=dotted](39.5,-1.5)(42.5,-1.5)
\psarc[linewidth=.2,linestyle=dotted](37.5,3){.5}{0}{180}
\psarc[linewidth=.2,linestyle=dotted](37.5,-3){.5}{180}{360}
\psarc[linewidth=.2,linestyle=dotted](44.5,3){.5}{0}{180}
\psarc[linewidth=.2,linestyle=dotted](44.5,-3){.5}{180}{360}
\psline[linewidth=.2,linestyle=dotted](37,2.7)(37,-2.7)
\psline[linewidth=.2,linestyle=dotted](45,2.7)(45,-2.7)
\rput(41,0){$\Ga_0$}\rput(40,4){$d'$}\rput(41.8,4){$d'$}
\rput(40.2,-4){$d'$}\rput(42,-4){$d'$}
\rput(38.1,4.1){$i$}\rput(44.4,4.4){$\phi(i)$}\rput(41,6.7){$n$}
\rput(43.8,-4){$i$}\rput(37.6,-4.4){$\phi(i)$}\rput(41,-6.7){$n$}
\psline[linewidth=.05]{<->}(37.7,2)(37.7,-2)
\psline[linewidth=.05]{<->}(44.3,2)(44.3,-2)
\end{pspicture}
\caption{The second and fourth graphs are degenerations of the first and third,
respectively, induced by the degenerations of
$\bT^m$-fixed conics in $\P^{n-1}$ with $n$~odd;
$\Ga_0$ indicates any graph compatible with the indicated involution.}
\label{GaDegen_fig}
\end{figure}

\noindent
By the above,  
$$\ov\fM_{g,l}(\P^{n-1},d)^{\phi;\bT^m}=\bigsqcup_{(\Ga,\si)}\!\!\ov\cZ_{\Ga,\si}\,,$$
with the union taken over all pairs $(\Ga,\si)$ consisting of
an $S_l$-marked $[n]$-decorated connected graph~$\Ga$ with  an involution~$\si$
such that $g(\Ga)\!=\!g$, $d(\Ga)\!=\!d$, and $\Ga$ contains no vertex~$v$ such that 
$$\vt(v)=2m\!+\!1, \quad \val_v(\Ga)\!=\!\E_v(\Ga)\!=\!2,  \quad
\fd(e_1)=\fd(e_2), \quad \vt(e_1/v)=\phi\big(\vt(e_2/v)\big),$$
where $e_1,e_2\!\in\!\E_v(\Ga)$ are the two elements of~$\E_v(\Ga)$.
We denote the set of all such pairs $(\Ga,\si)$ by~$\ov\cA_{g,l}(n,d)$.\\

\noindent
The spaces~$\ov\cZ_{\Ga,\si}$ need not be connected or non-empty.
If $n\!\in\!2\Z$, $\phi\!=\!\tau_n'$, and $\fd(e)\!\in\!2\Z$ for some 
\hbox{$e\!\in\!\E_{\R}^{\si}(\Ga)$}, 
then $(\Ga,\si)$ is compatible with two distinct topological types of the real degree~$\fd(e)$
covers \hbox{$\Si_e\!\!\lra\!\P^1_{\vt(v_1)\vt(v_2)}$} branched only over two points.
By the proof of Lemma~\ref{VanV_lmm2}, the $\bT^m$-fixed loci associated with the two types of covers
contribute to~\eref{PDeVdfn_e2} with opposite signs.
If $k\!\in\!\Z^+$ and $\fd(e)\!\in\!2\Z$,  
then the restriction of~$\cV_{n;\a}^{\wt\phi_{n;\a}}$ to $\ov\cZ_{\Ga,\si}$
contains a subbundle of odd rank.
This is in particular the case 
if $n\!\not\in\!2\Z$ and thus $\phi\!=\!\tau_n'$.
Since the Euler class of such a subbundle vanishes, 
 the contribution of~$\ov\cZ_{\Ga,\si}$  to~\eref{PDeVdfn_e2}  is zero in this case as well.
If  $n\!\in\!2\Z$, $\phi\!=\!\eta_n$, and $\fd(e)\!\in\!2\Z$, 
then $(\Ga,\si)$ is not compatible with any real degree~$\fd(e)$ cover 
$\Si_e\!\!\lra\!\P^1_{\vt(v_1)\vt(v_2)}$; 
see \cite[Lemma~1.9]{Teh}. 
Thus, $\cZ_{\Ga,\si}$ is empty and does not contribute to~\eref{PDeVdfn_e2}
in this case either if $\fd(e)\!\in\!2\Z$ for some $e\!\in\!\E_{\R}^{\si}(\Ga)$.
This motivates the first restriction in~\eref{AdmissPairCond_e}.\\

\noindent
Suppose $\Ga$ is a decorated graph as in~\eref{Dual_e}, $\si$ is an involution
on~$\Ga$, and $[\u]$  is an element  of~$\ov\cZ_{\Ga,\si}$ as in~\eref{umapdfn_e}.
For each $e\!\in\!\E_v(\Ga)$, let $x_{e,v}\!\in\!\Si_e$ be the branch point of $u|_{\Si_e}$
sent to~$P_{\vt(v)}$.
Given $v\!\in\!\Ver$, let 
\begin{equation*}\begin{split}
\F_v(\Ga)&=\big\{(e,v)\!:\,e\!\in\!\E_v(\Ga)\big\}\cup
\big\{(\si(e),\si(v))\!:\,e\!\in\!\E_v(\Ga)\big\}\subset \Edg\!\times\!\Ver,\\
\nV_v'(\Ga)&=\big\{e/v\!:\,e\!\in\!\E_v(\Ga)\big\}\!\cup\!\big\{\si(e/v)\!:\,e\!\in\!\E_v(\Ga)\big\}\\
&\qquad-\big\{v,\si(v)\big\}-
\big\{e/v\!:\,e\!\in\!\E_v(\Ga)\big\}\!\cap\!\big\{\si(e/v)\!:\,e\!\in\!\E_v(\Ga)\big\},\\
\F_v'(\Ga)&=\big\{(e',v')\!:\,e'\!\in\!\E_v(\Ga)\!\cup\!\E_{\si(v)}(\Ga),
\,v'\!\in\!e'\!\cap\!\nV_v'(\Ga)\big\}.
\end{split}\end{equation*}
We define $\Si_v'\!\subset\!\Si$ by
\begin{gather*}
\Si_v'=\bigcup_{e\in\E_v(\Ga)}\!\!\!\!\!\big(\Si_e\!\cup\!\Si_{\si(e)}\big) \cup
\begin{cases}
\Si_v\!\cup\!\Si_{\si(v)},&\hbox{if}~\val_v(\Ga)\!\ge\!3;\\
\eset,&\hbox{if}~\val_v(\Ga)\!\le\!2.
\end{cases}\end{gather*}
Thus, $\Si_v'$ is a union of irreducible components of~$\Si_v$;  
the nodes shared by~$\Si_v'$  with other irreducible components of~$\Si$
are contained in the set $\{x_{e',v'}\}_{(e',v')\in\F_v'(\Ga)}$.
Let $\wt\Si_v'$ be the nodal surface obtained from~$\Si_v'$ by removing
the components in $\Si_v\!\cup\!\Si_{\si(v)}$ if $\val_v(\Ga)\!\ge\!3$ 
and separating $\Si_v'$ at the node or nodes corresponding to~$v$ and~$\si(v)$
if $\val_v(\Ga),|\E_v(\Ga)|\!=\!2$. 
Thus, every topological component of~$\wt\Si_v'$ is either $\P^1$ or a wedge of two copies of~$\P^1$.
The involution~$\si$ on~$\Si$ restricts to an involution on~$\Si_v'$ and induces
an involution on~$\wt\Si_v'$.\\

\noindent
Let $(\cL_{n;\a},\wt\phi_{n;\a})$ be as in~\eref{cLdfn_e}.
Thus,
$$\cV_{n;\a}^{\wt\phi_{n;\a}}\big|_{[\u]}=
H^0\big(\Si;u^*\cL_{n;\a}\big)^{\wt\phi_{n;\a}}\big/\Aut(\u)\,.$$
For each $v\!\in\!\Ver$, define
\begin{gather*}
H^0\big(\Si_v';u^*\cL_{n;\a}\big)_v^{\wt\phi_{n;\a}}
\equiv\big\{\xi\!\in\!H^0\big(\Si_v';u^*\cL_{n;\a}\big)^{\wt\phi_{n;\a}}\!:\,
\xi(x_{e',v'})\!=\!0~\forall\,(e',v')\!\in\!\F_v'(\Ga)\big\},\\
\begin{aligned}
\wt\phi_v\!: \bL_v\!\equiv\!\bigoplus_{(e',v')\in\F_v(\Ga)}
\!\!\!\!\!\!\!\!\!\cL_{n;\a}|_{P_{\vt(v')}}
&\lra \bL_v,
& \big(\wt\phi_v\big((w_{(e',v')})_{(e',v')\in\F_v(\Ga)}\big)\big)_{(e'',v'')}
&=\wt\phi_{n;\a}\big(w_{(\si(e''),\si(v''))}\big),\\
\wt\phi_v'\!: \bL_v'\!\equiv\!\bigoplus_{(e',v')\in\F_v'(\Ga)}
\!\!\!\!\!\!\!\!\!\cL_{n;\a}|_{P_{\vt(v')}}
&\lra \bL_v',
& \big(\wt\phi_v'\big((w_{(e',v')})_{(e',v')\in\F_v'(\Ga)}\big)
\big)_{(e'',v'')}&=\wt\phi_{n;\a}\big(w_{(\si(e''),\si(v''))}\big).
\end{aligned}\end{gather*}
The two involutions above interchange the components indexed by~$(e',v')$ and~$(\si(e'),\si(v'))$.
Furthermore,
\BE{Hovwt_e}\begin{split}
H^0\big(\Si_v';u^*\cL_{n;\a}\big)_v^{\wt\phi_{n;\a}}
=\big\{\xi\!\in\!H^0\big(\wt\Si_v';u^*\cL_{n;\a}\big)^{\wt\phi_{n;\a}}\!:
~&\xi(x_{e',v'})\!=\!0~\forall\,(e',v')\!\in\!\F_v'(\Ga),\\
&\xi(x_{e_1,v})\!=\!\xi(x_{e_2,v})~\forall\,e_1,e_2\!\in\!\E_v(\Ga)\big\}.
\end{split}\EE
Since the nodes shared by~$\Si_v'$  with the remainder of~$\Si$
are contained in the set $\{x_{e',v'}\}_{(e',v')\in F_v'(\Ga)}$,
the image of the restriction homomorphism
$$H^0\big(\Si;u^*\cL_{n;\a}\big)^{\wt\phi_{n;\a}}\lra H^0\big(\Si_v';u^*\cL_{n;\a}\big)^{\wt\phi_{n;\a}},
\qquad \xi\lra \xi|_{\Si_v'}\,,$$
contains the subspace $H^0(\Si_v';u^*\cL_{n;\a}\big)_v^{\wt\phi_{n;\a}}$.
Since each topological component of~$\wt\Si_v'$ is rational and contains at most two 
of the points $x_{e',v'}$ with $(e',v')$ in $\F_v(\Ga)\!\cup\!\F_v'(\Ga)$, 
the homomorphism
$$H^0\big(\wt\Si_v';u^*\cL_{n;\a}\big)^{\wt\phi_{n;\a}}\lra 
\bL_v^{\wt\phi_v}\!\oplus\!\bL_v'^{\wt\phi_v'}, \quad
\xi\lra \big(\big(\xi(x_{e',v'})\big)_{(e',v')\in\F_v(\Ga)},
\big(\xi(x_{e',v'})\big)_{(e',v')\in\F_v'(\Ga)}\big),$$
is surjective.\\

\noindent
If $v\!\in\!\Ver$ and $\val_v(\Ga)\!\ge\!3$, $\xi|_{\Si_v}$ is constant for all
$\xi\!\in\!H^0(\Si;u^*\cL_{n;\a})$.
Thus, the evaluation homomorphism
$$\wt\ev_v\!: H^0\big(\Si;u^*\cL_{n;\a}\big)\lra\cL_{n;\a}\big|_{P_{\vt(v)}}, \qquad
\wt\ev_v(\xi)=\xi(\Si_v),$$
is well defined.
If $\val_v(\Ga),|\E_v(\Ga)|\!=\!2$, we define it to be the evaluation at the node
of~$\Si$ corresponding to~$v$.
If  $\val_v(\Ga)\!\le\!2$ and $\E_v(\Ga)\!=\!\{e\}$, we take this homomorphism to be
the evaluation at the preimage $x_{e,v}\!\in\!\Si_e$ of~$P_{\vt(v)}$.
If $v\!\in\!\nV_{\R}^{\si}(\Ga)$, the previous paragraph implies that 
the homomorphism
\BE{HovevalR_e} H^0\big(\Si;u^*\cL_{n;\a}\big)^{\wt\phi_{n;\a}}\lra 
\big(\cL_{n;\a}|_{P_{\vt(v)}}\big)^{\wt\phi_{n;\a}}, \qquad \xi\lra \wt\ev_v(\xi),\EE
is surjective.
If $v\!\in\!\nV_{\C}^{\si}(\Ga)$, it implies that  the homomorphism
\BE{HovevalC_e} 
H^0\big(\Si;u^*\cL_{n;\a}\big)^{\wt\phi_{n;\a}}\lra 
\big(\cL_{n;\a}\big|_{P_{\vt(v)}}\!\oplus\!\cL_{n;\a}|_{P_{\phi(\vt(v)})}\big)^{\wt\phi_{n;\a}}, 
\quad  \xi\lra \big(\wt\ev_v(\xi),\wt\ev_{\si(v)}(\xi)\big),\EE
is surjective.
From this, we obtain the following observation.

\begin{lmm}\label{VanV_lmm}
Suppose  $n\!=\!2m\!+\!1$ and $\Ga\!\in\!\ov\cA_{g,l}(n,d)$.
If $\vt(v)\!=\!n$ for some $v\!\in\!\Ver$, then
$$\e(\cV_{n;\a}^{\wt\phi_{n;\a}})|_{\ov\cZ_{\Ga,\si}}=0.$$
\end{lmm}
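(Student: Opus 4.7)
The plan is to combine the surjectivity of the evaluation map~\eref{HovevalR_e} at~$v$ with the vanishing of the $\bT^m$-weight of~$\cL_{n;\a}|_{P_n}$. Since $n\!=\!2m\!+\!1$, we have $\phi\!=\!\tau_n'$, and the definition~\eref{baridfn_e} gives $\phi(n)\!=\!n$, so $P_n$ is $\phi$-fixed. Any $v\!\in\!\Ver$ with $\vt(v)\!=\!n$ therefore lies in $\nV_{\R}^{\si}(\Ga)$, and every element of $\ov\cZ_{\Ga,\si}$ sends the associated subsurface~$\Si_v'$ constantly to~$P_n$.

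Next, I would use the surjection in~\eref{HovevalR_e} to produce a $\bT^m$-equivariant short exact sequence of real vector bundles over~$\ov\cZ_{\Ga,\si}$,
\begin{equation*}
0\lra \cK \lra \cV_{n;\a}^{\wt\phi_{n;\a}}\big|_{\ov\cZ_{\Ga,\si}} \lra
\ov\cZ_{\Ga,\si}\!\times\!\big(\cL_{n;\a}|_{P_n}\big)^{\wt\phi_{n;\a}} \lra 0,
\end{equation*}
where the quotient is the pullback of $(\cL_{n;\a}|_{P_n})^{\wt\phi_{n;\a}}$ by the constant evaluation map $\ov\cZ_{\Ga,\si}\!\lra\!\{P_n\}$ and $\cK$ is its kernel. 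Since $\cL_{n;\a}\!=\!\bigoplus_{i=1}^k\cO_{\P^{n-1}}(a_i)$ with each summand having $\bT^n$-weight $a_i\al_n$ at~$P_n$, and since $\al_n$ restricts to~$0$ on $\bT^m$ by~\eref{sp_weights_e}, this quotient is trivial as a $\bT^m$-equivariant bundle of real rank~$k\!\ge\!1$.

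The equivariant Euler class of a trivial $\bT^m$-equivariant real bundle of positive rank is zero; by the multiplicativity of the equivariant Euler class on short exact sequences of oriented $\bT^m$-equivariant real bundles (all three of which are oriented coherently by the real orientation of Section~\ref{RealOrientCI_subs}), this forces $\e(\cV_{n;\a}^{\wt\phi_{n;\a}})|_{\ov\cZ_{\Ga,\si}}\!=\!0$. The main obstacle here is merely bookkeeping: verifying that the short exact sequence above is both well-defined (the surjectivity is already guaranteed by~\eref{HovevalR_e}, but one must confirm that it descends past the automorphism quotient appearing in the definition of $\cV_{n;\a}^{\wt\phi_{n;\a}}$) and compatibly oriented, so that multiplicativity of Euler classes applies without a spurious sign ambiguity that could obstruct the vanishing.
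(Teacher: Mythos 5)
Your core mechanism --- a $\bT^m$-equivariant surjection from $\cV_{n;\a}^{\wt\phi_{n;\a}}|_{\ov\cZ_{\Ga,\si}}$ onto a bundle with zero torus weights, forcing the Euler class to vanish --- is the right idea and agrees with the paper's. However, there is a genuine gap in the setup: you assert that any $v\!\in\!\Ver$ with $\vt(v)\!=\!n$ lies in $\nV_{\R}^{\si}(\Ga)$, and this does not follow. The involution constraint $\phi\!\circ\!\vt=\vt\!\circ\!\si|_{\Ver}$ in \eref{Invol_e}, combined with $\phi(n)=n$, only shows that $\vt(\si(v))=n$ as well; it does not force $\si(v)=v$. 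A pair $\{v,\si(v)\}$ with $\si(v)\neq v$ and $\vt(v)=\vt(\si(v))=n$ is perfectly compatible with the definitions --- the requirement $\vt(v)\neq\vt(e/v)$ constrains only adjacent vertices, and one can easily build connected graphs in which two non-adjacent conjugate vertices carry the label $n$. The paper's remark after \eref{Invol_e} is a one-way implication (every $\si$-fixed vertex has $\vt$-value $n$), not the converse you need.

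Consequently your argument covers only $v\!\in\!\nV_{\R}^{\si}(\Ga)$, where \eref{HovevalR_e} applies. The paper handles both cases: for $v\!\in\!\nV_{\C}^{\si}(\Ga)$ it invokes the surjection \eref{HovevalC_e} onto $(\cL_{n;\a}|_{P_{\vt(v)}}\!\oplus\!\cL_{n;\a}|_{P_{\phi(\vt(v))}})^{\wt\phi_{n;\a}}$, which with $\vt(v)=\phi(\vt(v))=n$ is again a trivial $\bT^m$-representation (a complex one this time), and the Euler class again vanishes. Your short-exact-sequence framing adapts verbatim to this second case once you replace \eref{HovevalR_e} by \eref{HovevalC_e}; the fix is simply to add that branch rather than incorrectly excluding it. One smaller inaccuracy: the map does not send all of $\Si_v'$ constantly to $P_n$ --- the edge components $\Si_e\subset\Si_v'$ carry non-constant covers --- only the contracted piece $\Si_v$ (when present) does; but this does not affect the evaluation-map argument.
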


\begin{proof}
Since $n\!\not\in\!2\Z$, $k\!>\!0$ and so the targets in~\eref{HovevalR_e}
and~\eref{HovevalC_e} are non-trivial.
If $v\!\in\!\nV_{\R}^{\si}(\Ga)$, the surjectivity of~\eref{HovevalR_e} implies that 
$\cV_{n;\a}^{\wt\phi_{n;\a}}|_{\cZ_{\Ga,\si}}$ contains a trivial real line bundle
with the trivial $\bT^m$-action.
If \hbox{$v\!\in\!\nV_{\C}^{\si}(\Ga)$}, the surjectivity of~\eref{HovevalC_e} implies that 
$\cV_{n;\a}^{\wt\phi_{n;\a}}|_{\cZ_{\Ga,\si}}$ contains a trivial complex line bundle
with the trivial $\bT^m$-action.
In either case, $\e(\cV_{n;\a}^{\wt\phi_{n;\a}})|_{\cZ_{\Ga,\si}}$ vanishes.
\end{proof}

\noindent
By Lemma~\ref{VanV_lmm}, 
$\ov\cZ_{\Ga,\si}$ does not contribute to~\eref{EquivLocal_e}
unless  $\vt(v)\!\neq\!2m\!+\!1$ for all vertices~$v\!\in\!\Ver$.
Thus,  it is sufficient to restrict attention to the subset
$$\ov\cA_{g,l}'(n,d)\subset\ov\cA_{g,l}(n,d)$$ 
of pairs $(\Ga,\si)$ satisfying the second condition in~\eref{AdmissPairCond_e}.

\subsection{The fixed-locus contribution}
\label{FLCsign_subs}

\noindent
For each pair $(\Ga,\si)$ in $\ov\cA_{g,l}'(n,d)$, we will next describe
the moduli spaces associated with the vertices and edges of~$\Ga$ 
and then determine the normal bundle to 
the $\bT^m$-fixed locus $\ov\cZ_{\Ga,\si}$,
after capping with  $\e(\cV_{n;\a}^{\wt\phi_{n;\a}})$ if $k\!\in\!\Z^+$. 
We fix an element of $\ov\cA_{g,l}'(n,d)$  with $\Ga$ as in~\eref{Dual_e}
throughout this section.\\

\noindent
Let $g_0\!\in\!\Z^{\ge0}$.
We will call a two-component symmetric surface $(\Si,\si)$ of the~form
\BE{SymSurfDbl_e}\Si\equiv \Si_1\!\sqcup\!\Si_2
\equiv \{1\}\!\times\!\Si_0 \sqcup \{2\}\!\times\!\ov\Si_0, \qquad
\si(i,z)=\big(3\!-\!i,z\big)~~\forall~(i,z)\!\in\!\Si,\EE
where $\Si_0$ is a connected oriented, possibly nodal, genus~$g_0$ surface 
and  $\ov\Si_0$ denotes $\Si_0$ with the opposite orientation,
a \sf{nodal $g_0$-doublet}.
The arithmetic genus of a $g_0$-doublet is $2g_0\!-\!1$.
If $d_0\!\in\!\Z$ and $S_1$ and $S_2$ are finite sets with a fixed bijection~$\si_S$ between them,
let $\ov\fM_{2g_0-1,S_1\sqcup S_2}^{\bu}(\P^{n-1},2d_0)^{\phi}$
denote the moduli space of stable real  degree~$2d_0$ $J_0$-holomorphic maps into~$\P^{n-1}$ from 
nodal $g_0$-doublets with the first component carrying the $S_1$-marked points 
and with the marked points interchanged by the involution~$\si_S$.
We denote by $\ov\fM_{g_0,S_1}(\P^{n-1},d_0)$
the usual moduli space of stable genus~$g_0$ $S_1$-marked degree~$d_0$ maps into~$\P^{n-1}$.\\

\noindent
For each vertex~$v$ of~$\Ga$, let $S_v$ be as in~\eref{Svdfn_e}.
Since $v\!\neq\!\si(v)$, $\si$ induces an involution on the set $S_v\!\sqcup\!S_{\si(v)}$.
If $\val_v(\Ga)\!\ge\!3$, let 
\BE{fMbu_e0}
\ov\fM_{\Ga;v}=\ov\fM_{\g(v),S_v}(\P^{n-1},0) 
\qquad\hbox{and}\qquad
\ov\fM_{\Ga;v}^{\phi}=\ov\fM_{2\g(v)-1,S_v\sqcup S_{\si(v)}}^{\bu}(\P^{n-1},0)^{\phi}\,.\EE
We denote by
\BE{evalv_e}
\ev_v\!:\ov\fM_{\Ga;v}\lra \P^{n-1} \qquad\hbox{and}\qquad
\ev_v^{\phi}\!:\ov\fM_{\Ga;v}^{\phi}\lra \P^{n-1}\EE
the morphism sending each constant stable map to its value
and the morphism  sending each degree~0 holomorphic map from a doublet to its value on
the first component, respectively.
For each~$e\!\in\!S_v$, let
\BE{Lvdfn_e}L_{v;e}\lra \ov\fM_{\Ga;v} \qquad\hbox{and}\qquad
L_{v;e}^{\phi}\lra \ov\fM_{\Ga;v}^{\phi}\EE
be the universal tangent line bundles for this point.\\

\noindent
The restriction of the map to the $S_v$-marked component induces a diffeomorphism
\BE{RtoCv_e} \Psi_{\Ga;v}\!:\ov\fM_{\Ga;v}^{\phi}\lra \ov\fM_{\Ga;v}\EE
between the two moduli spaces which commutes with the evaluation morphisms~\eref{evalv_e}
and naturally lifts to an isomorphism between the line bundles~\eref{Lvdfn_e}.
Let 
$$\ov\fM_{\Ga;v}^{\phi;\bT}=\Psi_{\Ga;v}^{-1}\big(\ov\fM_{\Ga;v}^{\bT}\big)
\subset \ov\fM_{\Ga;v}^{\phi}\,;$$
see~\eref{ovfMGavdfn_e}.
We denote by 
$$\cN_{\Ga;v}\lra \ov\fM_{\Ga;v}^{\bT} \qquad\hbox{and}\qquad 
\cN_{\Ga;v}^{\phi}\lra \ov\fM_{\Ga;v}^{\phi;\bT} $$
the normal bundle of $\ov\fM_{\Ga;v}^{\bT}$ in $\ov\fM_{\Ga;v}$
and of $\ov\fM_{\Ga;v}^{\phi;\bT}$ in $\ov\fM_{\Ga;v}^{\phi}$, respectively.
The diffeomorphism~\eref{RtoCv_e} descends to an isomorphism from the second bundle
to the~first.\\

\noindent
For each $e\!\in\!\Edg$ and $v\!\in\!e$, let
$$S_{e,v}=\begin{cases} \{v\},&\hbox{if}~\val_v(\Ga)\!+\!|\E_v(\Ga)|\!\ge\!4;\\
\fm^{-1}(v),&\hbox{if}~\val_v(\Ga)\!+\!|\E_v(\Ga)|\!=\!3;\\
\eset,&\hbox{if}~\val_v(\Ga)\!+\!|\E_v(\Ga)|\!=\!2.
\end{cases}$$
If $e\!=\!\{v_1,v_2\}$, let $S_e\!=\!S_{e,v_1}\!\sqcup\!S_{e,v_2}$.
If $e\!\in\!\E_{\R}^{\si}(\Ga)$, $S_e$ is either empty or consists of two elements interchanged 
by~$\si$.
In such a case, let 
$$ \ov\fM_{\Ga,\si;e}^{\phi}\equiv \ov\fM_{0,S_e}\big(\P^{n-1},\fd(e)\big)^{\phi}$$
denote the moduli space of stable real genus~0 $S_e$-marked degree~$\fd(e)$ maps into~$\P^{n-1}$.
Let 
$$\ov\fM_{\Ga,\si;e}^{\phi;\bT}\subset  \ov\fM_{\Ga,\si;e}^{\phi}$$
be the preimage~of  $\ov\fM_{\Ga,\si;e}^{\phi;\bT;\circ}$ under the forgetful morphism
$$\ff_{\Ga,\si;e}^{\phi}\!: \ov\fM_{\Ga,\si;e}^{\phi}
\lra \ov\fM_{\Ga,\si;e}^{\phi;\circ} \equiv \ov\fM_{0,0}\big(\P^{n-1},\fd(e)\big)^{\phi};$$
see~\eref{EdgeSpaces_e}.
We denote~by 
$$\cN_{\Ga,\si;e}^{\phi}\lra \ov\fM_{\Ga,\si;e}^{\phi;\bT} $$
the normal bundle of $\ov\fM_{\Ga,\si;e}^{\phi;\bT}$ in $\ov\fM_{\Ga,\si;e}^{\phi}$.\\

\noindent
If $e\!\in\!\E_{\C}^{\si}(\Ga)$, $S_e$ and $S_{\si(e)}$ consist of either one or two elements each;
the involution~$\si$ interchanges the two sets.
Let
$$\ov\fM_{\Ga;e}= \ov\fM_{0,S_e}\big(\P^{n-1},\fd(e)\big) 
\qquad\hbox{and}\qquad
\ov\fM_{\Ga,\si;e}^{\phi}=\ov\fM_{-1,S_e\sqcup S_{\si(e)}}^{\bu}(\P^{n-1},2\fd(e))^{\phi}\,.$$
Thus, $\ov\fM_{\Ga,\si;e}^{\phi}\!=\!\ov\fM_{\Ga,\si;\si(e)}^{\phi}$ and 
the restriction of the map to the $S_e$-marked component induces a diffeomorphism
\BE{RtoCe_e} \Psi_{\Ga,\si;e}\!: \ov\fM_{\Ga,\si;e}^{\phi}\lra \ov\fM_{\Ga;e} \EE
between the two moduli spaces.
Let 
$$\ov\fM_{\Ga;e}^{\bT}\subset \ov\fM_{\Ga;e}$$
be the preimage~of  $\ov\fM_{\Ga;e}^{\bT;\circ}$ under the forgetful morphism
\BE{ffGaE_e}\ff_{\Ga;e}\!: \ov\fM_{\Ga;e}\lra 
\ov\fM_{\Ga;e}^{\circ}\!\equiv\!\ov\fM_{0,0}\big(\P^{n-1},\fd(e)\big)^{\phi}\EE
and define
$$\ov\fM_{\Ga,\si;e}^{\phi;\bT}=\Psi_{\Ga;v}^{-1}\big(\ov\fM_{\Ga;e}^{\bT}\big)
\subset \ov\fM_{\Ga,\si;e}^{\phi}\,.$$
We denote by 
$$\cN_{\Ga;e}\lra \ov\fM_{\Ga;e}^{\bT} \qquad\hbox{and}\qquad 
\cN_{\Ga,\si;e}^{\phi}\lra \ov\fM_{\Ga,\si;e}^{\phi;\bT} $$
the normal bundle of $\ov\fM_{\Ga;e}^{\bT}$ in $\ov\fM_{\Ga;e}$
and of $\ov\fM_{\Ga,\si;e}^{\phi;\bT}$ in $\ov\fM_{\Ga,\si;e}^{\phi}$, respectively.
The diffeomorphism~\eref{RtoCe_e} descends to an isomorphism from the second bundle
to the~first.\\

\noindent
In either of the two cases above, for $v\!\in\!S_e\!\cup\!S_{\si(e)}$ let 
$$\ev_{e,v}^{\phi}\!:\ov\fM_{\Ga,\si;e}^{\phi}\lra \P^{n-1} \qquad\hbox{and}\qquad
L_{e;v}^{\phi}\lra \ov\fM_{\Ga,\si;e}^{\phi}$$
be the evaluation morphism and the universal tangent line bundle for the marked point 
indexed by~$v$.
If $e\!\in\!\E_{\C}^{\si}(\Ga)$ and $v\!\in\!S_e$, let
$$\ev_{e,v}\!:\ov\fM_{\Ga;e}\lra \P^{n-1} \qquad\hbox{and}\qquad
L_{e;v}\lra \ov\fM_{\Ga;e}$$
be the analogous objects for the target space of the diffeomorphism~\eref{RtoCe_e}.
We extend these definitions to $v\!\in\!S_{\si(e)}$ by setting
$$\ev_{e,v}=\phi\circ\ev_{e,\si(v)} \qquad\hbox{and}\qquad
L_{e;v}=\ov{L_{e;\si(v)}}\,.$$
The diffeomorphism~\eref{RtoCe_e} commutes with the evaluation morphisms
and naturally lifts to an isomorphism between the universal tangent line bundles.\\

\noindent
Choose $\nV_+^{\si}(\Ga)\!\subset\!\Ver$ and $\E_+^{\si}(\Ga)\!\subset\!\Edg$
satisfying~\eref{nVplus_e} and~\eref{nEplus_e}.
Let 
$$\nV_{+;3}^{\si}(\Ga)=\big\{v\!\in\!\nV_+^{\si}(\Ga)\!:\,\val_v(\Ga)\!\ge\!3\big\},
\quad
\nV_{+;2}^{\si}(\Ga)=\big\{v\!\in\!\nV_+^{\si}(\Ga)\!:\,
\val_v(\Ga),|\E_v(\Ga)|\!=\!2\big\}.$$
We define
\BE{FixedLocus_e}\begin{split}
\wt\cZ_{\Ga,\si}&\equiv
\prod_{v\in\nV_{+;3}^{\si}(\Ga)}\!\!\!\!\!\!\!\ov\fM_{\Ga;v}^{\phi;\bT} \times
\prod_{e\in\E_{\R}^{\si}(\Ga)\sqcup\E_+^{\si}(\Ga)}\hspace{-.34in}\ov\fM_{\Ga,\si;e}^{\phi;\bT}\\
&\approx \prod_{v\in\nV_{+;3}^{\si}(\Ga)}\!\!\!\!\!\!\!\ov\cM_{\g(v);S_v} \times
\prod_{e\in\E_{\R}^{\si}(\Ga)}\!\!\!\!\!\ov\fM_{\Ga,\si;e}^{\phi;\bT} \times
\prod_{e\in\E_+^{\si}(\Ga)}\!\!\!\!\!\ov\fM_{\Ga;e}^{\bT}\,.
\end{split}\EE
The fixed locus $\ov\cZ_{\Ga,\si}$ corresponding to   $(\Ga,\si)$
is then given~by
$$\ov\cZ_{\Ga,\si}=\wt\cZ_{\Ga,\si}\big/\Aut(\Ga,\si)$$
with the group $\Aut(\Ga,\si)$ acting trivially.
For example, in the case of the pair~$(\Ga,\si)$ represented by the first diagram
in Figure~\ref{discord_fig}
$$\cZ_{\Ga,\si}=\ov\cZ_{\Ga,\si}\approx \big(\ov\cM_{0,3}\!\times\!\ov\cM_{0,3}\big)\big/S_3
\approx \{\pt\}/S_3\subset \ov\fM_{2,0}(\P^{n-1},3)^{\phi}\,,$$
with the symmetric group~$S_3$ acting trivially.\\

\noindent
For each $e\!\in\!\Edg$, define 
$$e_{\bu}\in \E_{\R}^{\si}(\Ga)\!\cup\!\E_+^{\si}(\Ga) \qquad\hbox{by}\qquad
e_{\bu}\in\big\{e,\si(e)\big\}.$$
Let
\begin{gather*}
\ov\fM_{\Ga}^{\phi}=
\prod_{v\in\nV_{+;3}^{\si}(\Ga)}\!\!\!\!\!\!\!\ov\fM_{\Ga;v}^{\phi} \times
\prod_{e\in\E_{\R}^{\si}(\Ga)\cup \E_+^{\si}(\Ga)}\hspace{-.31in}\ov\fM_{\Ga,\si;e}^{\phi}\,,\\
\begin{split}
\ov\fM_{\Ga}'^{\phi}&=\big\{\big((\u_v)_{v\in\nV_{+;3}^{\si}(\Ga)},
(\u_e)_{e\in\E_{\R}^{\si}(\Ga)\sqcup\E_+^{\si}(\Ga)}\big)\!\in\!\ov\fM_{\Ga}^{\phi}\!:~
\ev_v(\u_v)\!=\!\ev_{e_{\bu},v}(\u_{e_{\bu}})~
\forall\,v\!\in\!\nV_{+;3}^{\si}(\Ga),\, e\!\in\!\E_v(\Ga),\\
&\hspace{2.3in} \ev_{e_{1\bu},v}(\u_{e_{1\bu}})\!=\!\ev_{e_{2\bu},v}(\u_{e_{2\bu}})~
\forall\,v\!\in\!\nV_{+;2}^{\si}(\Ga),\, e_1,e_2\!\in\!\E_v(\Ga)\big\}.
\end{split}\end{gather*}
For each $v\!\in\!\nV_{+;3}^{\si}(\Ga)\!\cup\!\nV_{+;2}^{\si}(\Ga)$, there is thus a well-defined
evaluation morphism
$$\ev_v\!:\ov\fM_{\Ga}'^{\phi} \lra \P^{n-1}, \quad
\ev_v\big((\u_{v'})_{v'\in\nV_{+;3}^{\si}(\Ga)},
(\u_e)_{e\in\E_{\R}^{\si}(\Ga)\sqcup\E_+^{\si}(\Ga)}\big)=
\ev_{e_{\bu},v}(\u_{e_{\bu}})~{}~\hbox{if}\,e\!\in\!\E_v(\Ga).$$
For $v\!\in\!\nV_{+;3}^{\si}(\Ga)$ and $e\!\in\!\E_{\R}^{\si}(\Ga)\!\sqcup\!\E_+^{\si}(\Ga)$, let
$$\pi_v\!:\ov\fM_{\Ga}'^{\phi}\lra \ov\fM_{\Ga;v}^{\phi}
\qquad\hbox{and}\qquad
\pi_e\!:\ov\fM_{\Ga}'^{\phi}\lra \ov\fM_{\Ga,\si;e}^{\phi}$$
be the projection maps.\\

\noindent
Define
\begin{alignat}{1}
\label{LGadfn_e}
L_{\Ga}&=\bigoplus_{v\in\nV_{+;3}^{\si}(\Ga)}\bigoplus_{e\in\E_v(\Ga)}\!\!\!\!
\pi_v^*L_{v;e}^{\phi}\!\otimes\!\pi_{e_{\bu}}^*L_{e_{\bu};v}^{\phi} \oplus
\bigoplus_{v\in\nV_{+;2}^{\si}(\Ga)}
\bigotimes_{e\in\E_v(\Ga)}\!\!\!\!\!\pi_{e_{\bu}}^*L_{e_{\bu};v}^{\phi}\,,\\
\label{cNGadfn_e}
\cN_{\Ga}\P&=
\bigoplus_{v\in\nV_{+;3}^{\si}(\Ga)}\!\!\!\!\!\!\!
\big(\ev_v^*T\P^{n-1}\big)^{\!\{v\}\sqcup\E_v(\Ga)}
\oplus
\bigoplus_{v\in\nV_{+;2}^{\si}(\Ga)}\!\!\!\!\!\!\!
\big(\ev_v^*T\P^{n-1}\big)^{\!\E_v(\Ga)}\,,\\
\label{cLGadfn_e}
\cL_{\Ga}&=
\bigoplus_{v\in\nV_{+;3}^{\si}(\Ga)}\!\!\!\!\!\!\!
\big(\ev_v^*\cL_{n;\a}\big)^{\!\{v\}\sqcup\E_v(\Ga)}
\oplus
\bigoplus_{v\in\nV_{+;2}^{\si}(\Ga)}\!\!\!\!\!\!\!
\big(\ev_v^*\cL_{n;\a}\big)^{\!\E_v(\Ga)}\,.
\end{alignat}
For $v\!\in\!\nV_{+;3}^{\si}(\Ga)\!\cup\!\nV_{+;2}^{\si}(\Ga)$, denote~by 
\begin{equation*}\begin{split}
\cN_{\Ga;v}^{\De}\P&\subset\begin{cases}
(\ev_v^*T\P^{n-1})^{\{v\}\sqcup\E_v(\Ga)},&\hbox{if}~v\!\in\!\nV_{+;3}^{\si}(\Ga);\\
(\ev_v^*T\P^{n-1})^{\E_v(\Ga)},&\hbox{if}~v\!\in\!\nV_{+;2}^{\si}(\Ga);
\end{cases}
\qquad\hbox{and}\\
\cL_{\Ga;v}^{\De}&\subset\begin{cases}
(\cL_{n;\a})^{\{v\}\sqcup\E_v(\Ga)},&\hbox{if}~v\!\in\!\nV_{+;3}^{\si}(\Ga);\\
(\cL_{n;\a})^{\E_v(\Ga)},&\hbox{if}~v\!\in\!\nV_{+;2}^{\si}(\Ga);
\end{cases}
\end{split}\end{equation*}
the small diagonals (all components are the same).
Let $\cN_{\Ga}'\P$ and $\cL_{\Ga}'$ be the quotients of $\cN_{\Ga}\P$ and $\cL_{\Ga}$
by the subbundles
$$\cN_{\Ga}^{\De}\P=
\bigoplus_{v\in\nV_{+;3}^{\si}(\Ga)\cup\nV_{+;2}^{\si}(\Ga)}\hspace{-.43in}\cN_{\Ga;v}^{\De}\P
\qquad\hbox{and}\qquad
\cL_{\Ga}^{\De}=
\bigoplus_{v\in\nV_{+;3}^{\si}(\Ga)\cup\nV_{+;2}^{\si}(\Ga)}\hspace{-.43in}\cL_{\Ga;v}^{\De},$$
respectively.
Since the vector bundles $L_{\Ga}$, $\cN_{\Ga}'\P$, and $\cL_{\Ga}'$  are complex, 
they are canonically oriented.\\

\noindent
The differentials of the evaluation morphisms $\ev_v$ and $\ev_{e_{\bu},v}$ 
induce a homomorphism
$$\wt\ev_{\Ga}^{\P}\!:\big(T\ov\fM_{\Ga}^{\phi}\big)^{\vrt}|_{\ov\fM_{\Ga}'^{\phi}}
\lra \cN_{\Ga}'\P\,.$$
The latter descends to an isomorphism
\BE{cNGaisom_e}
\wt\ev_{\Ga}^{\P}\!:
\cN_{\Ga}^{\phi}\fM\!\equiv\! 
\frac{(T\ov\fM_{\Ga}^{\phi})^{\vrt}|_{\ov\fM_{\Ga}'^{\phi}}}{(T\ov\fM_{\Ga}'^{\phi})^{\vrt}}
\stackrel{\approx}{\lra} \cN_{\Ga}'\P\EE
from the normal bundle of $\ov\fM_{\Ga}'^{\phi}$ in $\ov\fM_{\Ga}^{\phi}$.
The natural bundle homomorphisms
$$\cV_{n;\a}^{\wt\phi_{n;\a}}\lra \ev_v^*\cL_{n;\a} \qquad\hbox{and}\qquad
\cV_{n;\a}^{\wt\phi_{n;\a}}\lra \ev_{e_{\bu},v}^*\cL_{n;\a}$$
over  $\ov\fM_{\Ga;v}^{\phi}$ and $\ov\fM_{\Ga,\si;e}^{\phi}$,
respectively, given by the evaluations at the marked points similarly induce a bundle homomorphism
$$\wt\ev_{\Ga}^{\cL}\!:
\cV_{\Ga,\si}\equiv
\bigoplus_{v\in\nV_{+;3}^{\si}(\Ga)}\!\!\!
\bigg(\!\!\pi_v^*\cV_{n;\a}^{\wt\phi_{n;\a}}\oplus \!\!\!
\bigoplus_{e\in\E_v(\Ga)}\!\!\!\!\pi_{e_{\bu}}^*\cV_{n;\a}^{\wt\phi_{n;\a}}\!\!\bigg)
\oplus  \bigoplus_{v\in\nV_{+;2}^{\si}(\Ga)}
\bigoplus_{e\in\E_v(\Ga)}\!\!\!\!\pi_{e_{\bu}}^*\cV_{n;\a}^{\wt\phi_{n;\a}}
\lra  \cL_{\Ga}'$$
over $\ov\fM_{\Ga}'^{\phi}$.\\

\noindent
Denote by
$$\io_{\Ga}\!: \ov\fM_{\Ga}'^{\phi}\lra \ov\fM_{g,l}\big(\P^{n-1},d\big)^{\phi}$$
the natural node-identifying immersion which sends $\wt\cZ_{\Ga,\si}$ to~$\ov\cZ_{\Ga,\si}$.
For each $v\!\in\!\nV_{+;3}^{\si}(\Ga)$ and \hbox{$e\!\in\!\E_v(\Ga)$}, it identifies
the marked point of~$\u_v$ indexed by~$e$ with the marked point of
$\u_{e_{\bu}}$ indexed by~$v$.
For each $v\!\in\!\nV_{+;2}^{\si}(\Ga)$ and $\E_v(\Ga)\!=\!\{e_1,e_2\}$, 
$\io_{\Ga}$ identifies the marked points of~$\u_{e_{1\bu}}$ and~$\u_{e_{2\bu}}$ 
indexed by~$v$.
There is a natural isomorphism
\BE{cNLisom_e} \cN\io_{\Ga}\equiv 
\frac{\io_{\Ga}^*(T\ov\fM_{g,l}\big(\P^{n-1},d)^{\phi})^{\vrt}}{(T\ov\fM_{\Ga}'^{\phi})^{\vrt}}
\approx L_{\Ga}\EE 
of vector bundles over $\ov\fM_{\Ga}'^{\phi}$.
Since the exact sequence
$$0\lra  \io_{\Ga}^*\cV_{n;\a}^{\wt\phi_{n;\a}} \lra \cV_{\Ga,\si} 
 \stackrel{\wt\ev_{\Ga}^{\cL}}{\lra}
\cL_{\Ga}'\lra0$$
of vector bundles over $\ov\fM_{\Ga}'^{\phi}$ is $\bT^m$-equivariant,
\BE{cVsplit_e}
\io_{\Ga}^*\e\big(\cV_{n;\a}^{\wt\phi_{n;\a}}\big) =\frac{1}{\e(\cL_{\Ga}')}
\e\big(\cV_{\Ga,\si}\big)\,.\EE
\\

\noindent
For each $v\!\in\!\Ver$, the elements of the subset $\fm^{-1}(v)\!\subset\!S_v$
carry signs as elements of~$S_l$.
We decorate the elements in the complement 
$\E_v(\Ga)\!\subset\!S_v$ of~$\fm^{-1}(v)$ 
with the plus sign if $v\!\in\!\nV_+^{\si}(\Ga)$
and with the minus sign if $\si(v)\!\in\!\nV_+^{\si}(\Ga)$.
The involution~$\si$ on $S_v\!\sqcup\!S_{\si(v)}$ then interchanges the two subsets
and changes the sign of each element.
If $e\!\in\!\Edg$ and $v\!\in\!S_e\!\cap\!\Ver$, 
we decorate $v\!\in\!S_e$ with   the plus sign if $v\!\in\!\nV_+^{\si}(\Ga)$
and with the minus sign if $\si(v)\!\in\!\nV_+^{\si}(\Ga)$.
If $e\!\in\!\E_{\R}^{\si}(\Ga)$, $\si$ induces an involution on~$S_e$ that changes the sign of
each element.
If $e\!\in\!\E_{\C}^{\si}(\Ga)$, the involution~$\si$ on $S_e\!\sqcup\!S_{\si(e)}$ 
then interchanges the two subsets and changes the sign of each element.
Let $S_e^-\!\subset\!S_e$ be the subset of negatively marked elements.
In particular,
$$\big|S_e^-\big|=1 \quad\forall\,e\!\in\!\E_{\R}^{\si}(\Ga), \qquad
\big|S_e^-\big|\in\{0,1,2\} \quad\forall\,e\!\in\!\E_{\C}^{\si}(\Ga);$$
all three possibilities in the last case are  in general attainable.\\

\noindent
Denote by 
$$\wt\cN_{\Ga,\si}^{\phi}\equiv
\bigoplus_{v\in\nV_{+;3}^{\si}(\Ga)}\!\!\!
\bigg(\!\!\pi_v^*\cN_{\Ga;v}^{\phi}\oplus
\bigoplus_{e\in\E_v(\Ga)}\!\!\!\!\pi_{e_{\bu}}^*\cN_{\Ga,\si;e_{\bu}}^{\phi}\!\!\bigg)
\oplus  \!\!\!
\bigoplus_{v\in\nV_{+;2}^{\si}(\Ga)}
\bigoplus_{e\in\E_v(\Ga)}\!\!\!\!\pi_{e_{\bu}}^*\cN_{\Ga,\si;e_{\bu}}^{\phi}
\lra \wt\cZ_{\Ga,\si}$$
the normal bundle of $\wt\cZ_{\Ga,\si}$ in $\ov\fM_{\Ga}^{\phi}$.
Under the assumptions on~$(\Ga,\si)$ in Theorem~\ref{EquivLocal_thm},
$$\wt\cZ_{\Ga,\si}\subset \fM_{g,l}^{\star}(\P^{n-1},d)^{\phi}\,;$$
see \eref{fM0dfn_e} for the notation.
The real orientation on $(X_{n;\a},\phi_{n;\a})$ described 
in Section~\ref{CROprop_subs} determines an orientation on 
the moduli space $\ov\fM_{g,l}(X_{n;\a},d)^{\phi_{n;\a}}$.
Analogously to \cite[Section~5.3]{PSW}, it also determines orientations~on
the homology classes
\BE{cVcZclasses_e} \frac{\e(\cV_{n;\a}^{\wt\phi_{n;\a}})}{\e(\cN_{\Ga,\si}^{\phi})}
\cap\big[\ov\cZ_{\Ga,\si}\big]^{\vrt} 
\qquad\hbox{and}\qquad
\frac{\e(\cV_{\Ga,\si})}{\e(\wt\cN_{\Ga,\si}^{\phi})}
\cap\big[\wt\cZ_{\Ga,\si}\big]^{\vrt} \EE
that correspond to the orientations of the two moduli spaces of real maps
into $(X_{n;\a},\om_{n;\a},\phi_{n;\a})$ via the section~$s_{n;\a}$.
By~\eref{cNGaisom_e}, \eref{cNLisom_e}, and~\eref{cVsplit_e},
\BE{cVcZclasses_e2} \io_{\Ga}^*\bigg(\frac{\e(\cV_{n;\a}^{\wt\phi_{n;\a}})}{\e(\cN_{\Ga,\si}^{\phi})}\bigg)
=\bigg(\frac{\e(\cN_{\Ga}'\P)}{\e(\cL_{\Ga}')}\bigg)
\frac{1}{\e(L_{\Ga})}
\bigg(\frac{\e(\cV_{\Ga,\si})}{\e(\wt\cN_{\Ga,\si}^{\phi})}\bigg)\,.\EE
\\

\noindent
Since the orientations on the two classes in~\eref{cVcZclasses_e} arise from
the same real orientation on $(X_{n;\a},\phi_{n;\a})$,
\cite[Theorem~1.2]{RealGWsII} provides for a comparison between 
the two classes in~\eref{cVcZclasses_e}.
Since the section~$s_{n;\a}$ is holomorphic, 
the leading fraction on the right-hand side of~\eref{cVcZclasses_e2} 
corresponds to the Euler class of $TX_{n;\a}$ 
at the chosen node in each pair of the conjugate nodes considered 
for the purposes of \cite[Theorem~1.2]{RealGWsII}.
Since the number of such pairs~is
\BE{ndGadfn_e}\nd(\Ga)\equiv \sum_{v\in\nV_{+;3}^{\si}(\Ga)}\!\!\!\!\!\!\!\big|\E_v(\Ga)\big|
+\big|\nV_{+;2}^{\si}(\Ga)\big|\,,\EE
we find that 
\BE{cVcZclasses_e3}
\frac{\e(\cV_{n;\a}^{\wt\phi_{n;\a}})}{\e(\cN_{\Ga,\si}^{\phi})}\cap\big[\ov\cZ_{\Ga,\si}\big]^{\vrt} 
=\frac{(-1)^{\nd(\Ga)}}{|\Aut(\Ga,\si)|}  
\bigg(\frac{\e(\cN_{\Ga}'\P)}{\e(\cL_{\Ga}')}\bigg)
\io_{\Ga*}\bigg(\frac{1}{\e(L_{\Ga})}
\bigg(\frac{\e(\cV_{\Ga,\si})}{\e(\wt\cN_{\Ga,\si}^{\phi})}\bigg)
\cap\big[\wt\cZ_{\Ga,\si}\big]^{\vrt}\bigg).\EE
The automorphism factor above is the degree of the covering
$$\io_{\Ga}\!: \wt\cZ_{\Ga,\si} \lra\ov\cZ_{\Ga,\si}\,;$$
it corresponds to the first factor in~\eref{EquivLocal_e}.
By definition of $\wt\cN_{\Ga,\si}^{\phi}$, $\cV_{\Ga,\si}$, and $\wt\cZ_{\Ga,\si}$,
\BE{cVcZclasses_e5}\begin{split}
\bigg(\frac{\e(\cV_{\Ga,\si})}{\e(\wt\cN_{\Ga,\si}^{\phi})}\bigg)
\cap\big[\wt\cZ_{\Ga,\si}\big]^{\vrt}
&=\prod_{v\in\nV_{+;3}^{\si}(\Ga)}\!\!\!
\bigg(\!\!\bigg(\frac{\e(\cV_{n;\a}^{\wt\phi_{n;\a}})}{\e(\cN_{\Ga;v}^{\phi})}\bigg)
\cap\big[\ov\fM_{\Ga;v}^{\phi;\bT}\big]^{\vrt} \bigg)\\
&\qquad\times
\prod_{e\in\E_{\R}^{\si}(\Ga)\sqcup\E_+^{\si}(\Ga)}\!\!\!
\bigg(\!\!\bigg(\frac{\e(\cV_{n;\a}^{\wt\phi_{n;\a}})}{\e(\cN_{\Ga,\si;e}^{\phi})}\bigg)
\cap\big[\ov\fM_{\Ga,\si;e}^{\phi;\bT}\big]^{\vrt} \bigg)
\end{split}\EE
under the decomposition~\eref{FixedLocus_e}.
The identities~\eref{cVcZclasses_e3} and~\eref{cVcZclasses_e5} nearly split the equivariant
contribution of~$(\Ga,\si)$ to~\eref{PDeVdfn_e2}, 
i.e.~the left-hand side of~\eref{cVcZclasses_e3}, into contributions
from the components in~\eref{FixedLocus_e}.
The exceptional term~$\e(L_{\Ga})$ splits into products of Euler classes of 
$\bT^m$-equivariant line bundles, each of which involves only two components in~\eref{FixedLocus_e}.
We will associate each factor of~$\e(L_{\Ga})$ with the corresponding vertex~$v$
in $\nV_{+;3}^{\si}(\Ga)$ or in~$\nV_{+;2}^{\si}(\Ga)$.

\subsection{The edge and vertex contributions}
\label{NBsign_subs}

\noindent
It remains to compute the cap products in~\eref{cVcZclasses_e5} and to determine 
the factors of~$\e(L_{\Ga})$.
The first case of Lemma~\ref{VanV_lmm2} below justifies the alternative case of 
the fixed-edge contribution~\eref{ContrER_e}, i.e.~when it vanishes.
The second case of this lemma justifies the first restriction in~\eref{AdmissPairCond_e}
and in particular rules out contributions from the families $\ov\fM_{\Ga,\si;e}^{\phi;\bT;\circ}$
of real conics of Remark~\ref{ConicsSpaces_rmk}.
Lemma~\ref{AuxAct_lmm} expresses the contributions from the families $\ov\fM_{\Ga;e}^{\bT;\circ}$
of complex conics in terms of~\eref{ContrECb_e}.
The crucial fixed-edge contribution~\eref{ContrER_e} arises
from Proposition~\ref{REdgeCntr_prp}.
Lemmas~\ref{VanV_lmm2} and~\ref{AuxAct_lmm} are proved at the end of this section;
Proposition~\ref{REdgeCntr_prp} is established in Section~\ref{EdgeCntrPrp_subs}.
Analogously to~\eref{cVdfn_e}, let
$$\pi_{n;\a}\!:
\cV_{n;\a}=
\ov\fM_{g,l}\big(\cL_{n;\a},d\big)\lra  \ov\fM_{g,l}(\P^{n-1},d)\,.$$

\begin{lmm}\label{VanV_lmm2}
Suppose  $\Ga\!\in\!\ov\cA_{g,l}'(n,d)$ and $\E_{\R}^{\si}(\Ga)\!\neq\!\eset$.
If either $a_i\!\in\!2\Z$ for some $i\!\in\![k]$ or $\fd(e)\!\in\!2\Z$ for some 
$e\!\in\!\E_{\R}^{\si}(\Ga)$, then
$$\io_*\bigg(\frac{\e(\cV_{n;\a}^{\wt\phi_{n;\a}})}{\e(\cN_{\Ga,\si}^{\phi})}
\cap\big[\ov\cZ_{\Ga,\si}\big]^{\vrt}\bigg)=0
\in \cH_*^{\bT^m}\big(\ov\fM_{g,l}(\P^{n-1},d)^{\phi}\big),$$ 
where $\io\!:\cZ_{\Ga,\si}\!\lra\!\ov\fM_{g,l}(\P^{n-1},d)^{\phi}$ 
is the inclusion of the fixed locus.
\end{lmm}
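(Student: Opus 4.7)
The strategy is to use the splitting of the contribution from $\ov\cZ_{\Ga,\si}$ into edge and vertex contributions via \eref{cVcZclasses_e3} and \eref{cVcZclasses_e5}, and then to show that the factor coming from a fixed edge $e^* \in \E_{\R}^{\si}(\Ga)$ satisfying the hypothesis either vanishes tautologically or cancels in pairs. Since $\cV_{n;\a}^{\wt\phi_{n;\a}}$ splits equivariantly over $\wt\cZ_{\Ga,\si}$ via the evaluation-at-nodes sequence leading to \eref{cVsplit_e}, the Euler class $\e(\cV_{n;\a}^{\wt\phi_{n;\a}})|_{\wt\cZ_{\Ga,\si}}$ factors through a product over edges of local Euler classes of real pushforward bundles $(\pi_*u^*\cL_{n;\a})^{\wt\phi_{n;\a}}|_{\ov\fM_{\Ga,\si;e}^{\phi;\bT}}$, so it suffices to analyze the edge-level real sheaf on $\Si_e$ for the offending~$e^*$.

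First I would handle the case $a_i\in 2\Z$ for some $i\in[k]$. The restriction of $u^*\cO_{\P^{n-1}}(a_i)$ to a degree-$\fd(e^*)$ cover $\Si_{e^*}\to \P^1_{\vt(v_1)\vt(v_2)}$ is $\cO_{\P^1}(a_i\fd(e^*))$, which has even degree, and the induced conjugation on its real sections (restricted from $\wt\phi_{n;\a}$) admits a fiberwise $\R$-trivial summand: the ``middle monomial'' in the basis of real sections of $\cO_{\P^1}(a_i\fd(e^*))$ adapted to $\wt\tau_{n;1}^{(a_i)}$ or $\wt\eta_{n;1}^{(a_i)}$ is $\bT^m$-invariant of weight zero and nowhere vanishing over $\ov\fM_{\Ga,\si;e^*}^{\phi;\bT}$. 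This produces a trivial real line subbundle of $\cV_{n;\a}^{\wt\phi_{n;\a}}$ with trivial torus action, whose equivariant Euler class is zero, forcing the whole contribution to vanish. The analogous odd-rank subbundle argument handles the case $k\in\Z^+$ with $\fd(e^*)\in 2\Z$: then $u^*\cO(a_i\fd(e^*))$ has even degree for every~$i$, so a parity count of real sections yields an odd-rank $\R$-trivial summand in $H^0(\Si_{e^*};u^*\cL_{n;\a})^{\wt\phi_{n;\a}}$ that again kills the Euler class.

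The remaining case is $k=0$ with some $\fd(e^*)\in 2\Z$; then $\cV_{n;\a}^{\wt\phi_{n;\a}}$ is absent and we must argue instead that $\ov\cZ_{\Ga,\si}$ itself splits as $\ov\cZ_{\Ga,\si}^{(1)}\sqcup\ov\cZ_{\Ga,\si}^{(2)}$ whose contributions to~\eref{PDeVdfn_e2} are opposite. A real degree $\fd(e^*)$ cover $\Si_{e^*}\to\P^1_{\vt(v_1)\vt(v_2)}$ branched only over the two $\phi$-conjugate fixed points admits exactly two topological types distinguished by the parity of the number of $\si$-fixed circles on $\Si_{e^*}$ (this is possible only for $\phi=\tau_n'$ with $n$ even; for $\phi=\eta_n$ with $\fd(e^*)\in 2\Z$ the edge space is empty by \cite[Lemma~1.9]{Teh}, consistent with the first restriction in~\eref{AdmissPairCond_e}). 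I would then compare the orientations on the two components induced by the real orientation on $(\P^{n-1},\phi)$ of Section~\ref{RealOrientCI_subs} using the sh-surface formalism: the two types differ in the parity of the number of ordinary boundary components of the sh-surface half, and by \cite[Propositions~4.1,4.2]{BHH} together with items~\ref{Eeta_it}--\ref{Espin_it} of Section~\ref{CROprop_subs}, swapping types inverts the induced orientation on $\ov\fM_{\Ga,\si;e^*}^{\phi;\bT}$ while preserving every other factor in~\eref{cVcZclasses_e5}.

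The main obstacle will be the last step: making precise the orientation comparison between the two topological types of real covers and verifying that the sign really is $-1$ after accounting for (i)~the relative orientation twist introduced by swapping the canonical trivializations of the universal tangent line at the node, and (ii)~the interaction with the node-movement signs absorbed into $L_{\Ga}$ via~\eref{cNLisom_e}. The odd-rank subbundle arguments in Cases~A and~B are by contrast purely algebraic, amounting to a dimension count of real sections of $\cO_{\P^1}(2r)$ under the appropriate anti-holomorphic involution, and should pose no difficulty.
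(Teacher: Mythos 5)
Your handling of the two cases with $k\ge1$ (some $a_i\!\in\!2\Z$, or some $\fd(e^*)\!\in\!2\Z$) is essentially correct and in the same spirit as the paper's, but the paper states it more economically and more robustly: the subbundle of $\cV_{n;\a}^{\wt\phi_{n;\a}}$ over $\ov\fM_{\Ga,\si;e}^{\phi}$ associated with the $i$-th factor of $\cL_{n;\a}$ has odd real rank $a_i\fd(e)+1$ under either hypothesis, and the rational equivariant Euler class of an odd-rank real vector bundle vanishes outright since the fiberwise antipodal map is orientation-reversing. That parity argument requires no identification of a specific zero-weight summand and, in particular, sidesteps your assertion that the ``middle monomial'' is ``nowhere vanishing over $\ov\fM_{\Ga,\si;e^*}^{\phi;\bT}$'' — a triviality claim that needs extra justification whenever the edge fixed locus is positive-dimensional, as happens for the conic covers of Remark~\ref{ConicsSpaces_rmk}.

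The genuine gap is in your Case~C ($k=0$, $\fd(e^*)\!\in\!2\Z$, $\phi=\tau_n'$). You correctly identify the two topological types of real cover ($\tau$- versus $\eta$-domain) and dispose of the $\phi=\eta_n$ sub-case, but the orientation comparison you sketch does not go through as described. Items~\ref{Eeta_it}--\ref{Espin_it} of Section~\ref{CROprop_subs} record the effect of \emph{changing the real orientation} of $(X,\phi)$ on the boundary trivialization data; they are not set up to compare two topological types of real domains under a \emph{fixed} real orientation, and invoking \cite[Propositions~4.1,4.2]{BHH} does not bridge that gap. You yourself flag this comparison as the main obstacle and leave it open, and that is precisely where the substance of the lemma lies. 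The paper's route is different: after adopting the orientation-reversal convention from the end of \cite[Section~3.2]{RealGWsI}, cancellation reduces to the equality of the \emph{un-reversed} Euler classes $\e(\cN_{\Ga,\si;e}^{\phi})$ over the $\tau$- and $\eta$-points; by \cite[Proposition~5.5]{Teh} the algebraic orientations of \cite[Section~5.2]{Teh} also fail to extend across the wall, by \cite[Lemma~5.1]{Teh} the space $\ov\fM_{\Ga,\si;e}^{\phi}$ is connected, so the sign discrepancy between the real-orientation and algebraic orientations is the same on both components, and then \cite[Remark~6.9]{Teh} with \cite[(6.6)]{Teh} gives the required equality directly. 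In other words, the paper outsources the delicate orientation comparison to existing genus-zero computations rather than re-deriving it via sh-surfaces.
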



\begin{lmm}\label{AuxAct_lmm}
Suppose $\Ga\!\in\!\ov\cA_{g,l}'(n,d)$, $e\!\in\!\E_{\C}^{\si}(\Ga)$,
\eref{ConicsCovCond_e} is satisfied, and $\b\!\in\!(\Z^{\ge0})^e$.
Then,
\BE{AuxAct_e}\begin{split}
&\bigg( \prod_{v\in e}\!c_1(L_{e;v})^{\b(v)}~
\frac{\e(\cV_{n;\a})}{\e(\cN_{\Ga;e})}\bigg)
\cap\big[\ov\fM_{\Ga;e}^{\bT}\big]\\
&\hspace{1in}
= -\bigg( \prod_{v\in e}\!\!\big(-\!\psi_{e;v}\big)^{\b(v)-|S_{e,v}|}\bigg)
\cdot
\begin{cases}
[\tn{RHS of~\eref{ContrECb_e}}],&\hbox{if}~k\!=\!1;\\
0,&\hbox{if}~k\!\ge\!2.
\end{cases}
\end{split}\EE
\end{lmm}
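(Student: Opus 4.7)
The plan is to resolve the indeterminate form of the edge formula~\eref{ContrEC_e} on the positive-dimensional fixed locus described in Remark~\ref{ConicsSpaces_rmk} by using an auxiliary $\C^*$-action, along the lines suggested by Remark~\ref{EdgCntrVan_rmk} and by \cite[Section~3.3]{Wal}. By Lemma~\ref{FixedCurves_lmm}\ref{Tcurves_it}, under the assumption~\eref{ConicsCovCond_e} the space $\ov\fM_{\Ga;e}^{\bT}$ is the one-parameter family of $\bT^m$-invariant degree $\fd(e)$ covers of the conics $\cC_i(a,b)\subset\P^2_{2i-1,2i,n}$ with $i=\flr{(\vt(v_1)+1)/2}$, parametrized by $[a,b]\in\P^1$ together with the required marked points. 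I would extend the $\bT^m$-action on $\P^{n-1}$ by an auxiliary $\C^*$-action of weight~$\mu$ acting only on the $P_n$-homogeneous coordinate; this action commutes with $\bT^m$, fixes every $P_j$, and acts on the parameter by $[a,b]\mapsto[a,t^2b]$, so its only fixed points in $\ov\fM_{\Ga;e}^{\bT}$ are the two degenerate configurations over $[1,0]$ and~$[0,1]$.

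With this enlarged action, I would apply the virtual localization theorem of~\cite{GP} to express the left-hand side of~\eref{AuxAct_e} as a sum of two isolated residue contributions: one from the nodal configuration at $[1,0]$ corresponding to the union $\P^1_{2i-1,n}\cup\P^1_{2i,n}$ covered by two degree $\fd(e)/2$ maps meeting over $P_n$, and one from the configuration at $[0,1]$ corresponding to the doubled line along~$\P^1_{2i-1,2i}$. Each of these residues is governed by the standard complex edge formula~\eref{ContrEC_e} at edge degree~$\fd(e)/2$, with the auxiliary weight $\mu$ appearing through the normal directions to $\P^2_{2i-1,2i,n}$ and through the smoothing of the node over $P_n$ at $[1,0]$ (resp.\ the non-reduced structure at $[0,1]$); the relevant automorphism groups $(\Z_{\fd(e)/2})^2$, $\Z_{\fd(e)}$, and $\Z_{\fd(e)/2}$ from Remark~\ref{ConicsSpaces_rmk} enter as prefactors of the two residues.

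Since the original integrand is independent of $\mu$, the $\bT^m\times\C^*$-equivariant sum of the two residues must itself be $\mu$-independent and agree with the value of the integral. Taken individually, each residue has poles in $\mu$; they must cancel pairwise, and the finite remainder should be identified with the substitution recipe of Remark~\ref{EdgCntrVan_rmk}, namely setting $\al_n=0$ and $\al_{\vt(v_1)}=-\al_{\vt(v_2)}$ in~\eref{ContrEC_e} at edge degree~$\fd(e)/2$, yielding~\eref{ContrECb_e} for $k=1$. The exterior prefactor $\prod_{v\in e}(-\psi_{e;v})^{\b(v)-|S_{e,v}|}$ on the right-hand side of~\eref{AuxAct_e} arises as a bookkeeping term for the transfer of the node-smoothing factors between the edge and vertex contributions, following the convention of Remark~\ref{EdgCntr_rmk}, and the overall minus sign comes from the residue at $P_n$.

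For $k\ge2$, the two residues should cancel identically, in accord with the second type of vanishing predicted in \cite[Section~3.3]{Wal}: each factor of $\cL_{n;\a}$ contributes to the equivariant Euler class a term whose restrictions to the two fixed configurations differ by a $\mu$-dependent factor whose combined effect, when summed with opposite residue signs, is total cancellation once $k\ge 2$. The hard part will be the explicit residue bookkeeping in the $\mu\to0$ limit: as emphasized in Remark~\ref{EdgCntrVan_rmk}, a naive substitution produces the indeterminate form~$0/0$, so the two residue contributions must first be combined algebraically before the limit can be taken, and the resulting simplification must then be matched precisely against the combinatorial factors $(-1)^{(a_1+1)\fd(e)/2}$, $a_1$, and the factorial ratios appearing in~\eref{ContrECb_e}.
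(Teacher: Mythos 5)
Your overall strategy---break the degenerate family of conics with an auxiliary torus action and then localize---is the right one, and you correctly identify $[1,0]$ and $[0,1]$ as the isolated fixed configurations that appear. However, your specific choice of auxiliary torus ($\C^*$ with weight $\mu$ on $z_n$ while keeping $\al_{\vt(v_1)}=-\al_{\vt(v_2)}$) leaves the two genuinely hard steps unaddressed: the pairwise cancellation of $\mu$-poles between the two residues, and, for $k\ge2$, the exact cancellation of two nonzero terms. You flag these explicitly as ``the hard part'' and defer them, so as written this is an incomplete plan rather than a proof.

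The paper makes a different choice that dissolves both difficulties at once. It works with the $\bT^2$-action of weights $\al_{\vt(v_1)},\al_{\vt(v_2)},0$ on $\P^2_{\vt(v_1),\vt(v_2),n}$---keeping $\al_n=0$ but un-restricting $\al_{\vt(v_1)}+\al_{\vt(v_2)}$ away from $0$---and substitutes $\al_{\vt(v_1)}=-\al_{\vt(v_2)}$ only at the very end. The crucial point is that with $\al_n=0$, evaluation over $P_n$ exhibits a summand of $\cV_{n;\a}$ with trivial $\bT^2$-weight, so, by the argument of Lemma~\ref{VanV_lmm}, $\e(\cV_{n;\a})$ vanishes at the fixed point $\cZ_{\Ga';e}^{\bT}$ over the nodal conic $\P^1_{\vt(v_1),n}\cup\P^1_{\vt(v_2),n}$. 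Only the degree-$\fd(e)$ cover of $\P^1_{\vt(v_1)\vt(v_2)}$ contributes, the computation becomes a single application of the standard complex one-edge formula (cf.~\cite[Exercise~27.2.4,(27.8)]{MirSym}), and the $k\ge2$ vanishing is immediate from the explicit factor $((\al_{\vt(v_1)}+\al_{\vt(v_2)})/2)^{k-1}$ in the result~\eref{AuxAct_e5}. Your choice of $\al_n=\mu\ne0$ destroys this vanishing (that summand then carries weight $a_i\mu$), forcing a genuine two-term computation. Two smaller slips: the overall minus sign is a normalization built into~\eref{ContrEC_e} and~\eref{ContrECb_e} (see Remark~\ref{EdgCntr_rmk}), not a contribution from the residue at the nodal configuration (which, in the paper's setup, vanishes); and the prefactor $\prod_{v\in e}(-\psi_{e;v})^{\b(v)-|S_{e,v}|}$ arises from the $c_1(L_{e;v})^{\b(v)}$ insertions together with the forgetful-morphism comparison $\e(\cV_{n;\a})/\e(\cN_{\Ga;e})=\big(\prod_{v\in e}c_1(L_{e;v})^{|S_{e,v}|}\big)^{-1}\ff_{\Ga;e}^*\big(\e(\cV_{n;\a})/\e(\cN_{\Ga;e}^{\circ})\big)$ and $c_1(L_{e;v})|_{\cZ_{\Ga;e}'^{\bT}}=-\psi_{e;v}$, not from a transfer of node-smoothing factors.
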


\begin{prp}\label{REdgeCntr_prp}
Suppose  $\Ga\!\in\!\ov\cA_{g,l}'(n,d)$ and $e\!\in\!\E_{\R}^{\si}(\Ga)$.
If $a_i\!\not\in\!2\Z$ for all $i\!\in\![k]$ and $\fd(e)\!\not\in\!2\Z$, then
\BE{REdgeCntr_e}
\frac{\e(\cV_{n;\a}^{\wt\phi_{n;\a}})}{\e(\cN_{\Ga,\si;e}^{\phi;\circ})}
\bigg|_{\ov\fM_{\Ga,\si;e}^{\phi;\bT;\circ}}
=\fd(e)\cdot\big[\tn{RHS of~\eref{ContrER_e}}\big]  \in \cH_{\bT^m}^*\,,\EE
with $\ov\fM_{\Ga,\si;e}^{\phi;\bT;\circ}$ as on the first line in~\eref{EdgeSpaces_e}.
\end{prp}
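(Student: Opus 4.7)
The plan is to reduce the claim to an explicit equivariant computation at the unique real cover in $\ov\fM_{\Ga,\si;e}^{\phi;\bT;\circ}$ and then match signs via the orientation analysis of~\cite{RealGWsII}. Under the hypotheses $\fd(e)\!\not\in\!2\Z$ and $a_i\!\not\in\!2\Z$ for all~$i$, the space $\ov\fM_{\Ga,\si;e}^{\phi;\bT;\circ}$ is a single orbifold point with automorphism group~$\Z_d$, where $d\!\equiv\!\fd(e)$; it corresponds to the degree~$d$ real branched cover
$$u\!:(\P^1,c)\lra\big(\P^1_{\vt(v_1)\vt(v_2)},\phi\big),$$
ramified only over $P_{\vt(v_1)}$ and $P_{\vt(v_2)}\!=\!\phi(P_{\vt(v_1)})$, as classified by Lemma~\ref{FixedCurves_lmm}\ref{TRcurves_it} and the discussion in Remark~\ref{ConicsSpaces_rmk}.
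Since $H^1(\P^1;u^*T\P^{n-1})\!=\!0$ and $H^1(\P^1;u^*\cL_{n;\a})\!=\!0$, the classes $\e(\cN_{\Ga,\si;e}^{\phi;\circ})$ and $\e(\cV_{n;\a}^{\wt\phi_{n;\a}})$ at~$u$ reduce to the equivariant Euler classes of the real parts of $H^0(\P^1;u^*T\P^{n-1})/H^0(\P^1;T\P^1)$ and $H^0(\P^1;u^*\cL_{n;\a})$, respectively, under the conjugations pulled back from~$\tnd\phi$ and~$\wt\phi_{n;\a}$.

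I would then compute the relevant weights via the $\bT^n$-equivariant Euler sequence of Lemma~\ref{Pnses_lmm}. The $\bT^n$-weights on the $j$-th summand $H^0(\P^1;u^*\cO_{\P^{n-1}}(1))$ are $\frac{(d-r)\al_{\vt(v_1)}+r\al_{\vt(v_2)}}{d}-\al_j$ for $r\!=\!0,\ldots,d$; restriction to~$\bT^m$ via~\eref{sp_weights_e} substitutes $\al_{\vt(v_2)}\!=\!-\al_{\vt(v_1)}$. The involution induced by~$\wt\phi_n^*$ together with the source involution~$c$ sends the weight for $(j,r)$ to the weight for $(\phi(j),d\!-\!r)$, and the two weights in each orbit are negatives of each other on~$\bT^m$; since $d$ is odd, picking representatives with $r\!\in\!\{0,\ldots,(d\!-\!1)/2\}$ and $j\!\in\![n]\!\setminus\!\{\vt(v_1),\vt(v_2)\}$ selects one element from each orbit. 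After absorbing the $H^0(T\P^1)$-weights into the contributions from $j\!=\!\vt(v_1),\vt(v_2)$, this yields the denominator
$$\prod_{j\neq\vt(v_1),\vt(v_2)}\prod_{r=0}^{(d-1)/2}\!\!\Big(\tfrac{(d-2r)\al_{\vt(v_1)}}{d}-\al_j\Big)$$
of~\eref{ContrER_e}. The parallel computation on $u^*\cL_{n;\a}$ produces real-part weights $(a_id\!-\!2r)\al_{\vt(v_1)}/d$ for $i\!=\!1,\ldots,k$ and $r\!=\!0,\ldots,(a_id\!-\!1)/2$, giving the factor $\prod_i(a_id)!!\,(\al_{\vt(v_1)}/d)^{(|\a|d+k)/2}$; combined with the surviving $H^0(T\P^1)$ contribution and $|\Aut(u)|\!=\!d$, the remaining numerical coefficient $1/(d\cdot 2^{d-1}d!)$ of~\eref{ContrER_e} follows by elementary rearrangement.

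The final step is to pin down the leading sign. Each pair $\{w,-w\}$ of conjugate weight eigenspaces assembles into a real line with a natural complex structure that is only fixed once an orientation is chosen, and the equivariant Euler class on this line equals $+w$ or~$-w$ accordingly. I would extract the $(-1)^{|\phi|}$ factor from the square of the source conjugation on the target line bundle ($\fc_\eta^2\!=\!-\Id$ versus $\fc^2\!=\!\Id$), and the $(-1)^{(d-1)/2}$ factor from comparing the canonical orientation of Section~\ref{RealOrientCI_subs} with the standard complex-eigenspace orientation ordered by~$r$. Concretely, \cite[Theorem~1.4]{RealGWsII} and \cite[Corollary~3.8]{RealGWsII} reduce this last comparison to tracing the spin structure on $TX_{n;\a}^{\phi_{n;\a}}\!\oplus\!2(L^*)^{\wt\phi^*}$ constructed in Section~\ref{RealOrientCI_subs} along the real locus of~$u$; an analogous computation was carried out in the proof of Proposition~\ref{CROprop_prp}(2). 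The main obstacle is this last sign calculation: it requires tracking the spin structure against the trivialization provided by the complex-eigenspace decomposition on a M\"obius-band-type real loop in~$X_{n;\a}^{\phi_{n;\a}}$, which is precisely the kind of subtlety addressed by the orientation framework of~\cite{RealGWsII}.
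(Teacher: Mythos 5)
Your overall plan mirrors the paper's: reduce to the unique real branched cover $f_0$, compute the $\bT^m$-weights on $H^0(\P^1;f_0^*\cL_{n;\a})$ and on the normal directions via the Euler sequence, and then attach the correct sign via the orientation theory of~\cite{RealGWsII}. The weight bookkeeping you describe (pairing $(j,r)$ with $(\phi(j),\fd(e)\!-\!r)$, picking one representative per orbit, absorbing the $H^0(T\P^1)$-weights) is consistent with what the paper encodes in the ``complex orientation'' formulas \eref{Corienttau_e2} and \eref{Corienttau_e5}, so that part of the argument is sound.

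The gap is in the sign determination, which you correctly flag as the main obstacle but treat too uniformly. The paper's proof does not extract a single $(-1)^{(\fd(e)-1)/2}$ by comparing ``the canonical orientation with the standard complex-eigenspace orientation ordered by~$r$''; instead it splits into three cases according to whether $\phi\!=\!\eta_n$, or $\phi\!=\!\tau_n'$ with $n\!-\!|\a|\!\in\!4\Z$, or $\phi\!=\!\tau_n'$ with $n\!-\!|\a|\!\not\in\!4\Z$. In the first and second cases, the factor $(-1)^{(\fd(e)-1)/2}$ is already present in the formulas for $\e(T_{[f_0]}\fM(\P^{n'-1}))$ furnished by \cite[Proposition~6.2]{Teh}; in the third case that factor is absent from the analogous formula \cite[(6.21)]{Teh}, and instead emerges as the sum of two relative-spin discrepancies supplied by \cite[Theorem~1.5]{RealGWsII}. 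This trichotomy is forced by whether the real line bundle $(L^*)^{\wt\phi^*}$ of \eref{CIprp_e6} is orientable over $X_{n;\a}^{\phi_{n;\a}}$, which in turn depends on $n\!-\!|\a|\bmod 4$, and it is exactly why the construction of Section~\ref{RealOrientCI_subs} distinguishes \eref{RealOrientCI_e9a} from \eref{RealOrientCI_e9b}. Without this split, you cannot justify the leading sign in a way that is uniform in $\a$.

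Two smaller issues: the sign comparison through the exact sequence \eref{RealOrientCI_e19} uses \cite[Corollary~3.13]{RealGWsII} (for $\tau$) and \cite[Corollary~3.16]{RealGWsII} (for $\eta$), not \cite[Corollary~3.8]{RealGWsII}, which plays a more peripheral role; and the $(-1)^{|\phi|}$ in the $\eta$ case does not come from $\fc_\eta^2\!=\!-\Id$ acting on the target line bundle, but from the orientation-reversal convention for the $\eta$-domain stratum when it is glued into $\ov\fM_{g,l}(X_{n;\a},d)^{\phi_{n;\a}}$ (as stated at the end of \cite[Section~3.2]{RealGWsI}). You should also be careful that the $\fd(e)$ on the right-hand side of \eref{REdgeCntr_e} cancels the $1/\fd(e)$ already present in \eref{ContrER_e}; the automorphism group $\Z_{\fd(e)}$ enters later, in the integration against $[\ov\fM_{\Ga,\si;e}^{\phi;\bT}]^{\vrt}$ in \eref{EdgRContr_e}, not in the statement of the proposition itself.
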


\begin{proof}[{\bf\emph{Proof of Theorem~\ref{EquivLocal_thm}}}]
By Lemmas~\ref{VanV_lmm} and~\ref{VanV_lmm2},
all nonzero $\bT^m$-equivariant contributions to~\eref{PDeVdfn_e2} come
from the $\bT^m$-fixed loci $\ov\cZ_{\Ga,\si}$ corresponding 
to the admissible pairs~$(\Ga,\si)$, i.e.~the elements of the collection 
$\cA_{g,l}(n,d)$ defined by~\eref{AdmissPairCond_e}.
Furthermore,  if a fixed locus~$\ov\cZ_{\Ga,\si}$ contributes to~\eref{PDeVdfn_e2},
then $\E_{\R}^{\si}(\Ga)\!=\!\eset$ if $a_i\!\in\!2\Z$ for some $i\!\in\![k]$ 
and $k\!=\!1$ if some $e\!\in\!\E_+^{\si}(\Ga)$ satisfies~\eref{ConicsCovCond_e}.
From now~on, we assume that $(\Ga,\si)$ is an admissible pair satisfying these two conditions.\\

\noindent
By~\eref{cNGadfn_e}, \eref{cLGadfn_e}, and the definitions of $\cN_{\Ga}'\P$ and $\cL_{\Ga}'$
just~below,
\BE{cNcLec_e}
\frac{\e(\cN_{\Ga}'\P)}{\e(\cL_{\Ga}')}\bigg|_{\wt\cZ_{\Ga,\si}}
=  \prod_{v\in\nV_{+;3}^{\si}(\Ga)}\!\!
\bigg(\frac{\e(T_{P_{\vt(v)}}\P^{n-1})}{\lr\a\al_{\vt(v)}^{k}}\bigg)^{\!|\E_v(\Ga)|}
\cdot
\prod_{v\in\nV_{+;2}^{\si}(\Ga)}\!\!\!
\bigg(\frac{\e(T_{P_{\vt(v)}}\P^{n-1})}{\lr\a\al_{\vt(v)}^{k}}\bigg).
\EE
By~\eref{LGadfn_e},
\BE{Lec_e}\begin{split}
\e(L_{\Ga})\big|_{\wt\cZ_{\Ga,\si}}
&=\prod_{v\in\nV_{+;3}^{\si}(\Ga)}\prod_{e\in\E_v(\Ga)}\!\!\!\!\!
\big(c_1(L_{e_{\bu};v}^{\phi})\!+\!c_1(L_{v;e}^{\phi})\big)
\cdot
\prod_{v\in\nV_{+;2}^{\si}(\Ga)}\!\!\!
\bigg(\!\!-\!\!\!\!\!\sum_{e\in\E_v(\Ga)}\!\!\!\!\!\psi_{e_{\bu};v}\bigg),\\
&=\prod_{v\in\nV_{+;3}^{\si}(\Ga)}
\Psi_{\Ga;v}^*\!\!\!\!\!\prod_{e\in\E_v(\Ga)}\!\!\!\!\!
\big(-\!\psi_{e;v}\!-\!\pi_v^*\psi_e\big)
\cdot
\prod_{v\in\nV_{+;2}^{\si}(\Ga)}\!\!\!
\bigg(\!\!-\!\!\!\!\!\sum_{e\in\E_v(\Ga)}\!\!\!\!\!\psi_{e;v}\bigg).
\end{split}\EE
The right-hand sides of~\eref{cNcLec_e} and~\eref{Lec_e} are elements of
$\cH_{\bT^m}^*$ and $\cH_{\bT^m}^*\!\otimes\!H^*(\wt\cZ_{\Ga,\si})$, respectively.\\

\noindent
By \cite[Theorem~1.4]{RealGWsII},
\BE{Csgn_e}\begin{split}
\Psi_{\Ga;v*}\bigg(\!\!
\bigg(\frac{\e(\cV_{n;\a}^{\wt\phi_{n;\a}})}{\e(\cN_{\Ga;v}^{\phi})}\bigg)
\!\cap\!\big[\ov\fM_{\Ga;v}^{\phi;\bT}\big]^{\vrt}\bigg) 
&=(-1)^{\g(v)-1+|S_v^-|} 
\bigg(\frac{\e(\cV_{n;\a})}{\e(\cN_{\Ga;v})}\bigg)
\!\cap\!\big[\ov\fM_{\Ga;v}^{\bT}\big]^{\vrt}\,,\\
\Psi_{\Ga,\si;e*}
\bigg(\!\!\bigg(\frac{\e(\cV_{n;\a}^{\wt\phi_{n;\a}})}{\e(\cN_{\Ga,\si;e}^{\phi})}\bigg)
\!\cap\!\big[\ov\fM_{\Ga,\si;e}^{\phi;\bT}\big]^{\vrt}\bigg)
&=(-1)^{-1+|S_e^-|} 
\bigg(\frac{\e(\cV_{n;\a})}{\e(\cN_{\Ga;e})}\bigg)\!\cap\!\big[\ov\fM_{\Ga;e}^{\bT}\big],
\end{split}\EE
for all $v\!\in\!\nV_{+;3}^{\si}(\Ga)$ and $e\!\in\!\E_+^{\si}(\Ga)$;
the right-hand sides above carry standard complex orientations.
Since 
$$\io_{\Ga}^*\big(\psi_i^{b_i},\ev_i^*\x^{p_i}\big)\big|_{\wt\cZ_{\Ga,\si}}=
\begin{cases}
(\pi_v^*\Psi_{\Ga;v}^*\psi_i^{p_i},\al_{\vt(v)}^{p_i}),&\hbox{if}~i^+\!\in\!S_v;\\
((-\pi_v^*\Psi_{\Ga;v}^*\psi_i)^{b_i},(-\al_{\vt(v)})^{p_i}),&\hbox{if}~i^-\!\in\!S_v;\end{cases}$$
the first statement in~\eref{Csgn_e} and \cite[Section~27.6]{MirSym} give
\BE{Ver3Contr_e}\begin{split}
&\Bigg(\frac{\prod\limits_{\begin{subarray}{c}1\le i\le l\\ i^{\pm}\in S_v\end{subarray}}
\!\!\!(\psi_i^{b_i}\ev_i^*\x^{p_i})}
{\Psi_{\Ga;v}^*\!\!\!\!\!\prod\limits_{e\in\E_v(\Ga)}\!\!\!\!\!(-\psi_{e;v}\!-\!\pi_v^*\psi_e)}
\frac{\e(\cV_{n;\a}^{\wt\phi_{n;\a}})}{\e(\cN_{\Ga;v}^{\phi})}
\Bigg)\cap \big[\ov\fM_{\Ga;v}^{\phi;\bT}\big]^{\vrt}\\
&\hspace{.8in}=(-1)^{\fs_v-|\E_v(\Ga)|}
\bigg(\frac{\e(T_{P_{\vt(v)}}\P^{n-1})}{\lr\a\al_{\vt(v)}^k}\bigg)^{\!-1}
\al_{\vt(v)}^{|\p|_v} \!\!\!\!\!\!\!\!\!
\int\limits_{\ov\cM_{\g(v),S_v}}\!\!
\frac{\e(\bE^*\!\otimes\!T_{P_{\vt(v)}}\P^{n-1})}
{\prod\limits_{e\in\E_v(\Ga)}\!\!\!\!\!\!\left(-\psi_{e;v}\!-\!\psi_e\right)}
\!\!\prod_{\begin{subarray}{c}1\le i\le l\\ i^{\pm}\in S_v\end{subarray}}\!\!\!\psi_i^{b_i}
\end{split}\EE
for all $v\!\in\!\nV_{+;3}^{\si}(\Ga)$.\\

\noindent
For any $e\!\in\!\Edg$ and $v\!\in\!e$, let 
$$|\b|_{e,v}=\sum_{\begin{subarray}{c}1\le i\le l\\ i^{\pm}\in S_{e,v}\end{subarray}}\!\!\!\!\!b_i\,,
\quad
|\b|_{e,v}^-=\sum_{\begin{subarray}{c}1\le i\le l\\ i^-\in S_{e,v}\end{subarray}}\!\!\!\!\!b_i\,,
\quad
|\p|_{e,v}=\sum_{\begin{subarray}{c}1\le i\le l\\ i^{\pm}\in S_{e,v}\end{subarray}}\!\!\!\!\!p_i\,,
\quad
|\p|_{e,v}^-=\sum_{\begin{subarray}{c}1\le i\le l\\ i^-\in S_{e,v}\end{subarray}}\!\!\!\!\!p_i\,.$$
Since $S_{e,v}\!\cap\![l]$ consists of at most one element,
\BE{alclass_e}\io_{\Ga}^*\ev_i^*\x^{p_i}\big|_{\wt\cZ_{\Ga,\si}}
= (-1)^{|\p|_{e,v}^-}\al_{\vt(v)}^{|\p|_{e,v}}
\qquad\forall~i\!\in\!S_{e,v}\!\cap\![l].\EE
If $e\!\in\!\E_+^{\si}(\Ga)$  does not satisfy~\eref{ConicsCovCond_e}, then
$$\frac{\e(\cV_{n;\a})}{\e(\cN_{\Ga;e})}=
\bigg(\!\prod_{v\in e}\!(-\psi_{e;v})^{|S_{e,v}|}\!\bigg)^{\!-1}
\ff_{\Ga;e}^*\bigg(\frac{\e(\cV_{n;\a})}{\e(\cN_{\Ga;e}^{\circ})}\bigg),\quad
\io_{\Ga}^*\psi_i^{b_i}\big|_{\wt\cZ_{\Ga,\si}}\!=
(-1)^{|\b|_{e,v}^-}\psi_{e;v}^{|\b|_{e,v}}\,.$$
The second statement in~\eref{Csgn_e} and \cite[Sections~27.2,27.6]{MirSym} thus give
\BE{EdgCContr_e}\begin{split}
&\Bigg(\prod_{\begin{subarray}{c}1\le i\le l\\ i^{\pm}\in S_e\end{subarray}}
\!\!\!(\psi_i^{b_i}\ev_i^*\x^{p_i})
\frac{\e(\cV_{n;\a}^{\wt\phi_{n;\a}})}{\e(\cN_{\Ga,\si;e}^{\phi})}
\Bigg)\cap \big[\ov\fM_{\Ga,\si;e}^{\phi;\bT}\big]^{\vrt}\\
&\hspace{.8in}
=(-1)^{|S_e^-|}\prod_{v\in e}\frac{
\prod\limits_{\begin{subarray}{c}1\le i\le l\\ i^{\pm}\in S_{e,v}\end{subarray}}\!\!\!\!
(-1)^{|\b|_{e,v}^-+|\p|_{e,v}^-}\psi_{e;v}^{|\b|_{e,v}}\al_{\vt(v)}^{|\p|_{e,v}}}
{(-\psi_{e;v})^{|S_{e,v}|}}
\big[\tn{RHS of~\eref{ContrEC_e}}\big]
\end{split}\EE
for all $e\!\in\!\E_+^{\si}(\Ga)$ not satisfying~\eref{ConicsCovCond_e}.\\

\noindent
If $e\!\in\!\E_+^{\si}(\Ga)$ satisfies~\eref{ConicsCovCond_e}, then
$$\io_{\Ga}^*\psi_i^{b_i}\big|_{\wt\cZ_{\Ga,\si}}\!=
(-1)^{|\b|_{e,v}^-}
\big(-c_1(L_{e;v})\big)^{|\b|_{e,v}}\,.$$
The second statement in~\eref{Csgn_e},
Lemma~\ref{AuxAct_lmm}, and~\eref{alclass_e} thus give
\BE{EdgCbContr_e}\begin{split}
&\Bigg(\prod_{\begin{subarray}{c}1\le i\le l\\ i^{\pm}\in S_e\end{subarray}}
\!\!\!(\psi_i^{b_i}\ev_i^*\x^{p_i})
\frac{\e(\cV_{n;\a}^{\wt\phi_{n;\a}})}{\e(\cN_{\Ga,\si;e}^{\phi})}
\Bigg)\cap \big[\ov\fM_{\Ga,\si;e}^{\phi;\bT}\big]^{\vrt}\\
&\hspace{.8in}
=(-1)^{|S_e^-|}\prod_{v\in e}\frac{
\prod\limits_{\begin{subarray}{c}1\le i\le l\\ i^{\pm}\in S_{e,v}\end{subarray}}\!\!\!\!
(-1)^{|\b|_{e,v}^-+|\p|_{e,v}^-}\psi_{e;v}^{|\b|_{e,v}}\al_{\vt(v)}^{|\p|_{e,v}}}{(-\psi_{e;v})^{|S_{e,v}|}}
\big[\tn{RHS of~\eref{ContrECb_e}}\big]
\end{split}\EE
for all $e\!\in\!\E_+^{\si}(\Ga)$ satisfying~\eref{ConicsCovCond_e}.\\

\noindent
If $e\!\in\!\E_{\R}^{\si}(\Ga)$, then precisely one of the vertices $v_e\!\in\!e$
belongs to~$\nV_+^{\si}(\Ga)$ and 
$$\io_{\Ga}^*\ev_i^*\x^{p_i}\big|_{\wt\cZ_{\Ga,\si}} =
(-1)^{|\p|_{e,v_e}^-}\al_{\vt(v_e)}^{|\p|_{e,v_e}}\,, 
\quad
\io_{\Ga}^*\psi_i^{b_i}\big|_{\wt\cZ_{\Ga,\si}} =
(-1)^{|\b|_{e,v_e}^-}\psi_{e;v_e}^{|\b|_{e,v_e}}\,.$$
Since $\ov\fM_{\Ga,\si;e}^{\phi;\bT}$ consists of a single point with 
the automorphism group~$\Z_{\fd(e)}$,
Proposition~\ref{REdgeCntr_prp} thus~gives
\BE{EdgRContr_e}\begin{split}
&\Bigg(\prod_{\begin{subarray}{c}1\le i\le l\\ i^{\pm}\in S_e\end{subarray}}
\!\!\!(\psi_i^{b_i}\ev_i^*\x^{p_i})
\frac{\e(\cV_{n;\a}^{\wt\phi_{n;\a}})}{\e(\cN_{\Ga,\si;e}^{\phi})}
\Bigg)\cap \big[\ov\fM_{\Ga,\si;e}^{\phi;\bT}\big]^{\vrt}\\
&\hspace{.8in}
=\frac{\prod\limits_{\begin{subarray}{c}1\le i\le l\\ i^{\pm}\in S_{e,v}\end{subarray}}\!\!\!\!
(-1)^{|\b|_{e,v_e}^-+|\p|_{e,v_e}^-}\psi_{e;v_e}^{|\b|_{e,v_e}}
\al_{\vt(v_e)}^{|\p|_{e,v_e}}}{(-\psi_{e;v_e})^{|S_{e,v_e}|}}
\big[\tn{RHS of~\eref{ContrER_e}}\big]
\end{split}\EE
for all $e\!\in\!\E_{\R}^{\si}(\Ga)$.\\

\noindent
By~\eref{cVcZclasses_e3}, \eref{cVcZclasses_e5},  and~\eref{Lec_e},
the left-hand side of~\eref{EquivLocal_e} is the product~of 
the leading fraction in~\eref{cVcZclasses_e3} with the right-hand sides 
of~\eref{cNcLec_e}, \eref{Ver3Contr_e}, 
\eref{EdgCContr_e}, \eref{EdgCbContr_e}, and~\eref{EdgRContr_e}.
The factor on the right-hand side of~\eref{EquivLocal_e} corresponding to
a vertex $v\!\in\!\nV_{+;3}^{\si}(\Ga)$ is the product~of
\begin{enumerate}[label=$\bu$,leftmargin=*]

\item the corresponding factor on RHS of~\eref{cNcLec_e},

\item RHS of~\eref{Ver3Contr_e},

\item the non-edge factors on RHSs 
of~\eref{EdgCContr_e} and~\eref{EdgCbContr_e} corresponding to either~$v\!\in\!e$
or $\si(v)\!\in\!e$,

\item the non-edge factors on  RHS of~\eref{EdgRContr_e} with $v_e\!=\!e$,
and 

\item $(-1)$ to the power of the summand $|\E_v(\Ga)|$ in~\eref{ndGadfn_e}.

\end{enumerate}
The factor on the right-hand side of~\eref{EquivLocal_e} corresponding to
a vertex $v\!\in\!\nV_{+;2}^{\si}(\Ga)$ is obtained similarly,
except the product of the contributions from the second and last bullets above is replaced~by
$$\bigg(\!\!-\!\!\!\!\!\sum_{e\in\E_v(\Ga)}\!\!\!\!\!\psi_{e;v}\bigg)^{-1}\cdot (-1)^1\,.$$
The role of~\eref{Contr3V_e} now is played by~\eref{Contr2V_e} with 
$$\val_v(\Ga),\big|\E_v(\Ga)\big|=2, \qquad |\b|_v,|\p|_v=0\,.$$
There is no contribution associated with $v\!\in\!\nV_+^{\si}(\Ga)$ such that $\val_v(\Ga)\!=\!1$;
this is consistent~with
$$(-1)^{\fs_v}\big[\tn{RHS of~\eref{Contr2V_e}}\big] =1$$
in this case.
For $v\!\in\!\nV_+^{\si}(\Ga)$ such that $\val_v(\Ga)\!=\!2$ and $|\E_v(\Ga)|\!=\!1$,
there is a unique edge~$e$ in $\E_{\R}^{\si}(\Ga)\!\cup\!\E_+^{\si}(\Ga)$ containing either~$v$ or~$\si(v)$.
The factor on the right-hand side of~\eref{EquivLocal_e}  
corresponding to such~$v$ is the non-edge factor on the right-hand side of
either \eref{EdgCContr_e}, \eref{EdgCbContr_e}, or \eref{EdgRContr_e}
corresponding to the associated flag~$(e,v)$.
This is the only case for which the numbers $|\b|_{e,v}^-$ and $|\p|_{e,v}^-$  may be nonzero.
\end{proof}

\begin{proof}[{\bf\emph{Proof of Lemma~\ref{VanV_lmm2}}}]  
Suppose $e\!\in\!\E_{\R}^{\si}(\Ga)$.
If $k\!\in\!\Z^+$ and either $a_i\!\in\!2\Z$ for some $i\!\in\![k]$ or $\fd(e)\!\in\!2\Z$, then
$$\cV_{n;\a}^{\wt\phi_{n;\a}}\lra \ov\fM_{\Ga,\si;e}^{\phi}$$
contains a subbundle of odd rank and thus $\e(\cV_{n;\a}^{\wt\phi_{n;\a}})\!=\!0$.
In the first case, this is the subbundle associated with the $i$-th factor in
the vector bundle~$\cL_{n;\a}$.
In the second case,  the subbundle associated with every factor in~$\cL_{n;\a}$
has odd rank.\\

\noindent
It remains to consider the case $\fd(e)\!\in\!2\Z$ and $k\!=\!0$. 
The latter implies that $X_{n;\a}\!=\!\P^{2m-1}$.
If $\phi\!=\!\eta_{2m}$, then 
$$\ov\fM_{\Ga,\si;e}^{\phi;\bT} \subset
    \ov\fM_{\Ga,\si;e}^{\phi}\!\equiv\!\ov\fM_{0,S_e}\big(\P^{2m-1},\fd(e)\big)^{\phi}=\eset;$$
see \cite[Lemma~1.9]{Teh}.
Suppose $\phi\!=\!\tau_{2m}'$.
The set $\ov\fM_{\Ga,\si;e}^{\phi;\bT}$ then consists of two equivalence classes of
real maps: one with the standard involution $\tau\!=\!\tau_2$ on the domain
and the other with the fixed-point-free involution $\eta\!=\!\eta_2$.
We denote the associated uncompactified moduli spaces of real maps~by
$$\fM_{\Ga,\si;e}^{\phi,\tau;\bT} \subset\fM_{\Ga,\si;e}^{\phi,\tau}
\qquad\hbox{and}\qquad  \fM_{\Ga,\si;e}^{\phi,\eta;\bT} \subset\fM_{\Ga,\si;e}^{\phi,\eta}\,,$$
respectively.\\

\noindent
A real orientation on $(\P^{2m-1},\tau_{2m})$ directly determines orientations on 
$\fM_{\Ga,\si;e}^{\phi,\tau}$ and $\fM_{\Ga,\si;e}^{\phi,\eta}$;
see \cite[Corollary~5.10]{RealGWsI}.
In order to extend some set of orientations across the common boundary of these two moduli spaces,
we reverse the orientation of the second moduli space;
see the end of \cite[Section~3.2]{RealGWsI}.
Thus, it is sufficient to show~that 
\BE{VanV_lmm2e3}\e\big(\cN_{\Ga,\si;e}^{\phi}\big)\big|_{\fM_{\Ga,\si;e}^{\phi,\tau;\bT}}
=\e\big(\cN_{\Ga,\si;e}^{\phi}\big)\big|_{\fM_{\Ga,\si;e}^{\phi,\eta;\bT}}
\in \cH_{\bT^m}^*\EE 
before the orientation reversal.
Since adding a pair of conjugate points has the same effect on the two sides of~\eref{VanV_lmm2e3},
we can assume that $S_e$ consists of a pair of conjugate points
and the value of~$\vt$ on the positive vertex is some $i\!\in\!\Z^+\!-\!2\Z$.\\

\noindent
By \cite[Proposition~5.5]{Teh}, the algebraic orientations on 
$\fM_{\Ga,\si;e}^{\phi,\tau}$ and $\fM_{\Ga,\si;e}^{\phi,\eta}$
defined in \cite[Section~5.2]{Teh} also do not extend across the common boundary.
By \cite[Lemma~5.1]{Teh}, the moduli space $\ov\fM_{\Ga,\si;e}^{\phi}$ is connected.
Thus, the algebraic orientations on 
$\fM_{\Ga,\si;e}^{\phi,\tau}$ and $\fM_{\Ga,\si;e}^{\phi,\eta}$ are either both the same
or both opposite of the orientations induced by a real orientation on $(\P^{2m-1},\tau_{2m})$.
In either case, it is sufficient to establish~\eref{VanV_lmm2e3} 
for the algebraic orientations on the two moduli spaces.
By \cite[Remark~6.9]{Teh}, both classes in~\eref{VanV_lmm2e3}
are then the negative of the right-hand side of \cite[(6.6)]{Teh} with $d_0\!=\!\fd(e)$
and $\la_i\!=\!\al_i$.
In particular, they are equal.
\end{proof}

\begin{rmk}\label{VanV2_rmk}
Suppose $\fd(e)\!\in\!2\Z$ as above.
If $m\!\in\!2\Z$, the orientation on $\fM_{\Ga,\si;e}^{\phi,\tau}$
induced by the canonical real orientation of Section~\ref{CROprop_subs}
is the same as the orientation induced by the canonical spin structure
of \cite[Section~5.5]{Teh};
see  \cite[Theorem~1.5]{RealGWsII}.
The two classes in~\eref{VanV_lmm2e3}
with respect to the canonical real orientation (and before the orientation reversal)
are thus given by~\cite[(6.6)]{Teh}.
If $m\!\not\in\!2\Z$, the orientation on $\fM_{\Ga,\si;e}^{\phi,\tau}$
induced by the canonical real orientation of Section~\ref{CROprop_subs}
is the same as the orientation induced by the associated relative spin structure; 
see  \cite[Theorem~1.5]{RealGWsII}.
The latter is the same as the orientation induced by the relative spin structure
of \cite[Remark~6.5]{Teh}.
By the beginning of \cite[Remark~6.9]{Teh} and the middle of the preceding paragraph in~\cite{Teh},
the two classes in~\eref{VanV_lmm2e3}
with respect to the canonical real orientation (and before the orientation reversal)
are thus again given by~\cite[(6.6)]{Teh}.
\end{rmk}

\begin{proof}[{\bf\emph{Proof of Lemma~\ref{AuxAct_lmm}}}] 
We use the $\bT^2$-action on $\P^2_{\vt(v_1),\vt(v_2),n}$ induced by the $\bT^2$-action on~$\C^3$
with weights~$\al_{\vt(v_1)}$, $\al_{\vt(v_2)}$, and~0 and set $\al_{\vt(v_1)}\!=\!-\al_{\vt(v_2)}$
{\it after} computing the equivariant contribution to the left-hand side of~\eref{AuxAct_e}
from the fixed loci of this action on~$\ov\fM_{\Ga;e}^{\bT}$.\\

\noindent
The fixed locus of the $\bT^2$-action on $\ov\fM_{\Ga;e}^{\bT}$ consists of two points:
the $\bT^2$-invariant degree~$\fd(e)$ cover of the line $\P^1_{\vt(v_1),\vt(v_2)}$
and
the $\bT^2$-invariant degree~$\fd(e)/2$ cover of $\P^1_{\vt(v_1),n}\!\cup\!\P^1_{\vt(v_2),n}$.
Let 
$$\cZ_{\Ga;e}'^{\bT},\cZ_{\Ga';e}^{\bT}
\subset\ov\fM_{\Ga;e}^{\bT}\subset\ov\fM_{\Ga;e}\equiv \ov\fM_{0,S_e}\big(\P^{n-1},\fd(e)\big) $$
be the corresponding one-element subspaces.
Denote~by 
$$\cZ_{\Ga;e}'^{\bT;\circ}\subset\ov\fM_{\Ga;e}^{\bT;\circ}
\subset 
\ov\fM_{\Ga;e}^{\circ}\equiv \ov\fM_{0,0}\big(\P^{n-1},\fd(e)\big)$$
the image of $\cZ_{\Ga;e}'^{\bT}$ under the forgetful morphism and by 
$$\cN\cZ_{\Ga;e}'^{\bT},\cN_{\Ga;e}'\lra \cZ_{\Ga;e}'^{\bT}
\qquad\hbox{and}\qquad
\cN\cZ_{\Ga;e}'^{\bT;\circ},\cN_{\Ga;e}'^{\circ} \lra \cZ_{\Ga;e}'^{\bT;\circ}$$
the normal bundles in the intermediate subspaces and in the ambient moduli spaces,
respectively.\\

\noindent
By the proof of Lemma~\ref{VanV_lmm}, $\e(\cV_{n;\a})\big|_{\cZ_{\Ga';e}^{\bT}}\!=\!0$.
From the classical Atiyah-Bott Localization Theorem~\cite{AB},
we thus find~that 
\BE{AuxAct_e1}
\bigg( \prod_{v\in e}\!c_1(L_{e;v})^{\b(v)}~
\frac{\e(\cV_{n;\a})}{\e(\cN_{\Ga;e})}\bigg)
\cap\big[\ov\fM_{\Ga;e}^{\bT}\big]
=\bigg( \prod_{v\in e}\!c_1(L_{e;v})^{\b(v)}~
\frac{\e(\cV_{n;\a})}{\e(\cN_{\Ga;e})}\bigg)\bigg|_{\cZ_{\Ga;e}'^{\bT}}
\cdot\frac{1}{\e(\cN\cZ_{\Ga;e}'^{\bT})}\EE
with respect to the specified $\bT^2$-action.\\

\noindent
By Exercise~27.2.4 and~(27.8) in \cite{MirSym},
\BE{AuxAct_e5}\begin{split}
&\bigg(\frac{1}{\e(\cN\cZ_{\Ga;e}'^{\bT;\circ})}\frac{\e(\cV_{n;\a})}{\e(\cN_{\Ga;e}^{\circ})}
\bigg) \cap\big[\cZ_{\Ga;e}'^{\bT;\circ}\big]
=\bigg(\frac{\e(\cV_{n;\a})}{\e(\cN_{\Ga;e}^{'\circ})}\bigg) \cap\big[\cZ_{\Ga;e}'^{\bT;\circ}\big]\\
&=-\frac{\lr\a}{\fd(e)\,(\fd(e)!)^2}
\cdot\frac{\left(\frac{\al_{\vt(v_1)}+\al_{\vt(v_2)}}{2}\right)^{\!k-1}}
{\prod\limits_{\begin{subarray}{c}1\le j<n\\ j\neq\vt(v_1),\vt(v_2)\end{subarray}}\hspace{-.25in}
\left(\frac{\al_{\vt(v_1)}+\al_{\vt(v_2)}}{2}\!-\!\al_j\right)}\\
&\hspace{.5in}\times
\frac{\prod\limits_{i=1}^k\prod\limits_{r=1}^{a_i\fd(e)/2}\!\!
\left(\! a_i^2\!\left(\frac{\al_{\vt(v_1)}+\al_{\vt(v_2)}}{2}\right)^{\!2} \!-\! 
r^2\!\left(\frac{\al_{\vt(v_1)}-\al_{\vt(v_2)}}{\fd(e)}\right)^{\!2}\right)}
{ \left(\frac{\al_{\vt(v_1)}-\al_{\vt(v_2)}}{\fd(d)}\right)^{2\fd(e)-2} \!\!\!\!\!\!\!\!\!\!\! 
\prod\limits_{j\neq\vt(v_1),\vt(v_2)}
\!\!\prod\limits_{r=1}^{\fd(e)/2}\!\!
\left(\!\!\left(\frac{\al_{\vt(v_1)}+\al_{\vt(v_2)}}{2}\!-\!\al_j\right)^{\!2} \!-\! 
r^2\left(\frac{\al_{\vt(v_1)}-\al_{\vt(v_2)}}{\fd(e)}\right)^{\!2}\right)}\,,
\end{split}\EE
with $\al_n\!\equiv\!0$.
For $\al_{\vt(v_1)}\!=\!-\al_{\vt(v_2)}$, the right-hand side of this expression
reduces to the negative of the right-hand side of~\eref{ContrECb_e} if $k\!=\!1$
and to~0 if $k\!\ge\!2$.
Since 
$$\e\big(\cN\cZ_{\Ga;e}'^{\bT}\big)=
\ff_{\Ga;e}^*\e\big(\cN\cZ_{\Ga;e}'^{\bT;\circ}\big), \qquad
\frac{\e(\cV_{n;\a})}{\e(\cN_{\Ga;e})}=
\bigg(\!\prod_{v\in e}\!c_1(L_{e;v})^{|S_{e,v}|}\!\bigg)^{\!-1}
\ff_{\Ga;e}^*\bigg(\frac{\e(\cV_{n;\a})}{\e(\cN_{\Ga;e}^{\circ})}\bigg),$$
and $c_1(L_{e;v})|_{\cZ_{\Ga;e}'^{\bT}}\!=\!-\psi_{e;v}$,  
the claim follows from~\eref{AuxAct_e1}  and~\eref{AuxAct_e5}.
\end{proof}

\subsection{Proof of Proposition~\ref{REdgeCntr_prp}}
\label{EdgeCntrPrp_subs}

\noindent
Let $n'\!=\!n\!-\!k$,
$$c=\begin{cases} \tau,&\hbox{if}~\phi\!=\!\tau_n';\\
\eta,&\hbox{if}~\phi\!=\!\eta_n;
\end{cases}
\qquad\hbox{and}\qquad 
G_c=\Aut(\P^1,c).$$
The 3-dimensional Lie group~$G_c$ is oriented by the positive rotation around $0\!\in\!\P^1$
and  the complex orientation of $T_0\P^1$;
see \cite[Section~1.4]{RealGWsII}.
This choice of orientation  is not directly relevant to the present proof, as it is contained
in the formulas from~\cite{Teh} we cite.
Denote~by \hbox{$\P^{n'-1}\!\subset\!\P^{n-1}$}
the span of the first $n'$ homogenous coordinates and~by
$$\big(V_c,\vph_c\big)\lra \big(\P^{n'-1},\phi'\big),$$
where $\phi'\!=\!\phi|_{\P^{n'-1}}$,
the normal bundle of $\P^{n'-1}$ in~$\P^{n-1}$.
This holomorphic real bundle pair is the restriction of the last $k$~components 
of the middle term in~\eref{Pnses_e} to~$\P^{n'-1}$.
If $(V,\vph)$ is a real bundle pair over $(\P^1,c)$, let 
$$\Ga(\P^1;V)^{\vph}\equiv\big\{\xi\!\in\!\Ga(\P^1;V)\!:\,\xi\!\circ\!c\!=\!\vph\!\circ\!\xi\big\}$$
denote the space of \sf{real sections}.\\

\noindent
Let $e\!=\!\{v_1,v_2\}$ so that $\vt(v_1)\!\not\in\!2\Z$  and 
\begin{gather*}
\fM_{\Ga,\si;e}^{\phi,c;\circ}=\fM_{0,0}\big(\P^{n-1},\fd(e)\big)^{\phi,c}, \\
\fM(\P^{n'-1})=\fM_{0,0}\big(\P^{n'-1},\fd(e)\big)^{\phi',c}, \qquad
\fM(X_{n;\a})=\fM_{0,0}\big(X_{n;\a},\fd(e)\big)^{\phi_{n;\a},c}.
\end{gather*}
By \cite[Lemma~5.1]{Teh},  the space 
$$\ov\fM_{\Ga,\si;e}^{\phi;\bT;\circ}=\fM_{\Ga,\si;e}^{\phi,c;\bT;\circ}$$
consists of one element: the equivalence class of the unmarked
real degree~$\fd(e)$ covering 
$$f_0\!:\big(\P^1,c,0,\i\big)\lra 
\big(\P^1_{\vt(v_1)\vt(v_2)},\phi,P_{\vt(v_1)},P_{\vt(v_2)}\big)$$
branched only over $P_{\vt(v_1)}$ and $P_{\vt(v_2)}\!=\!P_{\phi(\vt(v_1))}$.
We denote~by
$$G_c(f_0)\subset H^0\big(\P^1;f_0^*T\P^1_{\vt(v_1)\vt(v_2)}\big)^{f_0^*\tnd\phi}$$
the tangent space to the orbit of the $G_c$-action on the space of 
parametrized branched covers (by the composition with the inverse of each automorphism as usual).
The orientation on $G_c$ induces an orientation on~$G_c(f_0)$.\\

\noindent
Let $\dbar_{\cL_{n;\a}}$ denote
the standard  $\dbar$-operator on the real bundle pair 
$$f_0^*\big(\cL_{n;\a},\wt\phi_{n;\a}\big)\lra (\P^1,c).$$
The evaluations of real holomorphic sections and their derivatives at $z\!=\!0$ induce 
a $\bT^m$-equivariant isomorphism
\BE{Corienttau_e1}H^0\big(\P^1;f_0^*\cL_{n;\a}\big)^{f_0^*\wt\phi_{n;\a}}\lra
\bigoplus_{i=1}^k\bigoplus_{r=0}^{(a_i\fd(e)-1)/2}\!\!\!\!\!\!\!\!\! 
\cO_{\P^{n-1}}(a_i)\big|_{P_{\vt(v_1)}}\!\otimes\!\big(T_0^*\P^1\big)^{\otimes r}\,.\EE
The orientation on 
$$\det\big(\dbar_{\cL_{n;\a}}\big)=
\La_{\R}^{\top}\big(H^0\big(\P^1;f_0^*\cL_{n;\a}\big)^{f_0^*\wt\phi_{n;\a}}\big)$$
induced by the isomorphism~\eref{Corienttau_e1}
is called the \sf{complex orientation} in \cite[Section~3.3]{RealGWsII}.
Using
\BE{Corienttau_e2a}
\e\big(\cO_{\P^{n-1}}(a_i)|_{P_{\vt(v_1)}}\big)=a_i\al_{\vt(v_1)},
\qquad  \e\big(T_0^*\P^1\big)=\frac{\al_{\vt(v_2)}\!-\!\al_{\vt(v_1)}}{\fd(e)}
=-\frac{2\al_{\vt(v_1)}}{\fd(e)},\EE
we find that 
\BE{Corienttau_e2}\begin{split}
\e\big(H^0\big(\P^1;f_0^*\cL_{n;\a}\big)^{f_0^*\wt\phi_{n;\a}}\big)
&=\prod_{i=1}^k\prod_{r=0}^{(a_i\fd(e)-1)/2}\!\!
\bigg(a_i\al_{\vt(v_1)}-r\frac{2\al_{\vt(v_1)}}{\fd(e)}\bigg)\\
&=\prod_{i=1}^k\!\!\left(a_i\fd(e)\right)!!\,
\bigg(\frac{\al_{\vt(v_1)}}{\fd(e)}\bigg)^{\!\!\frac{|\a|\fd(e)+k}{2}}
\end{split}\EE
with respect to the complex orientation; see \cite[Section~27.2]{MirSym}.\\

\noindent
By Section~\ref{CROprop_subs}, 
the canonical real orientation on $(X_{n;\a},\phi_{n;\a})$ 
does not depend on the ordering of pairs $(2i\!-\!1,2i)$ of homogeneous coordinates on~$\P^{n-1}$.
Thus, we can assume that $\vt(v_1)\!\le\!n'$.
Denote~by $\dbar_{\P^{n'-1}}$ and $\dbar_{V_c}$ the standard $\dbar$-operators 
on the real bundle pairs
$$f_0^*\big(T\P^{n'-1},\tnd\phi\big),
f_0^*\big(V_c,\vph_c\big)\lra (\P^1,c),$$
respectively.
Combining
$$\e\big(T_{P_{\vt(v_1)}}\P^1_{\vt(v_1)j}\big)=\al_{\vt(v_1)}-\al_j
\qquad\forall~j\!\in\![n]\!-\!\vt(v_1)$$
with the second statement in~\eref{Corienttau_e2a}, we~obtain
\BE{Corienttau_e5}\begin{split}
\e\big(H^0\big(\P^1;f_0^*V_c\big)^{f_0^*\vph_c}\big)
&=\prod_{j=n'+1}^n \!\!\!\!
\prod_{r=0}^{(\fd(e)-1)/2}\!\!\!
\bigg(\al_{\vt(v_1)}\!-\!\al_j-r\frac{2\al_{\vt(v_1)}}{\fd(e)}\bigg)\\
&=\prod_{j=n'+1}^n
\!\!\!\!\!\prod\limits_{r=0}^{(\fd(e)-1)/2}\!
\left(\frac{(\fd(e)\!-\!2r)\al_{\vt(v_1)}}{\fd(e)}\!-\!\al_j\right)
\end{split}\EE
with respect to the complex orientation on the determinant of the standard  $\dbar$-operator 
on~$(V_c,\vph_c)$.\\

\noindent
Let
\BE{RealOrientCI_e19}\begin{split}
0\lra (V,\vph) &\lra 
\big(T\P^{n'-1},\tnd\phi|_{T\P^{n'-1}}\big)\big|_{\P^1_{\vt(v_1)\vt(v_2)}}\oplus
(V_c,\vph_c)\big|_{\P^1_{\vt(v_1)\vt(v_2)}} \\
&\hspace{1.5in}\lra
\big(\cL_{n;\a},\wt\phi_{n;\a}\big)\big|_{\P^1_{\vt(v_1)\vt(v_2)}} \lra 0
\end{split}\EE
be an exact sequence of holomorphic real bundle pairs over 
$(\P^1_{\vt(v_1)\vt(v_2)},\phi)$ such~that 
$$T\P^1_{\vt(v_1)\vt(v_2)}\subset V.$$
Let $\dbar_V$ denote the standard $\dbar$-operator on the real bundle pair 
$$f_0^*(V,\vph)\lra (\P^1,c).$$
The real Cauchy-Riemann operators~$\dbar_V$ and $\dbar_{\P^{n'-1}}$ descend
to operators~$\dbar_V'$ and $\dbar_{\P^{n'-1}}'$ on the quotients of their domains~by
$$G_c(f_0)\subset H^0\big(\P^1;f_0^*T\P^1_{\vt(v_1)\vt(v_2)}\big)^{f_0^*\tnd\phi}
\subset \Ga\big(\P^1;f_0^*V\big)^{f_0^*\vph},\Ga\big(\P^1;f_0^*T\P^{n'-1}\big)^{f_0^*\tnd\phi}\,.$$
The exact sequence~\eref{RealOrientCI_e19}  gives rise to an exact sequence 
\BE{RealOrientCI_e21}
0\lra \dbar_V' \lra \dbar_{\P^{n'-1}}'\!\oplus\!\dbar_{V_c} \lra \dbar_{\cL_{n;\a}} \lra 0\EE
of Fredholm operators over~$(\P^1,c)$.
The operators $\dbar_{\P^{n'-1}}'$, $\dbar_{V_c}$, and $\dbar_{\cL_{n;\a}}$ are surjective
and the kernel of $\dbar_{\P^{n'-1}}'$ is canonically isomorphic to the tangent space of
$\fM(\P^{n'-1})$ at~$[f_0]$.\\

\noindent
{\bf\emph{The case $\phi\!=\!\tau_n'$ and $n\!-\!|\a|\!\in\!4\Z$.}}
By Lemma~\ref{CIorient_lmm}, $n'\!\in\!4\Z$.
The canonical spin structure on the real locus~of 
\BE{Corienttau_e6}\frac{n'}{2}\big(2\cO_{\P^{n-1}}(1),\wt\tau_{n;1,1}'^{(1)}\big)
\lra \big(\P^{n-1},\tau_n'\big)\EE
as in Section~\ref{RealOrientCI_subs} and the exact sequence~\eref{Pnses_e}
determine a spin structure on 
\BE{RPdfn_e}\R\P^{n'-1}=\Fix\big(\tau_{n'}\big)\subset\P^{n'-1}.\EE
This is the same spin structure as in \cite[Section~5.5]{Teh}. 
The equivariant Euler class of the tangent space 
of $[f_0,(0,\i)]$ in $\fM_{0,1}(\P^{n'-1},\fd(e))^{\phi'}$
with respect to the orientation induced by this spin structure is provided
by \cite[Proposition~6.2]{Teh} with $d_0\!=\!\fd(e)$, $i\!=\!\vt(v_1)$, and $\la_j\!=\!\al_j$.
The Euler class of  $[f_0]$ in $\fM(\P^{n'-1})$ is obtained by dividing the expression
in~\cite{Teh} by~$\e(T_0\P^1)$. Thus,
\BE{Corienttau_e9}\begin{split}
\e\big(T_{[f_0]}\fM(\P^{n'-1})\big)
 =(-1)^{\fd(e)}\fd(e)!\bigg(\frac{2\al_{\vt(v_1)}}{\fd(e)}\bigg)^{\!\!\fd(e)-1}
\!\!\!\!\!\!\!\!\!\!\!\!\!\!
\prod_{\begin{subarray}{c}1\le j\le n'\\ j\neq\vt(v_1),\, 2|(j-\vt(v_1))\end{subarray}}
\!\!\!\!\!\!\!\!\prod_{r=0}^{\fd(e)}
\!\!\bigg(\!\frac{\fd(e)\!-\!2r}{\fd(e)}\al_{\vt(v_1)}-\al_j\!\bigg)&\\
=(-1)^{\frac{\fd(e)-1}{2}}
2^{\fd(e)-1}\fd(e)!
\bigg(\frac{\al_{\vt(v_1)}}{\fd(e)}\bigg)^{\!\!\fd(e)-1} \!\!\!\!\!\!\!\!\!\!
\prod_{\begin{subarray}{c}1\le j\le n'\\ j\neq\vt(v_1),\vt(v_2)\end{subarray}}
\!\!\!\!\!\!\!\!\!\!\prod_{r=0}^{(\fd(e)-1)/2}\!\!\!
\bigg(\frac{(\fd(e)\!-\!2r)\al_{\vt(v_1)}}{\fd(e)}\!-\!\al_j\bigg)&.
\end{split}\EE\\

\noindent
Similarly to Section~\ref{RealOrientCI_subs},
the canonical spin structure on~\eref{RPdfn_e} induces a trivialization of~$V^{\vph}$
via the exact sequence~\eref{RealOrientCI_e19}
and thus an orientation on $\det \dbar_V'$ as in  \cite[Section~3.3]{RealGWsII}.
By \cite[Corollary~3.13]{RealGWsII}, 
the isomorphism
\BE{RealOrientCI_e23}\begin{split}
\big(\!\det\dbar_V'\big)\otimes \big(\!\det\!\big(\cV_{n;\a}^{\wt\phi_{n;\a}}|_{[f_0]}\big)\!\big)
&=\big(\!\det\dbar_V'\big)\otimes \big(\!\det\dbar_{\cL_{n;\a}}\big)\\
&\approx 
\La_{\R}^{\top}\big(T_{[f_0]}\fM(\P^{n'-1})\big)
\otimes \big(\!\det\dbar_{V_c}\big) 
= \det\!\big(\cN_{\Ga,\si;e}^{\phi;\circ}|_{[f_0]}\big)
\end{split}\EE
induced by~\eref{RealOrientCI_e21} is orientation-preserving with respect to the orientations
on the first terms on the two sides induced by the canonical spin structure on~\eref{RPdfn_e}
and the complex orientations on the other terms.
By~\eref{Corienttau_e2}, \eref{Corienttau_e5}, and~\eref{Corienttau_e9}, 
\eref{REdgeCntr_e} thus holds with respect to the  orientation on the left-hand side
induced by the orientation on~$\det\dbar_V'$ corresponding
to the canonical spin structure on the real locus of~\eref{Corienttau_e6}
via the exact sequences~\eref{Pnses_e} and~\eref{RealOrientCI_e19}.\\

\noindent
The orientation on $\fM(X_{n;\a})$ in this case
is determined by the distinguished homotopy class of
isomorphisms~\eref{RealOrientCI_e9a}
with $\ell_0(\a)\!=\!0$ and thus $\ell_1(\a)\!=\!k$.
Since the real line bundle $(L^*)^{\wt\phi^*}$ is orientable in this case,
the distinguished homotopy class of isomorphisms~\eref{RealOrientCI_e9a} 
determines a homotopy class of isomorphisms 
\begin{equation*}\begin{split}
 \big(X_{n;\a}^{\phi_{n;\a}}\!\times\!\R\big)\oplus
TX_{n;\a}^{\phi_{n;\a}}\oplus\cL_{n;\a}^{\wt\phi_{n;\a}}\big|_{X_{n;\a}^{\phi_{n;\a}}}
&\approx
 \big(X_{n;\a}^{\phi_{n;\a}}\!\times\!\R\big)\oplus
TX_{n;\a}^{\phi_{n;\a}}
\oplus k\cO_{\R\P^{n-1}}(1)\big|_{X_{n;\a}^{\phi_{n;\a}}}\\
&\approx n'\cO_{\R\P^{n-1}}(1)\big|_{X_{n;\a}^{\phi_{n;\a}}}
\oplus k\cO_{\R\P^{n-1}}(1)\big|_{X_{n;\a}^{\phi_{n;\a}}}\,.
\end{split}\end{equation*}
The canonical spin structure on the real locus of~\eref{Corienttau_e6}  thus determines 
a spin structure on~$TX_{n;\a}^{\phi_{n;\a}}$,
an orientation on $\fM(X_{n;\a})$, and 
an orientation on the left-hand side of~\eref{REdgeCntr_e} via 
the short exact sequence~\eref{CIprp_e4} pulled back to $\fM(X_{n;\a})$.
Since $\fM_{\Ga,\si;e}^{\phi,c;\circ}$ is connected, 
the last orientation agrees with the orientation described below~\eref{RealOrientCI_e23}.
By \cite[Theorem~1.5]{RealGWsII}, the orientation on $\fM(X_{n;\a})$
 induced  by the real orientation of Section~\ref{RealOrientCI_subs} is 
the same as the orientation induced by the canonical spin structure
on $(X_{n;\a},\phi_{n;\a})$.
Thus, the orientation on the left-hand side of~\eref{REdgeCntr_e} with the respect to
the canonical orientation on $\ov\fM_0(X_{n;\a},\fd(e))^{\phi_{n;\a}}$ 
arising from
Section~\ref{RealOrientCI_subs} is described by the right-hand side of~\eref{REdgeCntr_e}
in this~case.\\

\noindent
{\bf\emph{The case $\phi\!=\!\tau_n'$ and $n\!-\!|\a|\!\not\in\!4\Z$.}}
By Lemma~\ref{CIorient_lmm}, $n'\!+\!2\!\in\!4\Z$.
The canonical spin structure on the real locus~of 
\BE{Corienttau_e36}
\frac{n'\!+\!2}{2}\big(2\cO_{\P^{n-1}}(1),\wt\tau_{n;1,1}'^{(1)}\big)
\lra \big(\P^{n-1},\tau_n'\big)\EE
as in Section~\ref{RealOrientCI_subs} and the exact sequence~\eref{Pnses_e}
determine a relative spin structure on~\eref{RPdfn_e}.
This is the same relative spin structure as in \cite[Remark~6.5]{Teh}. 
The equivariant Euler class of the tangent space 
of $[f_0,(0,\i)]$ in $\fM_{0,1}(\P^{n'-1},\fd(e))^{\tau_{n'}'}$
with respect to the orientation induced by this relative spin structure is provided
by \cite[(6.21)]{Teh}; see \cite[Remark~6.6]{Teh}.
Thus,
\BE{Corienttau_e39}\begin{split}
&\e\big(T_{[f_0]}\fM_0(\P^{n'-1})\big)
=2^{\fd(e)-1}\fd(e)!
\bigg(\frac{\al_{\vt(v_1)}}{\fd(e)}\bigg)^{\!\!\fd(e)-1} \!\!\!\!\!\!\!\!\!\!
\prod_{\begin{subarray}{c}1\le j\le n'\\ j\neq\vt(v_1),\vt(v_2)\end{subarray}}
\!\!\!\!\!\!\!\!\!\!\prod_{r=0}^{(\fd(e)-1)/2}\!\!\!
\bigg(\frac{(\fd(e)\!-\!2r)\al_{\vt(v_1)}}{\fd(e)}\!-\!\al_j\bigg).
\end{split}\EE\\

\noindent
The canonical relative spin structure on~\eref{RPdfn_e} induces a relative spin structure on~$V^{\vph}$
via the exact sequence~\eref{RealOrientCI_e19}
and thus an orientation on $\det\dbar_V'$ as in  \cite[Section~3.3]{RealGWsII}.
By \cite[Corollary~3.13]{RealGWsII}, 
the isomorphism~\eref{RealOrientCI_e23}
induced by~\eref{RealOrientCI_e21} is orientation-preserving with respect to the orientations
on the first terms on the two sides induced by the canonical relative spin structure on~\eref{RPdfn_e}
and the complex orientations on the other terms.
By~\eref{Corienttau_e2}, \eref{Corienttau_e5}, and~\eref{Corienttau_e39}, 
\eref{REdgeCntr_e} without the leading sign in~\eref{ContrER_e}
thus holds with respect to the  orientation on the left-hand side
induced by the orientation on~$\det\dbar_V'$ corresponding
to the canonical relative spin structure on the real locus of~\eref{Corienttau_e36}
via the exact sequences~\eref{Pnses_e} and~\eref{RealOrientCI_e19}.\\

\noindent
The orientation on $\fM(X_{n;\a})$ in this case
is determined by the distinguished homotopy class of
isomorphisms~\eref{RealOrientCI_e9b}
with $\ell_0(\a)\!=\!0$ and thus $\ell_1(\a)\!=\!k$.
Since the real line bundle $(L^*)^{\wt\phi^*}$ is not orientable in this case,
the distinguished homotopy class of isomorphisms~\eref{RealOrientCI_e9b} 
determines a homotopy class of isomorphisms 
\begin{equation*}\begin{split}
 &\big(X_{n;\a}^{\phi_{n;\a}}\!\times\!\R\big)\oplus
\big(TX_{n;\a}^{\phi_{n;\a}}\!\oplus\!2(L^*)^{\wt\phi^*}\big|_{X_{n;\a}^{\phi_{n;\a}}}\big)
\oplus\cL_{n;\a}^{\wt\phi_{n;\a}}\big|_{X_{n;\a}^{\phi_{n;\a}}}\\
&\hspace{1.5in}\approx
 \big(X_{n;\a}^{\phi_{n;\a}}\!\times\!\R\big)\oplus
\big(TX_{n;\a}^{\phi_{n;\a}}\!\oplus\!2\cO_{\R\P^{n-1}}(1)\big|_{X_{n;\a}^{\phi_{n;\a}}}\big)
\oplus k\cO_{\R\P^{n-1}}(1)\big|_{X_{n;\a}^{\phi_{n;\a}}}\\
&\hspace{1.5in}\approx (n'\!+\!2)\cO_{\R\P^{n-1}}(1)\big|_{X_{n;\a}^{\phi_{n;\a}}}
\oplus k\cO_{\R\P^{n-1}}(1)\big|_{X_{n;\a}^{\phi_{n;\a}}}\,.
\end{split}\end{equation*}
The canonical spin structure on the real locus of~\eref{Corienttau_e36} thus determines 
a relative spin structure on~$TX_{n;\a}^{\phi_{n;\a}}$,
an orientation on $\fM(X_{n;\a})$, and 
an orientation on the left-hand side of~\eref{REdgeCntr_e} via 
the short exact sequence~\eref{CIprp_e4} pulled back to $\fM(X_{n;\a})$.
Since $\fM_{\Ga,\si;e}^{\phi,c;\circ}$ is connected (unless $n\!=\!2$), 
the last orientation agrees with the orientation described
at the end of the previous paragraph.\\

\noindent
By \cite[Theorem~1.5]{RealGWsII}, the orientation on $\fM(X_{n;\a})$
 induced  by the real orientation of Section~\ref{RealOrientCI_subs} differs from
 the orientation induced by the associated relative spin structure
on $(X_{n;\a},\phi_{n;\a})$ by~$(-1)$ to the power~of
$$\flr{\frac{\lr{c_1(X_{n;\a}),\fd(e)\ell}+2}{4}}=\frac{(n\!-\!|\a|)\fd(e)\!+\!2}{4}\,,$$
where $\ell\!\in\!H_2(\P^{n-1};\Z)$ is the homology class of a line.
Replacing the real bundle bundle pair $(L^*,\wt\phi^*)$ with 
$(\cO_{\P^{n-1}}(1),\wt\tau_{n;1}')$ above,
we obtain the  relative spin structure on $(X_{n;\a},\phi_{n;\a})$
induced by the canonical relative spin structure on~\eref{RPdfn_e}.
The orientations on $\fM_0(X_{n;\a},\fd(e))^{\phi_{n;\a},c}$ induced by  
the two relative spin structures differ  by~$(-1)$ to the power~of
$$\frac{1}{2}\blr{1\!-\!c_1(L^*),\fd(e)\ell}=\frac{(n\!-\!|\a|)\fd(e)\!+\!2\fd(e)}{4}\,.$$
Adding up the right-hand sides of the last two equations, 
we conclude
the orientation on $\fM(X_{n;\a})$
 induced  by the real orientation of Section~\ref{RealOrientCI_subs} differs from
 the orientation induced by the canonical relative spin structure 
by the leading sign in~\eref{ContrER_e}.
Thus, the orientation on the left-hand side of~\eref{REdgeCntr_e} with respect to
the canonical orientation on $\ov\fM_0(X_{n;\a},\fd(e))^{\phi_{n;\a}}$ 
arising from
Section~\ref{RealOrientCI_subs} is described by the right-hand side of~\eref{REdgeCntr_e}
in this~case as well.\\

\noindent 
{\bf\emph{The case $\phi\!=\!\eta_n$.}}
The top exterior power of the real bundle pair
\BE{Corienteta_e6}\frac{n'}{2}\big(2\cO_{\P^{n-1}}(1),\eta_{n;1,1}^{(1)}\big)
\lra \big(\P^{n-1},\eta_n\big)\EE
is canonically the square of a  rank~1 real bundle pair as in Section~\ref{RealOrientCI_subs}.
Thus, the restriction of~\eref{Corienteta_e6} to the equator 
$$S_{\vt(v_1)\vt(v_2)}^1\subset\P^1_{\vt(v_1)\vt(v_2)}$$
has a canonical homotopy class of trivializations.
This homotopy class of trivializations determines an orientation on each moduli space 
 $\fM_{0,l}(\P^{n'-1},\fd(e))^{\phi'}$ as in the proof 
of \cite[Lemma~2.5]{Teh}.
The equivariant Euler class of the tangent space of $[f_0,(0,\i)]$ in 
$\fM_{0,1}(\P^{n'-1},\fd(e))^{\phi'}$ with respect to this orientation
is again provided by \cite[Proposition~6.2]{Teh} and given by the same expression
as in the first case above.
Thus,
\BE{Corienteta_e9}\begin{split}
&\e\big(T_{[f_0]}\fM(\P^{n'-1})\big)\\
&\qquad
=(-1)^{\frac{\fd(e)-1}{2}}
2^{\fd(e)-1}\fd(e)!
\bigg(\frac{\al_{\vt(v_1)}}{\fd(e)}\bigg)^{\!\!\fd(e)-1} \!\!\!\!\!\!\!\!\!\!
\prod_{\begin{subarray}{c}1\le j\le n'\\ j\neq\vt(v_1),\vt(v_2)\end{subarray}}
\!\!\!\!\!\!\!\!\!\!\prod_{r=0}^{(\fd(e)-1)/2}\!\!\!
\bigg(\frac{(\fd(e)\!-\!2r)\al_{\vt(v_1)}}{\fd(e)}\!-\!\al_j\bigg).
\end{split}\EE\\

\noindent
The canonical square root structure on~\eref{Corienteta_e6} induces 
a homotopy class of trivializations of $(V,\vph)$ over $S_{\vt(v_1)\vt(v_2)}^1$
via the exact sequence~\eref{RealOrientCI_e19}
and thus an orientation on $\det\dbar_V'$ as in  \cite[Section~3.3]{RealGWsII}.
By \cite[Corollary~3.16]{RealGWsII},  the isomorphism~\eref{RealOrientCI_e23}
induced by~\eref{RealOrientCI_e21} is orientation-preserving with respect to the orientations
on the first terms on the two sides induced by the canonical square root structure on~\eref{Corienteta_e6}
and the complex orientations on the other terms.
By~\eref{Corienttau_e2}, \eref{Corienttau_e5}, and~\eref{Corienteta_e9}, 
\eref{REdgeCntr_e} without $|\phi|\!=\!1$ in the leading exponent 
thus holds with respect to the  orientation on the left-hand side
induced by the orientation on~$\det\dbar_V'$ corresponding
to the canonical square root structure on~\eref{Corienteta_e6}
via the exact sequences~\eref{Pnses_e} and~\eref{RealOrientCI_e19}
and the orienting procedure of \cite[Lemma~2.5]{Teh}.\\

\noindent
The orientation on $\fM(X_{n;\a})$ in this case
is determined by a real orientation on $(X_{n;\a},\phi_{n;\a})$
associated with the second isomorphism in~\eref{CIprp_e7}.
This isomorphism also determines an orientation on 
the tangent space of $\fM(X_{n;\a})$ at an element~$[u]$
by trivializing $u^*(TX_{n;\a},\phi_{n;\a})$ along the equator $S^1\!\subset\!\P^1$
as in the proof of \cite[Lemma~2.5]{Teh}.
By \cite[Corollary~3.8(2)]{RealGWsII}, the two orientations are the same.
They determine an orientation on the left-hand side of~\eref{REdgeCntr_e} via 
the short exact sequence~\eref{CIprp_e4b} pulled back to $\fM(X_{n;\a})$.
Since $\fM_{\Ga,\si;e}^{\phi,c;\circ}$ is connected, 
the last orientation agrees with the orientation described 
at the end of the previous paragraph.
Thus, the orientation on the left-hand side of~\eref{REdgeCntr_e} with the respect to
the  orientation on $\fM(X_{n;\a})$ 
arising from the canonical real orientation on $(X_{n;\a},\phi_{n;\a})$
is given by the right-hand side of~\eref{REdgeCntr_e} multiplied 
by $(-1)^{|\phi|}\!=\!(-1)^{|c|}$.
As stated at the end of \cite[Section~3.2]{RealGWsI},
the canonical orientation~of 
$$\fM(X_{n;\a})\equiv \fM_{0,0}(X_{n;\a},\fd(e))^{\phi_{n;\a},c}$$ 
with $c\!=\!\eta$
is reversed when it is viewed as a subspace of $\ov\fM_0(X_{n;\a},\fd(e))^{\phi_{n;\a}}$.
This accounts for the extra factor of $(-1)^{|\phi|}\!=\!-1$ 
in~\eref{ContrER_e} in this case.\\

\vspace{.5in}

\noindent
{\it  Institut de Math\'ematiques de Jussieu - Paris Rive Gauche,
Universit\'e Pierre et Marie Curie, 
4~Place Jussieu,
75252 Paris Cedex 5,
France\\
penka.georgieva@imj-prg.fr}\\

\noindent
{\it Department of Mathematics, Stony Brook University, Stony Brook, NY 11794\\
azinger@math.sunysb.edu}\\

\vspace{.2in}

\end{document}